\documentclass[11pt]{article}
\usepackage[a4paper,margin=24mm]{geometry}
\usepackage[T1]{fontenc}
\usepackage{lmodern}
\usepackage{amsmath,amssymb,amsthm,mathtools,array,longtable,booktabs,microtype,fancyvrb}
\usepackage[colorlinks=true,linkcolor=blue,citecolor=blue,urlcolor=blue]{hyperref}
\newtheorem{theorem}{Theorem}[section]
\newtheorem{lemma}[theorem]{Lemma}
\newtheorem{corollary}[theorem]{Corollary}
\theoremstyle{remark}
\newcommand{\Z}{\mathbb Z}

\newcommand{\calL}{\mathcal L_n}
\newcommand{\Lam}{\Lambda_n}
\newcommand{\M}{\mathcal M}

\allowdisplaybreaks[3]
\title{The Golomb--Welch conjecture for arbitrary centre sets}
\author{Bocong Chen$^1$,  Jing Huang$^2$ and Gaojun Luo$^3$\footnote{E-mail addresses: {\it
bocongchen@foxmail.com (B. Chen),
jhuangmath@foxmail.com (J. Huang),
gaojun\_luo@nuaa.edu.cn (G. Luo).
}}
}

\date{\small
$1.$ School of Mathematics, South China University of Technology, Guangzhou 510641, China\\
$2.$
School of Mathematics and Information Science,
Guangzhou University, Guangzhou 510006, China\\
$3.$ School of Mathematics, Nanjing University of Aeronautics and Astronautics, Nanjing 210016, China\\
}

\begin{document}
\maketitle
\begin{abstract}
	We prove that for all $n\ge3$ and $r\ge2$, the Lee ball $B_n(r)$ does not
	tile $\Z^n$ by translations, resolving the Golomb--Welch conjecture for
	arbitrary centre sets and completing the existence classification of
	perfect Lee codes in $\Z^n$.
	The proof combines translation-invariant averaging, boundary contact
	identities, and integer moment inequalities with coefficient comparisons
	in stable symmetric functions.
	Consequently, we obtain finite local obstructions, strict Banach density
	gaps for packings and coverings, and the nonexistence of exact measurable
	Lee-ball towers.
	In the lattice specialization, our result classifies the attainability of
	the Lee bound for even-degree abelian Cayley graphs, excludes normal
	topological coverings and Galois graph coverings for Lee-ball completions,
	and gives a strict node bound for lattice cubature as well as the
	nonexistence of subgroup-indexed orthogonal exponential bases on $B_n(r)$.
\end{abstract}

\medskip
\noindent\textbf{Keywords:} Golomb--Welch conjecture; perfect Lee code;
translational tiling; lattice tiling; abelian Cayley graph; symmetric functions.

\medskip
\noindent\textbf{2020 Mathematics Subject Classification:}
Primary 52C22, 94B60; Secondary 05B45, 05C25, 05E05.

\section{Introduction}\label{sec:introduction}

For $\mathbf x,\mathbf y\in\Z^n$, the Lee distance  
$d_L(\mathbf x,\mathbf y)$ is $\|\mathbf x-\mathbf y\|_1=\sum_{i=1}^n|x_i-y_i|$.
 For integers $a\ge0$ and $b$, set $\binom ab=0$ when $b<0$ or $b>a$.
For $j\in\mathbb Z_{\ge0}$, write
$(x)_0=1$ and $(x)_j=x(x-1)\cdots(x-j+1)$ for $j\ge1$;
these are falling factorials. Whenever its upper argument may be negative,
$\binom{x}{2}$ denotes the polynomial $x(x-1)/2$, rather than a
zero-truncated counting function.
For $r\ge0$, put
\[
 B_n(r)=\{\mathbf x\in\Z^n:\|\mathbf x\|_1\le r\},
 \qquad
 V_r(n)=|B_n(r)|
 =\sum_{j=0}^{\min\{n,r\}}2^j\binom nj\binom rj.
\]
A perfect $r$-error-correcting Lee code is a set
$\mathcal C\subseteq\Z^n$ for which every lattice point lies at
Lee distance at most $r$ from exactly one element of $\mathcal C$.
Equivalently,
\begin{equation}\label{eq:tiling}
 \Z^n=\bigsqcup_{\mathbf c\in\mathcal C}
       (\mathbf c+B_n(r)).
\end{equation}
Such a code attains the sphere-packing bound. Its error balls give a
translational tiling; replacing each lattice point of a ball by its
centred unit cube gives the corresponding tiling by Lee polyominoes.
The tiling is a lattice tiling if, after translation, its centre set is
a subgroup of $\Z^n$. A periodic centre set is a finite union of cosets
of a finite-index subgroup and need not itself be a subgroup.

The Lee metric entered coding theory through the study of nonbinary
transmission errors~\cite{Lee}. Golomb and Welch connected perfect Lee
codes with polyomino packings and constructed lattice examples for
$n=1,2$ at every positive radius, and for $r=1$ in every
dimension~\cite{GW1968,GW}. They conjectured that these exhaust the
positive parameter pairs: no tiling~\eqref{eq:tiling} exists for
$n\ge3$ and $r\ge2$. 
Its geometric form permits arbitrary centres, whereas its lattice
specialization also admits a finite-group description.

Early progress came from geometric obstructions. The asymptotic
argument of Golomb and Welch, with the interpretation clarified by
Horak and Kim, excludes sufficiently large radii in every fixed
dimension $n\ge3$~\cite{GW,HK}. Post~\cite{Post} and
Lepist\"o~\cite{Lepisto} obtained explicit bounds in the periodic or
large finite-alphabet setting. Horak and Kim~\cite{HK} extended these
bounds to arbitrary centres and developed a linear-programming method.
In a complementary direction, Gravier, Mollard, and Payan~\cite{GMP}
settled dimension three, and Horak~\cite{HorakTilings} established
nonexistence in dimensions three through five.
Szab\'o~\cite{Sz} also studied tilings by Lee polyominoes of unequal
radii in dimensions three and four. These results excluded large
regions of the parameter space, while leaving the interaction of
dimension and radius unresolved in general.

The lattice problem led to a second line of attack. A lattice tiling
exists precisely when there is a homomorphism from $\Z^n$ onto a
finite abelian group whose restriction to $B_n(r)$ is bijective.
Horak and Gro\v sek~\cite{HG} used finite-group invariants to exclude
radius-two lattice tilings in dimensions $3$ through $12$.
Kim~\cite{Kim} introduced a symmetric-polynomial obstruction for
certain dimensions with prime Lee-ball cardinality.
Zhang and Zhou~\cite{ZZ} developed group-ring and character methods
for radii two and three. Leung and Zhou~\cite{LZ} then excluded
radius-two lattice tilings in every dimension $n\ge3$; their result
also excludes arbitrary-centre tilings when $2n^2+2n+1$ is prime.
Qureshi~\cite{Qureshi} obtained further polynomial criteria for
linear perfect Lee codes. More recently, Xiao and Zhou~\cite{XZ}
determined the coefficients of the associated $Q$-polynomials,
extended the group-ring formulation to arbitrary radii, and developed
an algorithm combining the resulting nonexistence conditions.

The distinction between general and lattice centres is substantial.
Greenfeld and Tao~\cite{GTStructure} proved structural theorems for
translational tilings, including a quantitative periodic-tiling theorem
in dimension two. Their subsequent counterexample to the periodic
tiling conjecture~\cite{GTCounterexample} shows that a finite tile in
high dimension can tile the integer lattice without admitting any
periodic tiling. These results concern general tiles, not Lee balls,
but rule out imposing periodicity as a general reduction.
Symbolic dynamics provides another framework: a tiling indicator
satisfies a finite-range equation, so the tilings form a subshift of
finite type. Kari~\cite{Kari} studied the algebraic and dynamical
structure of such systems, including their connection with the
Golomb--Welch problem.
We prove the conjecture in its arbitrary-centre form.
\begin{theorem}\label{thm:main}
For all integers $n\ge3$ and $r\ge2$, there is no tiling of
$\Z^n$ by translates of the Lee ball $B_n(r)$.
\end{theorem}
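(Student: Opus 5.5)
We argue by contradiction. Suppose $\mathcal C\subseteq\Z^n$ satisfies \eqref{eq:tiling} with $n\ge3$ and $r\ge2$. The plan follows the three ingredients named in the abstract: averaging, boundary contacts, and integer moments.

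\textbf{Step 1: reduce to averaged statistics.} Since $\mathcal C$ need not be periodic, we cannot work in a finite quotient group. Instead we average over large boxes $[-N,N]^n$ and pass to a subsequential limit along a F{\o}lner sequence, which is the same as taking a translation-invariant mean. This gives well-defined densities for finite local patterns of $\mathcal C$. The tiling condition fixes the density of $\mathcal C$ at exactly $1/V_r(n)$. Every local identity that holds pointwise then becomes an exact linear relation among these densities.

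\textbf{Step 2: boundary contact identities.} For each point $\mathbf x\in\mathcal C$ and each unit direction $\pm e_i$, consider the boundary points $\mathbf y$ of $\mathbf x+B_n(r)$ with $\|\mathbf y-\mathbf x\|_1=r$ and $\mathbf y\pm e_i$ outside the ball. Each such neighbour $\mathbf y\pm e_i$ must be covered by exactly one other ball. That ball is $\mathbf c+B_n(r)$ with $\|\mathbf c-\mathbf x\|_1\in\{2r+1,\dots\}$, and the tiling forces $\mathbf c-\mathbf x$ to lie in a restricted set of ``contact vectors''.

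We then count, for a fixed centre, the pairs (boundary face, neighbouring centre) in two ways. Averaging these counts yields identities for the density-weighted distribution of difference vectors $\mathbf d=\mathbf c-\mathbf x$ between adjacent tiles. In particular they constrain the moments $\sum_{\mathbf d}w(\mathbf d)\,p(\mathbf d)$ for symmetric polynomials $p$ in $d_1^2,\dots,d_n^2$ of low degree, say power sums $\sum d_i^2$ and $\sum d_i^4$. By the hyperoctahedral symmetry of the ball, we may average over signed permutations, so only the symmetric-function content survives.

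\textbf{Step 3: integer moment inequalities.} Each $\mathbf d$ is an integer vector with $\|\mathbf d\|_1$ essentially $2r+1$, perhaps slightly more for non-facet contacts. Its coordinates therefore satisfy integrality inequalities such as $\sum d_i^2(d_i^2-1)\ge0$, and more refined ones of the form $(d_i)_j\cdot(\text{stuff})\ge0$ using falling factorials. Plugging the exact moments from Step 2 into these inequalities gives a polynomial inequality in $n$ and $r$.

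We expand both sides in the basis of stable symmetric functions (monomial or elementary symmetric functions in the squared coordinates) as polynomials in $n$. The claim to verify is that a coefficient comparison forces a strict sign violation whenever $n\ge3$ and $r\ge2$. Equality should occur exactly for $n\le2$ or $r=1$, where tilings exist.

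\textbf{Main obstacle.} The hardest step is Step 3. We must choose the right family of moment inequalities so that the contradiction holds uniformly in all $(n,r)$ with $n\ge3$ and $r\ge2$, not just asymptotically. The first attempt will be the second and fourth moments combined via $\sum(d_i)_4\ge0$ together with the Lee-norm constraint, which reproduces the classical asymptotic Golomb--Welch bound. If that leaves a finite exceptional region of small $(n,r)$, we would add a third-moment or mixed term such as $e_2(d_1^2,\dots,d_n^2)$ to close it, checking stability in $n$ by coefficient comparison. Any remaining finitely many small cases are covered by the known results for $n\le5$~\cite{HorakTilings} and for $r=2$~\cite{LZ}.
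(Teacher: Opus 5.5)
Your Step~1 is fine and matches the paper: an invariant mean from Day's theorem, normalized so that $\M(1)=V_r$ and $\M(\mathbf 1_{\mathcal C})=1$. The gap is that Steps~2--3 contain no working obstruction, and the one you sketch is of the wrong kind.

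\textbf{Why your inequalities cannot suffice.} An inequality such as $\sum_i d_i^2(d_i^2-1)\ge0$ is a pointwise nonnegative weight on a \emph{single} difference vector. After averaging it is just a nonnegative functional of the pair correlation $\varrho(\mathbf d)\ge0$. Combined with the exact contact identities, this is a linear-programming bound. Every inequality it yields is polynomial in $n,r$, with no dependence on the residue of $2n$ modulo $r+1$. The paper's purely linear certificates of this type (the four-orbit duality, Lemmas~\ref{foundation:lem:v5:four-orbit-duality} and~\ref{foundation:v5:triangular}) reach only $n\le r(r+1)/2+2$.

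\textbf{What the paper uses instead.} Beyond that range, the decisive input is integrality of \emph{local counts}, not of coordinates. At every boundary point, the signed supports of the covering vector and of the exterior contact vectors partition the $2n$ signed directions (Lemma~\ref{foundation:lem:v5-partition}). Hence $(r+1)|\mathcal F_{\mathbf z}|=2n-r+Z$ with $|\mathcal F_{\mathbf z}|\in\Z$. This brings $c=2n\bmod(r+1)$ into the argument through $(Z-r+c)(Z-2r-1+c)\ge0$. Together with the shadow-capacity bounds, this excludes $1\le c\le r-1$ for large $n$ (Lemma~\ref{boundary:prop:all-nonexceptional-high}), but it gives no contradiction for $c=0$ or $c=r$.

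Those two classes, for every $r$ and infinitely many $n$, need further ingredients:
\begin{enumerate}
\item configuration-level integral inequalities such as $\binom{Y_A-a}{2}\ge0$ and $Y_B(F-\beta)\ge0$, with $\beta=\lceil(2n+1-\ell(\nu))/(r+1)\rceil$;
\item exact evaluation of their means through the contact pairing $\mathfrak C$, using the coprime-dilation identity $\mathsf B_r^{[2]}f=1$ to annihilate all-even profiles;
\item explicit degree-$(r+2)$ multipliers $V$ for which $Vh_r-Th_{r+1}$ is verified monomial-nonnegative on every non-all-even profile of size $2r+2$.
\end{enumerate}
Fixed low-degree power-sum moments $\sum d_i^2$, $\sum d_i^4$ cannot encode any of this. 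Your plan gives no inequality that fails for all $n\ge3$, $r\ge2$ while still holding at $r=1$.

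\textbf{The fallback for leftover cases also fails.} What a uniform estimate leaves is not finite. The classical Golomb--Welch argument excludes $r$ large relative to $n$; the hard region is $n$ large relative to $r$. In the paper this region includes radius two in every dimension, infinitely many dimensions at radii three and four, and, for each $r\ge5$, all $n$ with $2n\ge(r+1)(r+2)$ and $2n\equiv0,-1\pmod{r+1}$.

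Moreover, \cite{LZ} excludes radius-two \emph{lattice} tilings. For arbitrary centres it applies only when $2n^2+2n+1$ is prime, so it cannot dispose of $r=2$ here. The paper needs a separate arbitrary-centre argument for that radius: the star identity $\mathsf S_1^2+\mathsf S_2=2\mathsf B_2+2nI$, congruences modulo $3$ and $5$ from $\mathsf S_1^p\equiv\mathsf S_p\pmod p$, dilated packing, and exact second moments. It also needs separate certificates for $r=3,4$.

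As it stands, the proposal names the right themes but supplies neither the key partition identity nor any inequality that actually yields a contradiction.
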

Theorem~\ref{thm:main} requires no linearity, periodicity, or density
assumption on the centre set. Together with the classical
constructions, it shows that a perfect Lee code in $\Z^n$ exists
for positive integers $n,r$ if and only if $n\le2$ or $r=1$.
This classifies the admissible parameters, not the individual codes.
This existence classification parallels the classification of
perfect-code parameters over prime-power alphabets in the Hamming
metric~\cite{Tietavainen,ZL,Heden}.

We state the consequences first for arbitrary centre sets and then
for lattices. Put $Q_m=[-m,m]^n\cap\Z^n$. For
$\mathcal C\subseteq\Z^n$, its upper and lower Banach densities are,
respectively,
\[
\limsup_{m\to\infty}\sup_{\mathbf z\in\Z^n}
\frac{|\mathcal C\cap(\mathbf z+Q_m)|}{|Q_m|},
\qquad
\liminf_{m\to\infty}\inf_{\mathbf z\in\Z^n}
\frac{|\mathcal C\cap(\mathbf z+Q_m)|}{|Q_m|}.
\]
Each statement in the following corollary is equivalent to
nonexistence of a Lee-ball tiling for the given $n,r$.

\begin{corollary}\label{cor:arbitrary-centre-consequences}
	Let $n\ge3$ and $r\ge2$. There exist an integer $m\ge1$
	and a constant $\varepsilon_{n,r}>0$ such that
	\begin{enumerate}
	\renewcommand{\labelenumi}{\textup{(\roman{enumi})}}
		\item No function $f:Q_{m+r}\to\{0,1\}$ satisfies
		\[
		\sum_{\mathbf b\in B_n(r)}f(\mathbf x-\mathbf b)=1
		\qquad(\mathbf x\in Q_m).
		\]
		
		\item Every packing by translates of $B_n(r)$ has centre set
		of upper Banach density at most
		$(1-\varepsilon_{n,r})/V_r(n)$.
		Every covering by these translates has centre set
		of lower Banach density at least
		$(1+\varepsilon_{n,r})/V_r(n)$.
		
		\item For every measure-preserving $\Z^n$-action $\tau$
		on a probability space $(\Omega,\mu)$, every measurable
		$A\subseteq\Omega$ whose translates
		$\{\tau^{\mathbf b}A:\mathbf b\in B_n(r)\}$
		are pairwise disjoint up to null sets satisfies
		$V_r(n)\mu(A)\le1-\varepsilon_{n,r}$.
	\end{enumerate}
\end{corollary}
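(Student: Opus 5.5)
We derive all three statements from Theorem~\ref{thm:main} by compactness, without revisiting its proof. The plan is to prove (i) first, deduce (iii) from (i) by a pointwise counting argument inside the dynamical system, and then obtain (ii) by encoding centre sets as orbits of a shift and applying (iii).

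For (i), suppose that for every $m\ge1$ there is $f_m:Q_{m+r}\to\{0,1\}$ with $\sum_{\mathbf b\in B_n(r)}f_m(\mathbf x-\mathbf b)=1$ for all $\mathbf x\in Q_m$. Extend each $f_m$ by $0$ to all of $\Z^n$ and regard it as a point of $\{0,1\}^{\Z^n}$ with the product topology. This space is compact, and $\Z^n$ is countable, so a diagonal argument gives a subsequence converging pointwise to some $f$. Each identity at a fixed $\mathbf x$ involves only the finitely many values $f(\mathbf x-\mathbf b)$, and it holds for $f_m$ as soon as $m\ge\|\mathbf x\|_\infty$. Hence it passes to the limit for every $\mathbf x\in\Z^n$. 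Then $\mathcal C=f^{-1}(1)$ satisfies \eqref{eq:tiling}, contradicting Theorem~\ref{thm:main}. So some $m$ works.

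Fix this $m$ and put $N=|Q_m|$. For (iii), call $\omega\in\Omega$ \emph{locally perfect} if $F_\omega(\mathbf y)=\mathbf 1_A(\tau^{\mathbf y}\omega)$ satisfies $\sum_{\mathbf b\in B_n(r)}F_\omega(\mathbf x-\mathbf b)=1$ for every $\mathbf x\in Q_m$. Two facts drive the argument.
\begin{itemize}
\item Disjointness up to null sets gives $\sum_{\mathbf b}\mathbf 1_A(\tau^{-\mathbf b}\omega)\le1$ almost everywhere. Applying this along the countably many orbit points shows that, almost surely, $F_\omega$ is a packing indicator.
\item By (i), the restriction $F_\omega|_{Q_{m+r}}$ cannot be locally perfect. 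Hence, almost surely, there is some $\mathbf x\in Q_m$ with $\sum_{\mathbf b}\mathbf 1_A(\tau^{\mathbf x-\mathbf b}\omega)=0$.
\end{itemize}
Let $U=\Omega\setminus\bigcup_{\mathbf b}\tau^{\mathbf b}A$ be the uncovered set. By the second fact, $\mathbf 1_U(\tau^{\mathbf x}\omega)=1$ for some $\mathbf x\in Q_m$, so $\sum_{\mathbf x\in Q_m}\mathbf 1_U(\tau^{\mathbf x}\omega)\ge1$ almost surely. Integrating and using invariance of $\mu$ gives $N\mu(U)\ge1$. Disjointness gives $V_r(n)\mu(A)=1-\mu(U)$, hence
\[
V_r(n)\mu(A)\le 1-\frac1N .
\]
So (iii) holds with $\varepsilon_{n,r}=1/N$, or any smaller positive constant.

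For the packing half of (ii), we convert a density statement into a measure statement. Given a packing $\mathcal C$, choose windows $\mathbf z_k+Q_{m_k}$ with $m_k\to\infty$ that realize the upper Banach density. Let $X$ be the orbit closure of $\mathbf 1_{\mathcal C}$ in $\{0,1\}^{\Z^n}$ under the shift. Average the point masses along the windows and take a weak-$*$ limit; this is the standard Furstenberg correspondence, and it yields a shift-invariant probability measure $\mu$ on $X$. Set $A=\{\omega:\omega(\mathbf 0)=1\}$. Then $\mu(A)$ equals the upper Banach density of $\mathcal C$. The packing property is closed, so every point of $X$ is a packing, and the translates $\tau^{\mathbf b}A$ are genuinely disjoint. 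Applying (iii) gives the bound $(1-\varepsilon_{n,r})/V_r(n)$.

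The covering half of (ii) cannot simply reuse (iii), since (iii) is stated only for packings. Instead we repeat the (iii) argument with the inequality $\sum_{\mathbf b}\mathbf 1_A(\tau^{-\mathbf b}\omega)\ge1$. Take an invariant measure realizing the lower Banach density. By (i), some $\mathbf x\in Q_m$ is covered at least twice, so the excess $\sum_{\mathbf b}\mathbf 1_A(\tau^{\mathbf x-\mathbf b}\omega)-1\ge1$ there. Averaging over $Q_m$ as before gives $V_r(n)\mu(A)\ge1+1/N$.

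The main obstacle is the Furstenberg-correspondence step: one must check carefully that the invariant measure attains exactly the upper (respectively lower) Banach density along the chosen windows, and that the limit points still satisfy the packing or covering constraint pointwise. Once that is verified, everything else is a compactness argument and invariance of $\mu$.
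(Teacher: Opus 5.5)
Your proof is correct. Parts (i) and (iii) follow the paper's route: compactness of $\{0,1\}^{\Z^n}$ turns arbitrarily large local solutions into a global tiling, and the resulting finite obstruction, integrated along orbits, gives $\mu(U)\ge 1/|Q_m|$.

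You differ from the paper in (ii). The paper obtains the density gap directly and finitarily. By (i), every translated cube $\mathbf z+Q_m$ of a packing contains a hole, and every translated cube of a covering contains an overlap. Partitioning a large window into disjoint translates of $Q_m$ and counting covered points then gives the bound, up to boundary terms that vanish in the limit. You instead pass to the orbit closure and take a weak-$*$ limit along F{\o}lner windows that realize the upper (or lower) Banach density. You then apply (iii), or its covering analogue. Your steps here check out: the packing and covering constraints are closed and shift-invariant, $A=\{\omega:\omega(\mathbf 0)=1\}$ is clopen, so $\mu(A)$ equals the extremal density, and the translates of $A$ are genuinely disjoint.

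What each route buys: yours avoids boundary bookkeeping and makes the packing density bound a formal consequence of the measure statement. The price is invoking the correspondence principle and redoing the (iii) computation for coverings. The paper's argument is elementary, stays inside $\Z^n$, and shows the defect is already present at every finite scale. Both produce a gap of the same order, $\varepsilon_{n,r}=1/|Q_m|$.
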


For part~(i), compactness would turn arbitrarily large local
solutions into a global tiling. The resulting finite obstruction
forces a hole in every translated cube of a packing and an overlap
in every translated cube of a covering, giving part~(ii).
Integrating this local defect along orbits gives part~(iii).
Conversely, a tiling has density $1/V_r(n)$, and an invariant
probability measure on its orbit closure gives an exact measurable
tower. For related compactness and packing-to-tiling arguments,
see~\cite{HK,Kolountzakis}.

We next state three lattice consequences. For finite simple
undirected abelian Cayley graphs of degree $2n$ and diameter at
most $r$, attaining the Lee bound $V_r(n)$ is equivalent to a
lattice Lee tiling~\cite{DF,LZ}.

Let $\mathbf e_1,\ldots,\mathbf e_n$ be the standard basis of $\Z^n$.
For the covering formulation, a \emph{Lee-ball completion} is an
$n$-tuple of permutations $\pi_1,\ldots,\pi_n$ of $B_n(r)$ satisfying
$\pi_i(\mathbf x)=\mathbf x+\mathbf e_i$ whenever both points
belong to the ball. These permutations define a right action of the
free group $F_n=\langle a_1,\ldots,a_n\rangle$ by
$\mathbf x\cdot a_i=\pi_i(\mathbf x)$.
For the cubature formulation, a finite subgroup
$H\le\mathbb R^n/\Z^n$ is the node set of an equal-weight lattice
rule. The character identities below express exactness for
trigonometric polynomials with frequencies in
$B_n(2r)$~\cite{CoolsSloan,Lyness}.

\begin{corollary}\label{cor:lattice-consequences}
	Let $n\ge3$ and $r\ge2$.
	\begin{enumerate}
		\renewcommand{\labelenumi}{\textup{(\roman{enumi})}}
		\item Every finite simple undirected abelian Cayley graph of degree
		$2n$ and diameter at most $r$ has order at most $V_r(n)-1$.
		
		\item Every Lee-ball completion has a nonnormal stabilizer of
		$\boldsymbol0$ in $F_n$, of index $V_r(n)$. Equivalently, its
		associated covering is neither a normal topological covering
		nor a Galois graph covering.
		
		\item Every finite subgroup $H\le\mathbb R^n/\Z^n$ satisfying
		\[
		\sum_{\boldsymbol\theta\in H}
		e^{2\pi i\mathbf k\cdot\boldsymbol\theta}=0
		\qquad
		(\boldsymbol0\ne\mathbf k\in B_n(2r))
		\]
		has $|H|\ge V_r(n)+1$. This node bound also applies to translated
		lattice rules. Equivalently, no finite subgroup of
		$\mathbb R^n/\Z^n$ or its translate indexes an orthogonal
		exponential basis of $\mathbb C^{B_n(r)}$.
	\end{enumerate}
\end{corollary}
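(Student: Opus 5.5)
The plan is to deduce all three parts from Theorem~\ref{thm:main} by translating each hypothesis into a lattice Lee tiling of $\Z^n$, which the theorem forbids.

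For part~(i), take a finite simple undirected abelian Cayley graph $\mathrm{Cay}(G,S)$ of degree $2n$ and diameter at most $r$. The connection set has the form $S=\{\pm g_1,\ldots,\pm g_n\}$ with $|S|=2n$, so no $g_i$ has order two. Define the homomorphism $\varphi:\Z^n\to G$ by $\mathbf e_i\mapsto g_i$.
\begin{itemize}
\item Since the diameter is at most $r$, every element of $G$ is a sum of at most $r$ signed generators. Hence $\varphi(B_n(r))=G$, and so $|G|\le V_r(n)$.
\item If $|G|=V_r(n)$, then $\varphi$ restricted to $B_n(r)$ is a bijection. Then $\ker\varphi$ is a lattice centre set for a tiling of $\Z^n$ by $B_n(r)$, which contradicts Theorem~\ref{thm:main}.
\end{itemize}
Therefore $|G|\le V_r(n)-1$. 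This is the equivalence cited from \cite{DF,LZ}.

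For part~(ii), the stabilizer $K$ of $\boldsymbol0$ has index $V_r(n)$, because the action is transitive. Transitivity holds since every point of $B_n(r)$ is reached from $\boldsymbol0$ by unit steps that stay inside the ball, and each such step is an application of $\pi_i$ or $\pi_i^{-1}$. Suppose $K$ is normal. Then:
\begin{itemize}
\item $G=F_n/K$ acts regularly on $B_n(r)$, so $|G|=V_r(n)$.
\item $G$ is abelian. For adjacent moves inside the ball, $\mathbf x\cdot a_ia_j$ and $\mathbf x\cdot a_ja_i$ agree. Since $a_ia_ja_i^{-1}a_j^{-1}$ fixes $\boldsymbol0$ (take $r\ge2$, so $\mathbf e_i+\mathbf e_j$ lies in the ball), every commutator of generators lies in $K$. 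The commutator subgroup is the normal closure of these, and $K$ is normal, so the commutator subgroup lies in $K$.
\item The map $\Z^n\to G$, $\mathbf e_i\mapsto a_iK$, is therefore a well-defined homomorphism. Its restriction to $B_n(r)$ sends $\mathbf x$ to the group element carrying $\boldsymbol0$ to $\mathbf x$ along a path inside the ball.
\item By regularity this restriction is injective, and since $|G|=V_r(n)$ it is surjective. This gives a lattice tiling, contradicting Theorem~\ref{thm:main}.
\end{itemize}
The equivalence with normal and Galois coverings is the standard dictionary: a covering is normal exactly when the stabilizer subgroup is normal.

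For part~(iii), let $H\le\mathbb R^n/\Z^n$ be finite and set $N=|H|$.
\begin{itemize}
\item Pairing with $H$ gives a homomorphism $\chi:\Z^n\to\widehat H$, $\mathbf k\mapsto(\boldsymbol\theta\mapsto e^{2\pi i\mathbf k\cdot\boldsymbol\theta})$.
\item The displayed condition says $\chi(\mathbf k)$ is nontrivial for all $\boldsymbol0\ne\mathbf k\in B_n(2r)$.
\item Since $B_n(2r)=B_n(r)-B_n(r)$, this is equivalent to $\chi$ being injective on $B_n(r)$. Hence $N=|\widehat H|\ge V_r(n)$.
\item Equality would make $\chi$ restricted to $B_n(r)$ a bijection onto $\widehat H$, hence a lattice tiling, contradicting Theorem~\ref{thm:main}.
\end{itemize}
So $N\ge V_r(n)+1$. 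For a translated rule $\mathbf t+H$, each character sum is multiplied by the unimodular factor $e^{2\pi i\mathbf k\cdot\mathbf t}$, so the vanishing conditions are unchanged. The orthogonal-basis formulation is the same statement: the exponentials $e_{\boldsymbol\theta}(\mathbf x)=e^{2\pi i\mathbf x\cdot\boldsymbol\theta}$ on $B_n(r)$, indexed by $H$, are pairwise orthogonal exactly when the character sums over differences vanish. They form a basis exactly when, in addition, $N=V_r(n)$.

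The main obstacle is part~(ii), specifically the normality-implies-abelian step and well-definedness on the ball. These use the defining rule $\pi_i(\mathbf x)=\mathbf x+\mathbf e_i$ only where both points lie in the ball, and need $r\ge2$ so that the commutator loop at $\boldsymbol0$ stays inside the ball. Care is also needed that the path-independent evaluation on $B_n(r)$ agrees with the abelianized group element.
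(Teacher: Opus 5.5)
Your reduction of all three parts to a lattice tiling forbidden by Theorem~\ref{thm:main} is the paper's route. In (ii) you use the commutator loop through $\mathbf e_i+\mathbf e_j$ (needing $r\ge2$) and the normal-stabilizer-gives-abelian-quotient argument; in (iii) you use injectivity of the ball into $\Z^n$ modulo the annihilator of $H$, via $B_n(r)-B_n(r)=B_n(2r)$. Part (ii) and the node bound in (iii) are fine as written. There is, however, a gap in (i). A symmetric connection set of size $2n$ need not have the form $\{\pm g_1,\ldots,\pm g_n\}$ with no $g_i$ of order two, since it may contain involutions. The hypercube $\mathrm{Cay}(\Z_2^{2n},\{\mathbf e_1,\ldots,\mathbf e_{2n}\})$ is such a graph of degree $2n$, and for it there is no homomorphism $\Z^n\to G$ whose signed generator images are $S$.

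The paper covers this case by citing the known equivalence, so a self-contained proof needs a separate count. Write $S=\{\pm g_1,\ldots,\pm g_m\}\cup\{h_1,\ldots,h_s\}$ with the $h_j$ involutions and $s=2n-2m\ge2$. Then every element of $G$ equals $\sum a_ig_i+\sum\varepsilon_jh_j$ with $\varepsilon_j\in\{0,1\}$ and $\sum|a_i|+\sum\varepsilon_j\le r$. Pair the $\varepsilon$'s and send $(0,0),(1,0),(0,1),(1,1)$ to $0,1,-1,2$. This is a norm-preserving injection of these coefficient vectors into $B_n(r)$ whose image misses $-2\mathbf e_{m+1}$, which has norm $2\le r$. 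Hence $|G|\le V_r(n)-1$ in this case without the theorem, and only the involution-free case needs Theorem~\ref{thm:main}.

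A smaller imprecision is in the orthogonal-basis clause of (iii). Orthogonality of the exponentials $e_{\boldsymbol\theta}$, $\boldsymbol\theta\in\mathbf t+H$, in $\mathbb C^{B_n(r)}$ is the vanishing of $\sum_{\mathbf x\in B_n(r)}e^{2\pi i\mathbf x\cdot(\boldsymbol\theta-\boldsymbol\theta')}$, a sum over the ball. The displayed condition is instead a sum over $H$, taken at differences of ball points. In terms of $M=(e^{2\pi i\mathbf x\cdot\boldsymbol\theta})_{\mathbf x\in B_n(r),\,\boldsymbol\theta\in\mathbf t+H}$, the first says the columns are orthogonal and the second says the rows are. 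These are not equivalent unless $M$ is square; when $|H|=V_r(n)$, one has $M^*M=|H|I$ if and only if $MM^*=|H|I$. So the correct statement is that $\mathbf t+H$ indexes an orthogonal exponential basis if and only if $|H|=V_r(n)$ and the displayed condition holds. That step is what makes the basis formulation equivalent to the node bound, and your sentence ``pairwise orthogonal exactly when the character sums over differences vanish'' skips it. Read as a sum over $H$, that sentence is false when $|H|\ne V_r(n)$.
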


Each part is equivalent to lattice nonexistence. For part~(ii),
the Schreier graph is a connected $V_r(n)$-sheeted covering of
the bouquet of $n$ circles. Normal stabilizers correspond to normal
topological coverings and Galois graph
coverings~\cite{Hall,Hatcher,Terras}. Since $r\ge2$, the internal
coordinate squares show that every generator commutator fixes
$\boldsymbol0$. A normal stabilizer would therefore give an
abelian quotient with $B_n(r)$ as a complete representative set,
hence a lattice tiling. Conversely, a lattice tiling gives a
normal completion.

For part~(iii), character orthogonality shows that the annihilator
of $H$ meets $B_n(2r)$ only at $\boldsymbol0$. Since
$B_n(r)-B_n(r)=B_n(2r)$, the ball injects into the corresponding
quotient of order $|H|$. Equality $|H|=V_r(n)$ would give a lattice
tiling. This is the lattice Fourier correspondence~\cite{Fuglede}.
The assertion concerns lattice rules and subgroup spectra;
the unrestricted tiling--spectral equivalence is false~\cite{TaoFuglede}.
Together with the classical constructions, part~(i) classifies
attainability of the Lee bound: for positive integers $n,r$,
equality occurs exactly when $n\le2$ or $r=1$.
The exact maximum order in the remaining cases is not determined.

The proof of Theorem~\ref{thm:main} combines translation-invariant
averaging with local contact identities and integer moment
inequalities. Write $2n=(r+1)q+c$ with $0\le c\le r$. Geometric
estimates and the small-radius arguments reduce the remaining parameters
to the two residue classes $c=0$ and $c=r$.
Stable symmetric functions encode the contact profiles;
differentiation before specialization pairs adjacent shells.
Coefficient comparisons with nonnegative corrections then exclude
both residue classes, with explicit finite identities handling
the exceptional pairs. We use the known exclusions
\cite[Theorem~3]{Sz} and \cite[Theorem~13]{HK}, and the dilation
identity \cite[Corollary~25]{HK}.

The paper is organized as follows.
Sections~2 and~3 develop the averaging tools and geometric estimates.
Section~4 establishes the contact identities, treats radii two,
three, and four, and completes the parameter reduction.
Section~5 excludes the two remaining residue classes.

\section{Preliminaries}\label{sec:preliminaries}
We collect the averaging, counting, and translation tools used in the proof.

Assume \eqref{eq:tiling}, let $f=\mathbf1_{\mathcal C}$ be its indicator,
and write $V_j=V_j(n)$ with the dimension fixed. Translation-invariant averaging
permits local incidence arguments without a periodicity assumption. A local
function at $\mathbf z$ is determined by the finite pattern
$\mathcal C\cap(\mathbf z+K)$ for a fixed finite set $K\subseteq\Z^n$.

\begin{lemma}
\label{foundation:lem:v5-local-mean}
Distinct centres have Lee distance at least $2r+1$. There is a positive,
translation-invariant linear functional $\M$ on bounded local functions
of the translated tiling such that $\M(1)=V_r$ and $\M(f)=1$.
\end{lemma}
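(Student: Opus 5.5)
The first claim follows from disjointness of the balls. If $\mathbf c\ne\mathbf c'$ are centres with $d_L(\mathbf c,\mathbf c')\le 2r$, we will exhibit a point $\mathbf y$ with $\|\mathbf y-\mathbf c\|_1\le r$ and $\|\mathbf y-\mathbf c'\|_1\le r$. Walk from $\mathbf c$ toward $\mathbf c'$ along a monotone lattice path of length $d=d_L(\mathbf c,\mathbf c')$, changing one coordinate by $\pm1$ at each step, and stop after $\lfloor d/2\rfloor$ steps. The resulting point is at distance $\lfloor d/2\rfloor\le r$ from $\mathbf c$ and at distance $\lceil d/2\rceil\le r$ from $\mathbf c'$. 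Hence $\mathbf y$ lies in two tiles of \eqref{eq:tiling}, which contradicts the disjointness of the union. So distinct centres are at distance at least $2r+1$.

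For the functional, I will use a Banach-limit construction on the cubes $Q_m$. For a bounded local function $g$, i.e.\ $g(\mathbf z)=G(\mathcal C\cap(\mathbf z+K))$ with $K$ finite and $G$ bounded, put $A_m(g)=|Q_m|^{-1}\sum_{\mathbf z\in Q_m}g(\mathbf z)$. Fix a Banach limit (or a nonprincipal ultrafilter limit) $L$ on $\ell^\infty(\mathbb N)$ and define $\M(g)=V_r\,L\bigl((A_m(g))_m\bigr)$. This is clearly linear and positive, and it satisfies $\M(1)=V_r$.

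Translation invariance is a Følner estimate. Translating by $\mathbf t$ replaces $g$ with $g(\cdot+\mathbf t)$, which is again local with window $K+\mathbf t$. The two averages differ by at most $\|g\|_\infty|Q_m\triangle(Q_m+\mathbf t)|/|Q_m|=O(\|g\|_\infty\|\mathbf t\|_1/m)\to0$, so their limits under $L$ coincide.

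The normalisation $\M(f)=1$ is the counting step, and it is the only point needing care. The tiling identity reads $\sum_{\mathbf b\in B_n(r)}f(\mathbf x-\mathbf b)=1$ for every $\mathbf x$. Averaging this over $\mathbf x\in Q_m$ gives
\[
1=\sum_{\mathbf b\in B_n(r)}A_m\bigl(f(\cdot-\mathbf b)\bigr).
\]
By the translation estimate, each term equals $A_m(f)+O(r/m)$. Hence $V_rA_m(f)=1+O(V_r r/m)$, and applying $L$ gives $\M(f)=V_r L(A_m(f))=1$. If one prefers to avoid Banach limits, one can instead pass to a subsequence along which the averages converge for a countable dense family of local functions. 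The main, mild obstacle is making sure the normalisation $\M(1)=V_r$ (rather than $1$) is consistent. It is consistent because $\M$ is just $V_r$ times a translation-invariant mean, so $\M(f)=1$ expresses the density $1/V_r$ of the centre set.
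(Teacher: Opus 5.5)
Your proof is correct and follows the same route as the paper: a point on a Lee geodesic between two centres at distance at most $2r$ lies in both tiles, and a translation-invariant mean scaled by $V_r$, applied to the tiling identity, forces $\M(f)=1$. The only differences are cosmetic. You stop at step $\lfloor d/2\rfloor$ rather than the paper's $\max(0,d-r)$, and you construct the invariant mean explicitly as a Banach limit of F{\o}lner averages over the cubes $Q_m$, whereas the paper cites Day's theorem for abelian groups.
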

\begin{proof}
If $\mathbf{c},\mathbf{c}'\in {\mathcal C}$ had distance $d\le2r$, an integer geodesic from $\mathbf{c}$
to $\mathbf{c}'$ would contain a point at distance at most $r$ from both:
take the point at step $\max(0,d-r)$.  This contradicts disjointness
in \eqref{eq:tiling}.

Since $\Z^n$ is abelian, Day's theorem~\cite[Theorem~2]{Day}
supplies a positive, normalized, translation-invariant mean on its
bounded real-valued functions. Multiplying this mean by $V_r>0$ and
restricting it to local functions gives $\M$ with $\M(1)=V_r$.
Applying $\M$ to \eqref{eq:tiling} gives
\[
 V_r=\M(1)
  =\sum_{\mathbf u\in B_n(r)}\M\bigl(f(\,\cdot-\mathbf u)\bigr)
  =\sum_{\mathbf u\in B_n(r)}\M(f)
  =V_r\M(f).
\]
Division by $V_r>0$ gives $\M(f)=1$.
\end{proof}

The same mean equates incoming and outgoing finite-range incidence counts.
A subscript on $\M$ specifies the variable being averaged.

\begin{lemma}\label{foundation:lem:v5-transport}
Let $A(\mathbf{c},\mathbf{z})$ be a bounded real incidence weight, zero unless
$\mathbf{c}\in {\mathcal C}$ and $\|\mathbf{z}-\mathbf{c}\|_1\le K$ for some fixed $K$.  Suppose that for
each fixed displacement its value is a local function of the tiling.
Then
\begin{equation}\label{foundation:eq:v5-transport}
 \M_\mathbf{z}\sum_{\mathbf{c}\in {\mathcal C}}A(\mathbf{c},\mathbf{z})
   =\M_\mathbf{c}\sum_{\mathbf{z}\in\mathbb Z^n}A(\mathbf{c},\mathbf{z}).
\end{equation}
In particular, if every centre sends total weight at most $A_0$,
then the left side is at most $A_0$.
\end{lemma}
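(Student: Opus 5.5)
The plan is to reduce both sides of \eqref{foundation:eq:v5-transport} to the same finite sum of averages of local functions, using only linearity and translation invariance of $\M$ from Lemma~\ref{foundation:lem:v5-local-mean}. Fix a displacement $\mathbf d\in B_n(K)$. For this $\mathbf d$, define
\[
g_{\mathbf d}(\mathbf z)=A(\mathbf z-\mathbf d,\mathbf z),
\]
which is zero unless $\mathbf z-\mathbf d\in\mathcal C$. By hypothesis, $g_{\mathbf d}$ is a bounded local function of the tiling at $\mathbf z$: it depends on $\mathcal C\cap(\mathbf z+K_{\mathbf d})$ for some finite window $K_{\mathbf d}$. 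Since $A(\mathbf c,\mathbf z)$ vanishes unless $\mathbf z-\mathbf c\in B_n(K)$, we have, for each $\mathbf z$,
\[
\sum_{\mathbf c\in\mathcal C}A(\mathbf c,\mathbf z)=\sum_{\mathbf d\in B_n(K)}g_{\mathbf d}(\mathbf z).
\]
This is a finite sum of bounded local functions, and the union of the windows $K_{\mathbf d}$ is finite. Hence it is itself local and bounded, and linearity gives
\[
\M_{\mathbf z}\sum_{\mathbf c\in\mathcal C}A(\mathbf c,\mathbf z)=\sum_{\mathbf d\in B_n(K)}\M(g_{\mathbf d}).
\]

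On the other side, the natural function of $\mathbf c$ is
\[
h(\mathbf c)=\sum_{\mathbf z}A(\mathbf c,\mathbf z)=\sum_{\mathbf d\in B_n(K)}A(\mathbf c,\mathbf c+\mathbf d).
\]
This function is supported on $\mathcal C$. For fixed $\mathbf d$, the summand $\mathbf c\mapsto A(\mathbf c,\mathbf c+\mathbf d)$ is exactly the translate $g_{\mathbf d}(\,\cdot+\mathbf d)$. It is therefore local, with window $\mathbf d+K_{\mathbf d}$, and bounded. Translation invariance of $\M$ gives
\[
\M\bigl(g_{\mathbf d}(\,\cdot+\mathbf d)\bigr)=\M(g_{\mathbf d}).
\]
Summing over the finitely many $\mathbf d\in B_n(K)$ and using linearity again gives $\M_{\mathbf c}h=\sum_{\mathbf d}\M(g_{\mathbf d})$, which equals the left side. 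This proves \eqref{foundation:eq:v5-transport}.

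For the ``in particular'' clause, suppose every centre sends total weight $h(\mathbf c)\le A_0$. Then $h\le A_0\,f$ pointwise, because $h$ vanishes off $\mathcal C$. Positivity of $\M$ together with $\M(f)=1$ from Lemma~\ref{foundation:lem:v5-local-mean} then yields $\M_{\mathbf c}h\le A_0$. Here $f$ is local with window $\{\boldsymbol0\}$, and $A_0f-h$ is a bounded local function, so positivity applies.

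The only step requiring care is interpreting the hypothesis ``for each fixed displacement its value is a local function''. We must check that the index shift $\mathbf z=\mathbf c+\mathbf d$ keeps the functions inside the class on which $\M$ is defined, which it does because translates of local functions are local. We must also check that finiteness of $B_n(K)$ keeps all the sums finite. Beyond that bookkeeping, the argument is just linearity, invariance, and positivity.
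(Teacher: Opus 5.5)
Your proposal is correct and follows essentially the same route as the paper: decompose both sides by the finitely many displacements in $B_n(K)$ and apply translation invariance to each displacement term. For the final clause you use $h\le A_0 f$ and $\M(f)=1$, exactly as the paper does; your $g_{\mathbf d}(\mathbf z)$ is just the paper's $a_{\mathbf d}(\mathbf z-\mathbf d)$.
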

\begin{proof}
For $\mathbf{v}\in B_n(K)$ put $a_\mathbf{v}(\mathbf{c})=A(\mathbf{c},\mathbf{c}+\mathbf{v})$, with value zero off ${\mathcal C}$.
The received weight at $\mathbf{z}$ is $\sum_\mathbf{v} a_\mathbf{v}(\mathbf{z}-\mathbf{v})$ and the sent weight
at $\mathbf{c}$ is $\sum_\mathbf{v} a_\mathbf{v}(\mathbf{c})$.  These are finite sums, so translation
invariance proves \eqref{foundation:eq:v5-transport}.  The bound follows from
$\sum_\mathbf{z} A(\mathbf{c},\mathbf{z})\le A_0f(\mathbf{c})$ and $\M(f)=1$.
\end{proof}

The shell counts and coordinate deficits in the next lemma will
supply the boundary moments.

For $j\ge0$, let $S_j=\{\mathbf x\in\Z^n:\|\mathbf x\|_1=j\}$
and $\sigma_j=\sigma_j(n)=|S_j|$; set $S_j=\varnothing$ and
$\sigma_j=0$ for $j<0$. For a vector $\mathbf x$, write
$s(\mathbf x)=|\{i:x_i\ne0\}|$ for its support size.

\begin{lemma}\label{foundation:lem:v5-shell}
One has $\sigma_0=1$, $\sigma_1=2n$, and
$\sigma_{k-1}/\sigma_k\le k/(2n)$ for $k\ge1$.
For $k\ge1$,
\begin{align}
 &\sum_{\mathbf{x}\in S_k}(k-s(\mathbf{x}))=(k-1)\sigma_{k-1},\label{foundation:eq:v5-first-deficit}\\
& \sum_{\mathbf{x}\in S_k}(k-s(\mathbf{x}))(k-s(\mathbf{x})-1)
   =(k-1)(k-2)\sigma_{k-2}.\label{foundation:eq:v5-second-deficit}
\end{align}
\end{lemma}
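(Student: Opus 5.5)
We count vectors in $S_k$ by support. A vector with support size $s$ and $\ell_1$-norm $k$ is determined by a support set of size $s$, a sign pattern, and a composition of $k$ into $s$ positive parts. Hence the number of such vectors is $\binom ns 2^s\binom{k-1}{s-1}$, so
\[
\sigma_k=\sum_{s}2^s\binom ns\binom{k-1}{s-1}\qquad(k\ge1).
\]
From this, $\sigma_0=1$ and $\sigma_1=2n$ are immediate.

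\textbf{Deficit identities.} Rather than manipulating binomial sums, we use a direct combinatorial injection. The quantity $k-s(\mathbf x)$ equals $\sum_i(|x_i|-1)_+$, the total excess over support. Equivalently, it counts the pairs $(\mathbf x,i)$ with $|x_i|\ge2$, each weighted by $|x_i|-1$. This equals the number of ways to pick a coordinate $i$ and a ``unit'' of $|x_i|$ other than the first.

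Consider the map $\mathbf x\mapsto \mathbf x-\operatorname{sgn}(x_i)\mathbf e_i$ from such marked pairs to $S_{k-1}$. For \eqref{foundation:eq:v5-first-deficit}, we reverse it: start from $\mathbf y\in S_{k-1}$ and a choice among the $k-1$ units of $\mathbf y$ (unit $t$ of coordinate $i$, with $1\le t\le |y_i|$). Add one unit to coordinate $i$ in the direction of $\operatorname{sgn}(y_i)$, and mark the new unit at position $t+1$.

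We claim this is a bijection between pairs ($\mathbf y\in S_{k-1}$, unit of $\mathbf y$) and pairs ($\mathbf x\in S_k$, non-first unit of $\mathbf x$). The inverse deletes one unit from the marked coordinate of $\mathbf x$. The left side counts $\sum_{\mathbf x}(k-s(\mathbf x))$, since each nonzero coordinate $x_i$ contributes $|x_i|-1$ non-first units. The right side counts $(k-1)\sigma_{k-1}$, because every $\mathbf y\in S_{k-1}$ has exactly $k-1$ units. The sign is determined because $|y_i|\ge1$, so this proves the identity.

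For \eqref{foundation:eq:v5-second-deficit} we use the same bijection with ordered pairs of distinct non-first units. Removing the two marked units in succession yields a point of $S_{k-2}$ together with an ordered pair of distinct units, the second chosen from the original $k-1$. A cleaner route is to iterate the first bijection. Summing $(k-s)(k-s-1)$ counts ordered pairs of distinct non-first units. Deleting the first marked unit gives $\mathbf y\in S_{k-1}$, a marked unit of $\mathbf y$, and a remaining non-first unit of $\mathbf y$ distinct from the mark, whose index is shifted appropriately. Then $k-1-s(\mathbf y)$ non-first units remain, minus adjustments.

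The bookkeeping here is the main obstacle. As a fallback, I would verify the identity algebraically with generating functions. Put $F(u,z)=\sum_{\mathbf x}u^{s(\mathbf x)}z^{\|\mathbf x\|_1}=\bigl(1+2u\,z/(1-z)\bigr)^n$. Setting $w=u^{-1}$, the sums $\sum_{\mathbf x\in S_k}(k-s)_j$ are coefficients of $z^k$ in $(z\partial_z-u\partial_u)_j F$ at $u=1$; more precisely, we compute $\sum (k-s)_j z^k=\bigl[\partial_w^j\,w^{j}\cdots\bigr]$. Concretely, $G(w,z)=\sum_{\mathbf x} w^{k-s}z^k=\bigl(1+2z/(1-wz)\bigr)^n$, because each nonzero coordinate contributes $2\sum_{m\ge1}w^{m-1}z^m=2z/(1-wz)$. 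Then $\partial_w G=n(\cdots)^{n-1}\cdot 2z^2/(1-wz)^2$ and $z^2\partial_z G=n(\cdots)^{n-1}\cdot 2z^2/(1-wz)^2$, and at $w=1$ these coincide. So $\partial_wG|_{w=1}=z^2\partial_z G|_{w=1}$, which gives \eqref{foundation:eq:v5-first-deficit} by coefficient extraction, since $[z^k]z^2\partial_z\sum\sigma_m z^m=(k-1)\sigma_{k-1}$.

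For the second identity, we check $\partial_w^2G=z^2\partial_z(z^2\partial_z G)-2z^3\partial_z G\cdot(\ldots)$ at $w=1$. The target coefficient $(k-1)(k-2)\sigma_{k-2}$ is $[z^k]\,z^4\partial_z^2\bigl(\sum\sigma_m z^m\bigr)$. Both sides are computed from $h=1+2z/(1-wz)$, using $\partial_w h=z\,\partial_z h\cdot z$ for the relevant factor. This reduces the claim to a routine rational-function check, which I would carry out carefully.

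\textbf{Ratio bound.} For $\sigma_{k-1}/\sigma_k\le k/(2n)$, I would use the identity $k\sigma_k=\sum_{\mathbf x\in S_k}\|\mathbf x\|_1$. Alternatively, count pairs $(\mathbf y,\mathbf x)$ with $\mathbf y\in S_{k-1}$, $\mathbf x\in S_k$, and $\|\mathbf x-\mathbf y\|_1=1$. Each $\mathbf y$ has at least $2n$ outward neighbours in $S_k$: indeed it has $2n-s(\mathbf y)\ge$ \ldots{} moving away on zero coordinates (two ways) and $s(\mathbf y)$ on nonzero ones, totalling $2n-s(\mathbf y)\ge n$. That count is too weak, so I would use the formula instead. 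We have $\sigma_k/\sigma_{k-1}$ comparison termwise: $2^s\binom ns\binom{k-1}{s-1}\cdot k\ge 2n\cdot 2^{s-1}\binom n{s-1}\binom{k-2}{s-2}$-type shifts. I would try matching the term $s$ of $\sigma_{k-1}$ with the term $s+1$ of $\sigma_k$ and the term $s$ of $\sigma_k$. The claim is then that $k\binom{k-1}{s-1}\ge \ldots$, which follows from $\binom{k-1}{s-1}/\binom{k-2}{s-1}=(k-1)/(k-s)$ and $\binom ns\ge$ appropriate bounds. If termwise comparison fails, the generating-function derivative identity $\partial_z G=\ldots\ge 2n G/(\ldots)$ gives the needed coefficientwise inequality.
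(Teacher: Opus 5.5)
Your counts for $\sigma_0,\sigma_1$ and your proof of \eqref{foundation:eq:v5-first-deficit} are fine. The unit-shifting bijection is correct; it is a combinatorial form of the paper's identity $(k-j)\binom{k-1}{j-1}=(k-1)\binom{k-2}{j-1}$. Your check $\partial_wG=z^2\partial_zG$ is also correct. The ratio bound, however, is not proved: you drop the neighbour count as too weak and end with an unexecuted termwise comparison. The neighbour count does suffice once you combine it with the identity you just proved. A point $\mathbf x\in S_k$ has exactly $s(\mathbf x)$ neighbours in $S_{k-1}$, and a point $\mathbf y\in S_{k-1}$ has exactly $2n-s(\mathbf y)$ neighbours in $S_k$. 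Double counting and \eqref{foundation:eq:v5-first-deficit} therefore give
\[
 \sum_{\mathbf y\in S_{k-1}}\bigl(2n-s(\mathbf y)\bigr)
 =\sum_{\mathbf x\in S_k}s(\mathbf x)
 =k\sigma_k-(k-1)\sigma_{k-1}.
\]
Since $s(\mathbf y)\le k-1$, the left side is at least $(2n-k+1)\sigma_{k-1}$, hence $k\sigma_k\ge2n\sigma_{k-1}$. Evaluating the left side exactly with the same identity yields the recurrence $k\sigma_k=2n\sigma_{k-1}+(k-2)\sigma_{k-2}$. The paper obtains this recurrence from $(1-u^2)F'=2nF$ with $F(u)=((1+u)/(1-u))^n$.

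For \eqref{foundation:eq:v5-second-deficit} your sketch contains a false step. With $F(z)=G(1,z)=\sum_m\sigma_mz^m$, one has $[z^k]\,z^4F''(z)=(k-2)(k-3)\sigma_{k-2}$, not $(k-1)(k-2)\sigma_{k-2}$. The correction term you posit does not exist either, so the ``routine check'' as you set it up would not close. The missing observation is that $\partial_wh=2z^2/(1-wz)^2=z^2\partial_zh$ holds for every $w$, not merely at $w=1$. Hence $\partial_wG=z^2\partial_zG$ identically. Differentiating once more in $w$, which commutes with $z^2\partial_z$, gives $\partial_w^2G=z^2\partial_z(z^2\partial_zG)$ exactly. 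At $w=1$ this is $z^4F''+2z^3F'$, whose coefficient of $z^k$ is $(k-2)(k-3)\sigma_{k-2}+2(k-2)\sigma_{k-2}=(k-1)(k-2)\sigma_{k-2}$. Alternatively, since you already have the support-size formula, the paper's one-line route applies. The identity $(k-j)(k-j-1)\binom{k-1}{j-1}=(k-1)(k-2)\binom{k-3}{j-1}$, summed against $2^j\binom nj$, gives $(k-1)(k-2)\sigma_{k-2}$ for $k\ge3$, and both sides vanish for $k\le2$.
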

\begin{proof}
The shells $S_0=\{\boldsymbol0\}$ and
$S_1=\{\pm\mathbf e_i:1\le i\le n\}$ give $\sigma_0=1$ and
$\sigma_1=2n$. A coordinate contributes
$1+2u+2u^2+\cdots=(1+u)/(1-u)$, so
\[
 F(u):=\sum_{k\ge0}\sigma_k u^k
       =\left(\frac{1+u}{1-u}\right)^n,
 \qquad
 F'(u)=nF(u)\left(\frac1{1+u}+\frac1{1-u}\right)
       =\frac{2nF(u)}{1-u^2}.
\]
Comparing coefficients of $u^{k-1}$ in $(1-u^2)F'=2nF$ yields
\begin{equation}\label{foundation:eq:shell-recurrence}
 k\sigma_k=2n\sigma_{k-1}+(k-2)\sigma_{k-2}\qquad(k\ge2).
\end{equation}
For $k\ge2$, the last term is nonnegative, so
$2n\sigma_{k-1}\le k\sigma_k$; division by $2n\sigma_k>0$
gives the ratio bound. For $k=1$, the ratio is
$\sigma_0/\sigma_1=1/(2n)$.

A vector in $S_k$ with exactly $j$ nonzero coordinates is specified
by its support, its signs, and a composition of $k$ into $j$ positive
parts, so their number is
$2^j\binom {n}{j}\binom{k-1}{j-1}$.  For $k\ge2$, grouping the vectors by their support size gives
\[
\begin{aligned}
	\sum_{\mathbf x\in S_k}(k-s(\mathbf x))
	&=\sum_{j=1}^{\min(n,k)}
	2^j\binom nj(k-j)\binom{k-1}{j-1}\\
	&=(k-1)\sum_{j=1}^{\min(n,k-1)}
	2^j\binom nj\binom{k-2}{j-1}\\
	&=(k-1)\sigma_{k-1}.
\end{aligned}
\]
Here the term with $j=k$ vanishes, and we used
$(k-j)\binom{k-1}{j-1}=(k-1)\binom{k-2}{j-1}$
for $1\le j\le k-1$. This proves
\eqref{foundation:eq:v5-first-deficit}.
Similarly, for $k\ge3$, the terms with $j=k-1,k$ vanish, and
\[
(k-j)(k-j-1)\binom{k-1}{j-1}
=(k-1)(k-2)\binom{k-3}{j-1}
\qquad(1\le j\le k-2).
\]
Consequently,
\[
\begin{aligned}
	&\sum_{\mathbf x\in S_k}
	(k-s(\mathbf x))(k-s(\mathbf x)-1)\\
	&\qquad=\sum_{j=1}^{\min(n,k)}
	2^j\binom nj(k-j)(k-j-1)\binom{k-1}{j-1}\\
	&\qquad=(k-1)(k-2)\sum_{j=1}^{\min(n,k-2)}
	2^j\binom nj\binom{k-3}{j-1}\\
	&\qquad=(k-1)(k-2)\sigma_{k-2},
\end{aligned}
\]
which proves \eqref{foundation:eq:v5-second-deficit}.

For $k=1$, every $\mathbf x\in S_1$ has $s(\mathbf x)=1$,
so both left sides are zero; both right sides are also zero
because $k-1=0$.
For $k=2$, every $\mathbf x\in S_2$ has
$s(\mathbf x)\in\{1,2\}$, so
$(2-s(\mathbf x))(1-s(\mathbf x))=0$.
Thus the second left side is zero, as is its right side
because $k-2=0$.
\end{proof}

For $\mathbf w\in\Z^m$, write $L=\sum_i|w_i|$ and define
\[
 \mathcal K_j(\mathbf w)=
 \{\mathbf u\in\Z^m:u_iw_i\ge0,\ |u_i|\le|w_i|\ (1\le i\le m),\
                    \|\mathbf u\|_1=j\},
 \qquad a_j=|\mathcal K_j(\mathbf w)|.
\]

\begin{lemma}
\label{foundation:lem:v5-rank-comparison}
For $0\le k\le\min(j,L-j)$, $a_j>0$ and $
 \frac{a_j}{a_k}\le\frac{\binom Lj}{\binom Lk}.$
\end{lemma}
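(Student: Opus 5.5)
The proof reduces to a monotonicity statement for $a_j/\binom Lj$, established by an induction that splits one coordinate at a time. Put $\ell_i=|w_i|$, so $L=\sum_i\ell_i$. Because the sign of each $u_i$ is forced by $w_i$, $a_j$ is the number of $\mathbf u\in\prod_i\{0,\dots,\ell_i\}$ with $\sum_i u_i=j$. Hence
\[
 \sum_{j\ge0}a_jt^j=\prod_{i=1}^m\bigl(1+t+\cdots+t^{\ell_i}\bigr).
\]
Positivity of $a_j$ for $0\le j\le L$ is a greedy filling: fill coordinates in order, each up to $\ell_i$, until the total reaches $j$. The map $u_i\mapsto \ell_i-u_i$ gives $a_j=a_{L-j}$, and $\binom Lj$ has the same symmetry. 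The hypothesis $k\le\min(j,L-j)$ is invariant under $j\mapsto L-j$, so we may assume $k\le j\le L/2$.

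It then suffices to prove the one-step inequality
\begin{equation*}
 (i+1)\,a_{i+1}\le (L-i)\,a_i\qquad(0\le i,\ i+1\le L/2),
\end{equation*}
which says $a_{i+1}/\binom L{i+1}\le a_i/\binom Li$. Telescoping it from $i=k$ to $i=j-1$ gives the lemma.

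I would prove this by induction on the number of coordinates with $\ell_i\ge2$, splitting one block at a time. A block $1+\cdots+t^{\ell}$ with $\ell\ge2$ is compared with $(1+t)(1+\cdots+t^{\ell-1})$. This replaces the profile $(\dots,\ell,\dots)$ by $(\dots,1,\ell-1,\dots)$ and keeps $L$ fixed. In the base case all $\ell_i\le1$, so $a_j=\binom Lj$ and the ratio is constant. For the inductive step, let $b_j$ be the coefficients after splitting. Then
\[
 \sum_j b_jt^j=\Bigl(\sum_j a_jt^j\Bigr)\cdot\frac{(1+t)(1-t^{\ell})}{1-t^{\ell+1}},
\]
or more usefully $b_j=a_j+(\text{correction})$. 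The correction counts the configurations in the split model that use the new coordinate of size $1$ while the old block is full. I would aim to show that passing from $a$ to $b$ preserves monotone decrease of the normalized sequence on $[0,L/2]$. The mechanism should be that $b_j-a_j$ is, up to the fixed positive factor $\binom Lj$, again a normalized coefficient sequence of a smaller product of the same shape, so induction on $L$ applies to it as well.

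As a fallback I would use a direct double count between $\mathcal K_i(\mathbf w)$ and $\mathcal K_{i+1}(\mathbf w)$. Realize each $\mathbf u$ as a prefix-closed subset of $[L]$, with the $L$ units grouped into blocks of sizes $\ell_i$ and $u_i$ initial units taken from block $i$. Then compare edges of the Boolean lattice between levels $i$ and $i+1$: each $(i+1)$-set has $i+1$ down-neighbours and each $i$-set has $L-i$ up-neighbours. The goal would be a weighted matching showing the prefix-closed family at level $i+1$ has relative density at most that at level $i$, via an LYM/normalized-matching argument averaged over the cyclic rotations within each block. This symmetrization is what makes the restriction $i+1\le L/2$ enter.

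The main obstacle is exactly this one-step inequality. The naive up/down degree count in the product of chains gives only the weaker ratio $a_{i+1}/a_i\le(\#\text{unsaturated coordinates})/(\#\text{nonzero coordinates})$, which does not compare with $(L-i)/(i+1)$. So some averaging over the internal structure of blocks, or the block-splitting induction above, is genuinely needed. I expect the symmetry restriction $j\le L/2$ to be used precisely to control the sign of the correction term.
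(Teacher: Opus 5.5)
Your preliminary reductions are correct: passing to the chain product $\prod_p\{0,\dots,\ell_p\}$, positivity by greedy filling, the symmetry $a_j=a_{L-j}$, the restriction to $k\le j\le L/2$, and the reduction to the one-step inequality $(k+1)a_{k+1}\le(L-k)a_k$. But that one-step inequality is the whole content of the lemma, and the proposal does not prove it.

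The block-splitting induction has no working inductive step. Let $R$ be the product of the other factors. Splitting a block of size $\ell$ gives $b(t)=a(t)+tR(t)(1+t+\cdots+t^{\ell-2})$. So the sequence you must control is $a=b-(\text{a shifted rank sequence of total rank }L-2)$, and that correction is naturally normalized by $\binom{L-2}{j-1}$, not $\binom Lj$. Suppose induction tells you that $b$ and the correction each have nonincreasing normalized sequences. This gives nothing about their difference, because the correction enters with a minus sign; you would need a quantitative comparison that you do not supply. Also, splitting $\ell\ge3$ does not reduce the number of blocks of size at least two, so the induction parameter needs changing. The LYM/rotation fallback is only a description of a hoped-for matching. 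As written, you have only the easy reductions.

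The missing idea is a weighted version of the degree count you set aside. Give the edge raising coordinate $p$ from $u_p$ to $u_p+1$ the weight $u_p+1$. The incoming weights at a rank-$(k+1)$ vector $\mathbf v$ then total $\sum_p v_p=k+1$, matching the Boolean down-degree. Every rank-$k$ vector has $\sum_p(\ell_p-u_p)=L-k$. Subtracting the two counts gives the exact identity
\[
 (L-k)a_k-(k+1)a_{k+1}=\sum_p\sum_{a=0}^{\ell_p-1}(\ell_p-1-2a)\,b^{(p)}_{k-a}.
\]
Equivalently, this is the coefficient of $t^k$ in $LA(t)-(1+t)A'(t)$, where $A(t)=\sum_j a_jt^j$. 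Here $b^{(p)}$ is the rank sequence of the chain product with coordinate $p$ deleted, extended by zero outside $[0,L-\ell_p]$.

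Now pair $a$ with $\ell_p-1-a$, for $a<(\ell_p-1)/2$. Each pair becomes $(\ell_p-1-2a)\bigl(b^{(p)}_{k-a}-b^{(p)}_{k-\ell_p+1+a}\bigr)$. The two indices $\xi\le\eta$ satisfy $\xi+\eta=2k+1-\ell_p\le L-\ell_p$ exactly when $2k+1\le L$. Symmetry plus unimodality of $b^{(p)}$ then give $b^{(p)}_\eta\ge b^{(p)}_\xi$, so every pair is nonnegative. This is where the middle restriction enters.

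It also shows you need unimodality of chain-product rank sequences, which your proposal never establishes. It follows by adjoining one chain of length $w$ at a time, since the new first differences are $b_k-b_{k-w-1}$.
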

\begin{proof}
Taking coordinatewise absolute values gives a rank-preserving bijection
to the product of chains $\prod_i\{0,1,\ldots,|w_i|\}$, so we may
assume that every $w_i\ge0$.
We first record the only consequence of symmetry and unimodality that
is needed. Suppose $(b_0,\ldots,b_M)$ is nonnegative, symmetric, and
nondecreasing up to its middle, and put $b_i=0$ outside $[0,M]$.
If $\xi\le\eta$ and $\xi+\eta\le M$, then
$
 b_\eta\ge b_\xi.
$
For $\xi<0$ this follows from nonnegativity. Otherwise
$0\le\xi\le\eta\le M-\xi$, so
$\xi\le\min(\eta,M-\eta)\le M/2$ and
$b_\eta=b_{\min(\eta,M-\eta)}\ge b_\xi$.

The rank generating function is
$\prod_i(1+z+\cdots+z^{w_i})$. Symmetry follows from
$\mathbf u\mapsto\mathbf w-\mathbf u$, and every rank between zero and
$L$ is nonempty by allocating its units successively among the
coordinates. To prove unimodality, start with the single coefficient
$b_0=1$ and adjoin the factors one at a time. If the old total rank is
$M$ and the next factor has degree $w$, the new coefficients satisfy
\[
 a_k=\sum_{j=0}^w b_{k-j},\qquad
 a_k-a_{k-1}=b_k-b_{k-w-1}.
\]
For $k\le(M+w)/2$, the two indices obey
$(k-w-1)+k\le M-1$, so  $
 b_\eta\ge b_\xi
$
gives $a_k-a_{k-1}\ge0$. This completes the induction.

For the normalized comparison, let $b^{(i)}_j$ be the rank count after
coordinate $i$ is deleted, extended by zero outside its rank interval.
Count edges from rank $k$ to rank $k+1$, weighting the edge that
increments $u_i$ by $u_i+1$. At rank $k+1$ the incoming weights sum to
$\sum_i v_i=k+1$. At rank $k$ one also has
$\sum_i(w_i-u_i)=L-k$. Subtracting these two counts gives
\begin{align*}
 (L-k)a_k-(k+1)a_{k+1}
 &=\sum_i\sum_{a=0}^{w_i}
  \bigl(w_i-a-\mathbf1_{a<w_i}(a+1)\bigr)b^{(i)}_{k-a}\\
 &=\sum_i\sum_{a=0}^{w_i-1}(w_i-2a-1)b^{(i)}_{k-a}.
\end{align*}
Assume $2k+1\le L$. Pair $a$ with $w_i-1-a$ in the last sum.
For $a<(w_i-1)/2$, the paired contribution is $(w_i-2a-1) \bigl(b^{(i)}_{k-a}-b^{(i)}_{k-w_i+1+a}\bigr)\ge0$: the first index is at least the second, their sum is
$2k-w_i+1\le L-w_i$, and $
 b_\eta\ge b_\xi
$
applies to the remaining chain product. A possible middle term has
coefficient zero. Consequently
$a_{k+1}/a_k\le(L-k)/(k+1)$. For $k\le j\le L/2$, multiplying
these positive inequalities from $k$ to $j-1$ gives
\[
 \frac{a_j}{a_k}
 \le\prod_{t=k}^{j-1}\frac{L-t}{t+1}
 =\frac{\binom Lj}{\binom Lk}.
\]
Finally $a_j=a_{L-j}$ and $\binom {L}{j}=\binom {L}{L-j}$ give the
claimed reflected version.
\end{proof}

Coprime dilations provide a second exact tiling identity for the same
centre indicator. Define the translation operator by
$[\mathbf v]F(\mathbf x)=F(\mathbf x-\mathbf v)$ and, for $s\ge0$, put
\[
 \mathsf B_s=\sum_{\|\mathbf v\|_1\le s}[\mathbf v],\qquad
 \mathsf B_s^{[a]}=\sum_{\|\mathbf v\|_1\le s}[a\mathbf v].
\]
The tiling equation is $\mathsf B_rf=1$.

\begin{lemma}[{\cite[Corollary~25]{HK}}]
\label{foundation:v5:primary-dilation}
If $\mathsf B_rf=1$ and $a$ is a positive integer with
$\gcd(a,V_r)=1$, then $\mathsf B_r^{[a]}f=1$.
\end{lemma}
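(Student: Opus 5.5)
The plan is to work in the group ring of $\Z^n$ with integer coefficients, treating the centre indicator $f$ as a formal (infinite) sum and the ball as the finite element $\mathsf B_r$. Convolving an arbitrary bounded function with a finitely supported operator is well defined. Products of finitely many finitely supported operators commute and associate. Hence identities such as $\mathsf B_s(\mathsf B_r f)=\mathsf B_r(\mathsf B_s f)$ hold pointwise, since every sum involved is finite.

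First reduce to $a=p$ prime. Factor $a=p_1\cdots p_t$; each $p_i$ is coprime to $V_r$. Observe that $\mathsf B_r^{[pq]}$ is the $p$-dilation of $\mathsf B_r^{[q]}$, and the dilated ball still has $V_r$ distinct points because dilation by a nonzero integer is injective. So it suffices to prove the following: if $T$ is a finite subset of $\Z^n$ with $\gcd(p,|T|)=1$ and $\sum_{\mathbf t\in T}[\mathbf t]f=1$, then $\sum_{\mathbf t\in T}[p\mathbf t]f=1$. Applying this iteratively, with $T$ the successive dilates of $B_n(r)$, gives the lemma.

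For the prime case put $\mathsf T=\sum_{\mathbf t\in T}[\mathbf t]$, $\mathsf T^{[p]}=\sum_{\mathbf t\in T}[p\mathbf t]$, and $g=\mathsf T^{[p]}f$. This $g$ is a nonnegative integer function bounded by $|T|$.
\begin{enumerate}
\item[(1)] Frobenius in the commutative ring $\mathbb F_p[\Z^n]$ gives $\mathsf T^p\equiv\mathsf T^{[p]}\pmod p$ coefficientwise.
\item[(2)] Hence $g\equiv\mathsf T^{p-1}(\mathsf Tf)=\mathsf T^{p-1}1=|T|^{p-1}\equiv1\pmod p$ pointwise, by Fermat.
\item[(3)] Therefore $g\ge1$ everywhere, since $0\not\equiv1$, and $h:=g-1\ge0$.
\item[(4)] By commutativity, $\mathsf T h=\mathsf T^{[p]}(\mathsf Tf)-\mathsf T1=\mathsf T^{[p]}1-|T|=|T|-|T|=0$.
\item[(5)] Since $\mathsf T h(\mathbf x)=\sum_{\mathbf t\in T}h(\mathbf x-\mathbf t)$ is a sum of nonnegative terms equal to zero, every term vanishes. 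Thus $h\equiv0$, i.e.\ $\mathsf T^{[p]}f=1$.
\end{enumerate}

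The step I expect to need care is the passage from the congruence to exact equality. A density or mean argument via Lemma~\ref{foundation:lem:v5-local-mean} would only show that $h$ has zero mean, which does not force $h=0$ pointwise. The commutation identity in step (4) is what rescues this: $h$ is annihilated by the positive operator $\mathsf T$. One must also check that the associativity used in steps (2) and (4) is legitimate for the infinitely supported $f$. It is, because each evaluation is a finite sum over $T$ and $pT$.
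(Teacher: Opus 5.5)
Your proof is correct and complete. The prime reduction works because $qB_n(r)$ still has $V_r$ points. The Frobenius congruence $\mathsf T^p\equiv\mathsf T^{[p]}\pmod p$, combined with $\mathsf T^{p-1}1=|T|^{p-1}\equiv1\pmod p$, forces $g=\mathsf T^{[p]}f\ge1$. The commuting finite convolutions then give $\mathsf T g=|T|$, which pins $g=1$ pointwise.

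The paper gives no argument of its own here and simply imports the statement from \cite[Corollary~25]{HK}. What you have written is the classical Tijdeman-type dilation argument (Frobenius congruence plus positivity) underlying that result. It therefore makes the lemma self-contained, and your formulation shows it holds for any finite tile $T$ with $\gcd(a,|T|)=1$, not only for Lee balls.
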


\section{Geometric obstructions and boundary estimates}\label{sec:general}
We derive the boundary identities and use two known exclusions to
prove the triangular bound. Weighted-shadow estimates and diagonal
endpoint exclusions then give the geometric reduction used below.

\subsection{Boundary partitions and moments}
We begin with the exact partition of the signed coordinate directions
at a boundary point and then average its coordinate deficits.
Let $\mathbf{c}_0(\mathbf{z})$ be the unique covering centre of $\mathbf{z}$.  Write $I_{\partial}(\mathbf{z})=1$
when $\|\mathbf{z}-\mathbf{c}_0(\mathbf{z})\|_1=r$ and $I_{\partial}(\mathbf{z})=0$ otherwise.  At a point with $I_{\partial}=1$
define
\[
\mathbf{b}=\mathbf{z}-\mathbf{c}_0(\mathbf{z}),\qquad
 \mathcal F_{\mathbf z}=\{\mathbf{z}-\mathbf{c}:\mathbf{c}\in {\mathcal C},\ \|\mathbf{z}-\mathbf{c}\|_1={(r+1)}\}.
 \]
For a vector $\mathbf x$, define its signed support by
\[
 \mathcal S(\mathbf x)=\{\operatorname{sgn}(x_i)\mathbf e_i:x_i\ne0\},
 \qquad |\mathcal S(\mathbf x)|=s(\mathbf x).
\]
The boundary geometry is governed by the allocation of these directions to nearby centres.
\begin{lemma}\label{foundation:lem:v5-partition}
Every point at distance $r+1$ from a centre is a boundary point.
At each boundary point, the signed supports of $\mathbf b$ and
the vectors in $\mathcal F_{\mathbf z}$ partition the $2n$ signed
coordinate directions.
\end{lemma}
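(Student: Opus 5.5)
The plan is to derive both assertions from two facts: each lattice point is covered by exactly one centre, as in \eqref{eq:tiling}, and distinct centres lie at Lee distance at least $2r+1$, as in Lemma~\ref{foundation:lem:v5-local-mean}.

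\emph{First assertion.} Let $\mathbf c\in\mathcal C$ satisfy $\|\mathbf z-\mathbf c\|_1=r+1$. Since $\mathbf z\notin\mathbf c+B_n(r)$, the covering centre $\mathbf c_0(\mathbf z)$ differs from $\mathbf c$. The triangle inequality gives
\[
2r+1\le\|\mathbf c-\mathbf c_0(\mathbf z)\|_1\le (r+1)+\|\mathbf z-\mathbf c_0(\mathbf z)\|_1 .
\]
Hence $\|\mathbf z-\mathbf c_0(\mathbf z)\|_1\ge r$. Combined with $\|\mathbf z-\mathbf c_0(\mathbf z)\|_1\le r$, this forces equality, so $I_\partial(\mathbf z)=1$.

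\emph{Partition.} Fix a boundary point $\mathbf z$ and a signed direction $\mathbf d=\pm\mathbf e_i$. I would examine the neighbour $\mathbf z-\mathbf d$ and compute its distance to each nearby centre.

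For a vector $\mathbf x$, one has $\|\mathbf x-\mathbf d\|_1=\|\mathbf x\|_1-1$ if $\mathbf d\in\mathcal S(\mathbf x)$, and $\|\mathbf x-\mathbf d\|_1=\|\mathbf x\|_1+1$ otherwise. Applying this:
\begin{itemize}
\item For $\mathbf b$, we get $\|\mathbf z-\mathbf d-\mathbf c_0\|_1\in\{r-1,r+1\}$, with the value $r-1$ exactly when $\mathbf d\in\mathcal S(\mathbf b)$.
\item For $\mathbf x=\mathbf z-\mathbf c\in\mathcal F_{\mathbf z}$, we get $\|\mathbf z-\mathbf d-\mathbf c\|_1\in\{r,r+2\}$, with the value $r$ exactly when $\mathbf d\in\mathcal S(\mathbf x)$.
\end{itemize}
Now let $\mathbf c'$ be the unique centre covering $\mathbf z-\mathbf d$. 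Then $\|\mathbf z-\mathbf c'\|_1\le r+1$. If $\mathbf c'\ne\mathbf c_0$, then $\|\mathbf z-\mathbf c'\|_1>r$, because $\mathbf c_0$ is the only centre within distance $r$ of $\mathbf z$. So either $\mathbf c'=\mathbf c_0$, which requires $\mathbf d\in\mathcal S(\mathbf b)$, or $\mathbf z-\mathbf c'\in\mathcal F_{\mathbf z}$ with $\mathbf d\in\mathcal S(\mathbf z-\mathbf c')$.

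This shows every direction lies in at least one of the signed supports. Disjointness follows from uniqueness of the covering centre of $\mathbf z-\mathbf d$:
\begin{itemize}
\item If $\mathbf d\in\mathcal S(\mathbf b)$, then $\mathbf c_0$ covers $\mathbf z-\mathbf d$. So no $\mathbf x\in\mathcal F_{\mathbf z}$ can have $\mathbf d\in\mathcal S(\mathbf x)$, since its centre would also cover $\mathbf z-\mathbf d$.
\item Likewise, two distinct vectors of $\mathcal F_{\mathbf z}$ cannot both contain $\mathbf d$ in their signed supports, since both of their centres would cover $\mathbf z-\mathbf d$.
\end{itemize}

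The only delicate point is tracking the sign convention. A direction $\mathbf d\in\mathcal S(\mathbf x)$ means that stepping from $\mathbf z$ by $-\mathbf d$ moves toward the centre $\mathbf z-\mathbf x$. Once this is fixed, every step is a one-line distance computation.
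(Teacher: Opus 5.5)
Your proof is correct and follows the paper's argument. The first assertion uses the same separation bound, and you obtain exhaustiveness from the covering centre of the neighbour $\mathbf z-\mathbf d$, exactly as the paper does. The only difference is cosmetic: you get disjointness from uniqueness of the covering centre of $\mathbf z-\mathbf d$, whereas the paper bounds the distance between two centres sharing a direction by $\|\mathbf x\|_1+\|\mathbf y\|_1-2\le 2r$, which is the same packing fact.
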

\begin{proof}
Suppose $\|\mathbf z-\mathbf c\|_1=r+1$. Its covering centre
$\mathbf c_0$ is distinct from $\mathbf c$. If
$\|\mathbf z-\mathbf c_0\|_1\le r-1$, then
\[
 \|\mathbf c-\mathbf c_0\|_1
 \le \|\mathbf c-\mathbf z\|_1+\|\mathbf z-\mathbf c_0\|_1
 \le (r+1)+(r-1)=2r,
\]
contrary to centre separation. Hence
$\|\mathbf z-\mathbf c_0\|_1=r$, so $\mathbf z$ is a boundary point.

Fix a boundary point $\mathbf z$.
Distinct vectors
$\mathbf x,\mathbf y\in\{\mathbf b\}\cup\mathcal F_{\mathbf z}$
correspond to distinct centres $\mathbf z-\mathbf x$ and
$\mathbf z-\mathbf y$. If they shared a signed direction
$\mathbf a$, then
\[
\begin{aligned}
 2r+1
 &\le \|(\mathbf z-\mathbf x)-(\mathbf z-\mathbf y)\|_1\le \|\mathbf x-\mathbf a\|_1+\|\mathbf y-\mathbf a\|_1\\
 &=\|\mathbf x\|_1+\|\mathbf y\|_1-2\le 2(r+1)-2=2r,
\end{aligned}
\]
a contradiction. Thus their signed supports are pairwise disjoint.

Let $\mathbf a$ be a signed direction outside
$\mathcal S(\mathbf b)$. Then
\[
 \|\mathbf z-\mathbf a-\mathbf c_0\|_1
 =\|\mathbf b-\mathbf a\|_1
 =\|\mathbf b\|_1+1=r+1.
\]
Consequently $\mathbf z-\mathbf a$ has a covering centre
$\mathbf c\ne\mathbf c_0$. Since $\mathbf z$ is covered only by
$\mathbf c_0$,
\[
 r+1\le\|\mathbf z-\mathbf c\|_1
 \le\|\mathbf z-\mathbf a-\mathbf c\|_1+1
 \le r+1.
\]
Thus $\mathbf x=\mathbf z-\mathbf c$ belongs to
$\mathcal F_{\mathbf z}$ and $\|\mathbf x-\mathbf a\|_1\le r$.
If $\mathbf a\notin\mathcal S(\mathbf x)$, then
$ \|\mathbf x-\mathbf a\|_1=\|\mathbf x\|_1+1=r+2,$
a contradiction. Hence $\mathbf a\in\mathcal S(\mathbf x)$.
Every signed direction therefore belongs to exactly one of the
indicated supports.
\end{proof}

At a boundary point, define the covering deficit and total deficit by
\[
 t=r-s(\mathbf b),\qquad
 Z=t+\sum_{\mathbf x\in\mathcal F_{\mathbf z}}
          (r+1-s(\mathbf x)).
\]
The support partition gives
$s(\mathbf b)+\sum_{\mathbf x\in\mathcal F_{\mathbf z}}s(\mathbf x)=2n,$
and hence
\[
 Z=r+(r+1)|\mathcal F_{\mathbf z}|
       -\left(s(\mathbf b)+\sum_{\mathbf x\in\mathcal F_{\mathbf z}}
                    s(\mathbf x)\right)=r+(r+1)|\mathcal F_{\mathbf z}|-2n.
\]
Moreover, $1\le s(\mathbf b)\le\|\mathbf b\|_1=r$ gives
$0\le t\le r-1$.

Off the boundary, set $\mathcal F_{\mathbf z}=\varnothing$ and
$t=Z=0$. With this convention, the counting identity holds globally as
\[
 (r+1)|\mathcal F_{\mathbf z}|
 =(2n-r)I_{\partial}(\mathbf z)+Z.
\]

Averaging the support partition and using the shell identities of
Section~\ref{sec:preliminaries} gives the following exact moments.

\begin{lemma}\label{foundation:prop:v5-means}
For every fixed weight $a$ on $S_r$ or on $S_{r+1}$, respectively,
\[
 \M(I_{\partial}a(\mathbf{b}))=\sum_{\mathbf{u}\in S_r}a(\mathbf{u}),\qquad
 \M\left(\sum_{\mathbf{x}\in \mathcal F_{\mathbf z}}a(\mathbf{x})\right)=\sum_{\mathbf{u}\in S_{r+1}}a(\mathbf{u}).
\]
Here the exterior expression is zero off the boundary. Then
$ \M(I_{\partial})=\sigma_r, \M(t)=(r-1)\sigma_{r-1},$
$ \M(Z)=r\sigma_r+(r-1)\sigma_{r-1}$ and 
$
 \M(t^2)=(r-1)\sigma_{r-1}+(r-1)(r-2)\sigma_{r-2}.
$
\end{lemma}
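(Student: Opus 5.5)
The two averaged identities come from the transport lemma (Lemma~\ref{foundation:lem:v5-transport}) with two choices of incidence weight. The listed moments then follow by substituting specific weights and using the shell identities of Lemma~\ref{foundation:lem:v5-shell}.

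For the first identity, set $A(\mathbf c,\mathbf z)=a(\mathbf z-\mathbf c)$ when $\mathbf c\in\mathcal C$ and $\|\mathbf z-\mathbf c\|_1=r$, and $A=0$ otherwise. This weight is bounded, has range $K=r$, and for each fixed displacement is a local function, namely $a(\mathbf v)f(\mathbf z-\mathbf v)$. By the tiling property, the only centre at distance exactly $r$ from $\mathbf z$ is $\mathbf c_0(\mathbf z)$, and only when $I_\partial(\mathbf z)=1$. So the received weight at $\mathbf z$ is $I_\partial(\mathbf z)a(\mathbf b)$, while each centre sends $\sum_{\mathbf u\in S_r}a(\mathbf u)$. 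Equation~\eqref{foundation:eq:v5-transport} together with $\M(f)=1$ gives the first formula.

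For the second identity, take $A(\mathbf c,\mathbf z)=a(\mathbf z-\mathbf c)$ on pairs at distance $r+1$. The received weight is $\sum_{\mathbf x\in\mathcal F_{\mathbf z}}a(\mathbf x)$. By Lemma~\ref{foundation:lem:v5-partition}, such $\mathbf z$ are boundary points, which is consistent with the convention that $\mathcal F_{\mathbf z}=\varnothing$ off the boundary. The sent weight is $\sum_{\mathbf u\in S_{r+1}}a(\mathbf u)$, giving the second formula.

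The moments now follow by choosing weights.
\begin{itemize}
\item With $a\equiv1$ on $S_r$ we get $\M(I_\partial)=\sigma_r$.
\item With $a(\mathbf u)=r-s(\mathbf u)$ on $S_r$, the product $I_\partial a(\mathbf b)$ equals $t$, so \eqref{foundation:eq:v5-first-deficit} at $k=r$ gives $\M(t)=(r-1)\sigma_{r-1}$.
\item With $a(\mathbf u)=(r-s(\mathbf u))^2$, write $t^2=t(t-1)+t$ and apply \eqref{foundation:eq:v5-second-deficit} at $k=r$. This gives $\M(t^2)=(r-1)(r-2)\sigma_{r-2}+(r-1)\sigma_{r-1}$.
\item For $Z$, use linearity: $Z=t+\sum_{\mathbf x\in\mathcal F_{\mathbf z}}(r+1-s(\mathbf x))$. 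The second formula with $a(\mathbf u)=r+1-s(\mathbf u)$ on $S_{r+1}$, together with \eqref{foundation:eq:v5-first-deficit} at $k=r+1$, gives $r\sigma_r$. Hence $\M(Z)=r\sigma_r+(r-1)\sigma_{r-1}$.
\end{itemize}

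The only delicate step is checking the hypotheses of the transport lemma: that the incidence weights are local functions for each fixed displacement, and that the received sums really equal $I_\partial a(\mathbf b)$ and $\sum_{\mathcal F_{\mathbf z}}a$. Uniqueness of the covering centre handles the first identity, and Lemma~\ref{foundation:lem:v5-partition} handles the boundary convention in the second. Everything else is substitution.
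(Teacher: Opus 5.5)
Your proposal is correct and matches the paper's proof. The paper writes each integrand directly as the finite translate sum $\sum_{\mathbf u\in S_r}a(\mathbf u)f(\mathbf z-\mathbf u)$ (respectively over $S_{r+1}$) and applies translation invariance with $\M(f)=1$; your appeal to Lemma~\ref{foundation:lem:v5-transport} packages exactly this step, and your weight choices and use of the deficit identities in Lemma~\ref{foundation:lem:v5-shell} for the moments are the same as the paper's.
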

\begin{proof}
The integrand of the first mean is
$\sum_{\mathbf{u}\in S_r}a(\mathbf{u})f(\mathbf{z}-\mathbf{u})$, since a centre at distance $r$
from $\mathbf{z}$ is necessarily its unique covering centre. The integrand of the second is
$\sum_{\mathbf{u}\in S_{r+1}}a(\mathbf{u})f(\mathbf{z}-\mathbf{u})$, and Lemma~\ref{foundation:lem:v5-partition}
shows that every such incidence occurs on the boundary.
Translation invariance and $\M(f)=1$ prove both formulas.
Taking $a=1$ and then the corresponding shell deficit gives
$\M(I_{\partial})=\sigma_r$ and $\M(t)=(r-1)\sigma_{r-1}$.
For the remaining moments, Lemma~\ref{foundation:lem:v5-shell} gives
\[
\begin{aligned}
 \M(Z)&=\M(t)+\sum_{\mathbf u\in S_{r+1}}(r+1-s(\mathbf u))
       =(r-1)\sigma_{r-1}+r\sigma_r,\\
 \M(t^2)&=\M(t)+\M(t(t-1))
       =(r-1)\sigma_{r-1}+(r-1)(r-2)\sigma_{r-2}.
\end{aligned}
\]
\end{proof}

Whenever a polynomial in boundary statistics has a nonzero constant
term, its value in a mean below means its value on $I_{\partial}=1$, extended
by zero elsewhere.  In particular a boundary constant $a$ has mean
$a\sigma_r$, not $a{V_r}$.

\begin{lemma}\label{foundation:prop:v5-direction-sums}
For $k\ge0$, put
\[
 T_k=\sum_{\mathbf v\in S_k}(2n-s(\mathbf v)),\qquad
 \Phi_k=\sum_{\mathbf v\in S_k}
       \bigl((2n-s(\mathbf v))^2-2n+2s(\mathbf v)\bigr).
\]
These sums are nonnegative and satisfy
\begin{align*}
 T_k&=(k+1)\sigma_{k+1}-k\sigma_k,\\
 \Phi_k&=(k+1)(k+2)\sigma_{k+2}
       -(k+1)(2k+1)\sigma_{k+1}+k^2\sigma_k.
\end{align*}
In particular $T_0=2n$ and $\Phi_0=2n(2n-1)$.
\end{lemma}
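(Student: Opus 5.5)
We will derive both identities from the grouping of $S_k$ by support size already used in Lemma~\ref{foundation:lem:v5-shell}, or more simply from Lemma~\ref{foundation:lem:v5-shell} itself applied at index $k+1$ and $k+2$. The key observation is combinatorial: for $\mathbf v\in S_k$ and a signed direction $\mathbf a\notin\mathcal S(\mathbf v)$, the vector $\mathbf v+\mathbf a$ lies in $S_{k+1}$. So $2n-s(\mathbf v)$ counts the outward edges from $\mathbf v$ to $S_{k+1}$. Conversely, a point $\mathbf x\in S_{k+1}$ is reached from exactly $s(\mathbf x)$ points of $S_k$, namely $\mathbf x-\mathbf a$ for $\mathbf a\in\mathcal S(\mathbf x)$. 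Double counting these edges gives
\[
T_k=\sum_{\mathbf x\in S_{k+1}}s(\mathbf x)=(k+1)\sigma_{k+1}-\sum_{\mathbf x\in S_{k+1}}(k+1-s(\mathbf x))=(k+1)\sigma_{k+1}-k\sigma_k,
\]
by \eqref{foundation:eq:v5-first-deficit} with $k+1$ in place of $k$. For $k=0$ this reads $\sigma_1=2n$, which is consistent with $T_0=2n$.

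For $\Phi_k$, observe that $(2n-s)^2-2n+2s=(2n-s)(2n-s-1)$. This counts ordered pairs of distinct signed directions $(\mathbf a,\mathbf a')$ outside $\mathcal S(\mathbf v)$. The number of such pairs with $\mathbf a'=-\mathbf a$ is $n-s(\mathbf v)$, so those pairs must be handled carefully. I will instead count two-step outward paths $\mathbf v\to\mathbf v+\mathbf a\to\mathbf v+\mathbf a+\mathbf a'$ that stay strictly outward. From $\mathbf y=\mathbf v+\mathbf a$, the admissible $\mathbf a'$ lie outside $\mathcal S(\mathbf y)$. This gives $\sum_{\mathbf v\in S_k}\sum_{\mathbf a}(2n-s(\mathbf v+\mathbf a))$, and the inner term equals $2n-s(\mathbf v)-1$ when $\mathbf a$ opens a new coordinate and $2n-s(\mathbf v)$ when $\mathbf a$ extends an existing one. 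That route is messier, so the cleaner plan is to expand directly:
\[
\Phi_k=\sum_{\mathbf v\in S_k}\bigl((2n-s)(2n-s-1)\bigr)=\sum_{\mathbf x\in S_{k+1}}(\text{paths ending at }\mathbf x).
\]
Equivalently, I will compute $\sum_{S_{k+2}}s(\mathbf x)(s(\mathbf x)-1)$ plus the correction from pairs entering the same coordinate twice, which by reversal equals $\sum_{S_{k+1}}$ of suitable terms. Then I will rewrite $s(s-1)$ and $s$ in terms of the deficits $k+2-s$ and $(k+2-s)(k+1-s)$. Lemma~\ref{foundation:lem:v5-shell} evaluates these at index $k+2$, and the identity for $T$ handles the linear leftovers.

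A fully reliable fallback is generating functions. The sum $\sum_k u^k\sum_{\mathbf v\in S_k}y^{s(\mathbf v)}$ equals $\bigl(1+2yu/(1-u)\bigr)^n$. Applying the operator $(2n-y\partial_y)$, or $(2n-y\partial_y)^2-2n+2y\partial_y$, at $y=1$ yields rational expressions in $F(u)=((1+u)/(1-u))^n$ and $F'$, $F''$. The identities then reduce to checking them as power series using $(1-u^2)F'=2nF$. This last step is the main computational obstacle, but it is mechanical: $y\partial_y$ at $y=1$ gives $nF\cdot 2u/(1+u)$, and matching with $(uF)'$-type expressions gives both formulas.

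Nonnegativity is immediate from the definitions. Indeed $s(\mathbf v)\le n$, so $2n-s\ge n\ge0$, and $(2n-s)(2n-s-1)\ge0$ because $2n-s$ is a nonnegative integer. At $k=0$ the only vector is $\mathbf v=\boldsymbol0$ with $s=0$, which gives $T_0=2n$ and $\Phi_0=2n(2n-1)$.
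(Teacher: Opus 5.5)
Your argument for $T_k$ is correct, and it differs from the paper's. The paper writes $2n-s=(2n-k)+(k-s)$, applies the first deficit identity at shell $k$, and then uses the recurrence \eqref{foundation:eq:shell-recurrence} at $k+1$. You instead double count outward edges to get $T_k=\sum_{\mathbf x\in S_{k+1}}s(\mathbf x)$, which needs only the deficit identity at $k+1$. There is one slip: $\mathbf v+\mathbf a\in S_{k+1}$ requires $\mathbf a\notin-\mathcal S(\mathbf v)$, not $\mathbf a\notin\mathcal S(\mathbf v)$ (take $\mathbf v=\mathbf e_1$, $\mathbf a=-\mathbf e_1$). The count $2n-s(\mathbf v)$ and your reverse map are unaffected.

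The $\Phi_k$ part, however, rests on a false identity. In fact $(2n-s)(2n-s-1)=(2n-s)^2-2n+s$, so the summand of $\Phi_k$ is $(2n-s)(2n-s-1)+s$. Counting ordered pairs of distinct directions outside $\mathcal S(\mathbf v)$ therefore undercounts $\Phi_k$ by $\sum_{\mathbf v\in S_k}s(\mathbf v)=k\sigma_k-(k-1)\sigma_{k-1}$. A correct evaluation of your ``direct expansion'' therefore cannot give the stated formula for $k\ge1$. Your nonnegativity argument also invokes this identity; its conclusion survives only because the missing term $s$ is nonnegative.

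The route you called messier is in fact the right one. Your own case split gives $(2n-2s)(2n-s-1)+s(2n-s)=(2n-s)^2-2n+2s$ two-step outward paths from $\mathbf v$, exactly the summand. Reversing these paths from $\mathbf x\in S_{k+2}$ counts $\sum_{\mathbf a\in\mathcal S(\mathbf x)}s(\mathbf x-\mathbf a)=s(\mathbf x)(s(\mathbf x)-1)+n_2(\mathbf x)$, where $n_2$ counts coordinates of magnitude at least two. Lowering such a coordinate by one shows $\sum_{S_{k+2}}n_2=\sum_{S_{k+1}}s=T_k$. With $m=k+2$,
\[
 s(s-1)=(m-s)(m-s-1)-2(m-1)(m-s)+m(m-1),
\]
so both deficit identities at shell $k+2$, together with your formula for $T_k$, give the stated $\Phi_k$.

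Your generating-function fallback uses the correct operator and would also work, but you only assert that it is mechanical. Carried out, it gives $\sum_k\Phi_ku^k=F(u)(4n^2-2n+2nu)/(1+u)^2$. This equals $(1-u)^2F''-(1-u)F'$, the generating function of the right-hand side, because $F''=4n(n+u)F/(1-u^2)^2$. As written, the proposal does not prove the $\Phi_k$ identity.
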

\begin{proof}
For a support size $0\le s\le n$,
\[
 2n-s\ge0,\qquad
 (2n-s)^2-2n+2s=(2n-s)(2n-s-1)+s\ge0,
\]
so both sums are nonnegative. At $k=0$, the single vector is
$\boldsymbol0$, giving $T_0=2n$ and $\Phi_0=2n(2n-1)$.

The two deficit identities in Lemma~\ref{foundation:lem:v5-shell}
hold also at $k=0$, since both sides are zero. The recurrence
\eqref{foundation:eq:shell-recurrence} also holds at $k=0,1$:
its two sides reduce to $0=0$ and $\sigma_1=2n\sigma_0$,
respectively.
The first deficit identity and the recurrence at $k+1$ give
\[
 \begin{aligned}
 T_k&=(2n-k)\sigma_k+
          \sum_{\mathbf v\in S_k}(k-s(\mathbf v))\\
    &=(2n-k)\sigma_k+(k-1)\sigma_{k-1}
      =(k+1)\sigma_{k+1}-k\sigma_k.
 \end{aligned}
\]
For the second sum, expand
\[
 (2n-s)^2-2n+2s
 =(2n-k)^2-2n+2k+(4n-2k-1)(k-s)+(k-s)(k-s-1).
\]
The two deficit identities and the recurrence, successively at
$k$, $k+1$, and $k+2$, therefore give
\begin{align*}
 \Phi_k
 &=[(2n-k)^2-2n+2k]\sigma_k
      +(4n-2k-1)(k-1)\sigma_{k-1}+(k-1)(k-2)\sigma_{k-2}\\
 &=[2n(2n-2k-1)+k(2k+1)]\sigma_k
      +(2n-2k-1)(k-1)\sigma_{k-1}\\
 &=(k+1)(k+2)\sigma_{k+2}
      -(k+1)(2k+1)\sigma_{k+1}+k^2\sigma_k.
\end{align*}
\end{proof}

\subsection{Geometric exclusions}\label{foundation:sec:nonexceptional}
We use the low-dimensional exclusion below and a four-orbit comparison.

\begin{lemma}[{\cite[Theorem~3]{Sz}}]
\label{foundation:v5:small-dim}
For $3\le n\le5$ and $r\ge3$, the ball $B_n(r)$ has no
translational tiling of $\Z^n$.
\end{lemma}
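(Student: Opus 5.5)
This lemma is quoted from Szab\'o~\cite[Theorem~3]{Sz}, so within the paper the plan is simply to invoke that result. Szab\'o treats tilings by Lee polyominoes of possibly unequal radii in dimensions three and four. The equal-radius case $n=3,4$ is therefore a specialization of his theorem. For $n=5$ we also rely on the statement as cited; Horak~\cite{HorakTilings} gives the same conclusion in dimensions three through five.

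If I had to reprove it independently, I would use the boundary machinery developed above. The aim is a finite contradiction for each $n\in\{3,4,5\}$ valid uniformly in $r\ge3$.

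\textbf{Step 1: large radii.} The geometric argument of Golomb--Welch and Horak--Kim \cite{GW,HK} excludes all $r\ge r_0(n)$ with explicit $r_0$. The key fact is that near a corner of the cross-polytope the Lee balls cannot fill the space left between neighbouring balls. Quantitatively, I would use Lemma~\ref{foundation:lem:v5-partition}: at a boundary point the $2n$ signed directions are partitioned among at most $2n$ supports. When $n\le5$ there are at most $10$ directions. Then the counting identity $(r+1)|\mathcal F_{\mathbf z}|=(2n-r)I_\partial+Z$ forces $|\mathcal F_{\mathbf z}|$ to be small. Meanwhile Lemma~\ref{foundation:prop:v5-means} gives $\M(Z)=r\sigma_r+(r-1)\sigma_{r-1}$. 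Comparing these shows that most boundary points have $Z$ of order $r$, which is incompatible with $|\mathcal F_{\mathbf z}|\le 2n$ once $r$ is large relative to $n$.

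\textbf{Step 2: the remaining finite range $3\le r<r_0(n)$.} For each such pair I would apply Lemma~\ref{foundation:lem:v5-transport} with weights adapted to the corner configurations. These weights account for the forced "holes" near vertices $r\mathbf e_i$ of the balls: a diagonal point adjacent to two vertices must be covered by a third ball, and the argument shows this is impossible.

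The main obstacle is Step~2. It is a finite but delicate case analysis, which is exactly what Szab\'o's and Horak's papers carry out. I would therefore cite them rather than redo the analysis.
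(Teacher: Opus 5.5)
Your plan matches the paper, which imports the lemma from Szab\'o~\cite[Theorem~3]{Sz} without proof; your pointer to Horak~\cite{HorakTilings} for $n=5$ is consistent with the paper's own introduction. Do not rely on the sketched Step~1, however: for fixed $n$ and large $r$ the boundary mean of $Z$ is $r+(r-1)\sigma_{r-1}/\sigma_r\approx 2r$, so on average $|\mathcal F_{\mathbf z}|\approx 1+(2n-2)/(r+1)$, which is perfectly compatible with $|\mathcal F_{\mathbf z}|\le 2n$, and these moments alone give no large-radius contradiction (the Golomb--Welch and Horak--Kim bound comes from a genuinely geometric argument).
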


The following lemma computes the four orbit intersections and reduces
the required lower bounds to two parameter families.

Fix $r\ge4$ and $n\ge r+3$. Let
$\mathcal O_-,\mathcal O_0,\mathcal O_2,\mathcal O_3$
be the signed-permutation orbits of
$(1^{r+1})$, $(1^{r+3})$, $(1^{r+1},2)$, and $(1^r,3)$,
respectively, and put $M_j=|\mathcal O_j|$ for
$j\in\{-,0,2,3\}$.
For $\mathbf x\in S_{2r+1}$, write
\[
m=s(\mathbf x),\quad
a=|\{i:|x_i|\ge2\}|,\quad
b=|\{i:|x_i|\ge3\}|,\quad
C_j=|\mathcal O_j\cap(\mathbf x+B_n(r))|.
\]

\begin{lemma}\label{foundation:lem:four-orbit-ratios}
	One has $C_-=\binom m{r+1}$.
	If $m\ge r+1$, put $t=m-r-1$, $v=n-r-1$, and
	$F_j=(C_j/M_j)/(C_-/M_-)$ for $j=0,2,3$.
	Then $0\le t\le\min(r,v)$ and
	\[
	\begin{aligned}
		F_0&=\frac{t\{(r+3)(2v-t-1)+t-1\}}{4v(v-1)},\\
		F_2&=\frac{mt+a(r+1)(2v-t)}{2vm},\\
		F_3&=\frac{a(t+1)+br(2v-t+1)}{m(t+1)}.
	\end{aligned}
	\]
\end{lemma}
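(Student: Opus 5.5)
The plan is to compute each $C_j$ by an explicit enumeration and divide by the orbit sizes. The only tool needed is the overlap form of the $\ell_1$ distance. For $\mathbf x,\mathbf y\in\Z^n$ put
\[
\operatorname{ov}(\mathbf x,\mathbf y)=\sum_{i:\,x_iy_i>0}\min(|x_i|,|y_i|).
\]
Then
\[
\|\mathbf x-\mathbf y\|_1=\|\mathbf x\|_1+\|\mathbf y\|_1-2\operatorname{ov}(\mathbf x,\mathbf y).
\]
With $\|\mathbf x\|_1=2r+1$, the condition $\|\mathbf x-\mathbf y\|_1\le r$ becomes $\operatorname{ov}(\mathbf x,\mathbf y)\ge(\|\mathbf y\|_1+r+1)/2$. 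For $\mathbf y\in\mathcal O_-$ this means $\operatorname{ov}=r+1$. For the three orbits of norm $r+3$ it means $\operatorname{ov}\ge r+2$, so at most one unit of $\mathbf y$ may fail to overlap $\mathbf x$.

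The first step is the orbit sizes:
\[
M_-=2^{r+1}\binom n{r+1},\quad M_0=2^{r+3}\binom n{r+3},\quad M_2=2^{r+2}\binom n{r+1}(n-r-1),\quad M_3=2^{r+1}\binom nr(n-r).
\]
For $\mathcal O_-$, full overlap forces the $r+1$ unit entries of $\mathbf y$ to lie in the signed support of $\mathbf x$, so $C_-=\binom m{r+1}$. When $m\ge r+1$ we have $t\ge0$. The bounds $t\le r$ and $t\le v$ follow from $m\le\|\mathbf x\|_1=2r+1$ and $m\le n$.

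The second step treats $\mathcal O_0$. There are two cases: all $r+3$ signed units lie in $\mathcal S(\mathbf x)$, or exactly $r+2$ do. In the second case the extra unit sits either outside the support of $\mathbf x$ (giving $2(n-m)$ choices) or on a remaining support coordinate with the wrong sign (giving $m-r-2$ choices). Hence
\[
C_0=\binom m{r+3}+\binom m{r+2}\bigl(2(n-m)+m-r-2\bigr).
\]
Dividing by $\binom m{r+1}$ and multiplying by $M_-/M_0=(r+2)(r+3)/(4v(v-1))$, with $n-m=v-t$, gives the stated $F_0$. I will check this simplification explicitly as a model for the other two cases.

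The third step treats $\mathcal O_2$ and $\mathcal O_3$, and here the parameters $a$ and $b$ enter. For $\mathbf y\in\mathcal O_2$, the $2$-entry contributes $\min(|x_i|,2)$ to the overlap. If it is placed on a matching coordinate with $|x_i|\ge2$ (there are $a$ choices), the deficit budget of one unit may be spent elsewhere. This gives $\binom{m-1}{r+1}$ full placements plus $\binom{m-1}{r}\cdot(2(n-m)+\ldots)$ placements with one non-overlapping unit entry. If instead the $2$-entry sits on a matching coordinate with $|x_i|=1$, it already uses the deficit, so all $r+1$ unit entries must match. Summing these cases and simplifying should produce $F_2=\bigl(mt+a(r+1)(2v-t)\bigr)/(2vm)$. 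For $\mathcal O_3$ the analogous split depends on whether the $3$-entry lies on a coordinate with $|x_i|\ge3$ (there are $b$ of these, deficit $0$) or with $|x_i|=2$ (deficit $1$, counted through $a-b$). A $3$-entry on a coordinate with $|x_i|=1$ already costs deficit $2$, so it is excluded. Combining these cases should collapse to $a(t+1)+br(2v-t+1)$ after dividing by $\binom m{r+1}\,m(t+1)$-type normalisations.

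I expect the main obstacle to be this third step. The case bookkeeping must avoid double counting: when the deficit unit lands on a support coordinate with the wrong sign, it must be counted exactly once. The binomial simplifications into the compact forms also need care. I will first organise each count as (placement of the large entry) $\times$ (placement of the unit entries with deficit $0$ or $1$), and then verify the resulting closed forms at small values such as $t=0$, where $F_0=0$ and $F_2=a(r+1)/m$, before completing the general algebra.
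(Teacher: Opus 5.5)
Your proposal is correct and follows the paper's proof essentially step for step. The overlap identity reduces membership to a defect budget of one unit, and splitting on where the large entry sits ($|x_i|=1$ versus $|x_i|\ge2$ for $\mathcal O_2$, and $|x_i|=2$ versus $|x_i|\ge3$ for $\mathcal O_3$) gives $C_2=m\binom{m-1}{r+1}+a(2n-m-r-1)\binom{m-1}{r}$ and $C_3=a\binom{m-1}{r}+b(2n-m-r)\binom{m-1}{r-1}$; your elided term counts the $m-r-1$ (resp.\ $m-r$) wrong-sign support slots, and substituting $2n-m-r-1=2v-t$, $2n-m-r=2v-t+1$ with the binomial quotients $t/m$, $(r+1)/m$, $r(r+1)/(m(t+1))$ yields exactly the stated $F_2,F_3$, just as in your $F_0$ computation.
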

\begin{proof}
Signed coordinate permutations preserve the ball and all four orbits,
so they preserve each $C_j$. We may therefore assume $x_i\ge0$.
Choosing the nonzero coordinates, their signs, and the distinguished
coordinate in the last two orbits gives
\[
\begin{array}{c|cccc}
j&-&0&2&3\\\hline
M_j=|\mathcal O_j|&2^{r+1}\binom {n}{r+1}&2^{r+3}\binom {n}{r+3}
 &2^{r+2}n\binom{n-1}{r+1}&2^{r+1}n\binom{n-1}{r}
\end{array}
\]
The identity
\[
\|\mathbf x-\mathbf y\|_1=\|\mathbf x\|_1+\|\mathbf y\|_1
 -2\sum_{i:y_i>0}\min(x_i,y_i)
\]
shows that a negative-orbit vector contributes exactly when all its
$r+1$ entries are positive and inside the support. Thus
$C_-=\binom {m}{r+1}$.
For a positive orbit, its norm is $r+3$, and the allowed defect
$\|\mathbf y\|_1-\sum_{y_i>0}\min(x_i,y_i)$ is at most one.
A good unit entry is positive and inside the support; a bad unit
entry is either negative inside it or has either sign outside it.
Consequently the three counts are
\begin{align*}
C_0&=\binom {m}{r+3}
 +\{m-r-2+2(n-m)\}\binom {m}{r+2},\\
C_2&=(m-a)\binom{m-1}{r+1}
 +a\left[\binom{m-1}{r+1}
 +\{m-r-1+2(n-m)\}\binom{m-1}{r}\right]\\
 &=m\binom{m-1}{r+1}+a(2n-m-r-1)\binom{m-1}{r},\\
C_3&=(a-b)\binom{m-1}{r}
 +b\left[\binom{m-1}{r}
 +\{m-r+2(n-m)\}\binom{m-1}{r-1}\right]\\
 &=a\binom{m-1}{r}+b(2n-m-r)\binom{m-1}{r-1}.
\end{align*}
In the second count, a positive entry two at a coordinate equal to
one spends the defect allowance; at a coordinate at least two it
leaves room for one bad unit. The third count has the same alternatives
for an entry three at coordinates equal to two or at least three.
A nonexistent choice contributes zero.

Suppose $m\ge r+1$. Since $m\le\min(n,2r+1)$, we have
$0\le t\le\min(r,v)$; also $v\ge2$ because $n\ge r+3$.
The required cardinality and binomial quotients are
\[
\frac{M_-}{M_0}=\frac{(r+2)(r+3)}{4v(v-1)},\quad
\frac{M_-}{M_2}=\frac{1}{2v},\quad
\frac{M_-}{M_3}=\frac{1}{r+1},\quad
\frac{\binom{m-1}{r+1}}{\binom m{r+1}}=\frac {t}{m},
\]
\[
\frac{\binom m{r+2}}{\binom m{r+1}}=\frac {t}{r+2},\quad
\frac{\binom m{r+3}}{\binom m{r+1}}
 =\frac{t(t-1)}{(r+2)(r+3)},\quad
\frac{\binom{m-1}r}{\binom m{r+1}}=\frac{r+1}{m},\quad
\frac{\binom{m-1}{r-1}}{\binom m{r+1}}
 =\frac{r(r+1)}{m(t+1)}.
\]
Substituting these quotients into $F_j=(M_-/M_j)(C_j/C_-)$
proves the three ratio formulas.
If $b=0$, then $2r+1=m+a$, so $a=r-t$.
If $b\ge1$, then $a\ge b\ge1$ and
$2r+1\ge m+a+b\ge r+3+t$, so $t\le r-2$.
On the feasible range $t\le v$, the coefficients of $a,b$ in
$F_2,F_3$ are nonnegative. Thus the case $b=0$ is exact at
$(a,b)=(r-t,0)$, while every case $b\ge1$ has $F_2,F_3$ at least
as large as their values at $(a,b)=(1,1)$ with the same $t$.
Since $F_0$ is independent of $a,b$, the same comparison applies to
every positive linear combination of the three ratios.
\end{proof}

The orbit ratios now give a tiling obstruction: assign positive mass
to the three outer orbits and the opposite total mass to $\mathcal O_-$.

\begin{lemma}\label{foundation:lem:v5:four-orbit-duality}
	Let $w_0,w_2,w_3>0$ and set
	$T(t,a,b)=\sum_{j\in\{0,2,3\}}w_j(F_j-1)$.
	If
	\[
	\begin{aligned}
		T(t,r-t,0)&>0 &&(0\le t\le\min(r,v)),\\
		T(t,1,1)&>0 &&(0\le t\le\min(r-2,v))
	\end{aligned}
	\]
	for all integers $t$ in the respective ranges, then
	$B_n(r)$ admits no translational tiling of $\Z^n$.
\end{lemma}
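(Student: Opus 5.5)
Argue by contradiction: assume a tiling with centre indicator $f$ and mean $\M$ from Lemma~\ref{foundation:lem:v5-local-mean}. The idea is a signed weight $g$ on $\Z^n$ supported on $\mathcal O_-\cup\mathcal O_0\cup\mathcal O_2\cup\mathcal O_3$, with total mass zero, chosen so that every translate $\mathbf x+B_n(r)$ with $\mathbf x\in S_{2r+1}$ gives it strictly positive mass. Concretely, put
\[
g=\sum_{j\in\{0,2,3\}}\frac{w_j}{M_j}\mathbf 1_{\mathcal O_j}
-\frac{w_0+w_2+w_3}{M_-}\mathbf 1_{\mathcal O_-},
\]
so that $\sum_{\mathbf y}g(\mathbf y)=0$. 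For any $\mathbf x$ one has
\[
\sum_{\mathbf y\in\mathbf x+B_n(r)}g(\mathbf y)=\sum_{j\in\{0,2,3\}}w_j\frac{C_j}{M_j}-(w_0+w_2+w_3)\frac{C_-}{M_-}.
\]
When $C_->0$ this equals $\frac{C_-}{M_-}\,T(t,a,b)$, where the $C_j$ are evaluated at $\mathbf x$.

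First I use the tiling to evaluate $\sum_{\mathbf c\in\mathcal C}\sum_{\mathbf y}g(\mathbf y)\mathbf 1[\mathbf y\in\mathbf c+B_n(r)]$, weighted along translates of a fixed centre. Fix a centre $\mathbf c_*$. The translates $\mathbf c_*+\mathbf y$ over the support of $g$ are each covered by exactly one ball $\mathbf c+B_n(r)$. Hence
\[
0=\sum_{\mathbf y}g(\mathbf y)=\sum_{\mathbf c\in\mathcal C}\ \sum_{\mathbf y\in(\mathbf c-\mathbf c_*)+B_n(r)}g(\mathbf y).
\]
The centre $\mathbf c_*$ itself contributes nothing, since every support point has norm at least $r+1$. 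Every other contributing centre has $\mathbf x=\mathbf c-\mathbf c_*$ with $\|\mathbf x\|_1\ge 2r+1$ by separation. Its ball meets the support only if $\|\mathbf x\|_1\le (r+3)+r=2r+3$.

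Everything therefore reduces to showing $\sum_{\mathbf y\in\mathbf x+B_n(r)}g(\mathbf y)\ge0$ for all $\mathbf x$ with $2r+1\le\|\mathbf x\|_1\le 2r+3$, and strictly $>0$ for at least one centre present. For $\|\mathbf x\|_1=2r+1$, the hypotheses together with the monotonicity reductions at the end of Lemma~\ref{foundation:lem:four-orbit-ratios} give $T>0$ whenever $m\ge r+1$. When $m\le r$ we have $C_-=0$, so the sum is a nonnegative combination of $C_0,C_2,C_3$, and hence $\ge0$. For $\|\mathbf x\|_1\in\{2r+2,2r+3\}$ the ball cannot reach $\mathcal O_-$, whose norm is $r+1$, so again only nonnegative terms occur. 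Thus the total is $\ge0$, with equality forcing every centre at distance exactly $2r+1$ from $\mathbf c_*$ to have $m\le r$ and zero $C_0,C_2,C_3$ contributions.

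The main obstacle is to produce a strict contradiction from this, i.e.\ to show that some centre must contribute positively. The vectors of $\mathcal O_-\subseteq S_{r+1}$ are covered, since $\mathbf c_*+\mathcal O_-\subseteq S_{r+1}+\mathbf c_*$ consists of boundary points by Lemma~\ref{foundation:lem:v5-partition}. Each is covered by a centre $\mathbf c$ with $\|\mathbf c-\mathbf c_*\|_1=2r+1$ exactly, because the covering centre lies at distance $r$ from it. Such an $\mathbf x$ has $C_->0$, i.e.\ $m\ge r+1$, so its contribution is $\frac{C_-}{M_-}T>0$. This contradicts the total being $0$. I would check carefully that the averaging by $\M$ is unnecessary here, since a single centre suffices, and that the reduction from general $(a,b)$ to the two extremal cases is exactly the monotonicity recorded in Lemma~\ref{foundation:lem:four-orbit-ratios}.
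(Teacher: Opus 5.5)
Your proposal is correct and follows essentially the same route as the paper. The paper uses the same signed weight $g$ and sums the $g$-mass of the translated balls over all centres after moving one centre to the origin. It gets nonnegativity at norms $\ge 2r+1$ from the two extremal branches of Lemma~\ref{foundation:lem:four-orbit-ratios}, and strict positivity from the covering centre of a point of $\mathcal O_-$ at distance exactly $2r+1$; as you expected, no averaging by $\M$ is needed.
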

\begin{proof}
Put
$
g=\sum_{j=0,2,3}\frac{w_j}{M_j}\mathbf1_{\mathcal O_j}
 -\frac {w_0+w_2+w_3}{M_-}\mathbf1_{\mathcal O_-}.
$
Then $g=0$ on $B_n(r)$ and $\sum g=0$. Write
$(g*\mathbf1_{B_n(r)})(\mathbf x)=\sum_{\mathbf y\in B_n(r)}g(\mathbf x-\mathbf y)$.
At norm $2r+1$,
$(g*\mathbf1_{B_n(r)})(\mathbf x)
=\frac{C_-}{M_-}T(t,a,b)>0\quad\text{if }C_->0;$
if $C_-=0$, the convolution is nonnegative. For
$\|\mathbf x\|_1>2r+1$ and $\mathbf y\in\mathcal O_-$,
$\|\mathbf x-\mathbf y\|_1\ge\|\mathbf x\|_1-(r+1)>r$.
Thus the negative orbit contributes nothing at larger norms,
and the convolution remains nonnegative.
Suppose an arbitrary subset ${\mathcal C}\subseteq\mathbb Z^n$ gives an exact
translational tiling.  Translate ${\mathcal C}$ so that $0\in {\mathcal C}$.
Distinct centres are separated by at least $2r+1$, since
$B_n(r)-B_n(r)=B_n(2r)$.  In particular every nonzero $\mathbf{c}\in {\mathcal C}$
has $\|\mathbf{c}\|_1\ge2r+1$.
Choose $\mathbf y\in\mathcal O_-$.  Its unique covering centre $\mathbf{c}$
is nonzero because $\|\mathbf y\|_1=r+1$.  It satisfies $2r+1\le\|\mathbf{c}\|_1\le\|\mathbf y\|_1+r=2r+1$, and its negative-orbit intersection count is positive.  Hence the
convolution at this centre is strictly positive.  All other
centre summands are nonnegative, including the zero summand at the
origin.  On the other hand, exact coverage and symmetry of the ball give
\[
0<\sum_{\mathbf{c}\in {\mathcal C}}(g*\mathbf1_{B_n(r)})(\mathbf{c})
=\sum_{\mathbf y}g(\mathbf y)
  \sum_{\mathbf{c}\in {\mathcal C}}\mathbf1_{B_n(r)}(\mathbf{c}-\mathbf y)
=\sum_{\mathbf y}g(\mathbf y)=0,
\]
a contradiction.  The convolutions have finite support, so all displayed
sums are finite even when ${\mathcal C}$ is nonperiodic.
\end{proof}

The same weights below handle the entire four-orbit range; no change of
weights at $n=2r$ is needed.

\begin{lemma}\label{foundation:v5:triangular}
	For $r\ge4$ and $3\le n\le r(r+1)/2+2$,
	$B_n(r)$ admits no translational tiling of $\Z^n$.
\end{lemma}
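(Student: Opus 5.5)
The plan is to split the range $3\le n\le r(r+1)/2+2$ into three pieces. Two of them are handled by the known exclusions, and the bulk is handled by the four-orbit duality of Lemma~\ref{foundation:lem:v5:four-orbit-duality} with one fixed choice of weights.

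\emph{Step 1: the small dimensions.} For $3\le n\le5$, Lemma~\ref{foundation:v5:small-dim} applies directly, since $r\ge4\ge3$.

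\emph{Step 2: the intermediate dimensions.} For $6\le n\le r+2$, the hypothesis $n\ge r+3$ of Lemma~\ref{foundation:lem:four-orbit-ratios} fails. Here I would invoke the second known exclusion, \cite[Theorem~13]{HK}, which I expect to cover the regime where $n$ is small relative to $r$. This is a case I have not checked: if that theorem does not reach $n=r+2$, the gap would have to be closed by a separate small-$v$ orbit argument.

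\emph{Step 3: the bulk range $r+3\le n\le r(r+1)/2+2$.} Here $v=n-r-1$ satisfies $2\le v\le r(r-1)/2+1$. Following the remark preceding the statement, I would fix weights $w_0,w_2,w_3>0$ depending only on $r$ (say $w_0=1$, $w_2=w_3=\lambda$ for a small rational $\lambda$, or $w_3$ smaller still). I would then verify the two families of inequalities in Lemma~\ref{foundation:lem:v5:four-orbit-duality} using the explicit ratios of Lemma~\ref{foundation:lem:four-orbit-ratios}. The heuristic for which weight does the work is as follows.
\begin{itemize}
\item At $t=0$ on the exact family $(a,b)=(r-t,0)$, one has $F_0=0$, $F_2=r$ and $F_3=r/(r+1)$. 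So $T=-w_0+w_2(r-1)-w_3/(r+1)$, and positivity here is driven by $w_2$.
\item At the far endpoint $t=\min(r,v)$, one has $a=0$, so $F_3=0$ and $F_2=t/(2v)$ is small. Positivity then rests on
\[
F_0=\frac{t\{(r+3)(2v-t-1)+t-1\}}{4v(v-1)},
\]
which at $t=r$ is roughly $r(r+3)/(2v)$. The condition $F_0>1$ is exactly where the triangular bound $v\lesssim r(r+1)/2$ comes from.
\end{itemize}
So $w_0$ must dominate at large $t$ and $w_2$ at small $t$.

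Concretely, I would multiply $T$ by the positive denominator $4v(v-1)m(t+1)$ in each family. This turns positivity into that of a polynomial $P(t,v)$ of low degree. I would then argue as follows.
\begin{enumerate}
\item Show that $P$ is concave (or at least unimodal) in $t$ on the feasible interval, so that it suffices to check the endpoints $t=0$ and $t=\min(r,v)$, respectively $\min(r-2,v)$.
\item Check those endpoint expressions as polynomials in $v$ over $2\le v\le r(r-1)/2+1$. Using monotonicity in $v$, this reduces to the extreme values $v=2$ and $v=r(r-1)/2+1$.
\item Treat the family $(a,b)=(1,1)$ in the same way. It is easier: the $b$-term of $F_3$ carries the large factor $r(2v-t+1)/(m(t+1))$, which should make $T$ comfortably positive once $w_3$ is not too small.
\end{enumerate}
Finally, Lemma~\ref{foundation:lem:v5:four-orbit-duality} gives nonexistence.

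\emph{Main obstacle.} The hardest step is the corner $t=r$, $v$ near $r(r-1)/2+1$, on the family $(a,b)=(r-t,0)$. There the margin $F_0-1$ is of order $1/r^2$, while $F_2-1$ and $F_3-1$ are both negative. The weights must therefore satisfy $w_0(F_0-1)>w_2(1-F_2)+w_3(1-F_3)$ with almost no slack, yet still leave $w_2$ large enough for the $t=0$ end. My first attempt would be $w_3$ of order $w_0/r^3$, and $w_2$ just above $w_0/(r-1)$ plus a correction. I would compute $T$ exactly at $(t,v)=(r,r(r-1)/2+1)$ to pin down the admissible window. If the window turns out to be empty, I would choose weights that depend on $v$, even though the preceding remark suggests this is unnecessary.
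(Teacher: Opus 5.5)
Your three-way split matches the paper's, and Step~2 does go through: Horak and Kim's Theorem~13 needs only $n\ge6$ and $r\ge\frac{\sqrt2}{2}n-\frac{3\sqrt2}{4}-\frac12$, and here $r\ge n-2$, where $n-2$ exceeds that bound by $(2-\sqrt2)(2n-3)/4>0$. The gap is in Step~3. That step is the whole content of the lemma, and both structural ingredients you propose for it are false.

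\textbf{No weights depending on $r$ alone can work.} On the branch $(a,b)=(1,1)$ at $t=0$ one has $F_0=0$, $F_2=1$ and $F_3-1=2rv/(r+1)$, so
\[
 T(0,1,1)=-w_0+\frac{2rv}{r+1}\,w_3>0\iff w_3>\frac{(r+1)\,w_0}{2rv}.
\]
A choice $w_3\asymp w_0/r^3$ violates this for large $r$ even at $v=V:=(r^2-r+2)/2$. At $v=2$ the condition demands $w_3>(r+1)w_0/(4r)$. On the other branch, $t=0$ forces $(r-1)w_2>w_0+w_3/(r+1)$, while at $(t,v)=(r,V)$
\[
 T(r,0,0)=\frac{(3r-4)\,w_0-(r^2-2r+2)\,w_2}{r^2-r+2}-w_3 .
\]
These three requirements are incompatible for every $r\ge5$. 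So $w_3$ must scale like $1/v$, which is why the paper takes $(w_0,w_2,w_3)=(1,2/r,5(r-1)/(14v))$. The remark before the lemma means one formula for all $v$, not weights independent of $v$. Also, the margin $F_0-1$ at $(r,V)$ is $(3r-4)/(r^2-r+2)$, which is of order $1/r$, not $1/r^2$.

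\textbf{The reduction to the endpoints in $t$ fails.} On the first branch, clear the denominator $4v(v-1)m$. The three parts of the resulting polynomial behave as follows.
\begin{itemize}
\item The $w_3$-part is linear in $t$.
\item The $w_2$-part has $t^2$-coefficient $2(r+2)(v-1)w_2>0$.
\item The $w_0$-part is $w_0(mtE-4v(v-1)m)$ with $E=(r+3)(2v-t-1)+t-1$. At $v=V$ its second derivative is $w_0\{2(r^3+r^2-5r)-6(r+2)t\}$, which is positive on $[0,r]$ for $r\ge5$.
\end{itemize}
So at $v=V$ the polynomial is convex in $t$ for every choice of positive weights, and no concavity argument is available.

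With the paper's weights the minimum is genuinely interior. At $r=4$, $v=7$, the quantity $28rv(v-1)m\,T$ takes the values $11040,3384,96,168,2592$ at $t=0,\dots,4$; multiplying by the extra factor $t+1$ does not change this. For every $r\ge4$ the minimum at $v=V$ sits at $t=r-2$, not at the corner $t=r$ you single out. Checking only $t=0$ and $t=\min(r,v)$ therefore proves nothing.

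The workable structure is the other way round:
\begin{itemize}
\item For fixed $t$, the cleared polynomial $P_i(t,v)$ is quadratic in $v$ with $P_i(t,0)<0$. Hence $P_i(t,v)/v$ is strictly concave in $v$, and only $v=t$ and $v=V$ need checking.
\item At $v=V$, a discrete convexity argument in $x=r-t$ locates and bounds the interior minimum at $t=r-2$. It uses a positive second difference and a first difference that changes sign between $x=1$ and $x=2$.
\item The $(1,1)$ branch at $v=V$ is handled by comparison with $(t+1)$ times the first branch for $r\ge6$, and by direct evaluation for $r=4,5$.
\end{itemize}
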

\begin{proof}
	Lemma~\ref{foundation:v5:small-dim} excludes $3\le n\le5$.
	For $6\le n\le r+2$, one has $r\ge n-2$ and
	\[
	r-\left(\frac{\sqrt2}{2}n-\frac{3\sqrt2}{4}-\frac12\right)
	\ge\frac{(2-\sqrt2)(2n-3)}4>0.
	\]
	Horak and Kim~\cite[Theorem~13]{HK} exclude tilings whenever
	$n\ge6$ and $r\ge\sqrt2 n/2-3\sqrt2/4-1/2$, so they exclude
	this range. For the remaining range put
	\[
	v=n-r-1,\qquad 2\le v\le V:=\frac{r^2-r+2}{2},\qquad
	(w_0,w_2,w_3)=\left(1,\frac2r,\frac{5(r-1)}{14v}\right).
	\]
	These weights are positive. We check the two branches
	$(a,b)=(r-t,0)$ and $(a,b)=(1,1)$, with integer
	$0\le t\le\min(r,v)$ and $0\le t\le\min(r-2,v)$, respectively.
	Write $m=r+t+1$ and $E=(2v-t-1)(r+3)+t-1$.
	Substituting the orbit ratios gives
	\begin{align*}
		P_0(t,v):&=28rv(v-1)m\,T(t,r-t,0)\\
		&=7rmtE+28(v-1)\{mt+(r-t)(r+1)(2v-t)\}-28(r+2)mv(v-1)\\
		&\quad-10r(r-1)(2t+1)(v-1),\\
		P_1(t,v):&=28rv(v-1)m(t+1)\,T(t,1,1)\\
		&=7rmt(t+1)E+28(v-1)(t+1)\{mt+(r+1)(2v-t)\}-28(r+2)mv(v-1)(t+1)\\
		&\quad+10r(r-1)(v-1)\{(1-m)(t+1)+r(2v-t+1)\}.
	\end{align*}
	At $t=0$,
	\begin{align*}
		P_0(0,v)&=2(v-1)\{14(r-2)(r+1)v-5r(r-1)\},\\
		P_1(0,v)&=4rv(v-1)(5r^2-12r-7).
	\end{align*}
	Both are positive: $(r-2)v\ge r$ gives
	$14(r-2)(r+1)v-5r(r-1)\ge r(9r+19)>0$, while
	$5r^2-12r-7\ge8r-7>0$.
	
	Now let $t\ge1$ and put $D=(r+2)t+r+4\ge2r+6$.
	At $v=0$, the defining formulas and
	$0\le r-t\le r$, $2t+1\le3t$, $m\ge r$ give
	\begin{align*}
		\frac{P_0(t,0)}{rtm}
		&\le-7D+\frac{28(r+1)+30(r-1)}m\le-14r+16<0,\\
		\frac{P_1(t,0)}{rtm}
		&\le-7(t+1)D+10(r-1)(1+r/m)\le-8r-104<0.
	\end{align*}
	For the second line we used
	$(m-1)(t+1)+r(t-1)=t(m+r)$.
	
	At $v=t=1$, both polynomials vanish. For $t\ge2$,
	$E=(t-1)(r+4)$ gives
	\begin{align*}
		&\frac{P_0(t,t)}{t(t-1)}
		=7m(r^2-4)+28(r-t)(r+1)
		-10r(r-1)(2+1/t),\\
		&\frac{P_1(t,t)}{t(t-1)(t+1)}=7m(r^2-4)-10r(r-1)+28(r+1).
	\end{align*}
	Since $r-t\ge0$ and $2+1/t\le5/2$, each expression is at least
	\[
	7m(r^2-4)-25r(r-1)
	\ge r(r-1)(7m-25)
	\ge r(r-1)(7r-4)>0,
	\]
	where $r^2-4\ge r(r-1)$ and $m\ge r+3$.
	
	For fixed $t$, both $P_i(t,v)$ are quadratic in $v$. Hence
	\[
	\frac{d^2}{dv^2}\left(\frac{P_i(t,v)}v\right)
	=\frac{2P_i(t,0)}{v^3}<0\qquad(v>0).
	\]
	Thus $P_i(t,v)/v$ is strictly concave. Since
	$V-r=(r-1)(r-2)/2>0$ and $P_i(t,t)\ge0$,
	proving $P_i(t,V)>0$ will imply positivity throughout
	$t<v\le V$. The feasible endpoint $v=t\ge2$ was already treated;
	when $t=1$, feasibility requires $v\ge2>t$.
	
	For the upper endpoint in the first branch put
	$Q_r(x)=P_0(r-x,V)/r$ and $u=r-4\ge0$.
	With $\Delta Q_r(x)=Q_r(x+1)-Q_r(x)$, direct expansion gives
	\[
	\Delta^2Q_r(x)=14(r^3-6r+2)+42(r+2)x>0\qquad(x\ge0),
	\]
	since $r^3-6r+2=r(r^2-6)+2>0$.
	The two differences adjacent to $x=2$ are
	\[
	\Delta Q_r(1)=-11u^3-75u^2-118u-18<0,\qquad
	\Delta Q_r(2)=3u^3+93u^2+512u+822>0.
	\]
	Thus $Q_r$ decreases on the integer points $0,1,2$ and increases
	on the integer points $x\ge2$. Consequently
	\[
	Q_r(x)\ge Q_r(2)=4u^4+36u^3+99u^2+103u+24>0
	\qquad(x\in\mathbb Z_{\ge0}).
	\]
	This proves $P_0(t,V)>0$ throughout the first branch.
	
	For the second branch, assume first $r\ge6$ and $1\le t\le r-2$.
	Put $z=(t+1)(r-t-1)\le r^2/4$. The defining formulas give
	\[
	\begin{aligned}
		\frac{P_1(t,V)-(t+1)P_0(t,V)}{V-1}
		&=(2V-t)\{10r^2(r-1)-28(r+1)z\}
		+10r(r-1)(r-z)\\
		&\ge r^2\left\{(r^2-2r+4)(3r-17)
		-\frac52(r-1)(r-4)\right\}>0.
	\end{aligned}
	\]
	Here $2V-t\ge r^2-2r+4$ and
	$10r^2(r-1)-28(r+1)z\ge r^2(3r-17)>0$.
	The expression in braces equals $3$ when $r=6$.
	For $r\ge7$, it is positive because $3r-17\ge4>5/2$ and
	$r^2-2r+4=(r-1)(r-4)+3r$.
	Thus $P_1(t,V)>(t+1)P_0(t,V)>0$.
	The two remaining radii are checked directly:
	\[
	\begin{array}{c|c|c}
		r&V&\bigl(P_1(t,V)/(r(t+1))\bigr)_{t=1}^{r-2}\\\hline
		4&7&(66,444)\\
		5&11&(4704,1456,3482)
	\end{array}
	\]
	The case $t=0$ was already proved.
	The endpoint argument now gives $P_i(t,v)>0$ throughout both
	feasible ranges. The normalizing factors are positive, so
	Lemma~\ref{foundation:lem:v5:four-orbit-duality} excludes the tiling.
\end{proof}

\subsection{Weighted shadows and remainder estimates}
\label{foundation:sec:v5-nonresidues}
Assume \eqref{eq:tiling}, with $r,n\ge3$, and retain the boundary
statistics, shell sums, and normalized mean defined above, with
$2n=(r+1)q+c$ and $0\le c\le r$.

Fix integers $\beta,j\ge1$ and $0\le k\le\beta$.
For $\alpha\ge j$, let $Z_\alpha$ be the normalized mean of
ordered local incidences between distinct centres, with root and
other vectors of norms $\alpha,\beta$, disjoint signed supports,
and a marked root direction of magnitude at least $j$.
When $k>0$, also mark an other-vector direction of magnitude
at least $k$; when $k=0$, omit the second mark.
Write
\[
m_\alpha=\alpha+\beta-j-k,\qquad A_\alpha=\alpha-j.
\]
The following lemma bounds these counts simultaneously over
several root norms.

\begin{lemma}\label{foundation:lem:v5-threshold-capacity}
	Let $\mathcal I\subseteq\{j,j+1,\ldots\}$ be finite and nonempty,
	and put $\ell=\max\{0,\max_{\alpha\in\mathcal I}m_\alpha-r\}$.
	If $\ell\le\min\{\min\mathcal I-j,\beta-k\}$, then
	\[
	\sum_{\alpha\in\mathcal I}
	\frac{\binom{m_\alpha}{\ell}}{\binom{m_\alpha}{A_\alpha}}Z_\alpha
	\le
	\begin{cases}
		T_\ell,&k=0,\\
		\Phi_\ell,&k>0.
	\end{cases}
	\]
\end{lemma}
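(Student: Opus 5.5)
The plan is to prove this with the transport identity of Lemma~\ref{foundation:lem:v5-transport}. Each incidence counted by $Z_\alpha$ will spread a total weight of at most $\binom{m_\alpha}{\ell}/\binom{m_\alpha}{A_\alpha}$ over a family of \emph{receiver slots}. Each slot will then be shown to receive total weight at most one, summed over all $\alpha\in\mathcal I$. Finally, the number of slots per point will be counted by $T_\ell$ or $\Phi_\ell$ from Lemma~\ref{foundation:prop:v5-direction-sums}.

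\emph{Setting up the free vector.} Fix an incidence with base point $\mathbf z$, root vector $\mathbf x$ of norm $\alpha$, other vector $\mathbf y$ of norm $\beta$, and marked directions $\mathbf a$ (root) and, when $k>0$, $\mathbf a'$ (other). Strip $j$ units from $\mathbf x$ in direction $\mathbf a$ and $k$ units from $\mathbf y$ in direction $\mathbf a'$. Since the signed supports of $\mathbf x$ and $\mathbf y$ are disjoint, the two stripped vectors combine into a single vector $\mathbf w$. Its norm is $L=m_\alpha$, and its root part has norm $A_\alpha$.

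\emph{Sending weight.} In the notation of Lemma~\ref{foundation:lem:v5-rank-comparison}, the root part is one element of $\mathcal K_{A_\alpha}(\mathbf w)$. For each $\mathbf u\in\mathcal K_\ell(\mathbf w)$, the incidence sends weight $1/a_{A_\alpha}$ to the slot consisting of the point $\mathbf z-\mathbf u$ (or a fixed shift of it along the marks), the displacement $\mathbf v\in S_\ell$ determined by $\mathbf u$, and the marked direction(s). The total sent is $a_\ell/a_{A_\alpha}$. The hypothesis $\ell\le\min\{\min\mathcal I-j,\beta-k\}$ says exactly that $\ell\le\min(A_\alpha,m_\alpha-A_\alpha)$. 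So Lemma~\ref{foundation:lem:v5-rank-comparison} applies with $k\mapsto\ell$ and $j\mapsto A_\alpha$, read in its reflected form where needed. It bounds $a_\ell/a_{A_\alpha}$ by $\binom{m_\alpha}{\ell}/\binom{m_\alpha}{A_\alpha}$. Averaging via \eqref{foundation:eq:v5-transport} then turns the left side of the lemma into an upper bound for the mean received weight.

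\emph{Bounding what a slot receives.} The key claim is an injectivity statement. Given a receiver point, its displacement $\mathbf v\in S_\ell$, and the mark data, there is at most one incidence (over all $\alpha\in\mathcal I$ simultaneously) that sends weight to that slot. The mark data is one signed direction outside $\mathcal S(\mathbf v)$ when $k=0$, or an ordered pair of directions when $k>0$.
\begin{itemize}
\item The argument uses $m_\alpha-\ell\le r$, which is the definition of $\ell$. From the receiver, the remaining part of the root vector together with the $j$ marked units, or the remaining part of the other vector, has norm within the covering radius.
\item By the unique-covering property and the separation $2r+1$ (Lemma~\ref{foundation:lem:v5-local-mean}, in the style of the partition Lemma~\ref{foundation:lem:v5-partition}), the corresponding centre is therefore determined. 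This fixes the root centre, the other centre, and $\alpha$.
\item Consequently the mean received weight is at most $\M$ of the number of slots at a point. That number is $\sum_{\mathbf v\in S_\ell}(2n-s(\mathbf v))=T_\ell$ for a single mark. For two marks it is $\Phi_\ell$, where the correction $-2n+2s(\mathbf v)$ accounts for ordered pairs with coincident or support-conflicting directions.
\item The averaging normalisation is taken relative to the translation-invariant counting, as in Lemma~\ref{foundation:prop:v5-means}.
\end{itemize}

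\emph{Main obstacle.} The hard step is the injectivity and slot count. One must choose the receiver point and the encoding of marks so that the stripped units and the $\ell$-shadow really reconstruct both centres. The disjoint-support condition and the magnitude thresholds $j,k$ must make the admissible mark pairs number exactly the $\Phi_\ell$ summand, not something larger. The first thing to try is to place the receiver at $\mathbf z-\mathbf u$ shifted back by the marked units. The separation argument is then applied once to each centre, and the combinatorial choice of $\mathbf u$ is handled entirely by Lemma~\ref{foundation:lem:v5-rank-comparison}.
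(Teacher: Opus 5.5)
There is a genuine gap, and it sits at the step you flagged as the main obstacle: your receiver slots are indexed by lattice points, not by centres. With the receiver $\mathbf z-\mathbf u$ (or any fixed shift of it), every point of $\Z^n$ carries $T_\ell$ (resp.\ $\Phi_\ell$) slots. The inequality ``mean received weight $\le\M(\text{number of slots at a point})$'' then only yields $T_\ell\,\M(1)=V_rT_\ell$. The constant $T_\ell$ in the lemma is a count \emph{per centre}, and it appears only because $\M(\mathbf 1_{\mathcal C})=1$. In the boundary-mean lemma you cite, the count is anchored at a centre through the factor $f(\mathbf z-\mathbf u)$, and your slots have no such factor. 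This is not a matter of normalisation conventions. By your own injectivity claim, a slot determines $\mathbf z$ and receives the share $1/a_{A_\alpha}$ of at most one incidence. So the per-slot bound of one is far from tight, and the slot count must be made per centre rather than per point.

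The missing idea is to fix the root centre $\mathbf c$ and the marks, and let both $\mathbf c'$ and $\mathbf z$ vary.
\begin{itemize}
\item Then $\mathbf w=\mathbf c'-\mathbf c-j\mathbf a+k\mathbf e$ depends only on $\mathbf c'$.
\item The incidences with this $\mathbf c'$ inject into $\mathcal K_{A_\alpha}(\mathbf w)$ via $\mathbf z\mapsto\mathbf z-\mathbf c-j\mathbf a$, and they all share the single shadow $\mathcal K_\ell(\mathbf w)$.
\item Shadows of distinct neighbours $\mathbf c',\mathbf c''$, of any types, are disjoint. Indeed $\mathbf w-\mathbf w'=\mathbf c'-\mathbf c''$, so a common $\mathbf v$ would give $\|\mathbf c'-\mathbf c''\|_1\le(m_\alpha-\ell)+(m_{\alpha'}-\ell)\le2r$. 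Equivalently, the point $\mathbf c+j\mathbf a-k\mathbf e+\mathbf v$ lies within $r$ of $\mathbf c'$; this is the receiver your first bullet is reaching for.
\item Hence the slot $(\mathbf c,\mathbf v,\text{marks})$ receives weight from one neighbour and at most $a_{A_\alpha}$ incidences, for a total of at most one. ``At most one incidence per slot'' is not the right claim here.
\end{itemize}

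The rank comparison gives $\binom{m_\alpha}{\ell}|\mathcal K_{A_\alpha}(\mathbf w)|\le\binom{m_\alpha}{A_\alpha}|\mathcal K_\ell(\mathbf w)|$. It must be used as a lower bound on $a_\ell/a_{A_\alpha}$, so that the left side of the lemma is at most the mean \emph{sent} weight. Your text states both this step and the transport step in the reverse direction.

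Counting slots per root centre and applying mass transport with $\M(\mathbf 1_{\mathcal C})=1$ then proves the lemma, provided the admissible marks are described correctly:
\begin{itemize}
\item $\mathbf a\notin-\mathcal S(\mathbf v)$, not $\mathbf a\notin\mathcal S(\mathbf v)$, since $\mathbf w$ can be positive along $\mathbf a$;
\item $\mathbf e\notin\mathcal S(\mathbf v)$;
\item $\mathbf a\ne\mathbf e$.
\end{itemize}
With this sign pattern, the excluded coincident pairs are the $2n-2s$ directions off $\mathcal S(\mathbf v)\cup-\mathcal S(\mathbf v)$, giving exactly $(2n-s)^2-2n+2s$ pairs. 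Your version would give $(2n-s)^2-(2n-s)$ instead.
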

\begin{proof}
Fix a root centre $\mathbf c$ and a marked root direction $\mathbf a$.
If $k>0$, fix the second marked direction $\mathbf e$; if $k=0$,
put $k\mathbf e=\boldsymbol0$ without choosing $\mathbf e$.
For an incident other centre $\mathbf c'$ and point $\mathbf z$, write
$\mathbf x=\mathbf z-\mathbf c$, $\mathbf y=\mathbf z-\mathbf c'$, and set
\[
 \mathbf w=\mathbf c'-\mathbf c-j\mathbf a+k\mathbf e
          =(\mathbf x-j\mathbf a)-(\mathbf y-k\mathbf e).
\]
The residual vectors have opposite signs in every shared nonzero
coordinate. Hence $\|\mathbf w\|_1=m_\alpha$, and
$\mathbf x-j\mathbf a$ is a sign-compatible subvector of rank
$A_\alpha$. For fixed $\mathbf c'$, the map
$\mathbf z\mapsto\mathbf x-j\mathbf a$ is injective.
The same sign calculation gives
$\|\mathbf c'-\mathbf c\|_1=\alpha+\beta$, so a neighbour has
only one type $\alpha$.
Lemma~\ref{foundation:lem:v5-rank-comparison} gives
\[
 \frac{\binom{m_\alpha}\ell}{\binom{m_\alpha}{A_\alpha}}
 |\mathcal K_{A_\alpha}(\mathbf w)|\le|\mathcal K_\ell(\mathbf w)|.
\]
The rank-$\ell$ sets are disjoint across distinct neighbours, including
different types. A common subvector $\mathbf v$ would give
\[
 \|\mathbf c'-\mathbf c''\|_1
 =\|\mathbf w-\mathbf w'\|_1
 \le(m_\alpha-\ell)+(m_{\alpha'}-\ell)\le2r.
\]
Here the marked translation is fixed, and the definition of $\ell$
gives $m_\alpha-\ell\le r$ for every type, also when $\ell=0$.
The displayed inequality contradicts centre separation.

Removing marked units never reverses coordinate signs.
Thus every $\mathbf v\in\mathcal K_\ell(\mathbf w)$ satisfies
$\mathbf a\notin-\mathcal S(\mathbf v)$; if $k>0$, also
$\mathbf e\notin\mathcal S(\mathbf v)$ and $\mathbf a\ne\mathbf e$.
For $s=s(\mathbf v)$, there are $2n-s$ possible root marks.
For two marks, there are initially $(2n-s)^2$ pairs.
The equal allowed marks lie outside
$\mathcal S(\mathbf v)\cup-\mathcal S(\mathbf v)$ and number
$2n-2s$, leaving $(2n-s)^2-2n+2s$ pairs.
Opposite marks on one axis remain allowed.

Apply the rank comparison to each neighbour and use the
disjointness proved above. Summing over neighbours and marks,
and counting the admissible marks for each $\mathbf v\in S_\ell$,
bounds the outgoing weighted count at each root centre by
$T_\ell$ or $\Phi_\ell$.
Finite-range mass transport
(Lemma~\ref{foundation:lem:v5-transport}), with
$\M(\mathbf1_{\mathcal C})=1$, gives the stated mean bound.
\end{proof}

For $\alpha,\beta\in\{r,r+1\}$, $1\le j\le\alpha$, and
$1\le k\le\beta$, set $\ell=\max\{0,m_\alpha-r\}$.
Both residual ranks $A_\alpha$ and $m_\alpha-A_\alpha$
are at most $r$, so
$\ell\le\min\{A_\alpha,m_\alpha-A_\alpha\}$.
Taking $\mathcal I=\{\alpha\}$ therefore gives
$Z_\alpha\le
\binom{m_\alpha}{A_\alpha}/\binom{m_\alpha}{\ell}
\Phi_\ell.$

We now return to the total boundary defect $Z$ and put
\[
\Xi_r=\sum_{A,B=0}^{r-2}
(1+2\mathbf1_{A\le r-3})
\frac{\binom{A+B}{A}}
{\binom{A+B}{\max(0,A+B-r)}}
\Phi_{\max(0,A+B-r)}.
\]
\begin{lemma}\label{boundary:prop:total-defect-budget}
For every arbitrary-centre tiling with $r\ge4$,
\begin{equation}\label{boundary:eq:total-collapse}
\begin{aligned}
 &(r-c)(c-1)\sigma_r+(r-2c)(r-1)\sigma_{r-1}\\
 &\quad\le\Xi_r+\left(3r-\frac2{r-1}\right)\sigma_{r-2}
 -\frac{r(r^3-6r^2+5r+4)}{(r-1)(r-2)}\sigma_{r-3}\\
 &\qquad\quad+4r\sum_{l=1}^{r-4}\frac{\sigma_l}{l+1}
                  +4(2n)^2-4(r+1)(2n).
\end{aligned}
\end{equation}
An empty sum is zero.
\end{lemma}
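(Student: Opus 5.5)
The inequality compares a lower bound for a second moment of the total boundary deficit $Z$ with an upper bound for the same moment, obtained by counting ordered pairs of contacts. The identity $(r+1)|\mathcal F_{\mathbf z}|=(2n-r)I_\partial+Z$ gives $Z\equiv c-r \pmod{r+1}$ on the boundary, since $2n-r=(r+1)(q-1)+(c+1)$. Hence $Z$ is confined to the integer classes $Z\ge0$, $Z\equiv c-r$. A pointwise integer inequality of the form $(Z-(c-r+r+1))(Z-(c-r))\ge0$, i.e.\ $(Z-c-1)(Z-c+r)\ge0$ or the analogous product adapted to the lattice of values, holds at every boundary point. Averaging it with $\M$ and using $\M(I_\partial)=\sigma_r$ and $\M(Z)=r\sigma_r+(r-1)\sigma_{r-1}$ from Lemma~\ref{foundation:prop:v5-means} yields a lower bound on $\M(Z^2)$, or on $\M(Z(Z-1))$. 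After subtracting the exactly computable linear terms, this lower bound should produce the left side $(r-c)(c-1)\sigma_r+(r-2c)(r-1)\sigma_{r-1}$.

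For the upper bound I expand $Z=t+\sum_{\mathbf x\in\mathcal F_{\mathbf z}}(r+1-s(\mathbf x))$ and write $Z^2$ as diagonal terms plus cross terms between distinct members of $\{\mathbf b\}\cup\mathcal F_{\mathbf z}$.

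\emph{Diagonal terms.} These are $t^2$ and the squared deficits of the exterior vectors. They are computed exactly by Lemma~\ref{foundation:prop:v5-means} together with the deficit identities of Lemma~\ref{foundation:lem:v5-shell}. These exact contributions give the $\sigma_{r-2}$ term with coefficient $3r-2/(r-1)$ and the negative $\sigma_{r-3}$ term.

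\emph{Cross terms.} A cross term pairs two distinct centres whose vectors from $\mathbf z$ have disjoint signed supports (Lemma~\ref{foundation:lem:v5-partition}). I write each deficit $k-s(\mathbf x)$ as the number of ways to mark a direction of magnitude $\ge2$ and remove a unit, i.e.\ $\sum_i(|x_i|-1)_+$. A product of two deficits then counts incidences with one marked root direction of magnitude $\ge j$ and one marked other-vector direction of magnitude $\ge k$. These are exactly the quantities $Z_\alpha$ with $\alpha,\beta\in\{r,r+1\}$, and Lemma~\ref{foundation:lem:v5-threshold-capacity} bounds each one by $\binom{m}{A}\Phi_\ell/\binom m\ell$. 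Re-indexing by the residual ranks $A=\alpha-j$ and $B=\beta-k$, which range over $0,\dots,r-2$, and summing gives $\Xi_r$. The weight $1+2\mathbf 1_{A\le r-3}$ records that residual rank $A\le r-3$ arises from both exterior and covering roots, with ordered multiplicity, while $A=r-2$ arises only once. The pairs with $k=0$ give $T_\ell$-type sums. Bounding them via $T_\ell=(\ell+1)\sigma_{\ell+1}-\ell\sigma_\ell$ and reorganizing produces $4r\sum_{l\le r-4}\sigma_l/(l+1)$. The remaining terms with $\ell=0$, which count pairs of directions, produce $4(2n)^2-4(r+1)(2n)$.

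The main obstacle is bookkeeping the cross terms. Each ordered pair type must be matched with exactly one threshold-capacity application, with the correct $\ell$ and multiplicity. The hypothesis $\ell\le\min\{\min\mathcal I-j,\beta-k\}$ must be checked on every type, and several root norms may need to be grouped into a single $\mathcal I$ so that the disjointness in Lemma~\ref{foundation:lem:v5-threshold-capacity} is exploited rather than counted twice. I would first carry out the pairing of exterior vectors with each other, since it produces the $\Xi_r$ sum. I would then treat the covering-vector pairs with $\mathbf b$, where $t\le r-1$ is what makes the weight $1+2\mathbf 1_{A\le r-3}$ appear.
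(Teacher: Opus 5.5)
Your overall plan matches the paper's: average a quadratic integrality inequality to bound $\M(Z^2)$ from below, and bound it from above by an exact diagonal plus cross terms controlled by Lemma~\ref{foundation:lem:v5-threshold-capacity}. However, the lower bound uses the wrong residue, and most of the right-hand terms are attributed to the wrong mechanisms.

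\textbf{The residue.} From $(r+1)|\mathcal F_{\mathbf z}|=2n-r+Z$ one gets $Z=r-c+(r+1)(|\mathcal F_{\mathbf z}|-q)$ with $|\mathcal F_{\mathbf z}|-q\ge0$. Hence $Z\equiv r-c$, not $c-r$, and the inequality to average is $(Z-r+c)(Z-2r-1+c)\ge0$. Your product $(Z-c-1)(Z-c+r)$ is negative at the permitted value $Z=r-c$ whenever $(r-1)/2<c<r$; for $r=4$, $c=3$ it equals $-6$.

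\textbf{The diagonal.} The diagonal terms are exactly $r\sigma_r+(r^2-1)\sigma_{r-1}+(r-1)(r-2)\sigma_{r-2}$. They contain no $\sigma_{r-3}$ and do not produce the coefficient $3r-2/(r-1)$. After comparing with the correct lower bound, the cross-term bound beyond $\Xi_r$ must absorb exactly $r^2\sigma_r+r(r-1)\sigma_{r-1}$. The $\sigma_{r-2}$ and $\sigma_{r-3}$ coefficients of the statement are what survives that cancellation.

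\textbf{The cross terms.} This is the substantive gap. Let $X_{jk}$ and $Y_{jk}$ denote the exterior--exterior and covering--exterior products of the threshold counts $n_j,n_k$ with $j,k\ge2$. Then $\Xi_r$ accounts only for $j,k\ge3$: there $A,B\le r-2$, and the covering kind has $A=r-j\le r-3$. Terms with a threshold equal to $2$ carry the exterior residual rank $r-1$, so they lie outside $\Xi_r$. Also, every cross term in $Z^2$ marks both vectors, so there are no ``$k=0$'' pairs and no $T_\ell$ sums.

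The missing idea is to apply the capacity lemma to these terms with $\mathcal I=\{r,r+1\}$, $\beta=r+1$, $j=2$. This treats covering and exterior roots simultaneously, with common shadow rank $r-k$, and gives $X_{2k}+\frac r{r-1}Y_{2k}\le\frac r{r+1-k}\Phi_{r-k}$ for $2\le k\le r$, and $X_{2,r+1}+Y_{2,r+1}\le\Phi_0$. Combine these with:
\begin{itemize}
\item the singleton bounds $Y_{k2}\le\frac r{r-k}\Phi_{r-k-1}$ and $Y_{r2}\le\Phi_0$;
\item $X_{2k}=X_{k2}$ and $Y_{r+1,2}=0$;
\item the corner split $2Y_{22}=\frac r{r-1}Y_{22}+\frac{r-2}{r-1}Y_{22}$.
\end{itemize}
This grouping is forced. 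Bounding $X_{22}$ and $2Y_{22}$ by singletons alone costs an extra $\frac{r^2}{(r-1)(r-2)}\Phi_{r-3}$. That term contains $r^2\sigma_{r-1}$, which breaks the cancellation and would also make the later estimate $(\Xi_r+E_r)/((r-1)\sigma_{r-1})<2$ impossible. The grouped bound yields
\[
\Xi_r+4\Phi_0+4r\sum_{k=0}^{r-4}\frac{\Phi_k}{k+1}+\frac r{r-1}\Phi_{r-2}+\Bigl(\frac r{r-1}+\frac{2r}{r-2}\Bigr)\Phi_{r-3}.
\]
The sum $4r\sum_l\sigma_l/(l+1)$ and the $-4r(2n)$ then come from the telescoping identity
\[
\sum_{k=0}^{m}\frac{\Phi_k}{k+1}=(m+2)\sigma_{m+2}-m\sigma_{m+1}+\sum_{l=1}^{m}\frac{\sigma_l}{l+1}-2n
\]
at $m=r-4$, not from $T_\ell$ capacities. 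Finally, $4\Phi_0-4r(2n)=4(2n)^2-4(r+1)(2n)$.
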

\begin{proof}
	The signed-support partition gives
	$Z=r-c+(r+1)(|\mathcal F_{\mathbf z}|-q)$ on the boundary.
	Since
	\[
	(r+1)|\mathcal F_{\mathbf z}|
	\ge2n-r=(r+1)(q-1)+c+1>(r+1)(q-1),
	\]
	the difference $|\mathcal F_{\mathbf z}|-q$ is a nonnegative
	integer. Hence $(Z-r+c)(Z-2r-1+c)\ge0$ on the boundary.
	Using $\M I_\partial=\sigma_r$ and the first deficit moment
	$\M Z=r\sigma_r+(r-1)\sigma_{r-1}$, we obtain
	\begin{align*}
		\M(Z^2)
		&\ge(3r+1-2c)\M Z-(r-c)(2r+1-c)\sigma_r\\
		&=\{r(r+1)+(r-c)(c-1)\}\sigma_r
		+(3r+1-2c)(r-1)\sigma_{r-1}.
	\end{align*}
	
	For $j\ge2$, let $n_j(\mathbf v)$ count the coordinates of
	magnitude at least $j$, and define
	\begin{align*}
		X_{jk}
		=\M\!\left(
		\sum_{\substack{\mathbf v,\mathbf w\in\mathcal F_{\mathbf z}\\
				\mathbf v\ne\mathbf w}}
		n_j(\mathbf v)n_k(\mathbf w)\right),\qquad
		Y_{jk}
		=\M\!\left(n_j(\mathbf b)
		\sum_{\mathbf w\in\mathcal F_{\mathbf z}}n_k(\mathbf w)\right).
	\end{align*}
	These means are nonnegative, with $X_{jk}=X_{kj}$ and
	$Y_{r+1,k}=0$. Since
	\[
	t=\sum_{j=2}^{r}n_j(\mathbf b),\qquad
	r+1-s(\mathbf v)=\sum_{j=2}^{r+1}n_j(\mathbf v),
	\]
	the square of $Z$ expands into the two individual second moments
	and the mixed terms $X_{jk}+2Y_{jk}$.
	Lemma~\ref{foundation:lem:v5-shell} gives
	\begin{align*}
		&\M(t^2)
		=(r-1)\sigma_{r-1}+(r-1)(r-2)\sigma_{r-2},\\
		&\M\!\left(\sum_{\mathbf v\in\mathcal F_{\mathbf z}}
		(r+1-s(\mathbf v))^2\right)
		=r\sigma_r+r(r-1)\sigma_{r-1}.
	\end{align*}
	Consequently
	\[
	\M(Z^2)
	=r\sigma_r+(r^2-1)\sigma_{r-1}
	+(r-1)(r-2)\sigma_{r-2}
	+\sum_{j,k=2}^{r+1}(X_{jk}+2Y_{jk}).
	\]
	
	We first bound the terms with $j,k\ge3$.
	Their residual ranks are $A=r+1-j$, $B=r+1-k$ for exterior
	pairs, and $A=r-j$, $B=r+1-k$ for covering--exterior pairs.
	Thus $0\le A,B\le r-2$, with the second kind occurring only
	when $A\le r-3$.
	The singleton bound from
	Lemma~\ref{foundation:lem:v5-threshold-capacity}, with
	multiplicities one and two respectively, gives
	\[
	\sum_{j,k=3}^{r+1}(X_{jk}+2Y_{jk})\le\Xi_r.
	\]
	
	For the remaining terms, apply the same lemma with
	$\mathcal I=\{r,r+1\}$, $\beta=r+1$, and $j=2$.
	For $2\le k\le r$, the common shadow rank is $r-k$.
	It is admissible because
	$r-k\le r-2\le\alpha-2$ and $r-k\le r+1-k$.
	The weights of $X_{2k}$ and $Y_{2k}$ are, respectively,
	\[
	\frac{\binom{2r-k}{r-k}}{\binom{2r-k}{r-1}}
	=\frac{r+1-k}{r},\qquad
	\frac{\binom{2r-k-1}{r-k}}{\binom{2r-k-1}{r-2}}
	=\frac{r+1-k}{r-1}.
	\]
	Applying the singleton bound also to $Y_{k2}$ yields
	\[
	\begin{aligned}
		&X_{2k}+\frac r{r-1}Y_{2k}
		\le\frac r{r+1-k}\Phi_{r-k}
		&&(2\le k\le r),\\
		&Y_{k2}
		\le\frac r{r-k}\Phi_{r-k-1}
		&&(2\le k<r),\\
		&X_{2,r+1}+Y_{2,r+1}\le\Phi_0,\qquad
		Y_{r2}\le\Phi_0.
	\end{aligned}
	\]
	For the second inequality, the residual ranks are $r-k,r-1$,
	the shadow rank is $r-k-1$, and the inverse weight is
	\[
	\frac{\binom{2r-k-1}{r-k}}
	{\binom{2r-k-1}{r-k-1}}=\frac r{r-k}.
	\]
	In the last line, the shadow rank is zero and the weights are one.
	
	At the corner, split
	$2Y_{22}=\frac r{r-1}Y_{22}+\frac{r-2}{r-1}Y_{22}$.
	On the two edges, use $r/(r-1)\ge1$.
	Together with $X_{2k}=X_{k2}$ and $Y_{r+1,2}=0$, this gives
	\begin{align*}
		\sum_{j,k=2}^{r+1}(X_{jk}+2Y_{jk})
		&\le\Xi_r+X_{22}+2Y_{22}
		+2\sum_{k=3}^{r+1}(X_{2k}+Y_{2k}+Y_{k2})\\
		&\le\Xi_r+\frac r{r-1}(\Phi_{r-2}+\Phi_{r-3})
		+4\Phi_0+2r\sum_{k=3}^{r}\frac{\Phi_{r-k}}{r+1-k}
		+2r\sum_{k=3}^{r-1}\frac{\Phi_{r-k-1}}{r-k}\\
		&=\Xi_r+4\Phi_0+4r\sum_{k=0}^{r-4}\frac{\Phi_k}{k+1}
		+\frac r{r-1}\Phi_{r-2}+\left(\frac r{r-1}+\frac{2r}{r-2}\right)\Phi_{r-3}.
	\end{align*}
	
	Comparing the two bounds for $\M(Z^2)$, the left-hand side of
	\eqref{boundary:eq:total-collapse} is at most the last displayed
	upper bound plus $
	-r^2\sigma_r-r(r-1)\sigma_{r-1}
	+(r-1)(r-2)\sigma_{r-2}.$
The formula for $\Phi_k$ in
Lemma~\ref{foundation:prop:v5-direction-sums} gives, for $k\ge1$,
\[
\frac{\Phi_k}{k+1}
=\bigl((k+2)\sigma_{k+2}-k\sigma_{k+1}\bigr)
-\bigl((k+1)\sigma_{k+1}-(k-1)\sigma_k\bigr)
+\frac{\sigma_k}{k+1}.
\]
Summing this identity and using
$\Phi_0=2\sigma_2-\sigma_1$ and $\sigma_1=2n$, we obtain
	\[
	\sum_{k=0}^{m}\frac{\Phi_k}{k+1}
	=(m+2)\sigma_{m+2}-m\sigma_{m+1}
	+\sum_{l=1}^{m}\frac{\sigma_l}{l+1}-2n
	\qquad(m\ge0).
	\]
	Apply this identity with $m=r-4$ and substitute the formulas
	for $\Phi_{r-2}$ and $\Phi_{r-3}$.
	The coefficients of $\sigma_r$ and $\sigma_{r-1}$ cancel.
	The next two coefficients simplify as follows:
	\begin{align*}
		&(r-1)(r-2)+4r(r-2)+\frac{r(r-2)^2}{r-1}
		-\frac{r(3r-4)(2r-5)}{r-1}
		=3r-\frac2{r-1},\\
		&-4r(r-4)+\frac{r(3r-4)(r-3)^2}{(r-1)(r-2)}
		=-\frac{r(r^3-6r^2+5r+4)}{(r-1)(r-2)}.
	\end{align*}
	Finally,
	$4\Phi_0-4r(2n)=4(2n)^2-4(r+1)(2n)$.
	This proves \eqref{boundary:eq:total-collapse}.
\end{proof}

\begin{lemma}\label{boundary:lem:common-price}
Suppose $r\ge5$ and ${(2n)}\ge H_r:=r^2+r+4$. Put
\[
 E_r=\left(3r-\frac{2}{r-1}\right)\sigma_{r-2}
       +4r\sum_{l=1}^{r-4}\frac{\sigma_l}{l+1}+4{(2n)}^2.
\]
Then every tiling satisfies $
 (r-c)(c-1)\sigma_r+(r-2c)(r-1)\sigma_{r-1}\le\Xi_r+E_r$
and $\frac{\Xi_r+E_r}{(r-1)\sigma_{r-1}}<2.$
\end{lemma}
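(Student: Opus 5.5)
The lemma has two parts, and I would treat them separately.

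For the first inequality, start from Lemma~\ref{boundary:prop:total-defect-budget}. It suffices to check that the extra terms appearing there, but not in $E_r$, sum to a nonpositive quantity. Those extra terms are
\[
-\frac{r(r^3-6r^2+5r+4)}{(r-1)(r-2)}\sigma_{r-3}\qquad\text{and}\qquad -4(r+1)(2n).
\]
The second is negative. For $r\ge5$ we have $r^3-6r^2+5r+4=r(r-1)(r-5)+4>0$, so the first is also nonpositive. Dropping both terms gives the first claim.

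The second claim is a pure estimate on shell counts. I would divide each term of $\Xi_r+E_r$ by $(r-1)\sigma_{r-1}$ and use the ratio bound $\sigma_{k-1}/\sigma_k\le k/(2n)$ from Lemma~\ref{foundation:lem:v5-shell}. Iterated, it gives $\sigma_l/\sigma_{r-1}\le (r-1)_{r-1-l}/(2n)^{r-1-l}$. With $2n\ge H_r=r^2+r+4$, the term $(3r)\sigma_{r-2}/((r-1)\sigma_{r-1})\le 3r/(2n)$ is about $3/r$. The sum $4r\sum_{l\le r-4}\sigma_l/(l+1)$ is dominated by its top term, of relative size about $4r\cdot r^2/((r-3)(2n)^2)=O(1/r)$, and the rest is a geometric tail.

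The term $4(2n)^2$ needs care when $r=5$, since then $\sigma_{r-1}=\sigma_4$ is a quartic in $n$ while $(2n)^2$ is quadratic. I would use the crude lower bound $\sigma_{r-1}\ge 2^{r-1}\binom{n}{r-1}$ together with $n\ge H_r/2$. An alternative is the recurrence, which gives $\sigma_{k}\ge (2n)^k/k!$, and hence a relative size of at most $4(r-1)!/((r-1)(2n)^{r-3})$, which is small for $r\ge5$ and $2n\ge H_r$.

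For $\Xi_r$, I would bound each summand $\binom{A+B}{A}\Phi_\ell/\binom{A+B}{\ell}$ with $\ell=\max(0,A+B-r)$. The main ingredient is $\Phi_\ell\le(2n)^2\sigma_\ell$, which holds since each term of $\Phi_\ell$ is at most $(2n)^2$. With it, every summand has $\sigma$-degree $\ell+2\le (2r-4-r)+2=r-2$, so each is $O(1/2n)$ relative to $\sigma_{r-1}$. The worst summands are those with $A+B$ near $2r-4$, giving $\ell=r-4$ and a factor $(2n)^2\sigma_{r-4}$. Against $(r-1)\sigma_{r-1}$ this factor is at most $(r-1)(r-2)(r-3)/(2n)\cdot\frac{1}{r-1}$, which is roughly $r^2/H_r$ times a constant. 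Multiplied by the binomial weight $\binom{A+B}{A}/\binom{A+B}{\ell}$, this is the real danger, since $\binom{2r-4}{r-2}/\binom{2r-4}{r-4}$ is only about $r^2/r^2$, i.e.\ bounded.

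The main obstacle is therefore showing that $\Xi_r/((r-1)\sigma_{r-1})$ is comfortably below $2-3/r-O(1/r)$, uniformly in $r\ge5$. The $(r-1)^2$ index pairs, each contributing $O(r^2/2n)$ with $2n\approx r^2$, threaten to sum to order $r$. To handle this, I would regroup $\Xi_r$ by $\ell$. For each $\ell$, the number of pairs $(A,B)$ with $A+B=r+\ell$ is about $r-\ell$, and for $\ell=0$ the weights $\binom{A+B}{A}$ are summed against the small quantity $\Phi_0=2n(2n-1)$. For $\ell\ge1$, I would use the exact formula for $\Phi_\ell$ rather than $(2n)^2\sigma_\ell$: it gives $\Phi_\ell\approx(\ell+1)(\ell+2)\sigma_{\ell+2}$, so each level has relative size of about $(r-\ell)\cdot\mathrm{poly}(r)/(2n)^{r-3-\ell}$, a geometric decay in $r-3-\ell$. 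The largest level is $\ell=r-4$. There the ratio $\Phi_{r-4}/\sigma_{r-1}\lesssim (r-2)(r-3)\cdot(r-1)/2n$ times the few pairs with $A+B=2r-4$, of which there is only one, $A=B=r-2$, with weight $\binom{2r-4}{r-2}/\binom{2r-4}{r-4}\approx 1$. This gives about $r^3/H_r\cdot\frac1{r-1}\approx 1$.

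I expect this delicate top level to be where the hypothesis $2n\ge r^2+r+4$ is tuned. If the asymptotic bookkeeping is too lossy for small $r$ (say $r=5,6,7$), I would fall back on directly evaluating the finitely many rational-function inequalities in $n$ at the threshold $2n=H_r$ and using monotonicity in $n$.
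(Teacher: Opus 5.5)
Your first part is exactly the paper's argument. The only terms of Lemma~\ref{boundary:prop:total-defect-budget} not in $E_r$ are $-4(r+1)(2n)$ and the $\sigma_{r-3}$ term, and both are nonpositive because $r^3-6r^2+5r+4=r(r-1)(r-5)+4>0$.

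For the second inequality you name the same ingredients the paper uses: the iterated shell ratio, $\Phi_\ell\le(2n)^2\sigma_\ell$, $\sigma_k\ge(2n)^k/k!$ for the factorial remainders, monotonicity in $2n$, and evaluation at $2n=H_r$. But you never turn them into an inequality. The statement is precisely the explicit constant $2$, uniformly in $r\ge5$, so this is a genuine gap.

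The missing step is a closed-form summation of $\Xi_r$:
\begin{itemize}
\item In the block $A+B\ge r$, put $A=r-2-i$ and $B=r-2-j$. The quotient $\binom{A+B}{A}/\binom{A+B}{\ell}=\ell!\,r!/(A!\,B!)$ has its $\ell!$ cancelled by $\sigma_\ell/\sigma_{r-1}\le(r-1)!/(\ell!\,(2n)^{r-1-\ell})$. Hence each normalized summand is at most $\frac{r(r-1)}{2n}x^{i+j}$ with $x=(r-2)/(2n)$.
\item With row weight $1$ for $i=0$ and $3$ for $i\ge1$, the double geometric series gives $\frac{r(r-1)(1+2x)}{2n(1-x)^2}=\frac{r(r-1)(2n+2r-4)}{(2n-r+2)^2}$. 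This is decreasing in $2n$. At $2n=H_r$, it plus $(3r-2/(r-1))/H_r$ stays below $7/5$.
\item The block $A+B<r$ (shadow rank $0$) must be bounded separately via $\sum\binom{A+B}{A}<2^r$. This gives $3\cdot2^r(r-2)!/(2n)^{r-3}$.
\end{itemize}

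Three of your heuristics would mislead you in carrying this out.
\begin{itemize}
\item You treat the $\ell=0$ block as summed against a small $\Phi_0$, but it is not negligible at $r=5$. At $2n=34$ the part $A+B<5$ alone contributes about $0.39$ to the normalized ratio; the bound above gives $0.498$. The whole $\ell=0$ level (about $0.59$) also exceeds the top level $\ell=r-4=1$ (about $0.53$), so your claim that the top level is the largest fails there.
\item The paper absorbs this block by bounding it together with the $4(2n)^2$ term; at $r=5$ the combined bound is the remainder $L_5=150/289$. Uniformity in $r$ then needs the additional check $L_{r+1}<L_r$. Your small-$r$ fallback does not repair this by itself, because you give no reason why the exact ratio $(\Xi_r+E_r)/((r-1)\sigma_{r-1})$ should be monotone in $n$. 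You must evaluate monotone upper bounds, which is what the summation above supplies.
\item Switching from $\Phi_\ell\le(2n)^2\sigma_\ell$ to the exact formula for $\Phi_\ell$ gains nothing. The shell ratio gives $(2n)^2\sigma_\ell\le(\ell+1)(\ell+2)\sigma_{\ell+2}$, so the crude bound is already at least as sharp as the leading term you propose to use.
\end{itemize}
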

\begin{proof}
	Since
	$r^3-6r^2+5r+4=r(r-1)(r-5)+4>0$,
	dropping the negative terms in
	\eqref{boundary:eq:total-collapse} proves the first assertion.
	Iterating the shell ratio in
	Lemma~\ref{foundation:lem:v5-shell} gives
	\[
	\frac{\sigma_u}{\sigma_v}
	\le\prod_{j=u+1}^{v}\frac{j}{2n}
	=\frac{v!}{u!(2n)^{v-u}}
	\qquad(0\le u\le v).
	\]
	In particular, $\sigma_v\ge(2n)^v/v!$, since $\sigma_0=1$.
	For $1\le l\le r-4$,
	\[
	\frac{\sigma_{l-1}/l}{\sigma_l/(l+1)}
	\le\frac{l+1}{2n}\le\frac{r-3}{2n}<1.
	\]
	Hence
	\[
	\sum_{l=1}^{r-4}\frac{\sigma_l}{l+1}
	\le\frac{\sigma_{r-4}}{r-3}
	\sum_{j\ge0}\left(\frac{r-3}{2n}\right)^j
	=\frac{\sigma_{r-4}}{r-3}\frac{2n}{2n-r+3}.
	\]
	Applying the shell estimates once more yields
	\begin{align*}
		&\frac{4r}{(r-1)\sigma_{r-1}}
		\sum_{l=1}^{r-4}\frac{\sigma_l}{l+1}
		\le\frac{4r(r-2)}{(2n)^2(2n-r+3)}
		<\frac4{r^4},\\
		&\frac{4(2n)^2}{(r-1)\sigma_{r-1}}
		\le\frac{4(r-2)!}{(2n)^{r-3}}.
	\end{align*}
	The first line uses $2n\ge r^2$ and $2n-r+3\ge r^2$.
	Together with
	$\sigma_{r-2}/\sigma_{r-1}\le(r-1)/(2n)$, these bounds give
	\begin{equation}\label{boundary:eq:collapsed-error}
		\frac{E_r}{(r-1)\sigma_{r-1}}
		\le\frac{3r-2/(r-1)}{2n}
		+\frac4{r^4}
		+\frac{4(r-2)!}{(2n)^{r-3}}.
	\end{equation}
	
	We next split the sum defining $\Xi_r$ according to whether
	$A+B\ge r$ or $A+B<r$.
	In the first range, write
	$A=r-2-i$, $B=r-2-j$, and $\ell=A+B-r$.
	The binomial quotient is $\ell!r!/(A!B!)$.
	Using $\Phi_\ell\le(2n)^2\sigma_\ell$ and the shell ratio,
	the normalized contribution before the row weight is at most
	\[
		\frac{\ell!r!}{A!B!}
		\frac{\Phi_\ell}{(r-1)\sigma_{r-1}}
		\le\frac{r(r-1)}{2n}
		\frac{(r-2)!}{A!(2n)^i}
		\frac{(r-2)!}{B!(2n)^j}\le\frac{r(r-1)}{2n}
		\left(\frac{r-2}{2n}\right)^{i+j}.
	\]
	The row weight is one when $i=0$ and three when $i\ge1$.
	Extending the sum to all $i,j\ge0$ and summing the geometric
	series gives
	\[
	\sum_{i,j\ge0}
	(1+2\mathbf1_{\{i\ge1\}})
	\left(\frac{r-2}{2n}\right)^{i+j}
	=\frac{(2n)(2n+2r-4)}{(2n-r+2)^2}.
	\]
	
In the range $A+B<r$, the shadow rank is zero and the row weight
is at most three. Extending the sum to all nonnegative $A,B$
with $A+B<r$ gives
\[
\sum_{m=0}^{r-1}\sum_{A=0}^{m}\binom mA=2^r-1<2^r.
\]
Since $\Phi_0\le(2n)^2$ and
$(r-1)\sigma_{r-1}\ge(2n)^{r-1}/(r-2)!$,
the normalized contribution is strictly less than
$3\cdot2^r(r-2)!/(2n)^{r-3}$.
Combining the two ranges therefore gives
\begin{equation*}
	\frac{\Xi_r}{(r-1)\sigma_{r-1}}
	<
	\frac{r(r-1)(2n+2r-4)}{(2n-r+2)^2}
	+\frac{3\cdot2^r(r-2)!}{(2n)^{r-3}}.
\end{equation*}
We now combine this estimate with
\eqref{boundary:eq:collapsed-error}.
The identity
\[
\frac{r(r-1)(2n+2r-4)}{(2n-r+2)^2}
=\frac{r(r-1)}{2n-r+2}
+\frac{3r(r-1)(r-2)}{(2n-r+2)^2}
\]
shows that the capacity term decreases with $2n$.
The same is true of the term $(3r-2/(r-1))/(2n)$
and both factorial remainders.
Thus $2n\ge H_r$ gives
\[
\frac{\Xi_r+E_r}{(r-1)\sigma_{r-1}}
<
\frac{r^2(r-1)(r+3)}{(r^2+6)^2}
+\frac{3r-2/(r-1)}{r^2+r+4}
+L_r+\frac4{r^4},
\]
where $L_r=(3\cdot2^r+4)(r-2)!/H_r^{r-3}$
collects the two factorial remainders.
To bound the first two terms, observe that
\[
\frac{r^2(r-1)(r+3)}{(r^2+6)^2}
=1+\frac{2r-15}{r^2+6}
-\frac{6(2r-9)}{(r^2+6)^2}
<1+\frac{2r-15}{r^2+6},
\]
while
$(3r-2/(r-1))/(r^2+r+4)<3/(r+1)$.
Their sum is consequently less than
\[
1+\frac{2r-15}{r^2+6}+\frac3{r+1}
=\frac75-
\frac{2r(r-6)^2+r^2+5r-3}{5(r+1)(r^2+6)}
<\frac75.
\]
It remains to bound $L_r$.
Since $3\cdot2^{r+1}+4<2(3\cdot2^r+4)$,
\[
\frac{L_{r+1}}{L_r}
<\frac{2(r-1)}{H_{r+1}}
\left(\frac{H_r}{H_{r+1}}\right)^{r-3}<1.
\]
Hence $L_r\le L_5=150/289$.
Substituting this and $4/r^4\le4/625$ into the combined bound yields
\[
\frac{\Xi_r+E_r}{(r-1)\sigma_{r-1}}
<\frac75+\frac{150}{289}+\frac4{625}
<\frac75+\frac{11}{20}+\frac1{20}=2.
\]

\end{proof}
The uniform price estimate now excludes every intermediate remainder in the remaining dimension range.
\begin{lemma}\label{boundary:prop:all-nonexceptional-high}
For every integer $r\ge5$ and integer $n$ with $2n\ge r^2+r+4$,
all residues $1\le {(2n)}\bmod(r+1)\le r-1$ are impossible for an
arbitrary-centre tiling.
\end{lemma}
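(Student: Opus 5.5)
We combine the two inequalities of Lemma~\ref{boundary:lem:common-price}. Its hypotheses $r\ge5$ and $2n\ge H_r=r^2+r+4$ are exactly those of the statement. Hence every tiling satisfies
\[
(r-c)(c-1)\sigma_r+(r-2c)(r-1)\sigma_{r-1}\le\Xi_r+E_r<2(r-1)\sigma_{r-1}.
\]
Fix a residue $c=2n\bmod(r+1)$ with $1\le c\le r-1$. It suffices to show that the left side is at least $2(r-1)\sigma_{r-1}$, which gives the contradiction.

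Write $P(c)=(r-c)(c-1)\sigma_r+(r-2c)(r-1)\sigma_{r-1}$. In the range $1\le c\le r-1$ both factors $r-c$ and $c-1$ are nonnegative, and $r-c\ge1$.

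\emph{Case $c=1$.} The first term vanishes, so $P(1)=(r-2)(r-1)\sigma_{r-1}$. Since $r\ge5$, we have $r-2\ge3>2$, and $P(1)\ge 2(r-1)\sigma_{r-1}$.

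\emph{Case $2\le c\le r-1$.} Here $(r-c)(c-1)\ge r-2\ge1$, so the $\sigma_r$ term dominates. By the shell ratio of Lemma~\ref{foundation:lem:v5-shell},
\[
\sigma_r\ge\frac{2n}{r}\sigma_{r-1}\ge\frac{r^2+r+4}{r}\sigma_{r-1}>(r+1)\sigma_{r-1}.
\]
The coefficient $(r-2c)(r-1)$ may be negative, but it is at least $-(r-2)(r-1)$. Hence
\[
P(c)\ge\bigl[(r-c)(c-1)(r+1)-(r-2c)^{-}(r-1)\bigr]\sigma_{r-1},
\]
where $(r-2c)^{-}=\max(0,2c-r)$ denotes the negative part. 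It suffices to check that the bracket is at least $2(r-1)$.

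The product $(r-c)(c-1)$ is concave in $c$, so over $2\le c\le r-1$ its minimum is attained at an endpoint and equals $r-2$. At $c=r-1$ the bracket is
\[
(r-2)(r+1)-(r-2)(r-1)=2(r-2).
\]
Since $2(r-2)<2(r-1)$, this crude bound falls just short, so the endpoint $c=r-1$ needs sharper handling.

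For $c=r-1$ we compute directly: $P(r-1)=(r-2)\sigma_r-(r-2)(r-1)\sigma_{r-1}$. We need $(r-2)\sigma_r\ge(r-2)(r-1)\sigma_{r-1}+2(r-1)\sigma_{r-1}=r(r-1)\sigma_{r-1}$, that is, $\sigma_r\ge\frac{r(r-1)}{r-2}\sigma_{r-1}$. The ratio bound gives $\sigma_r/\sigma_{r-1}\ge(r^2+r+4)/r$, and
\[
(r^2+r+4)(r-2)-r^2(r-1)=2r-8>0\qquad(r\ge5),
\]
so the required inequality holds.

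For $2\le c\le r-2$, first note that if $c\le r/2$ then $r-2c\ge0$, so the bracket is at least $(r-2)(r+1)\ge2(r-1)$. If $r/2<c\le r-2$, then $r-c\ge2$ and $c-1\ge r/2-1/2$. This gives $(r-c)(c-1)(r+1)\ge(r-1)(r+1)$, while the subtracted term is at most $(r-4)(r-1)$. The bracket is therefore at least $(r-1)\bigl[(r+1)-(r-4)\bigr]=5(r-1)\ge2(r-1)$.

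In every case $P(c)\ge2(r-1)\sigma_{r-1}$, which contradicts the strict upper bound from Lemma~\ref{boundary:lem:common-price}. Hence no residue $1\le(2n)\bmod(r+1)\le r-1$ is possible for an arbitrary-centre tiling. The only delicate point is the endpoint $c=r-1$, where the exact ratio bound with $2n\ge r^2+r+4$ is needed rather than the cruder estimate $\sigma_r>(r+1)\sigma_{r-1}$.
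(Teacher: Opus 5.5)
Your proof is correct and follows the paper's route. It uses the upper bound $P(c)\le\Xi_r+E_r<2(r-1)\sigma_{r-1}$ from Lemma~\ref{boundary:lem:common-price}, the shell ratio $\sigma_r\ge(2n/r)\sigma_{r-1}$, and the same sharp computations at $c=1$ and $c=r-1$. The only difference is that the paper handles all interior residues at once via the exact identity $P(c)=\frac{r-1-c}{r-2}P(1)+\frac{c-1}{r-2}P(r-1)+(c-1)(r-1-c)\sigma_r$, which says that $P$ is a concave quadratic in $c$. You instead bound $2\le c\le r-2$ case by case with the cruder estimate $\sigma_r>(r+1)\sigma_{r-1}$, which also works.
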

\begin{proof}
Write $2n=(r+1)q+c$, where $1\le c\le r-1$, and put
\[
 F(c)=(r-c)(c-1)\sigma_r+(r-2c)(r-1)\sigma_{r-1}.
\]
Lemma~\ref{boundary:lem:common-price} gives $
 F(c)\le\Xi_r+E_r<2(r-1)\sigma_{r-1}.$
The shell ratio
$(r-1)\sigma_{r-1}/\sigma_r\le r(r-1)/(2n)$ and
$2n\ge H_r=r^2+r+4$ give
\begin{align*}
 \frac{F(r-1)}{(r-1)\sigma_{r-1}}
 =(r-2)\left(\frac{\sigma_r}{(r-1)\sigma_{r-1}}-1\right)\ge\frac{(r-2)(H_r-r(r-1))}{r(r-1)}\ge2
\end{align*}
and
\[
 \frac{F(1)}{(r-1)\sigma_{r-1}}=r-2\ge3.
 \]
The exact quadratic interpolation is
\[
 F(c)=\frac{r-1-c}{r-2}F(1)+\frac{c-1}{r-2}F(r-1)
       +(c-1)(r-1-c)\sigma_r.
\]
Both weights are nonnegative and sum to one, and the remainder is
nonnegative. Hence $F(c)\ge2(r-1)\sigma_{r-1}$, a contradiction.
The two endpoint estimates use the same fixed $r,n$; neither assumes
that $2n$ itself has an endpoint remainder.
\end{proof}

\subsection{Diagonal endpoint exclusions}
\label{foundation:sec:residual}
The two remaining dimension endpoints will be treated by a local
support count followed by an average over the adjacent shell.
Put $\mathcal U=\{\pm\mathbf e_i:1\le i\le n\}$ and
$a_+=\max(a,0)$.

\begin{lemma}\label{foundation:lem:v5-diagonal-excess-lower}
	Suppose $r\ge3$, $\varepsilon\in\{0,1\}$, and
	$2n=(r+1)^2-\varepsilon$.
	Let $\mathbf z$ have an all-unit covering vector
	$\mathbf b=\mathbf z-\mathbf c_0$ of norm $r-1$.
	For the vectors $\mathbf w=\mathbf z-\mathbf c$ of norm $r+2$, one has
	$\sum_{\mathbf w}\sum_i(|w_i|-2)_+\ge(r+1)(r-2)-3\varepsilon$.
\end{lemma}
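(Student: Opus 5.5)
The plan is a purely local count around $\mathbf z$: feed every neighbour $\mathbf z-\mathbf a$ into the support partition of Lemma~\ref{foundation:lem:v5-partition}, and then solve an integer optimization in which $2n=(r+1)^2-\varepsilon$ enters. I have not carried out that final optimization, and it is where the plan could fail.

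\emph{Separation facts.} Let $\mathcal W$ be the set of vectors $\mathbf w=\mathbf z-\mathbf c$ of norm $r+2$. Any centre $\mathbf c\ne\mathbf c_0$ satisfies $\|\mathbf z-\mathbf c\|_1\ge(2r+1)-(r-1)=r+2$. Suppose $\mathbf w\in\mathcal W$ shared a signed direction with $\mathbf b$. The argument of Lemma~\ref{foundation:lem:v5-partition} would then give $\|\mathbf c-\mathbf c_0\|_1\le(r-1)+(r+2)-2$, which is impossible. Hence $\mathcal S(\mathbf w)\cap\mathcal S(\mathbf b)=\varnothing$. Similarly, for two vectors $\mathbf w,\mathbf w'\in\mathcal W$, the overlap $\sum_i\min(|w_i|,|w'_i|)$ over coordinates of equal sign is at most $1$. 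In particular, the heavy coordinates ($|w_i|\ge2$) of distinct $\mathbf w$ lie in distinct signed directions.

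\emph{Neighbour boundary points.} Take any signed direction $\mathbf a\notin\mathcal S(\mathbf b)$. There are $2n-r+1$ of them, because $\mathbf b$ is all-unit with $s(\mathbf b)=r-1$. The vector $\mathbf b-\mathbf a$ has norm $r$, so $\mathbf z'=\mathbf z-\mathbf a$ is a boundary point with covering centre $\mathbf c_0$. Its support size is $s(\mathbf b-\mathbf a)=r$ if $\mathbf a\notin-\mathcal S(\mathbf b)$, and $r-1$ if $\mathbf a\in-\mathcal S(\mathbf b)$. Every exterior vector $\mathbf x\in\mathcal F_{\mathbf z'}$ gives a centre with $\|\mathbf x+\mathbf a\|_1\in\{r,r+2\}$, and the value $r$ is excluded. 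Therefore $\mathbf x+\mathbf a\in\mathcal W$ with $\mathbf a\in\mathcal S(\mathbf x+\mathbf a)$. Conversely, each $\mathbf w\in\mathcal W$ with $\mathbf a\in\mathcal S(\mathbf w)$ gives $\mathbf w-\mathbf a\in\mathcal F_{\mathbf z'}$. So Lemma~\ref{foundation:lem:v5-partition} at $\mathbf z'$ yields, for each such $\mathbf a$,
\[
\sum_{\mathbf w\in\mathcal W:\ \mathbf a\in\mathcal S(\mathbf w)} s(\mathbf w-\mathbf a)=2n-s(\mathbf b-\mathbf a).
\]
The direction $\mathbf a$ itself is not in $\mathcal S(\mathbf b-\mathbf a)$. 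So some $\mathbf x$ contains it, and that $\mathbf w$ has $|w_{\mathbf a}|\ge2$. Combined with the disjointness of heavy directions, every direction outside $\mathcal S(\mathbf b)$ is a heavy coordinate of exactly one $\mathbf w$, giving $\sum_{\mathbf w}h_{\mathbf w}=2n-r+1$, where $h_{\mathbf w}$ counts heavy coordinates.

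\emph{Reduction to an optimization.} Let $p_{\mathbf w}$ count the unit coordinates of $\mathbf w$, so that $s(\mathbf w)=p_{\mathbf w}+h_{\mathbf w}$. Summing the displayed identity over all $2n-r+1$ directions $\mathbf a$, and using $s(\mathbf w-\mathbf a)=s(\mathbf w)-\mathbf 1_{|w_{\mathbf a}|=1}$, gives
\[
\sum_{\mathbf w}\bigl(s(\mathbf w)^2-p_{\mathbf w}\bigr)=(2n-r+1)2n-r(2n-2r+2)-(r-1)^2.
\]
The target quantity is $\sum_{\mathbf w}\sum_i(|w_i|-2)_+=\sum_{\mathbf w}\bigl(r+2-p_{\mathbf w}-2h_{\mathbf w}\bigr)$.

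It then remains to minimize this linear expression subject to:
\begin{itemize}
\item the quadratic identity above;
\item the linear identity $\sum h_{\mathbf w}=2n-r+1$;
\item the norm constraints $p_{\mathbf w}+2h_{\mathbf w}\le r+2$.
\end{itemize}
Only here does $2n=(r+1)^2-\varepsilon$ enter.

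The idea for this step is that the quadratic identity has size about $(2n)^2$. Since $s(\mathbf w)\le r+2$, it forces $\sum_{\mathbf w}s(\mathbf w)\gtrsim(2n)^2/(r+2)$. That exceeds what the heavy directions alone can supply, and the surplus unit coordinates make the number $|\mathcal W|$ of vectors large. I would use $s(\mathbf w)^2\le(r+2)s(\mathbf w)$ to turn the quadratic identity into a linear lower bound. I would then eliminate $p$ via the norm constraint and substitute $2n=(r+1)^2-\varepsilon$; the leading terms should then cancel and leave $(r+1)(r-2)-3\varepsilon$.

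\emph{Main obstacle.} This last inequality is the hardest step. The crude bound $s^2\le(r+2)s$ may lose too much when some $\mathbf w$ have small support. In that case I would instead bound each vector's contribution in a pointwise way, by minimizing $r+2-p-2h-\lambda(s^2-p)-\mu h$ over the finitely many feasible pairs $(p,h)$ with a suitable choice of multipliers $\lambda,\mu$. I would verify that this linear-programming dual certificate is nonnegative, with the integrality slack accounting for the $-3\varepsilon$.
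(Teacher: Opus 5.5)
\noindent Your local set-up is correct. This covers the separation facts, the neighbour boundary points $\mathbf z-\mathbf a$, and the unique heavy owner of each of the $m=2n-r+1$ directions of $\mathcal U_{\mathbf b}=\mathcal U\setminus\mathcal S(\mathbf b)$. It also covers the per-direction identity, which by the heavy-owner fact reads $\sum_{\mathbf w:\,\mathbf a\in\mathcal S(\mathbf w)}(s(\mathbf w)-1)=m-1-\eta_{\mathbf a}$, where $\eta_{\mathbf a}=0$ if $-\mathbf a\in\mathcal S(\mathbf b)$ and $\eta_{\mathbf a}=1$ otherwise. Together with your overlap bound, this says exactly that every pair of distinct non-opposite directions of $\mathcal U_{\mathbf b}$ lies in exactly one support.

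The gap is the step where you sum over $\mathbf a$ and keep only the three aggregate constraints. Those constraints admit integral solutions with zero excess, so no optimization over them can prove the lemma.
\begin{itemize}
\item For $r$ odd and $\varepsilon=0$ (so $m=r^2+r+2$), take $(r-1)/2$ vectors with $(p,h)=(r+2,0)$, $(r-1)/2$ with $(p,h)=(r-2,2)$, and $r^2+3$ with $(p,h)=(r,1)$.
\item For $r$ even and $\varepsilon=1$ (so $m=r^2+r+1$), take $(3r-2)/2$, $3r/2$ and $(r-1)^2$ vectors of the same three types.
\end{itemize}
In both cases $\sum h=m$, $\sum(s^2-p)=m^2-m+r-1$ (your right-hand side), every norm equals $r+2$, and $\sum(r+2-p-2h)=0$. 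For $r=3$ this is $20+8+12\cdot13=184=14^2-14+2$. These are the only cases in which $2n$ is even. So, by weak duality, no choice of multipliers $\lambda,\mu$ and no integrality slack can certify a positive bound. Your heuristic breaks because all-unit vectors of support $r+2$ raise $\sum s^2$ at no cost in excess.

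\medskip

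\noindent What is missing is the per-direction information that the summation discards. Let $d_{\mathbf a}$ count the supports containing $\mathbf a$, and put $\delta_{\mathbf w}=r+2-s(\mathbf w)$. Then $\sum_{\mathbf w}\delta_{\mathbf w}=m+H$, and your identity becomes $\sum_{\mathbf w:\,\mathbf a\in\mathcal S(\mathbf w)}\delta_{\mathbf w}=(r+1)d_{\mathbf a}-(m-1-\eta_{\mathbf a})\ge1$, the last inequality coming from the heavy owner of $\mathbf a$. The argument then runs as follows.
\begin{itemize}
\item Integrality of each $d_{\mathbf a}$ gives $d_{\mathbf a}\ge r+1-\varepsilon\eta_{\mathbf a}$.
\item A short counting argument shows that some support $\mathcal S_0$ has size exactly $r+2$.
\item Since two supports share at most one direction, every direction outside $\mathcal S_0\cup(-\mathcal S_0)$ has $d_{\mathbf a}\ge r+2$, and every direction of $(-\mathcal S_0)\cap\mathcal U_{\mathbf b}$ has $d_{\mathbf a}\ge r+1$.
\item Hence $\sum_{\mathbf a}d_{\mathbf a}=\sum_{\mathbf w}s(\mathbf w)$ exceeds $(r+1)m-\varepsilon(m-r+1)$ by at least $(1+\varepsilon)(r+1)(r-2)+\varepsilon$. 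In the $r=3$ example this forces $\sum s\ge60$, whereas the fake configuration has $\sum s=56$.
\item Summing your identity over $\mathbf a$ gives $\sum_{\mathbf w}s(\mathbf w)\delta_{\mathbf w}=(r+1)\sum_{\mathbf a}d_{\mathbf a}-m(m-1)+(m-r+1)$.
\item Compare this with the integer inequality $s(\mathbf w)\delta_{\mathbf w}\le(r+1)\delta_{\mathbf w}$, which is sharper than your $s^2\le(r+2)s$. This gives $(r+1)\{H-K+\varepsilon(r^2-r+2)\}+r-1-\varepsilon\ge0$, where $K$ is the total degree excess. A final integrality step then yields $H\ge(r+1)(r-2)-3\varepsilon$.
\end{itemize}
This is the route of the paper's proof. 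Your argument needs these degree bounds, and in particular the full-size support $\mathcal S_0$, in place of the global optimization.
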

\begin{proof}
	Put $H=\sum_{\mathbf w}\sum_i(|w_i|-2)_+$ and  $\mathcal U_{\mathbf b}=\mathcal U\setminus\mathcal S(\mathbf b)$.
	Since $\mathbf b$ is all-unit, $
	m:=|\mathcal U_{\mathbf b}|=r(r+1)+2-\varepsilon.$
	A signed direction is heavy in a vector if its coordinate
	magnitude is at least two.
	Centre separation implies that
	$\mathcal S(\mathbf w)\subseteq\mathcal U_{\mathbf b}$,
	that distinct supports meet in at most one direction,
	and that a direction is heavy in at most one vector.
	Indeed, a common signed unit between $\mathbf b$ and $\mathbf w$
	would give centre distance at most $2r-1$, while two common
	signed units between distinct exterior vectors would give
	centre distance at most $2r$.
	
	Choose $\mathbf a,\mathbf e\in\mathcal U_{\mathbf b}$ with
	$\mathbf e\ne-\mathbf a$, allowing $\mathbf e=\mathbf a$.
	The point $\mathbf z-\mathbf a-\mathbf e$ is at distance $r+1$
	from $\mathbf c_0$, so its covering centre $\mathbf c$ is different.
	Separation and the triangle inequality give
	\[
	r+2
	\le\|\mathbf z-\mathbf c\|_1
	\le\|\mathbf z-\mathbf a-\mathbf e-\mathbf c\|_1+2
	\le r+2.
	\]
	Equality forces the two selected signed units into
	$\mathbf w=\mathbf z-\mathbf c$.
	For distinct $\mathbf a,\mathbf e$, this places both directions
	in $\mathcal S(\mathbf w)$; when they coincide, it makes
	$\mathbf a$ heavy.
	Together with the intersection restrictions, this proves that
	every pair of distinct nonopposite directions occurs in exactly
	one support, and every direction is heavy in exactly one vector.
	
	Call a direction special if its opposite is not in
	$\mathcal U_{\mathbf b}$.
	There are $r-1$ special directions and $m-r+1$ ordinary ones.
	For $\mathbf a\in\mathcal U_{\mathbf b}$, let
	$\eta_{\mathbf a}=0$ for special directions and $1$ otherwise,
	and let $d_{\mathbf a}$ count the supports containing $\mathbf a$.
	Write
	$\delta_{\mathbf w}=r+2-|\mathcal S(\mathbf w)|$.
	The unique heavy owners give
	\[
	\sum_{\mathbf w}|\{i:|w_i|\ge2\}|=m,\qquad
	\sum_{\mathbf w}\delta_{\mathbf w}=m+H.
	\]
	Since $\delta_{\mathbf w}$ is a nonnegative integer,
$|\mathcal S(\mathbf w)|\delta_{\mathbf w}
	=(r+2-\delta_{\mathbf w})\delta_{\mathbf w}
	\le(r+1)\delta_{\mathbf w}.$
	Consequently
	\begin{equation}\label{diagonal:common-sums}
		\sum_{\mathbf w}|\mathcal S(\mathbf w)|\delta_{\mathbf w}
		\le(r+1)(m+H).
	\end{equation}
	Counting the $m-1-\eta_{\mathbf a}$ allowable partners of
	$\mathbf a$ gives
	\begin{equation}\label{diagonal:common-degree}
		\sum_{\mathbf w:\mathbf a\in\mathcal S(\mathbf w)}
		\delta_{\mathbf w}
		=(r+1)d_{\mathbf a}-(m-1-\eta_{\mathbf a})\ge1.
	\end{equation}
	Here the equality follows by summing
	$|\mathcal S(\mathbf w)|-1=r+1-\delta_{\mathbf w}$,
	and the inequality follows from the heavy owner of $\mathbf a$.
	
	We next show that some support $\mathcal S_0$ has size $r+2$.
	Otherwise every support has size at most $r+1$, and every exterior
	vector has a heavy coordinate.
	Each direction has at least
	$m-2\ge r(r+1)-1>r^2$ allowable partners, while each incident
	support contributes at most $r$ partners.
	Thus $d_{\mathbf a}\ge r+1$, and
	\[
		(r+1)m\le\sum_{\mathbf a}d_{\mathbf a}
		=\sum_{\mathbf w}|\mathcal S(\mathbf w)|\le(r+1)\sum_{\mathbf w}|\{i:|w_i|\ge2\}|
		=(r+1)m.
	\]
	Equality forces every support size and every degree to equal
	$r+1$. Every direction would then have $r(r+1)$ partners,
	contrary to the different required counts $m-1$ and $m-2$
	for the nonempty special and ordinary classes.
	Hence such a support $\mathcal S_0$ exists.
	
	By \eqref{diagonal:common-degree},
	$(r+1)d_{\mathbf a}\ge m-\eta_{\mathbf a}$.
	Since $m=r(r+1)+2-\varepsilon$, integrality gives
	$d_{\mathbf a}\ge r+1-\varepsilon\eta_{\mathbf a}$.
	Let $
	K=\sum_{\mathbf a\in\mathcal U_{\mathbf b}}
	(d_{\mathbf a}-r-1+\varepsilon\eta_{\mathbf a})$
	be the total increment above these bounds, and let $u$ count the
	special directions in $\mathcal S_0$.
	
	There are $m-2(r+2)+u$ directions of $\mathcal U_{\mathbf b}$
	outside $\mathcal S_0\cup(-\mathcal S_0)$, of which $r-1-u$
	are special.
	Each has degree at least $r+2$: its pairs with the $r+2$
	directions of $\mathcal S_0$ must occur in different supports,
	since another support cannot meet $\mathcal S_0$ twice.
	The $r+2-u$ directions in
	$(-\mathcal S_0)\cap\mathcal U_{\mathbf b}$ are ordinary.
	Each has degree at least $r+1$, by the same argument applied
	to its $r+1$ nonopposite partners in $\mathcal S_0$.
	Their contributions to $K$ therefore give
	\[
	\begin{aligned}
		K
		&\ge(1+\varepsilon)\{m-2(r+2)+u\}
		-\varepsilon(r-1-u)+\varepsilon(r+2-u)\\
		&=(1+\varepsilon)\{m-2(r+2)+u\}+3\varepsilon\\
		&=(1+\varepsilon)\{(r+1)(r-2)+u\}+\varepsilon,
	\end{aligned}
	\]
	where the last equality uses $\varepsilon^2=\varepsilon$.
	
	Finally, summing \eqref{diagonal:common-degree} over all
	directions and using the definition of $K$ gives
	\begin{align*}
		&\sum_{\mathbf w}|\mathcal S(\mathbf w)|\delta_{\mathbf w}
		=(r+1)\sum_{\mathbf a}d_{\mathbf a}
		-m(m-1)+(m-r+1),\\
		&\sum_{\mathbf a}d_{\mathbf a}=(r+1)m-\varepsilon(m-r+1)+K.
	\end{align*}
	Substituting these identities into
	\eqref{diagonal:common-sums}, with $m=r(r+1)+2-\varepsilon$,
	yields
	\[
	0\le
	(r+1)\{H-K+\varepsilon(r^2-r+2)\}+r-1-\varepsilon.
	\]
	The expression in braces is an integer, and
	$0\le r-1-\varepsilon<r+1$, so it must be nonnegative.
	The bound for $K$ now gives
	\[
	\begin{aligned}
		H\ge K-\varepsilon(r^2-r+2)
		\ge(r+1)(r-2)-3\varepsilon.
	\end{aligned}
	\]
\end{proof}

\begin{lemma}\label{foundation:thm:v5-diagonal-exclusions}
	For $r\ge4$ and $\varepsilon\in\{0,1\}$, there is no
	arbitrary-centre tiling when $2n=(r+1)^2-\varepsilon$.
\end{lemma}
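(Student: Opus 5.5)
The plan is to average the local bound of Lemma~\ref{foundation:lem:v5-diagonal-excess-lower} over all points with an all-unit covering vector of norm $r-1$. We then compare the result with an upper bound for the same heavy-excess quantity, obtained from the exact shell moments.

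First, the lower side. The configurations in Lemma~\ref{foundation:lem:v5-diagonal-excess-lower} are pairs of centres $\mathbf c_0,\mathbf c$ and a point $\mathbf z$ with $\mathbf z-\mathbf c_0$ all-unit of norm $r-1$ and $\|\mathbf z-\mathbf c\|_1=r+2$. By the same argument as in Lemma~\ref{foundation:prop:v5-means}, the mean of the indicator that $\mathbf z-\mathbf c_0$ is a fixed vector $\mathbf u$ equals $\M(f)=1$. Hence the points with an all-unit covering vector of norm $r-1$ have mean $2^{r-1}\binom n{r-1}$. Averaging the local inequality then gives
\[
\M\Bigl(\sum_{\mathbf w}\sum_i(|w_i|-2)_+\Bigr)\ge\bigl((r+1)(r-2)-3\varepsilon\bigr)2^{r-1}\binom n{r-1}.
\]

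Second, the upper side. Using the mass-transport principle (Lemma~\ref{foundation:lem:v5-transport}), we move each incidence to the pair consisting of the exterior centre $\mathbf c$ and its vector $\mathbf w\in S_{r+2}$. For a fixed $\mathbf w$, the number of $\mathbf z$ realising it with an all-unit covering vector is at most one per admissible $\mathbf c_0$. These $\mathbf c_0$ have pairwise distance at least $2r+1$, and each lies at distance $(r+2)+(r-1)=2r+1$ from $\mathbf c$, with the unit support of $\mathbf z-\mathbf c_0$ disjoint from $\mathcal S(\mathbf w)$. They are therefore separated neighbours in the $S_{2r+1}$ shell. A shadow count in the style of Lemma~\ref{foundation:lem:v5-threshold-capacity}, projecting to rank-$r$ subvectors and using Lemma~\ref{foundation:lem:v5-rank-comparison}, bounds the total excess sent by a centre by an explicit combination of $\sigma_j$ and binomial coefficients.

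Third, we compare. With $2n=(r+1)^2-\varepsilon$, the shell ratio bounds of Lemma~\ref{foundation:lem:v5-shell} turn both sides into explicit polynomials in $r$. The lower bound should grow like $r^2\cdot 2^{r-1}\binom n{r-1}$, whereas the upper bound should be of strictly smaller order in this regime. This is plausible because heavy coordinates in exterior vectors of norm $r+2$ are rare on average: the second deficit moment $\M(\cdot)$ of the $S_{r+2}$-type incidences is of order $\sigma_r$, which relative to $\binom n{r-1}2^{r-1}$ is only about $n\cdot r$. Any small radius $r=4,5$ where the asymptotic comparison is not yet strict would be checked by direct numerical substitution, as in the table of Lemma~\ref{foundation:v5:triangular}.

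The main obstacle is the upper bound. A crude bound through $\sum_{\mathbf w\in S_{r+2}}\sum_i(|w_i|-2)_+$ has the wrong order unless we use the constraint that the covering vector is all-unit and disjoint from $\mathcal S(\mathbf w)$. I would first try the exact mean identity $\M\bigl(\sum_{\mathbf x}a(\mathbf x)\bigr)=\sum_{\mathbf u\in S_{r+2}}a(\mathbf u)$, which holds for the analogous $S_{r+2}$ incidences because such a point is within $r+1$ of a boundary. I would then restrict it to points whose covering vector is of the required type, bounding the number of such points per $\mathbf w$ by the number of disjoint all-unit $(r-1)$-sets that fit into the $2n-s(\mathbf w)$ free directions subject to the pairwise-disjointness of supports (Lemma~\ref{foundation:lem:v5-partition}). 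This gives at most $\lfloor(2n-s(\mathbf w))/(r-1)\rfloor$ per $\mathbf w$, which should yield a bound of order $\frac{2n}{r-1}\sum_{\mathbf u\in S_{r+2}}\sum_i(|u_i|-2)_+$. That quantity should be of order $\sigma_{r-1}\cdot n/r$, and it is then compared with $r^2\binom n{r-1}2^{r-1}\approx r^2\sigma_{r-1}$.
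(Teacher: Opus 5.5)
Your lower side matches the paper's. The all-unit covering vectors of norm $r-1$ have total mean $G=2^{r-1}\binom n{r-1}$, so Lemma~\ref{foundation:lem:v5-diagonal-excess-lower} averages to $\mathcal M(H)\ge((r+1)(r-2)-3\varepsilon)G$. \textbf{The gap is the upper bound}, which carries all the difficulty, and neither of your routes supplies it.

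Your fallback in the last paragraph does not work, for two reasons.
\begin{enumerate}
\item Once the exterior centre $\mathbf c$ and $\mathbf w\in S_{r+2}$ are fixed, $\mathbf z=\mathbf c+\mathbf w$ and hence $\mathbf c_0$ are determined. So the count per $(\mathbf c,\mathbf w)$ is at most one, and the factor $\lfloor(2n-s(\mathbf w))/(r-1)\rfloor$ only weakens the crude bound.
\item The crude sum is not of order $\sigma_{r-1}$. The bijection $\mathbf u\mapsto\mathbf u-j\operatorname{sgn}(u_i)\mathbf e_i$ shows it equals $\sum_{\ell=0}^{r-1}T_\ell\ge T_{r-1}\ge(2n-r+1)\sigma_{r-1}\ge(r^2+r+1)G$. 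This already exceeds the lower bound $(r+1)(r-2)G$, so it gives no contradiction.
\end{enumerate}
Any working argument must use the fact that only points whose covering vector has norm $r-1$ are counted. That restriction has to save a factor of roughly $2n/r$.

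Your second paragraph points at the right tool, but an unmarked shadow count cannot work. Without a mark, $\mathbf z-\mathbf c$ is a rank-$(r+2)$ subvector of $\mathbf c_0-\mathbf c\in S_{2r+1}$. The separation argument makes shadows disjoint across covering centres only at rank at least $r+1$; a shared rank-$r$ subvector gives only $\|\mathbf c_0-\mathbf c_0'\|_1\le2r+2$. Lemma~\ref{foundation:lem:v5-rank-comparison}, however, needs shadow rank at most $\min(r+2,r-1)=r-1$, so no unmarked rank satisfies both requirements.

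The missing device is threshold marking:
\begin{enumerate}
\item Write $(|x_i|-2)_+=\sum_{j=3}^{r+2}\mathbf 1_{|x_i|\ge j}$, mark the direction $\mathbf a$ of the heavy coordinate, and pass to $\mathbf c_0-\mathbf c-j\mathbf a$, which has norm $2r+1-j$.
\item Its rank-$(r+1-j)$ shadows are disjoint across covering centres, since a shared one forces centre distance at most $2r$.
\item These shadows lie in the comparison range, with inverse weight $r/(r+2-j)$.
\item Counting admissible marks for each shadow gives capacity $T_{r+1-j}$.
\end{enumerate}
This is Lemma~\ref{foundation:lem:v5-threshold-capacity} with $\mathcal I=\{r+2\}$, $\beta=r-1$ and no second mark. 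It yields $\mathcal M(H)\le2n+r\sum_{\ell=0}^{r-2}T_\ell/(\ell+1)\le2n+(r+\tfrac12)\sigma_{r-1}$. The paper then uses $\sigma_{r-1}/G<(2r+1)/6$ and $G\ge4(2n)$ to obtain $\mathcal M(H)/G<(r^2+r+1)/3\le(r+1)(r-2)-3$, which contradicts the lower bound.
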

\begin{proof}
	Suppose such a tiling exists.
	Extend $H$ by the same formula to all points whose covering
	vector has norm $r-1$, and set it to zero elsewhere.
	For an exterior centre $\mathbf c$, write
	$\mathbf x=\mathbf z-\mathbf c$ with $\|\mathbf x\|_1=r+2$,
	and let $\mathbf y=\mathbf z-\mathbf c_0$ be the covering vector
	of norm $r-1$.
	Separation gives
	\[
	2r+1\le\|\mathbf c_0-\mathbf c\|_1
	=\|\mathbf x-\mathbf y\|_1
	\le(r+2)+(r-1)=2r+1,
	\]
	so $\mathbf x,\mathbf y$ have disjoint signed supports.
	
	Expand
	$(|x_i|-2)_+=\sum_{j=3}^{r+2}\mathbf1_{|x_i|\ge j}$
	and apply
	Lemma~\ref{foundation:lem:v5-threshold-capacity}
	with $\mathcal I=\{r+2\}$, $\beta=r-1$, and no second mark.
	For $3\le j\le r+1$, put $\ell=r+1-j$.
	The residual ranks are $\ell+1,r-1$, and the shadow rank is
	$\ell$. Since $0\le\ell\le r-2$, the hypothesis holds, and
	the inverse weight is $
	\frac{\binom{r+\ell}{\ell+1}}{\binom{r+\ell}{\ell}}
	=\frac r{\ell+1}.$
	At $j=r+2$, the residual ranks are $0,r-1$ and the shadow rank
	is zero, giving weight one and capacity $T_0=2n$.
	Using
	$T_\ell=(\ell+1)\sigma_{\ell+1}-\ell\sigma_\ell$
	from Lemma~\ref{foundation:prop:v5-direction-sums},
	we obtain
	\begin{align*}
		\M(H)\le2n+r\sum_{\ell=0}^{r-2}\frac{T_\ell}{\ell+1}
		=2n+r\sigma_{r-1}
		+r\sum_{\ell=1}^{r-2}\frac{\sigma_\ell}{\ell+1}.
	\end{align*}
	Since $2n\ge r^2+2r$, the shell ratio gives
	$\sigma_{j-1}/\sigma_j\le j/(2n)\le1/(r+1)$
	for $1\le j\le r-1$. Hence
	\[
	\sum_{\ell=1}^{r-2}\frac{\sigma_\ell}{\ell+1}
	\le\frac{\sigma_{r-2}}{2(1-1/(r+1))}
	\le\frac{\sigma_{r-1}}{2r},
	\]
	and therefore
	\[
	\M(H)\le2n+\left(r+\frac12\right)\sigma_{r-1}.
	\]
	
	Put $G=2^{r-1}\binom n{r-1}$.
	The all-unit covering vectors of norm $r-1$ give $G$ disjoint
	translates of the centre set, so the corresponding point set
	has normalized mean $G$.
	Since $r-1<n$, the $n$ distinct cyclic intervals of length
	$r-1$ show that $\binom n{r-1}\ge n$. Thus
	$G\ge2^{r-2}(2n)\ge4(2n).$
	For $0\le t\le r-2$,
	\[
	\frac{\binom n{r-1-t}}{\binom n{r-1}}
	=\prod_{i=0}^{t-1}\frac{r-1-i}{n-r+2+i}
	\le\left(\frac{r-1}{n-r+2}\right)^t.
	\]
	Also $\binom{r-2}{t}\le(r-2)^t$.
	Grouping the shell by support size and using
	$2n-2r+4\ge r^2+4$, we obtain
	\[
	\begin{aligned}
		\frac{\sigma_{r-1}}G
		=\sum_{t=0}^{r-2}
		2^{-t}\frac{\binom n{r-1-t}}{\binom n{r-1}}
		\binom{r-2}{t}
		\le\sum_{t=0}^{r-2}
		\left(\frac{(r-1)(r-2)}{r^2+4}\right)^t
		<\frac{2r+1}{6}.
	\end{aligned}
	\]
	The strict inequality comes from extending the finite geometric
	series, and the last inequality is equivalent to $7r\ge22$.
	The mean bound now gives
	\[
	\frac{\M(H)}G
	<\frac14+\left(r+\frac12\right)\frac{2r+1}{6}
	=\frac{r^2+r+1}{3}.
	\]
	
	On the other hand,
	Lemma~\ref{foundation:lem:v5-diagonal-excess-lower}
	applies at every point with an all-unit covering vector of norm
	$r-1$, and $H$ is nonnegative elsewhere.
	Consequently
	\[
	\begin{aligned}
		\frac{\M(H)}G
		\ge(r+1)(r-2)-3\varepsilon
		\ge(r+1)(r-2)-3
		\ge\frac{r^2+r+1}{3},
	\end{aligned}
	\]
	contradicting the strict upper bound.
\end{proof}

\section{Contact identities and small radii}\label{sec:small}
We first develop the contact identities shared by the small-radius
arguments and the later algebraic construction. The modular and
profile-square arguments then exclude radii two, three, and four.
Finally, we combine these cases with the preceding geometric estimates
to state the complete parameter reduction.

\subsection{Contact identities and polynomial tools}
We encode the two adjacent contact shells by a common correlation
function and derive the identities used in the local exclusions.

Let $\tau(\mathbf v)$ be the partition formed by the nonzero
absolute coordinates of $\mathbf v$.
For a partition $\lambda$, write $|\lambda|=\sum_i\lambda_i$,
$\ell(\lambda)$, and $m_a(\lambda)$ for its size, length, and
multiplicity of the part $a$, respectively.
Thus $\lambda\vdash j$ means $|\lambda|=j$.
Subscripts such as $31^2$ abbreviate $(3,1,1)$;
$\lambda\cup(1)$ adjoins a unit part, and
$\lambda\setminus(1)$ removes one.
The empty partition has size and length zero.
The signed-permutation orbit of $\lambda$ has size
\[
g_\lambda(n)
=2^{\ell(\lambda)}\binom n{\ell(\lambda)}
\frac{\ell(\lambda)!}{\prod_{a\ge1}m_a(\lambda)!},
\]
with $g_\lambda(n)=0$ when $\ell(\lambda)>n$.
For $f=\mathbf1_{\mathcal C}$, define
$\varrho(\mathbf v)
	=\M_{\mathbf x}\bigl(f(\mathbf x)f(\mathbf x+\mathbf v)\bigr),$
\begin{align*}
	&\bar X_\lambda
	=\sum_{\tau(\mathbf v)=\lambda}\varrho(\mathbf v)
	\qquad(|\lambda|=2r+1),\\
	&\bar Z_\lambda
	=\sum_{\tau(\mathbf v)=\lambda}\varrho(\mathbf v)
	\qquad(|\lambda|=2r+2).
\end{align*}
Since $f^2=f$ and $\M f=1$, one has $\varrho(\boldsymbol0)=1$.
Positivity and translation invariance give
$\varrho(\mathbf v)=\varrho(-\mathbf v)\ge0$,
while packing gives $\varrho(\mathbf v)=0$ for
$0<\|\mathbf v\|_1\le2r$.

The nonzero vectors of $B_n(r)$ occur in opposite pairs,
so $V_r$ is odd. The coprime-dilation lemma therefore gives
$\mathsf B_r^{[2]}f=1$.
Two centres cannot differ by $2\mathbf v\ne\boldsymbol0$
with $\|\mathbf v\|_1\le2r$: writing $\mathbf v$ as the difference
of two vectors in $B_n(r)$ would make their doubled tiles intersect.
Hence $\varrho(2\mathbf v)=0$ in this range.
Every all-even vector of norm $2r+2$ is twice a vector of norm
$r+1\le2r$, so every all-even $\bar Z$ profile vanishes.
Multiplying either tiling identity at $\mathbf x+\mathbf v$
by $f(\mathbf x)$ and applying $\M_{\mathbf x}$ gives
$\mathsf B_r\varrho=1$ and $
\mathsf B_r^{[2]}\varrho=1.$

We work in the stable symmetric-function ring
$\mathbb R[p_1,p_2,\ldots]$.
Let $h_j$, $e_j$, $p_j$, and $m_\lambda$ denote the complete,
elementary, power-sum, and monomial symmetric functions.
Their formal alphabet is $\mathbf x=(x_1,x_2,\ldots)$,
with $p_j=\sum_i x_i^j$ and weighted degree $\deg p_j=j$.
Set $h_0=e_0=m_\varnothing=1$ and $h_j=e_j=0$ for $j<0$.
The notation $[m_\lambda]G$ denotes the coefficient of $m_\lambda$,
without a signed-orbit factor. Put
\[
D=\frac{\partial}{\partial p_1},\qquad
\calL(G)=\sum_\lambda[m_\lambda]G\,g_\lambda(n),\qquad
\Lam(G)=\calL(G+DG).
\]
The functional $\calL$ is linear across weighted degrees and is
not multiplicative. Differentiation is performed in the stable
ring before terms of length greater than $n$ are discarded.
For homogeneous $G$ of degree $2r+1$ or $2r+2$, respectively,
define
\[
\langle G,\bar X\rangle
=\sum_{\lambda\vdash2r+1}[m_\lambda]G\,\bar X_\lambda,
\qquad
\langle G,\bar Z\rangle
=\sum_{\lambda\vdash2r+2}[m_\lambda]G\,\bar Z_\lambda.
\]
The length restrictions are automatic, since profiles of length
greater than $n$ have empty orbits.

The following identities relate adjacent degrees without
introducing multiplicity factors.
\begin{lemma}\label{common:lem:derivative}
	In the stable ring,
	$Dm_\lambda=m_{\lambda\setminus(1)}$ and $Dh_j=h_{j-1}$,
	where the first expression is zero if $\lambda$ has no unit part.
	Consequently, $D$ preserves nonnegative coefficients in the
	monomial basis.
\end{lemma}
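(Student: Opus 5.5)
The plan is to express $m_\lambda$ through power sums and apply $D=\partial/\partial p_1$ directly. This works because, in the stable ring, the monomial functions whose parts exceed one are unaffected by $p_1$ in a controlled way.

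For $m_\lambda$ I will use the generating identity. Adjoin a formal parameter $u$ and consider
\[
\prod_i\Bigl(1+\sum_{a\ge1}y_a x_i^a\Bigr)=\sum_\lambda m_\lambda\prod_a y_a^{m_a(\lambda)},
\]
whose logarithm is a power series in the $y_a$ with coefficients in the $p_j$. Differentiating in $p_1$ is cleanest via the alphabet interpretation. In the stable ring, $\partial/\partial p_1$ acting on a symmetric function $G(\mathbf x)$ equals the coefficient of $t$ in $G(\mathbf x,t)$ under the plethystic shift $p_j\mapsto p_j+t\,\delta_{j1}$. Equivalently, $D$ is the adjoint of multiplication by $p_1$ under the Hall inner product.

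Using the adjointness route, $\langle Dm_\lambda,h_\mu\rangle=\langle m_\lambda,p_1h_\mu\rangle=\langle m_\lambda,h_1h_\mu\rangle$, and since $\{m\}$ and $\{h\}$ are dual bases, this equals $1$ exactly when $\lambda=\mu\cup(1)$ and $0$ otherwise. Hence $Dm_\lambda=m_{\lambda\setminus(1)}$ when $\lambda$ has a unit part, and $Dm_\lambda=0$ otherwise. The fact that $D$ is the Hall adjoint of multiplication by $p_1$ is standard: $p_\mu$ and $z_\mu^{-1}p_\mu$ are dual, and $\partial p_\mu/\partial p_1=m_1(\mu)p_{\mu\setminus(1)}$ matches $z_\mu=m_1(\mu)z_{\mu\setminus(1)}$. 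I will verify this in one line.

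For $h_j$, I will write $\sum_j h_ju^j=\exp\bigl(\sum_k p_ku^k/k\bigr)$. Applying $D$ multiplies the series by $u$, which gives $Dh_j=h_{j-1}$. Alternatively, since $h_j=\sum_{\lambda\vdash j}m_\lambda$, the first identity applied termwise gives the same result: removing a unit part is a bijection from partitions of $j$ with a unit part onto all partitions of $j-1$.

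Nonnegativity preservation then follows by linearity, because $D$ maps each basis element $m_\lambda$ to either a basis element or zero. No step is a real obstacle. The only care needed is to fix the stable-ring (infinite-alphabet) convention, so that $D$ is well defined before the length truncation at $n$, which is exactly what the paper's definition stipulates.
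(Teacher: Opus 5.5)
Your proof is correct, but it takes a different route from the paper.

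The paper uses no inner product. It adjoins one formal variable $\varepsilon$ to the alphabet and notes that $p_j(\mathbf x,\varepsilon)=p_j(\mathbf x)+\varepsilon^j$. Hence the $\varepsilon$-linear part of $G(\mathbf x,\varepsilon)$ is $\varepsilon\,DG$, since the shifts $\varepsilon^j$ with $j\ge2$ cannot contribute to that part. It then reads off two facts directly:
\begin{itemize}
\item the monomials of $m_\lambda(\mathbf x,\varepsilon)$ linear in $\varepsilon$ are $\varepsilon$ times the distinct monomials of $m_{\lambda\setminus(1)}$;
\item the linear part of $h_j(\mathbf x,\varepsilon)$ is $\varepsilon h_{j-1}(\mathbf x)$.
\end{itemize}

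You instead identify $D$ as the Hall adjoint of multiplication by $p_1$; your check via $z_\mu=m_1(\mu)z_{\mu\setminus(1)}$ is right. You then combine the duality $\langle m_\lambda,h_\mu\rangle=\delta_{\lambda\mu}$ with $p_1h_\mu=h_{\mu\cup(1)}$.

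The paper's argument is more elementary and self-contained. Yours imports standard machinery, but in exchange it gives in one line the general coefficient rule $[m_\mu]DG=\langle G,h_{\mu\cup(1)}\rangle=[m_{\mu\cup(1)}]G$ for arbitrary $G$. That is exactly the form the paper later invokes as ``stable unit deletion.''

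One small inaccuracy in your aside: adjoining $t$ to the alphabet shifts $p_j\mapsto p_j+t^j$ for every $j$, not $p_j\mapsto p_j+t\,\delta_{j1}$. The coefficient of $t$ is $\partial G/\partial p_1$ in either reading, and your main argument does not rely on that remark, so it does no harm.
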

\begin{proof}
	Adjoin a formal variable $\varepsilon$ to the alphabet.
	Since
	$p_j(\mathbf x,\varepsilon)=p_j(\mathbf x)+\varepsilon^j$,
	the coefficient of $\varepsilon$ in $G(\mathbf x,\varepsilon)$
	is $DG$.
	In $m_\lambda(\mathbf x,\varepsilon)$, the terms linear in
	$\varepsilon$ are precisely $\varepsilon$ times the distinct
	monomials of $m_{\lambda\setminus(1)}$, each occurring once.
	There are no such terms if $\lambda$ has no unit part.
	In $h_j(\mathbf x,\varepsilon)$, the terms linear in
	$\varepsilon$ are $\varepsilon h_{j-1}(\mathbf x)$.
	These observations prove both formulas and the assertion
	about nonnegative coefficients.
\end{proof}

For $\mu\vdash r$ and $\nu\vdash r+1$, define
\[
 I_\mu(\mathbf x)=\sum_{\tau(\mathbf u)=\mu}f(\mathbf x-\mathbf u),\qquad
 F_\nu(\mathbf x)=\sum_{\tau(\mathbf v)=\nu}f(\mathbf x-\mathbf v),\qquad
 I_\partial=\sum_\mu I_\mu,\qquad F=\sum_\nu F_\nu.
\]
The $I_\mu$ are disjoint boundary-covering indicators; the signed-support
partition shows that all $F_\nu$ vanish off the boundary and that
\[
 \ell(\mu)+\sum_{\nu\vdash r+1}\ell(\nu)F_\nu=2n
 \quad\text{on }I_\mu=1,\qquad
 \M I_\mu=g_\mu(n),\quad \M F_\nu=g_\nu(n).
\]
The last two identities follow by translation invariance and $\M f=1$.
At most one exterior residual is all-even: two would have an all-even
centre difference whose divided norm is at most $r+1\le2r$, contrary
to doubled packing. For a homogeneous stable $A$ of degree $r+1$, put
\[
 F_A=\sum_\nu[m_\nu]A\,F_\nu,\qquad
 I_{DA}=\sum_\mu[m_\mu]DA\,I_\mu,\qquad Y_A=F_A+I_{DA}.
\]
Stable differentiation precedes deletion of profiles longer than $n$.
For homogeneous $Q$ of degree $2r+2$, define
\[
 \mathfrak C(Q)=\langle DQ,\bar X\rangle+\langle Q,\bar Z\rangle,
 \qquad
 \Pi_{d,j}=\sum_{\substack{\lambda\vdash d\\|\{i:2\nmid\lambda_i\}|=j}}m_\lambda.
\]

\begin{lemma}\label{common:contact-functional}
Assume $r\ge2$. If every stable non-all-even coefficient of $Q$ is nonnegative, then
$\mathfrak C(Q)\ge0$. For homogeneous $V$ of degree $r+2$ and
homogeneous $A,B$ of degree $r+1$, with arbitrary signed coefficients,
\begin{align}
 \mathfrak C(Vh_r)&=\Lam(V),\qquad
 \mathfrak C(\Pi_{2r+2,2})=(2n)^2/2,\\
 \M(Y_AY_B)&=\mathfrak C(AB)
       +\sum_{\nu\vdash r+1}[m_\nu]A[m_\nu]B\,\Lam(m_\nu),
 \qquad \M(Y_A)=\Lam(A).
 \label{common:profile-bilinear}
\end{align}
If $A$ has only zero-or-one monomial coefficients, then for every
integer $a$,
\begin{equation}\label{common:profile-square-mean}
 0\le\M\!\left(I_\partial\binom{Y_A-a}{2}\right)
 =\tfrac12\mathfrak C(A^2)-a\Lam(A)
                         +\tfrac12a(a+1)\sigma_r.
\end{equation}
If $B$ has nonnegative monomial coefficients and
\[
 \beta\le\left\lceil\frac{2n+1-\ell(\nu)}{r+1}\right\rceil
 \quad\text{whenever }[m_\nu]B>0,
\]
then
\begin{equation}\label{common:target-pair-mean}
 Y_B(F-\beta)\ge0,\qquad
 \M\bigl(Y_B(F-\beta)\bigr)=\mathfrak C(Bh_{r+1})-\beta\Lam(B).
\end{equation}
\end{lemma}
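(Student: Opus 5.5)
The plan is to obtain every identity by a single mass-transport computation: express each mean over boundary points as a sum, over pairs of centres, of the correlation $\varrho$. The basic observation concerns a boundary point $\mathbf z$ with covering vector $\mathbf u=\mathbf z-\mathbf c_0$, $\tau(\mathbf u)=\mu\vdash r$, and an exterior vector $\mathbf v=\mathbf z-\mathbf c$, $\tau(\mathbf v)=\nu\vdash r+1$. The centre difference $\mathbf c-\mathbf c_0=\mathbf u-\mathbf v$ has norm exactly $2r+1$, because the signed supports are disjoint by Lemma~\ref{foundation:lem:v5-partition}. Its profile is $\mu\cup\nu$ after merging opposite-sign coordinates on common axes. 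Likewise two exterior vectors $\mathbf v,\mathbf v'$ give a centre difference of norm $2r+2$.

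Conversely, fix a centre difference $\mathbf w$ of norm $2r+1$ (resp.\ $2r+2$). The points $\mathbf z$ realizing it correspond to splittings of $\mathbf w$ into sign-compatible pieces of norms $r$ and $r+1$ (resp.\ $r+1,r+1$). Translation invariance (Lemma~\ref{foundation:lem:v5-transport}) then gives
\[
\M\Bigl(\sum I_\mu F_\nu\Bigr)=\sum_{\mathbf w}\varrho(\mathbf w)\,N_{\mu,\nu}(\mathbf w),
\]
where $N$ counts the splittings. The key algebraic point is that, in the stable monomial basis, the coefficient of $m_\lambda$ in a product $m_\alpha m_\beta$ is exactly the number of ways to split a vector of absolute profile $\lambda$ coordinatewise into pieces of profiles $\alpha,\beta$. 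Thus, for $Q=AB$,
\[
\sum_{\nu,\nu'}[m_\nu]A\,[m_{\nu'}]B\sum_{\mathbf v\ne\mathbf v'}(\cdots)
\]
yields $\langle AB,\bar Z\rangle$, with the diagonal term $\mathbf v=\mathbf v'$ removed. The covering–exterior cross terms similarly give the $\bar X$ contributions. Here a covering piece of norm $r$ arises as a norm-$(r+1)$ profile with one unit removed, which is exactly the operator $D$ of Lemma~\ref{common:lem:derivative}, giving $\langle D(AB),\bar X\rangle$ via the Leibniz rule.

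The diagonal terms $\mathbf v=\mathbf v'$ together with $I_{DA}I_{DB}$ (where $I_\mu^2=I_\mu$) contribute $\sum_\nu[m_\nu]A[m_\nu]B\,\Lam(m_\nu)$. This uses $\M F_\nu=g_\nu$, $\M I_\mu=g_\mu$, and $\calL(Dm_\nu)=\sum_\mu[m_\mu]Dm_\nu\, g_\mu$. The same computation with one factor replaced by the constant $1$ gives $\M(Y_A)=\Lam(A)$. This proves \eqref{common:profile-bilinear}.

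Positivity of $\mathfrak C(Q)$ follows because $\varrho\ge0$, all-even $\bar Z$ profiles vanish, and $D$ preserves nonnegativity; the all-even $\bar X$ profiles do not occur in odd degree $2r+1$. For $\mathfrak C(Vh_r)=\Lam(V)$, note that $h_r$ sums all profiles of norm $r$, so $\langle Vh_r,\bar Z\rangle$ together with $\langle D(Vh_r),\bar X\rangle$ counts, for each ball position, the tiling relation $\mathsf B_r\varrho=1$ evaluated on shells. The tiling identity then converts $\sum_{\|\mathbf y\|\le r}\varrho(\mathbf w-\mathbf y)=1$ into $\calL(V)+\calL(DV)$. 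The identity $\mathfrak C(\Pi_{2r+2,2})=(2n)^2/2$ should be derived analogously from $\mathsf B_r^{[2]}\varrho=1$: profiles with exactly two odd parts are those expressible as $2\mathbf y+\mathbf a+\mathbf a'$ with unit vectors $\mathbf a,\mathbf a'$. I expect this dilated bookkeeping, including the correct treatment of the $\bar X$ term and the counting of the $(2n)^2$ ordered pairs of units, to be the main obstacle. I would first verify it for $r=2$ directly.

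For \eqref{common:profile-square-mean}, $Y_A$ is integer-valued and vanishes off the boundary. Hence $\binom{Y_A-a}{2}\ge0$ pointwise, and expanding $\tfrac12(Y_A^2-(2a+1)Y_A+a(a+1))I_\partial$ gives the mean. The zero-one hypothesis makes the diagonal term in \eqref{common:profile-bilinear} equal to $\Lam(A)$, which combines with $-\tfrac{2a+1}{2}\Lam(A)$ to give $-a\Lam(A)$.

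For \eqref{common:target-pair-mean}, the mean identity is \eqref{common:profile-bilinear} with $A=h_{r+1}$: there $Y_{h_{r+1}}=F+I_\partial$, and $\M(Y_B I_\partial)=\Lam(B)$. To account for the diagonal terms in this case one would apply the polynomial identity with the diagonal term absorbed. The pointwise inequality follows from the partition identity $\ell(\mu)+\sum\ell(\nu)F_\nu=2n$: whenever some exterior profile $\nu$ with $[m_\nu]B>0$ is present, the remaining $2n-\ell(\nu)$ directions, each exterior vector covering at most $r+1$ of them, force $F\ge\beta$. If no such $\nu$ is present, then $Y_B$ can only be positive through the covering term, and the same partition count bounds $F$ from below.
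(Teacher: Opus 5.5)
Your overall route matches the paper's. You use the kernel identity $\M((Kf)(Lf))=\sum_{\mathbf w}(K*L)(\mathbf w)\varrho(\mathbf w)$, the subdivision rule for $[m_\lambda](AB)$, and unit deletion with the Leibniz rule for the covering--exterior terms. You also use the diagonal terms and the signed-support partition for the target bound. Your square-mean and target-mean steps go through, once you note that for $A=h_{r+1}$ the diagonal term is exactly $\Lam(B)$ and cancels $\M(Y_BI_\partial)$.

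\textbf{The gap.} The identity $\mathfrak C(\Pi_{2r+2,2})=(2n)^2/2$ is part of the statement, and you leave it as an expectation rather than proving it. The missing argument evaluates $\mathsf B_r^{[2]}\varrho=1$ at two families of points.
\begin{itemize}
\item At a signed unit $\mathbf a$, a surviving term $\varrho(\mathbf a-2\mathbf v)$ has a nonzero argument (it has an odd coordinate) of norm at most $2r+1$. Packing therefore forces norm exactly $2r+1$. Equality in the triangle inequality then gives $\|\mathbf v\|_1=r$ and sign compatibility. So the contact has exactly one odd coordinate, on the axis of $\mathbf a$ and with its sign. Conversely, each such contact arises at exactly one $\mathbf a$ with a unique $\mathbf v$. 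Summing over the $2n$ units gives $\langle\Pi_{2r+1,1},\bar X\rangle=2n$.
\item At the $2n(n-1)$ points $\mathbf a+\mathbf a'$ on two distinct axes, the contact has exactly two odd coordinates. Its norm is therefore even and lies in $(2r,2r+2]$, so it equals $2r+2$. The same bijection gives $\langle\Pi_{2r+2,2},\bar Z\rangle=2n(n-1)$.
\item Appending and deleting a unit are inverse bijections between profiles of $2r+1$ with one odd part and profiles of $2r+2$ with two odd parts that contain a unit. Hence $D\Pi_{2r+2,2}=\Pi_{2r+1,1}$.
\end{itemize}
The total is $2n+2n(n-1)=(2n)^2/2$; no count of ordered pairs of units is involved.

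\textbf{Positivity step.} $Q$ is assumed nonnegative only off the all-even profiles. So ``$D$ preserves nonnegativity'' does not apply to $Q$ directly, and the odd degree of $2r+1$ is not the relevant fact. What you need is $[m_\mu]DQ=[m_{\mu\cup(1)}]Q$ in the stable ring. Here $\mu\cup(1)$ contains a unit and so is never all-even, even when its length is $n+1$.

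\textbf{The identity $\mathfrak C(Vh_r)=\Lam(V)$.} Your sketch does not say where $\mathsf B_r\varrho=1$ is evaluated. Multiply it by $[m_{\tau(\mathbf x)}]DV$ on the shell $\|\mathbf x\|_1=r+1$, and by $[m_{\tau(\mathbf x)}]V$ on the shell $\|\mathbf x\|_1=r+2$.
\begin{itemize}
\item On the first shell, only contacts of norm $2r+1$ at distance $r$ survive.
\item On the second shell, packing and parity leave contacts of norm $2r+1$ at distance $r-1$, or of norm $2r+2$ at distance $r$.
\end{itemize}
The subdivision rule then gives $\calL(DV)=\langle(DV)h_r,\bar X\rangle$ and $\calL(V)=\langle Vh_{r-1},\bar X\rangle+\langle Vh_r,\bar Z\rangle$. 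The Leibniz identity $D(Vh_r)=(DV)h_r+Vh_{r-1}$, which uses $Dh_r=h_{r-1}$, combines these into $\mathfrak C(Vh_r)=\Lam(V)$.
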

\begin{proof}
For every physical $\mu\vdash2r+1$, stable unit deletion gives
\[
 [m_\mu]DQ=[m_{\mu\cup(1)}]Q\ge0.
\]
The appended-unit profile is not all-even, including when its length
is $n+1$. Thus $\langle DQ,\bar X\rangle\ge0$. All-even profiles
have zero $\bar Z$ weight, while every other coefficient is
nonnegative, so $\langle Q,\bar Z\rangle\ge0$. This proves the first
assertion before finite-dimensional specialization.

We use one subdivision rule throughout. For arbitrary homogeneous
symmetric functions $A,B$ and the coordinate vector
$\boldsymbol\lambda=(\lambda_1,\ldots,\lambda_{\ell(\lambda)})$,
\begin{equation}\label{small:coefficient-rule}
 [m_\lambda](AB)=\sum_{0\le a_i\le\lambda_i}
 [m_{\tau(\mathbf a)}]A\,
 [m_{\tau(\boldsymbol\lambda-\mathbf a)}]B.
\end{equation}
Indeed each contribution to the coefficient of
$x_1^{\lambda_1}\cdots x_{\ell(\lambda)}^{\lambda_{\ell(\lambda)}}$
assigns exponent $a_i$ to the first factor and $\lambda_i-a_i$ to
the second. Wrong-degree coefficients are zero. In particular,
for centrally symmetric finite kernels $K,L$, translation invariance gives
\[
 \M((Kf)(Lf))
 =\sum_{\mathbf u,\mathbf v}K(\mathbf u)L(\mathbf v)
                   \varrho(\mathbf u-\mathbf v)
 =\sum_{\mathbf w}(K*L)(\mathbf w)\varrho(\mathbf w).
\]

For $\|\mathbf x\|_1=r+1$, each surviving contact in
$(\mathsf B_r\varrho)(\mathbf x)=1$ has norm $2r+1$; equality in
the triangle inequality makes $\mathbf x$ a geodesic rank-$(r+1)$
subvector. For $\|\mathbf x\|_1=r+2$, surviving contact norms are
$2r+1$ or $2r+2$. Their distances from $\mathbf x$ equal $r-1$ or
$r$, respectively: the triangle inequality gives these lower bounds,
and parity excludes a larger permitted distance in the first case.
Multiply the equations on these two shells by
$[m_{\tau(\mathbf x)}]DV$ and $[m_{\tau(\mathbf x)}]V$, respectively,
and sum over $\mathbf x$. For each fixed contact vector, its geodesic
subdivisions are counted by \eqref{small:coefficient-rule}; hence
\[
 \calL(DV)=\langle(DV)h_r,\bar X\rangle,\qquad
 \calL(V)=\langle Vh_{r-1},\bar X\rangle+\langle Vh_r,\bar Z\rangle.
\]
Linearity allows signed coefficients; multiplication cannot shorten
support, so profiles longer than $n$ contribute nothing. Since
$D(Vh_r)=(DV)h_r+Vh_{r-1}$, addition proves
$\mathfrak C(Vh_r)=\calL(DV)+\calL(V)=\Lam(V)$.

At a signed unit point $\mathbf a$, a surviving term of
$(\mathsf B_r^{[2]}\varrho)(\mathbf a)=1$ has displacement
$\mathbf c\equiv\mathbf a\pmod2$ and
$\|\mathbf c-\mathbf a\|_1\le2r$. Packing forces norm $2r+1$,
one odd coordinate and its prescribed sign. Conversely, each such
contact contributes once. At a signed two-axis unit point the same
argument forces norm $2r+2$, exactly two odd coordinates and their
prescribed signs. Summing over these two sets of points gives
\[
 \langle\Pi_{2r+1,1},\bar X\rangle=2n,\qquad
 \langle\Pi_{2r+2,2},\bar Z\rangle=(2n)(2n-2)/2.
\]
Appending and deleting a unit are inverse operations, so
$D\Pi_{2r+2,2}=\Pi_{2r+1,1}$. Consequently
$\mathfrak C(\Pi_{2r+2,2})=2n+(2n)(2n-2)/2=(2n)^2/2$.

Write $a_\nu=[m_\nu]A$, $b_\nu=[m_\nu]B$.
For two exterior vectors, a nonzero difference has even norm at most
$2r+2$ and at least $2r+1$, hence exactly $2r+2$.
For a covering and exterior vector, their different norms exclude
the zero difference and packing forces norm $2r+1$.
In each case equality in the triangle inequality gives precisely the
subdivisions counted in \eqref{small:coefficient-rule}. The kernel
identity therefore gives
\begin{align*}
 \M(F_AF_B)&=\sum_\nu a_\nu b_\nu\calL(m_\nu)
                                      +\langle AB,\bar Z\rangle
\end{align*}
and 
\[
 \M(F_AI_{DB}+F_BI_{DA})=\langle A(DB)+B(DA),\bar X\rangle.
 \]
The covering indicators are disjoint. Unit deletion is injective
among profiles containing a unit, with inverse given by appending one;
therefore
\[
 \M(I_{DA}I_{DB})=\sum_\nu a_\nu b_\nu\calL(Dm_\nu).
\]
Adding these three identities and using $D(AB)=A(DB)+B(DA)$ proves
the bilinear formula. The field means already give
$\M(Y_A)=\calL(A)+\calL(DA)=\Lam(A)$.

When every $a_\nu\in\{0,1\}$, the bilinear diagonal for $B=A$
is $\sum_\nu a_\nu^2\Lam(m_\nu)=\Lam(A)$. Thus
\[
 \M(Y_A^2)=\mathfrak C(A^2)+\Lam(A),\qquad
 2\M\!\left(I_\partial\binom{Y_A-a}{2}\right)
 =\M(Y_A^2)-(2a+1)\M(Y_A)+a(a+1)\sigma_r.
\]
The product of consecutive integers $Y_A-a$ and $Y_A-a-1$ is
nonnegative on the boundary; off it the leading indicator is zero.
Substitution proves \eqref{common:profile-square-mean}.

If a supported profile $\nu$ contributes to $Y_B$, then either an
exterior vector has defect $r+1-\ell(\nu)$, or its unit-deleted
covering vector has defect $r-\ell(\nu\setminus(1))=r+1-\ell(\nu)$.
The signed-support partition therefore gives
\[
 \begin{aligned}
 (r+1)F
 &=2n-r+(r-s(\mathbf b))
           +\sum_{\mathbf v\in\mathcal F_{\mathbf z}}(r+1-s(\mathbf v))\\
 &\ge2n-r+(r+1-\ell(\nu))=2n+1-\ell(\nu),\\
 F&\ge\left\lceil\frac{2n+1-\ell(\nu)}{r+1}\right\rceil\ge\beta.
 \end{aligned}
\]
This includes length $n+1$: its exterior field vanishes, but the
unit-deleted covering field may occur. Longer profiles contribute
neither field. Since $B$ is monomial-nonnegative, $Y_B\ge0$, so
$Y_B(F-\beta)\ge0$. Finally
$Y_{h_{r+1}}=F+I_\partial$ and $I_\partial Y_B=Y_B$, whence
\[
 \M(Y_BF)=\M(Y_BY_{h_{r+1}})-\M(Y_B)
 =\mathfrak C(Bh_{r+1})+\Lam(B)-\Lam(B)
 =\mathfrak C(Bh_{r+1}).
\]
Subtracting $\beta\M(Y_B)=\beta\Lam(B)$ proves
\eqref{common:target-pair-mean}.
\end{proof}

\begin{corollary}\label{common:square-target-certificate}
Let $A$ have zero-or-one monomial coefficients and let $B$ have
nonnegative monomial coefficients, both of degree $r+1$. Choose
$\kappa,b\ge0$, $a\in\Z$, and $\beta\in\Z$ satisfying the target
bound in Lemma~\ref{common:contact-functional}. For homogeneous $W$
of degree $r+2$ and $\alpha\in\mathbb R$, put
\[
 Q=Wh_r-\tfrac\kappa2A^2-bBh_{r+1}+\alpha\Pi_{2r+2,2}.
\]
If every stable non-all-even coefficient of $Q$ is nonnegative, then
\begin{equation*}
 0\le\Lam(W)-\kappa a\Lam(A)+\tfrac12\kappa a(a+1)\sigma_r
       -b\beta\Lam(B)+\alpha(2n)^2/2.
\end{equation*}
\end{corollary}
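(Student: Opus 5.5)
The plan is to apply the first assertion of Lemma~\ref{common:contact-functional} to $Q$, which gives $\mathfrak C(Q)\ge0$. We then expand $\mathfrak C(Q)$ by linearity into the four identities of that lemma, and bound the two negative pieces using the profile-square mean and the target-pair mean.

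Step one: $\mathfrak C$ is linear in $Q$, because $D$, $\langle\cdot,\bar X\rangle$ and $\langle\cdot,\bar Z\rangle$ are all linear. By hypothesis every stable non-all-even coefficient of $Q$ is nonnegative, so $\mathfrak C(Q)\ge0$. Linearity then gives
\[
 0\le\mathfrak C(Q)=\mathfrak C(Wh_r)-\tfrac\kappa2\mathfrak C(A^2)
 -b\,\mathfrak C(Bh_{r+1})+\alpha\,\mathfrak C(\Pi_{2r+2,2}).
\]
Step two: substitute $\mathfrak C(Wh_r)=\Lam(W)$ and $\mathfrak C(\Pi_{2r+2,2})=(2n)^2/2$. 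Both identities hold with arbitrary signed coefficients, so the sign of $\alpha$ does not matter.

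Step three handles the negative terms. Since $A$ has zero-or-one coefficients, \eqref{common:profile-square-mean} gives $\tfrac12\mathfrak C(A^2)\ge a\Lam(A)-\tfrac12a(a+1)\sigma_r$. Multiplying by $\kappa\ge0$ yields
\[
-\tfrac\kappa2\mathfrak C(A^2)\le-\kappa a\Lam(A)+\tfrac12\kappa a(a+1)\sigma_r .
\]
Next, $B$ is monomial-nonnegative and $\beta$ satisfies the target bound. By \eqref{common:target-pair-mean}, $\mathfrak C(Bh_{r+1})=\M(Y_B(F-\beta))+\beta\Lam(B)\ge\beta\Lam(B)$. Multiplying by $b\ge0$ gives $-b\,\mathfrak C(Bh_{r+1})\le-b\beta\Lam(B)$.

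Combining the three steps produces exactly the displayed inequality. No step is hard: the only points to check are the sign conventions, namely $\kappa,b\ge0$, and that each identity of Lemma~\ref{common:contact-functional} is applied within its stated degree. $W$ has degree $r+2$, so $Wh_r$ has degree $2r+2$. $A$ and $B$ have degree $r+1$, so $A^2$ and $Bh_{r+1}$ also have degree $2r+2$. This matches the homogeneity required for $\mathfrak C$.
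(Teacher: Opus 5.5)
Your proof is correct and is essentially the paper's argument. The paper sets $P=\kappa I_\partial\binom{Y_A-a}{2}+bY_B(F-\beta)\ge0$, adds $\M P\ge0$ to $\mathfrak C(Q)\ge0$, and expands by linearity; your use of \eqref{common:profile-square-mean} and \eqref{common:target-pair-mean} to bound the two negative terms separately is the same computation written term by term.
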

\begin{proof}
Put $P=\kappa I_\partial\binom{Y_A-a}{2}+bY_B(F-\beta)\ge0$.
Linearity of $\mathfrak C$ and the three identities in the lemma give
\[
 \begin{aligned}
 0&\le\M P+\mathfrak C(Q)\\
  &=\tfrac\kappa2\mathfrak C(A^2)-\kappa a\Lam(A)
       +\tfrac12\kappa a(a+1)\sigma_r
       +b\mathfrak C(Bh_{r+1})-b\beta\Lam(B)\\
  &\quad+\Lam(W)-\tfrac\kappa2\mathfrak C(A^2)
       -b\mathfrak C(Bh_{r+1})+\alpha(2n)^2/2\\
  &=\Lam(W)-\kappa a\Lam(A)+\tfrac12\kappa a(a+1)\sigma_r
       -b\beta\Lam(B)+\alpha(2n)^2/2.
 \end{aligned}
\]
When $b=0$, take $B=0$; its support condition is empty.
\end{proof}

Let $I$ be the identity operator and put
\[
 \mathsf S_a=I+\sum_{i=1}^n([a\mathbf e_i]+[-a\mathbf e_i])
 \qquad(a\ge1).
\]
In $\mathsf S_1^2$, the coefficients at
$\boldsymbol0$, $\pm\mathbf e_i$, $\pm2\mathbf e_i$, and
$\pm\mathbf e_i\pm\mathbf e_j$ ($i\ne j$) are respectively
$2n+1$, $2$, $1$, and $2$: the zero displacement has the zero pair and
$2n$ opposite ordered pairs; a unit displacement has two placements
of the zero mark; a doubled unit has one identical pair; and two
distinct signed units have two orders. All other coefficients vanish.
These are also the coefficients of
$2\mathsf B_2+(2n)I-\mathsf S_2$. Therefore, including every dilation,
\[
 \mathsf S_1^2+\mathsf S_2=2\mathsf B_2+(2n)I,\qquad
 \mathsf S_a^2+\mathsf S_{2a}=2\mathsf B_2^{[a]}+(2n)I
 \quad(a\ge1).
\]

\subsection{The radius-two exclusion}
The quadratic star identity supplies both modular constraints and both
second moments. Define
\[
 t_a=\mathsf S_af,\qquad g=\mathsf S_2\mathsf S_1f,
 \qquad k=\mathsf S_4\mathsf S_1f.
\]
All power-of-two dilations below are permitted because $V_2$ is odd.

\begin{lemma}\label{lem:rank5moment}
Suppose $r=2$. Then, pointwise,
\begin{align}
 g+t_3&\equiv2-2n+2nt_1\pmod3,
 \label{r2u:three}\\
 k+t_5&\equiv2(2n+1)^3+((2n)^2+2n)t_1-4ng\pmod5.
 \label{eq:rank5r2}
\end{align}
Moreover,
\begin{equation}\label{eq:rank5bounds}
 \M\binom g2=(2n)^2,\qquad
 (2n)^2\le\M\binom k2\le (2n)^2+n.
\end{equation}
\end{lemma}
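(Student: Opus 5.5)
The congruences come from Frobenius reduction in the group ring $\Z[\Z^n]$. The moment identities come from the correlation function $\varrho$, using the vanishing of $\varrho$ forced by packing and by doubled packing. Throughout, $V_2=2n^2+2n+1$ is odd, so Lemma~\ref{foundation:v5:primary-dilation} gives $\mathsf B_2^{[2^e]}f=1$ for every $e\ge0$. The dilated star identity $\mathsf S_a^2+\mathsf S_{2a}=2\mathsf B_2^{[a]}+(2n)I$ with $a\in\{1,2\}$ then yields the pointwise relations
\[
\mathsf S_1^2f=2+2nf-t_2,\qquad \mathsf S_2^2f=2+2nf-t_4 .
\]
Since all operators commute, $\mathsf S_2\mathsf B_2f=(2n+1)$ and $\mathsf B_2^2f=V_2$. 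We only use dilations by powers of two, so no condition such as $\gcd(5,V_2)=1$ is ever needed.

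For \eqref{r2u:three}, the plan is as follows. For a prime $p$, the multinomial theorem in the commutative group ring gives $(I+\sum_i([\mathbf e_i]+[-\mathbf e_i]))^p\equiv\mathsf S_p\pmod p$, because the cross terms carry coefficients divisible by $p$. Hence $\mathsf S_1^pf\equiv t_p\pmod p$ pointwise. For $p=3$ we write $\mathsf S_1^3f=\mathsf S_1(2+2nf-t_2)=2(2n+1)+2nt_1-g$. Reducing $4n+2\equiv2-2n\pmod3$ gives \eqref{r2u:three}. For $p=5$ we expand
\[
\mathsf S_1^4f=(2\mathsf B_2+2nI-\mathsf S_2)^2f
\]
using the three relations above, namely $\mathsf B_2^2f=V_2$, $\mathsf B_2\mathsf S_2f=2n+1$, and $\mathsf S_2^2f=2+2nf-t_4$. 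We then apply $\mathsf S_1$ once more, with $\mathsf S_1t_2=g$ and $\mathsf S_1t_4=k$, and reduce the constants modulo $5$ using $V_2=2n^2+2n+1$. The target form $2(2n+1)^3+((2n)^2+2n)t_1-4ng$ should come out after collecting terms. This is routine bookkeeping.

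For \eqref{eq:rank5bounds}, write $g=Kf$ and $k=Lf$, where $K=\mathsf S_2\mathsf S_1$ and $L=\mathsf S_4\mathsf S_1$ are centrally symmetric kernels. The kernel identity from the proof of Lemma~\ref{common:contact-functional} gives
\[
\M(g^2)=\sum_{\mathbf w}(K*K)(\mathbf w)\varrho(\mathbf w),
\]
and similarly for $L$, while $\M(g)=\M(k)=(2n+1)^2$.

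For $g$, the kernel $K$ has norm at most $3$, so $K*K$ lives on norms at most $6$. Packing kills $\varrho$ on norms $1$ through $4$, and doubled packing kills the all-even profiles of norm $6$. The surviving terms are the diagonal term $(K*K)(\boldsymbol0)=\sum K(\mathbf u)^2$, which picks up multiplicity $2$ at $\pm\mathbf e_i$ because $2\mathbf e_i-\mathbf e_i=\boldsymbol0+\mathbf e_i$, together with the contact profiles $\bar X_\lambda$ ($\lambda\vdash5$) and $\bar Z_\lambda$ ($\lambda\vdash6$). The step I would do next is to write the restriction of $K*K$ to these shells as a stable symmetric function $Q$ of degree $6$, with $DQ$ on degree $5$. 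I would then express $Q$ as $Vh_2+\alpha\Pi_{6,2}$ plus all-even terms, so that $\mathfrak C(Vh_2)=\Lam(V)$ and $\mathfrak C(\Pi_{6,2})=(2n)^2/2$ from Lemma~\ref{common:contact-functional} evaluate it exactly. Subtracting $\M(g)$ and halving should give exactly $(2n)^2$.

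For $k$, the kernel $L*L$ reaches norm $10$. I would again split $\M\binom k2$ into the diagonal part and the contacts of norm $5,6$; these are evaluated as for $g$ and should supply the value $(2n)^2$, which gives the lower bound because every remaining $\varrho$ is nonnegative. The remaining contacts of norm $7$ through $10$ must then be shown to contribute at most $n$. For this I would use the congruence $\mathbf w\equiv(\text{difference of unit parts})\pmod 4$. Most such differences are twice a vector of norm at most $4$, and these are killed by $\varrho(2\mathbf v)=0$. The survivors, essentially the displacements $\pm4\mathbf e_i\pm4\mathbf e_j+(\text{units})$ and the corresponding $\pm8\mathbf e_i$-type terms, would be bounded by a finite-range capacity count via Lemma~\ref{foundation:lem:v5-transport}, sending weight at most $n$ per centre.

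I expect this last step to be the main obstacle: identifying exactly which norm-$7$ to norm-$10$ contacts are not excluded by doubled packing, and bounding their total mean by $n$ with the correct constant. I would first attempt the bound by assigning each surviving pair to one of the $n$ axes and using separation to show that at most one such pair occurs per axis per centre.
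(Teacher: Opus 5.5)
Your congruence argument is correct and is the paper's argument; in fact $4V_2-2=2(2n+1)^2$ exactly, so no reduction of constants is needed. Your kernel route for $g$ also works, although you left its key step unchecked. Since $D(m_3+m_{21})=m_2$, the norm-$5$ and norm-$6$ contact sum for $K$ is exactly $\mathfrak C(K_3^2)$, where $K_3=m_3+m_{21}$ is the norm-three part of $K$. Moreover $K_3^2\equiv2m_4h_2+4m_{22}h_2$ modulo all-even monomials, so these contacts contribute $2\Lam(m_4)+4\Lam(m_{22})=8n^2-4n$. Adding the diagonal $4n^2+8n+1$ gives $\M\binom g2=(2n)^2$.

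\textbf{The gap is in the $k$ moment.} Your plan breaks at the step you describe as ``as for $g$''.
\begin{itemize}
\item For $L=\mathsf S_4\mathsf S_1$, the norm-$5$ and norm-$6$ contacts are exactly $4(\bar X_5+\bar X_{41})+4(\bar Z_{51}+\bar Z_{411}+\bar Z_{33})=4\,\mathfrak C(m_{51}+m_{411}+m_{33})$.
\item $m_{51}+m_{411}+m_{33}$ is not congruent to any $Vh_2+\alpha\Pi_{6,2}$ modulo all-even monomials. Vanishing at $m_{1^6},m_{21^4},m_{31^3}$ forces $V\in\operatorname{span}(m_4,m_{22})$, and after that the coefficients of $m_{33}$ and $m_{321}$ agree, whereas the target has $1$ and $0$ there.
\item What the identities do give, from $\mathfrak C(m_4h_2)=\Lam(m_4)=2n$ and $\langle\Pi_{6,2},\bar Z\rangle=2n(n-1)$, is that this part equals $8n+4\bar Z_{33}=2(2n)^2-4(\bar Z_{51}+\bar Z_{411}+\bar Z_{321}+\bar Z_{2211})\le2(2n)^2$.
\end{itemize}
So the norm-$5,6$ part contributes at most $(2n)^2$ to $\M\binom k2$, which is the wrong direction for your lower bound.

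The norm-$7$ to norm-$10$ contacts then total exactly $4(\bar Z_{51}+\bar Z_{411}+\bar Z_{321}+\bar Z_{2211})+\sum_{\mathbf e\in\mathcal U}(\mathsf S_8\varrho)(2\mathbf e)$. Their main term is bounded a priori only by $8n(n-1)$. Hence no per-axis separation count can bound them by $2n$.

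\textbf{The missing input} is the tiling equation $\mathsf B_2\varrho=1$ at the points $\pm4\mathbf e_i\pm4\mathbf e_j$ and $\pm8\mathbf e_i$, together with $\mathsf B_2^{[4]}\varrho=1$. The points $\pm4\mathbf e_i\pm4\mathbf e_j$ are where the contact $3\mathbf e_i+3\mathbf e_j$ meets the higher contacts. The paper uses both at once:
\begin{itemize}
\item Write $\M(k^2)=(\mathsf S_4^2\mathsf S_1^2\varrho)(\boldsymbol0)$.
\item Substitute $\mathsf S_1^2\varrho=2+2n\varrho-\mathsf S_2\varrho$ and use $(\mathsf S_4^2\varrho)(\boldsymbol0)=2n+1$.
\item Rewrite the negative term as $(\mathsf S_2\mathsf S_4^2\varrho)(\boldsymbol0)$ and substitute $\mathsf S_4^2\varrho=2+2n\varrho-\mathsf S_8\varrho$.
\end{itemize}
This gives $\M\binom k2=(2n)^2+\tfrac12\sum_{\mathbf e\in\mathcal U}(\mathsf S_8\varrho)(2\mathbf e)$. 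Each summand lies in $[0,1]$ because $\mathsf S_8\varrho\le\mathsf B_2^{[4]}\varrho=1$, which yields both bounds. The same computation with $\mathsf S_2$ in place of $\mathsf S_4$ gives the $g$ identity immediately, since every summand then vanishes by doubled packing.
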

\begin{proof}
The quadratic identity and the ordinary and doubled tiling equations give
\[
 \mathsf S_1^2f=2+2nf-t_2,\qquad
 \mathsf S_2^2f=2+2nf-t_4.
\]
Every star sends the constant function $1$ to $2n+1$.
Commutation therefore gives
\begin{align*}
 \mathsf S_1^3f&=2(2n+1)+2nt_1-g,\\
 \mathsf S_1^4f
 &=2(2n+1)^2+2n(2+2nf-t_2)
                   -\mathsf S_2(2+2nf-t_2)\\
 &=2(2n+1)^2+4n+(2n)^2f-4nt_2-2(2n+1)
                   +(2+2nf-t_4)\\
 &=2(2n+1)^2+((2n)^2+2n)f-4nt_2-t_4,\\
 \mathsf S_1^5f&=2(2n+1)^3+((2n)^2+2n)t_1-4ng-k.
\end{align*}
For $p=3,5$, the multinomial theorem gives
$\mathsf S_1^p\equiv\mathsf S_p\pmod p$: every mixed multinomial
coefficient is divisible by $p$, while the pure powers give the
translations in $\mathsf S_p$. The first and last equalities thus give
\eqref{r2u:three} and \eqref{eq:rank5r2}, using
$2(2n+1)\equiv2-2n\pmod3$.

For $a\in\{2,4\}$ put $H_a=\mathsf S_a\mathsf S_1f$.
Coprime dilation and averaging give
$\mathsf B_2^{[a]}\varrho=1$. The quadratic identity then gives
\[
 \mathsf S_a^2\varrho=2+2n\varrho-\mathsf S_{2a}\varrho,
 \qquad 0\le\mathsf S_{2a}\varrho
       \le\mathsf B_2^{[a]}\varrho=1.
\]
Ordinary and $a$-dilated packing give $\varrho(\boldsymbol0)=1$,
$(\mathsf S_2\varrho)(\boldsymbol0)=1$, and
$(\mathsf S_{2a}\varrho)(\boldsymbol0)=1$; hence
$(\mathsf S_a^2\varrho)(\boldsymbol0)=2n+1$.
Writing $C_a=(\mathsf S_2\mathsf S_{2a}\varrho)(\boldsymbol0)$,
we obtain
\[
 C_a=1+\sum_{\mathbf e\in\mathcal U}
                (\mathsf S_{2a}\varrho)(2\mathbf e),
 \qquad 1\le C_a\le 2n+1.
\]
Put $\mathcal U_0=\mathcal U\cup\{\boldsymbol0\}$. Expanding the two copies
of $H_a$ and using the symmetry of $\mathcal U_0$ gives the complete
second-moment calculation:
\begin{align*}
 \M(H_a^2)
 &=\sum_{\mathbf v,\mathbf w,\mathbf u,\mathbf z\in\mathcal U_0}
       \varrho\bigl(a(\mathbf v-\mathbf w)+\mathbf u-\mathbf z\bigr)
   =(\mathsf S_a^2\mathsf S_1^2\varrho)(\boldsymbol0)\\
 &=2(2n+1)^2+2n(2n+1)
                 -(\mathsf S_2\mathsf S_a^2\varrho)(\boldsymbol0)\\
 &=2(2n+1)^2+2n(2n+1)-\{2(2n+1)+2n-C_a\}\\
 &=3(2n)^2+4n+C_a,\\
 \M\binom{H_a}{2}
 &=\frac{3(2n)^2+4n+C_a-(2n+1)^2}{2}
   =(2n)^2+\frac{C_a-1}{2}\in[(2n)^2,(2n)^2+n].
\end{align*}
Here $\M(H_a)=(2n+1)^2\M(f)=(2n+1)^2$.
For $a=2$, every term of $C_2-1$ is
$\varrho(2\mathbf e-4\mathbf v)$ with
$\mathbf e\in\mathcal U$, $\mathbf v\in\mathcal U_0$.
The vector $\mathbf e-2\mathbf v$ is nonzero and has norm at most
three, so doubled packing makes every term zero. Thus $C_2=1$.
Since $H_2=g$ and $H_4=k$, this proves \eqref{eq:rank5bounds}.
\end{proof}

The following congruence bounds convert the moment identities into
upper bounds.
\begin{lemma}\label{unified:endpoint-product}
Let $G$ be a nonnegative integer, $t\in\{0,1\}$, $p\ge3$, and
$G+t\equiv2\pmod {p}$. If $G\le B$, then
\[
 G+t-2\ge0,\qquad
 0\le(G-1)(G-2)\le(B-1)(G+t-2).
\]
\end{lemma}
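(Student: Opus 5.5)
The plan is a direct case analysis on the residue condition; no earlier results from the paper are needed, only integrality.

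\emph{Step 1 (nonnegativity of $G+t-2$).} Since $G\ge0$ and $t\ge0$, we have $G+t-2\ge-2$. By hypothesis $G+t-2\equiv0\pmod p$ with $p\ge3$. The only integers in $[-2,\infty)$ divisible by $p$ are $0,p,2p,\ldots$, because $-1$ and $-2$ are not multiples of $p\ge3$. Hence $G+t-2\in\{0\}\cup[p,\infty)$, and in particular $G+t-2\ge0$.

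\emph{Step 2 (lower bound).} For any integer $G$, the numbers $G-1$ and $G-2$ are consecutive integers. Their product is therefore nonnegative, which gives $0\le(G-1)(G-2)$.

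\emph{Step 3 (upper bound), split by the dichotomy from Step 1.}

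If $G+t-2=0$, then $G=2-t\in\{1,2\}$. So $(G-1)(G-2)=0$, and the right side is $(B-1)\cdot0=0$; the inequality holds with equality.

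Otherwise $G+t-2\ge p\ge3$. Then $G\ge p+2-t\ge4$, so $G-1$ and $G-2$ are both positive. We compare factor by factor:
\[
0<G-1\le B-1,\qquad 0<G-2\le G+t-2.
\]
The first uses $G\le B$, and the second uses $t\ge0$. Multiplying these inequalities between nonnegative quantities gives $(G-1)(G-2)\le(B-1)(G+t-2)$.

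There is no real obstacle. The only point needing care is Step 1: it is exactly the hypothesis $p\ge3$ that rules out the residues $G+t-2\in\{-2,-1\}$. Without it, the factor $G+t-2$ could be negative, and the factor comparison in Step 3 would fail.
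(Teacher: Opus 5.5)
Your proof is correct and follows essentially the same route as the paper: the divisibility of $G+t-2$ by $p\ge3$ gives nonnegativity, a degenerate case where the product vanishes is handled separately, and otherwise the factorwise comparisons $0\le G-1\le B-1$ and $0\le G-2\le G+t-2$ are multiplied. The only difference is the case split: you split on $G+t-2=0$ versus $G+t-2\ge p$, while the paper (after noting $G\ne0$) splits on $G=1$ versus $G\ge2$, and both work equally well.
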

\begin{proof}
The integer $G+t-2$ is a multiple of $p$ and at least $-2$, so it
is nonnegative. The congruence also excludes $G=0$.
At $G=1$ the product is zero. At $G\ge2$, use
$0\le G-1\le B-1$ and $0\le G-2\le G+t-2$.
\end{proof}

\begin{lemma}\label{lem:rank5transfer}
Let $p\ge2$ be an integer. Suppose $H,T$ are nonnegative
integer-valued local fields and $H+T\equiv\theta\pmod {p}$, where
$0\le\theta<p$. Put
\[
 w_p(a)=\frac{a(p-a)}{2}\quad(0\le a<p).
\]
Then, pointwise,
\[
 \theta\le H+T,\qquad
 \frac{p-1}{2}(H-T)-\binom {H}{2}\le w_p(\theta).
\]
When $\theta=0$, equality in the second inequality holds exactly for
\[
 (H,T)\in\{(0,0),(p,0),(p-1,1)\}.
\]
\end{lemma}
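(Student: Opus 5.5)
The plan is to reduce the second inequality, after one algebraic simplification, to a finite comparison at each residue, using only that $T$ is a nonnegative integer in a prescribed residue class modulo $p$.

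\emph{First inequality.} Since $H,T\ge0$ are integers with $H+T\equiv\theta\pmod p$ and $0\le\theta<p$, the integer $H+T$ is a nonnegative member of the class of $\theta$. The least nonnegative member of that class is $\theta$, so $\theta\le H+T$.

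\emph{Reformulation of the second inequality.} Observe that $\frac{p-1}{2}H-\binom H2=\frac{H(p-H)}2$. Multiplying by two, the claim is equivalent to
\[
 \Delta(H,T):=\theta(p-\theta)-H(p-H)+(p-1)T\ge0 .
\]
I will split into cases according to the size of $H$ and its position relative to $\theta$.

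\emph{Case $H\ge p$.} Here $H(p-H)\le0\le\theta(p-\theta)$ and $T\ge0$, so $\Delta\ge0$.

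\emph{Case $0\le H<p$.} Write $T=T_0+kp$ with $k\ge0$, where $T_0\in[0,p)$ is the least nonnegative residue of $\theta-H$. Since $\Delta$ increases with $T$, it suffices to treat $T=T_0$, and there are two subcases.
\begin{itemize}
\item If $H\le\theta$, put $d=\theta-H\ge0$, so $T_0=d$. A direct expansion gives $\Delta=d(2p-1-2H-d)$. This is nonnegative because $2H+d\le2\theta\le2p-2$.
\item If $H>\theta$, put $e=H-\theta\in[1,p-1]$, so $T_0=p-e$. Substituting $H=\theta+e$ and expanding, I expect to obtain
\[
 \Delta=(p-e)(p-e-1)+2\theta e\ge0 .
\]
\end{itemize}
This expansion is the only step needing care, and I would verify it by expanding $-e(p-2H+e)+(p-1)(p-e)$ with $H=\theta+e$.

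\emph{Equality when $\theta=0$.} Equality forces equality in every step above.
\begin{itemize}
\item If $H\ge p$, it requires $H(p-H)=0$ and $T=0$, hence $(H,T)=(p,0)$.
\item If $H<p$, it requires $k=0$. For $H\le\theta=0$ this gives $(0,0)$. For $H>0$ it requires $(p-H)(p-H-1)=0$ with $1\le H\le p-1$, hence $H=p-1$ and $T=T_0=1$.
\end{itemize}
Conversely, each of the three pairs gives equality by direct substitution. The computations are elementary; the main risk is an arithmetic slip in the two expansions of $\Delta$.
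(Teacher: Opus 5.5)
Your proof is correct and follows essentially the paper's route: your $\Delta$ is exactly the paper's discrepancy $2K=\theta(p-\theta)+H(H-p)+(p-1)T$, and both arguments prove its nonnegativity and classify the equality cases through explicit factorizations. The difference is only in bookkeeping. The paper eliminates $T$ by writing $H+T=pj+\theta$ and splits on $j=0$ versus $j\ge1$, which covers all $H$ at once. You instead treat $H\ge p$ directly and reduce $H<p$ to the least admissible $T$ by monotonicity. Your expansion $\Delta=(p-e)(p-e-1)+2\theta e$ checks out.
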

\begin{proof}
Write $H+T=pj+\theta$ with $j\in\mathbb Z_{\ge0}$; this gives
$H+T\ge\theta$. Define the discrepancy
\[
 2K=2w_p(\theta)-(p-1)(H-T)+H(H-1)
     =\theta(p-\theta)+H(H-p)+(p-1)T.
\]
Substituting $T=pj+\theta-H$ gives
\begin{align*}
 2K
 &=H^2-(2p-1)H+p(p-1)j+\theta(2p-1-\theta)\\
 &=\begin{cases}
 (\theta-H)(2p-1-\theta-H),&j=0,\\
 (H-p)(H-p+1)+p(p-1)(j-1)
      +\theta(2p-1-\theta),&j\ge1.
 \end{cases}
\end{align*}
If $j=0$, then $0\le H\le\theta\le p-1$, so both factors
are nonnegative. If $j\ge1$, the product $(H-p)(H-p+1)$
is nonnegative for integral $H$, and both remaining terms are
nonnegative. Thus $K\ge0$, which is the second pointwise inequality;
linearity and positivity of the mean give the corresponding averaged inequalities.

At $\theta=0$, equality in the second inequality is equivalent to $K=0$.
If $j=0$, then $H=T=0$. If $j\ge1$, the displayed nonnegative
sum vanishes precisely when $j=1$ and $H-p\in\{0,-1\}$.
Since $T=p-H$, these alternatives give $(p,0)$ and $(p-1,1)$.
This proves the complete equality classification at the same time.
\end{proof}

For radius two,  we have the following result.

\begin{lemma}\label{radius2:thm:radius-two}
For every integer $n\ge3$, the Lee ball $B_n(2)$ does not tile
$\mathbb Z^n$ by translations with an arbitrary centre set.
\end{lemma}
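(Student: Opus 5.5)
Assume a tiling with $r=2$. We combine the two congruences of Lemma~\ref{lem:rank5moment} with the transfer inequality of Lemma~\ref{lem:rank5transfer}, and then average using the exact moments \eqref{eq:rank5bounds}. The natural split is by $2n\bmod 3$ and $2n\bmod 5$. When the right-hand sides of \eqref{r2u:three} and \eqref{eq:rank5r2} reduce to fixed residues independent of $t_1$ and $g$, we obtain pointwise relations $g+t_3\equiv\theta\pmod3$ and $k+t_5\equiv\theta'\pmod5$. For example, $2n\equiv0\pmod3$ gives $g+t_3\equiv2\pmod3$. Likewise, $2n\equiv0\pmod5$ gives $k+t_5\equiv2\pmod5$, and $2n\equiv4\pmod5$ kills the $t_1$ coefficient and leaves a congruence involving $g$ only through $-4ng\equiv 2g$.

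For the first congruence, apply Lemma~\ref{lem:rank5transfer} with $p=3$, $H=g$, $T=t_3$ pointwise, and then take $\M$. We have $\M(g)=(2n+1)^2$ and $\M(t_3)=2n+1$, because $\M(\mathsf S_a f)=(2n+1)\M f$. Together with $\M\binom g2=(2n)^2$, this gives
\[
(2n+1)^2-(2n+1)-(2n)^2=2n\le \M\bigl(w_3(\theta)\bigr)
\]
in the constant-residue case. Here $\theta$ is constant, so the right side is $w_3(\theta)V_2\,/\,$normalisation. Since the mean is normalised with $\M(1)=V_2=2n^2+2n+1$, the inequality reads $2n\le w_3(\theta)V_2$. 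That is far too weak on its own, so the real content must come from evaluating $w$ only on the locus where it is nonzero, or from the equality classification. So I would instead use residue $\theta=0$ configurations and the equality cases $(H,T)\in\{(0,0),(p,0),(p-1,1)\}$. The hope is that for the right residue class the averaged inequality is forced to be an equality. Each point then has $g\in\{0,3,2\}$ with matching $t_3$, which contradicts $t_1$-structure, for instance $g\ge t_1$ and the value $g$ at a centre being at least $2n+1$.

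Concretely, my plan is the following. First, for each residue of $2n$ modulo $15$, choose whichever of the two congruences has a constant right side, possibly after subtracting a multiple of $t_1$ or $g$. The $t_1$ term can be absorbed by replacing $T$ with $T+ct_1$, which stays nonnegative. Second, apply Lemma~\ref{lem:rank5transfer}, or Lemma~\ref{unified:endpoint-product} when the residue is $2$ and $H\le B$ is known (for instance $g\le (2n+1)^2$ trivially, or better bounds from packing), and average. Third, compare with the exact means $\M\binom g2=(2n)^2$ and $(2n)^2\le\M\binom k2\le(2n)^2+n$ to get either a strict contradiction or forced equality. Fourth, in the forced-equality case, evaluate at a centre $\mathbf x\in\mathcal C$. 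There $t_1(\mathbf x)=1$ and $g(\mathbf x)\ge 2n+1$, since $\mathsf S_2\mathsf S_1 f(\mathbf x)$ counts at least the $\mathbf 0$ term and the $2n$ terms $\pm2\mathbf e_i\mp\mathbf e_i$? That does not quite work; instead we locate a point where $(H,T)$ is outside the equality set, which has positive density because $\M f=1$.

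The main obstacle is the residues of $2n$ modulo $15$ where neither congruence has a constant right side. For example, $2n\equiv1\pmod3$ gives $g+t_3\equiv 2t_1\pmod 3$. There I would split pointwise according to $t_1\in\{0,1\}$; these are the only values, since centres are $5$ apart. This gives residue $0$ or $2$ on each part, and the mean of $t_1$ is $2n+1$, which controls the weights. Finally, the small cases where $2n^2+2n+1$ is prime, or small $n$, can be quoted from Leung--Zhou~\cite{LZ} if the inequalities fail to close.
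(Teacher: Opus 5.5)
Your plan has a genuine gap exactly where the difficulty lies, namely $3\nmid n$. There the modulo-three relation never produces a contradiction, and you have no mechanism for combining it with the modulo-five relation.

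Averaging Lemma~\ref{lem:rank5transfer} with $H=g$, $T=t_3$ gives $\M(g)-\M(t_3)-\M\binom g2=2n$ on the left. Splitting by $t_1$ gives $\M(w_3(\theta))=\M(t_1)=2n+1$ for $n\equiv1\pmod3$ and $V_2$ for $n\equiv2\pmod3$. (The case $n\equiv1$ is $2n\equiv2$, where the residue is $2t_1$; for $2n\equiv1$ it is $1+t_1$, not $2t_1$ as you wrote.) Absorbing $t_1$ into $T$ gives equally harmless inequalities. For $n\equiv1$ the single unit of slack sits at the centres, so equality genuinely holds elsewhere. There is therefore no positive-density point outside the equality set, and your guess $g\ge2n+1$ at a centre is false: $g=1$ there.

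The paper uses equality not as a contradiction but as a distribution theorem. On the boundary, where $t_1=0$, the shifted field $H=g-\varepsilon$ has residue zero. The four boundary means $\M(I_\partial)=(2n)^2/2$, $\M(I_\partial g)=(2n)^2$, $\M(I_\partial t_3)=2n$ and $\M(I_\partial\binom g2)=(2n)^2-2n$ show that the nonnegative gap $P_\varepsilon$ has mean zero. Hence the boundary is concentrated on three states of $(g,t_3)$, which gives the exact evaluation formula \eqref{r2u:distribution} for $\M\Phi(g,t_1)$.

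Only with this formula can the modulo-five relation be used with its full residue $\Theta_a(g,t_1)$. The transfer lemma with $H=k$, $T=t_5$ and $\M\binom k2\le(2n)^2+n$ bounds $\M(\Theta_a)$ and $\M(w_5(\Theta_a))$. The formula then turns these into $A(n-5)^2+B(n-5)+C\le0$ with positive coefficients, a contradiction. Absorbing the $g$-term into $T$, as you suggest, puts $-2c(2n+1)^2$ on the left and destroys the inequality.

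The divisible cases also need sharp pointwise bounds that you only gesture at.
\begin{itemize}
\item For $3\mid n$, averaging Lemma~\ref{unified:endpoint-product} gives $4n^2-4n\le2n(B-1)$, so you need $g\le B$ with $B\le2n-2$; the trivial bound is useless. The paper proves $g\le n+1$ from the boundary decomposition $g=F_3+F_{21}+I_2$, the signed-support partition, and $F_3\le1$ from dilation-three packing.
\item For $5\mid n$, the same computation gives $2n^2-2n\le n(B-1)$, so $k\le2n-2$ is needed, while only $k\le2n+1$ is automatic. The paper first excludes $k=2n+1$ pointwise using $\mathsf S_2g+k=2(2n+1)+2nt_1$, the values $g=1,2$ at centres and neighbours, dilation-five packing, and the modulo-three relation (which needs $3\nmid n$). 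It then uses $k\equiv1,2\pmod5$ to reach $k\le2n-3$.
\end{itemize}

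Finally, the fallback to Leung--Zhou cannot close the remaining cases. Their theorem concerns lattice tilings and covers arbitrary centres only when $2n^2+2n+1$ is prime.
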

\begin{proof}[Proof of Lemma~\ref{radius2:thm:radius-two}]
Suppose otherwise. Set $r=2$, so ${V_2}=2n^2+2n+1$,
and retain $t_a,g,k$ from the preceding lemma.
The dilation lemma gives
\[
({\mathcal C}-{\mathcal C})\cap aB_n(4)=\{\boldsymbol0\},\qquad0\le t_a\le1
 \quad(\gcd(a,{V_2})=1).
\]
Since ${V_2}$ is odd and has residues $1,2,1$ for $n\equiv0,1,2\pmod{3}$,
this applies to $a=1,2,3,4,8$; always $0\le t_a\le {(2n+1)}$.
Each star is a sum of ${(2n+1)}$ translations, so
\begin{equation}\label{r2u:means}
 \M(1)={V_2},\quad\M(f)=1,\quad\M(t_a)={(2n+1)},
 \qquad\M(g)=\M(k)={(2n+1)}^2.
\end{equation}
For the pointwise steps we require the following local facts
\begin{equation}\label{r2u:local}
 t_1t_3=f,\quad g\ge t_3,\quad g=1\text{ on }{\mathcal C},\quad
 g=2\text{ on }\{t_1-f=1\},\quad g\le n+1,\quad k\le {(2n+1)}.
\end{equation}
Indeed simultaneous representations
$\mathbf x=\mathbf c+\mathbf v=\mathbf d+3\mathbf w$ with
$\mathbf v,\mathbf w\in\mathcal U_0$ give
$\|\mathbf c-\mathbf d\|_1\le4$, hence
$\mathbf c=\mathbf d$ and $\mathbf v=3\mathbf w=\boldsymbol0$.
Each signed axial mark $3\mathbf w$ has the representation
$2\mathbf w+\mathbf w$, including the zero mark; hence
$\mathsf S_2\mathsf S_1\succeq \mathsf S_3$ coefficientwise and $g\ge t_3$.
At a centre only the zero pair in $\mathsf S_2\mathsf S_1f$ contributes. At $\mathbf x=\mathbf c+\mathbf e$,
$\mathbf e\in \mathcal U$, packing forces $2\mathbf v+\mathbf w=\mathbf e$;
its two solutions $(\boldsymbol0,\mathbf e)$ and $(\mathbf e,-\mathbf e)$
give $g=2$.

On the boundary the exterior residuals of types $3$ and $21$ each
contribute one representation to $\mathsf S_2\mathsf S_1f$;
the covering residual contributes one exactly when its type is $2$.
Thus
\[
 g=F_3+F_{21}+I_2,\qquad t_3=F_3
 \quad\text{on the boundary}.
\]
The signed-support partition makes these exterior supports pairwise
disjoint and disjoint from the covering support. Put $v=F_3+F_{21}$.
Dilation-three packing gives $F_3\le1$, so these exterior residuals
use $F_3+2F_{21}=2v-F_3\ge2v-1$ signed directions. For covering type $[2]$,
$2v-1\le {(2n)}-1$ and $g=v+1\le n+1$; for covering type $[1,1]$,
$2v-1\le {(2n)}-2$ and $g=v\le n-1$. The case $v=0$ is immediate.
Finally $k$ is a sum of $2n+1$ indicators, proving \eqref{r2u:local}.

Lemma~\ref{lem:rank5moment} supplies both second moments. The boundary means needed below follow directly by removing
the centre and neighbour states in \eqref{r2u:local}:
\begin{align*}
 \M(I_\partial)&=\sigma_2=\frac{(2n)^2}{2},\\
 \M(I_\partial g)&=\M(g)-\M(f)-2\M(t_1-f)
              =(2n+1)^2-1-4n=(2n)^2,\\
 \M(I_\partial t_3)&=\M(t_3)-\M(f)=2n,\\
 \M\!\left(I_\partial\binom g2\right)
       &=\M\binom g2-\M(t_1-f)=(2n)^2-2n.
\end{align*}

If $3\mid n$, the boundary tuple has $g+t_3\equiv2\pmod{3}$.
Lemma~\ref{unified:endpoint-product} with $B=n+1$ gives
\[
 g+t_3-2\ge0,\qquad(g-1)(g-2)\le n(g+t_3-2).
\]
The same inequalities hold in the interior: at a centre $(g,t_3)=(1,1)$,
and at a neighbour $(g,t_3)=(2,0)$, so both sides vanish.
Applying the mean and \eqref{r2u:means} and \eqref{eq:rank5bounds} gives
\[
 \M(g+t_3-2)=(2n+1)^2+(2n+1)-2V_2=2n,\qquad
 4n^2-4n=\M((g-1)(g-2))\le2n^2,
\]
contrary to $n\ge3$. Thus $3\nmid n$.

A single shifted cyclic-transfer gap determines both residue distributions.
Put
\[
 \varepsilon=\begin{cases}
 0,&n\equiv1\pmod{3},\\
 1,&n\equiv2\pmod{3}.
 \end{cases}
 \qquad
 \varepsilon^2=\varepsilon,\qquad
 2-{(2n)}\equiv\varepsilon\pmod{3}.
\]
On the boundary, \eqref{r2u:three} gives
$g+t_3\equiv\varepsilon\pmod{3}$. We also have
$g-\varepsilon\ge0$ there. This is immediate when $\varepsilon=0$.
When $\varepsilon=1$, the only excluded possibility is $g=0$;
then $g\ge t_3\ge0$ would force $t_3=0$, contradicting
$g+t_3\equiv1\pmod{3}$. Thus the zero-residue equality clause of Lemma~\ref{lem:rank5transfer},
with $p=3$, $H=g-\varepsilon$, and $T=t_3$, proves
\[
 P_\varepsilon
 :=\frac{(g-\varepsilon)(g-\varepsilon-3)}{2}+t_3\ge0
 \quad\hbox{on the boundary},
\]
with precisely the following zero states:
\[
 (g,t_3)=(\varepsilon,0),\quad
 (\varepsilon+3,0),\quad(\varepsilon+2,1).
\]
On the boundary, expand the same gap as
\[
 P_\varepsilon=\binom g2-(1+\varepsilon)g
                +\frac{\varepsilon(\varepsilon+3)}2+t_3.
\]
Since $\varepsilon\in\{0,1\}$, its constant term is $2\varepsilon$.
The four boundary means just computed give
\begin{align*}
 \M(I_\partial P_\varepsilon)
 &=(2n)^2-2n-(1+\varepsilon)(2n)^2
             +2\varepsilon\frac{(2n)^2}{2}+2n=0.
\end{align*}

The same zero-gap identity computes every bounded function of $g,t_1$.
For such a function $\Phi$, write
\[
 u=\Phi(1,1),\quad v=\Phi(2,1),\quad
 x=\Phi(\varepsilon,0),\quad y=\Phi(\varepsilon+2,0),\quad
 z=\Phi(\varepsilon+3,0),
\]
and, on the boundary, put
\[
 \mathcal I\Phi
 =x+\frac{z-x}{3}(g-\varepsilon)
       +\left(y-\frac{x+2z}{3}\right)t_3.
\]
At $(g,t_3)=(\varepsilon,0),(\varepsilon+3,0),(\varepsilon+2,1)$
the right side is, respectively,
\[
 x,\qquad x+(z-x)=z,\qquad
 x+\frac23(z-x)+y-\frac{x+2z}{3}=y.
\]
Thus $\Phi(g,0)-\mathcal I\Phi=0$ wherever $P_\varepsilon=0$.
Moreover $P_\varepsilon$ is integer-valued, since
$(g-\varepsilon)(g-\varepsilon-3)\equiv
(g-\varepsilon)(g-\varepsilon-1)\equiv0\pmod2$.
All fields are bounded. With
$C=\|I_\partial(\Phi(g,0)-\mathcal I\Phi)\|_\infty$, therefore,
\[
 \bigl|I_\partial(\Phi(g,0)-\mathcal I\Phi)\bigr|
 \le C I_\partial P_\varepsilon,
 \qquad
 \bigl|\M(I_\partial\Phi)-\M(I_\partial\mathcal I\Phi)\bigr|
 \le C\M(I_\partial P_\varepsilon)=0.
\]
Using the three boundary means and then the centre and neighbour values,
we obtain one evaluation formula for every $\Phi$:
\begin{align}
 \M(\Phi(g,t_1))
 &=u+2nv+x\frac{(2n)^2}{2}
    +\frac{z-x}{3}\left((2n)^2-\varepsilon\frac{(2n)^2}{2}\right)
    +\left(y-\frac{x+2z}{3}\right)2n\notag\\
 &=u+2n(v+y)
   +\frac{(1+\varepsilon)(2n)^2-4n}{6}\,x
   +\frac{(2-\varepsilon)(2n)^2-8n}{6}\,z.
 \label{r2u:distribution}
\end{align}
This is an identity of means; it asserts no pointwise restriction beyond
the already proved local facts.

For an integer $z$, let $[z]_5\in\{0,1,2,3,4\}$ be its least
nonnegative residue modulo five. Suppose $5\nmid n$, and put
$a=[n]_5\in\{1,2,3,4\}$.
The modulo-five relation gives
\[
 k+t_5\equiv\Theta_a(g,t_1)\pmod5,\qquad
 \Theta_a(g,t)=[2(2a+1)^3-4ag+(4a^2+2a)t]_5.
\]
All residues required by \eqref{r2u:distribution} are
\[
\begin{array}{c|cc|ccc|ccc}
 a&\Theta_a(1,1)&\Theta_a(2,1)
  &\Theta_a(0,0)&\Theta_a(2,0)&\Theta_a(3,0)
  &\Theta_a(1,0)&\Theta_a(3,0)&\Theta_a(4,0)\\\hline
 1&1&2&4&1&2&0&2&3\\
 2&2&4&0&4&1&2&1&3\\
 3&1&4&1&2&0&4&0&3\\
 4&4&3&3&1&0&2&0&4
\end{array}
\]
The middle three columns are the boundary inputs for $\varepsilon=0$,
and the final three for $\varepsilon=1$.
Set
\[
 \eta=\varepsilon+(1-2\varepsilon)\mathbf1_{\{a=1\}}\in\{0,1\},
 \qquad
 \Phi(g,t)=\eta\Theta_a(g,t)-(1-\eta)w_5(\Theta_a(g,t)).
\]
Lemma~\ref{lem:rank5transfer}, with $p=5,H=k,T=t_5$, and
$\M\binom k2\le (2n)^2+n$ give
\[
 \M(\Theta_a)\le(2n+1)^2+(2n+1)=(2n)^2+6n+2,
\]
\[
 \M(w_5(\Theta_a))
 \ge2\{(2n+1)^2-(2n+1)\}-\left((2n)^2+n\right)
 =(2n)^2+\frac{6n}{2}.
\]
Multiplying the first inequality by $\eta$ and the negative of the
second by $1-\eta$, then adding, proves
\begin{equation}\label{r2u:tests}
 \mathcal G_5:=\M(\Phi)+(1-2\eta)(2n)^2+\frac{3-9\eta}{2}2n-2\eta\le0.
\end{equation}
No indicator hypothesis on $t_5$ has been used.

For the five values $u,v,x,y,z$ of this $\Phi$, define
\[
 \begin{aligned}
 A&=2\{(1+\varepsilon)x+(2-\varepsilon)z\}+12(1-2\eta),\\
 B&=10A+6(v+y)-2x-4z+9-27\eta,\\
 C&=5B-25A+3u-6\eta.
 \end{aligned}
\]
Substitution of \eqref{r2u:distribution} in \eqref{r2u:tests}
expands every discrepancy by the same calculation:
\begin{align*}
 3\mathcal G_5
 &=A n^2+\{6(v+y)-2x-4z+9-27\eta\}n+3u-6\eta\\
 &=A n^2+(B-10A)n+C-5B+25A\\
 &=A(n-5)^2+B(n-5)+C.
\end{align*}
Applying the displayed residue table and the definition of $\Phi$ to
these three coefficient formulas gives the complete positive coefficient table:
\[
\begin{array}{c|cccc}
 &a=1&a=2&a=3&a=4\\\hline
 \varepsilon=0&(4,24,17)&(4,33,56)&(8,63,109)&(6,45,69)\\
 \varepsilon=1&(6,45,69)&(2,16,30)&(10,86,177)&(4,20,6)
\end{array}
\qquad\text{each entry is }(A,B,C).
\]
Consequently, for $n\ge5$,
\[
 3\mathcal G_5\ge2(n-5)^2+16(n-5)+6>0,
\]
contrary to \eqref{r2u:tests}. Among $3\le n<5$, the condition
$3\nmid n$ leaves only $n=4$. Its parameters are
$(\varepsilon,a)=(0,4)$, and the same formula gives
\[
 3\mathcal G_5=6(4-5)^2+45(4-5)+69=6-45+69=30>0.
\]
Thus every dimension with $3\nmid n$ and $5\nmid n$ is excluded.

It remains that $5\mid n$ and $3\nmid n$. Here valid dilation five
makes $t_5$ an indicator, and $k+t_5\equiv2\pmod{5}$.
We have $k\le {(2n+1)}$ since it is a sum of ${(2n+1)}$ indicators. We first prove
that equality is impossible pointwise. If $k(\mathbf x)={(2n+1)}$, all
$t_1(\mathbf x-4\mathbf v)$, $\mathbf v\in\mathcal U_0$, are one, including $t_1(\mathbf x)$.
The quadratic translation identity at dilation two, multiplied by ${\mathsf S_1}f$, is \(\mathsf S_2g+k=2{(2n+1)}+{(2n)}t_1.\) 
It follows that
\[
 g(\mathbf x)+\sum_{\mathbf v\in \mathcal U}g(\mathbf x-2\mathbf v)=2{(2n+1)}+{(2n)}-{(2n+1)}=2{(2n)}+1.
\]
The first term is at least one. Each other term is at least two,
because it includes the distinct summands $t_1(\mathbf x)$ and $t_1(\mathbf x-4\mathbf v)$.
Their lower bounds already sum to $2{(2n)}+1$, so
$g(\mathbf x)=1,\qquad g(\mathbf x-2\mathbf v)=2\quad(\mathbf v\in \mathcal U).$
Since $t_1(\mathbf x)=1$, \eqref{r2u:local} implies $\mathbf x\in {\mathcal C}$.
For $\mathbf y=\mathbf x-2\mathbf v$, packing gives $t_1(\mathbf y)=0$: a centre within one of $\mathbf y$
would be within three of $\mathbf x$, while $\mathbf x$ itself is not within one of $\mathbf y$.
The pointwise mod-three relation now gives
$t_3(\mathbf y)\equiv-2n\pmod3$. Since $3\nmid 2n$ and
$t_3(\mathbf y)\in\{0,1\}$, we must have $t_3(\mathbf y)=1$.
Thus $\mathbf y=\mathbf c+3\mathbf w$ for a centre $\mathbf c$
and $\mathbf w\in\mathcal U$; the zero mark is excluded by packing.
Now $\mathbf y-2\mathbf w=\mathbf c+\mathbf w$, so
$t_1(\mathbf y-2\mathbf w)=1$. The two known unit summands exhaust
$g(\mathbf y)=2$, forcing $\mathbf w=\pm\mathbf v$.
The two alternatives give, respectively,
\[
 \mathbf c=\mathbf x-2\mathbf v-3\mathbf v=\mathbf x-5\mathbf v,
 \qquad
 \mathbf c=\mathbf x-2\mathbf v+3\mathbf v=\mathbf x+\mathbf v,
\]
contradicting dilation-five or ordinary packing. Thus $k\ne 2n+1$.

Now $k\le2n$ and $k\equiv1,2\pmod5$. Since $5\mid n$, the
three values $2n,2n-1,2n-2$ have residues $0,4,3$, respectively,
so $k\le2n-3$.
Lemma~\ref{unified:endpoint-product} gives
\[
 0\le(k-1)(k-2)\le(2n-4)(k+t_5-2).
\]
Moreover,
\[
 \M(k+t_5-2)=\M(k)+\M(t_5)-2\M(1)
 =(2n+1)^2+(2n+1)-2V_2=2n.
\]
Consequently,
\[
\begin{aligned}
 \frac{(2n)^2}{2}-2n
 &=(2n)^2-(2n+1)^2+V_2\\
 &\le\M\binom{k}{2}-\M(k)+\M(1)\\
 &=\frac12\M\bigl((k-1)(k-2)\bigr)\\
 &\le\frac{2n-4}{2}\M(k+t_5-2)
   =\frac{(2n)^2}{2}-4n.
\end{aligned}
\]
Since ${(2n)}>0$, this is impossible. Together with the common mod-three
stage, it excludes every radius-two dimension $n\ge3$.

\end{proof}

\subsection{The radius-three and radius-four exclusions}
The same square calculation treats radii three and four. At radius four,
Lemma~\ref{foundation:v5:triangular} already excludes $3\le n\le12$.
\begin{lemma}\label{unified:r3-odd}\label{unified:r4-chain}
There is no radius-three tiling in an odd dimension $n\ge5$.
There is no radius-four tiling when $n\ge13$ and
$2n\bmod5\in\{1,2,3\}$.
\end{lemma}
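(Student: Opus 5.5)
This lemma will come out of Corollary~\ref{common:square-target-certificate}, used as a certificate. For each case I choose stable symmetric functions $W$ (degree $r+2$), $A$ with zero-or-one monomial coefficients and $B$ monomial-nonnegative (degree $r+1$), together with constants $\kappa,b\ge0$, integers $a,\beta$ and a real $\alpha$. The requirement is that
\[
Q=Wh_r-\tfrac\kappa2A^2-bBh_{r+1}+\alpha\Pi_{2r+2,2}
\]
has nonnegative stable non-all-even coefficients. The right-hand side of the certified inequality should then evaluate to a strictly negative number. All the quantities $\Lam(\cdot)$ and $\sigma_r$ are explicit polynomials in $n$, computed from $g_\lambda(n)$ via $\calL$ and $D$. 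A contradiction therefore reduces to a sign check of a univariate polynomial.

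The residue hypotheses feed in through the integer choices. For $r=3$ we have $2n=4q+c$. Odd $n$ means $2n\equiv2\pmod4$, so $c=2$, and the target bound gives $\lceil(2n+1-\ell(\nu))/4\rceil\ge q+1$ for short profiles $\nu$. This lets me take $\beta=q+1=(2n+2)/4$ with $B$ supported on profiles of small length, e.g.\ $B=m_4+m_{31}$ (length $\le2$). For $r=4$ we have $2n=5q+c$ with $c\in\{1,2,3\}$, and the analogous ceiling gives $\beta=q+1$ for profiles with $\ell(\nu)\le c$. The choice of $a$ in the profile-square term will be the integer nearest the mean ratio $\Lam(A)/\sigma_r$, so that $\tfrac12 a(a+1)\sigma_r-a\Lam(A)$ is as negative as possible. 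I would first try $A=h_{r+1}-m_{1^{r+1}}$ or $A=m_{r+1}+m_{r,1}$, together with $W$ a small combination such as $h_{r+2}$ or $e_1h_{r+1}$. Note that $\Lam(Wh_r)$ is the only positive term. I would solve a small linear program in the coefficients of $W,\kappa,b,\alpha$ over the finitely many partitions of $2r+2=8$ or $10$ to enforce coefficient nonnegativity. The coefficient rule \eqref{small:coefficient-rule} gives each $[m_\lambda]Q$ directly.

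Once the certificate is fixed, I substitute $q=(2n-c)/(r+1)$ and expand the right-hand side. For $r=3$ this is a polynomial in $n$ of degree about $r=3$, leading coefficient governed by $\sigma_r\sim (2n)^r/r!$. For each $c\in\{1,2,3\}$ at $r=4$, I would show negativity for all $n\ge13$ in the required residue class (and $n\ge5$ odd at $r=3$). As in the radius-two argument, I would expand around a base point ($n-5$ or $n-13$) and exhibit all coefficients with a common sign, then check the finitely many small values directly.

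The main obstacle is finding a single certificate, uniform in $n$, whose leading-order terms cancel correctly. The $\sigma_r$ terms must cancel to top order, because the square and target terms are exactly tight when $F\equiv q$ on average. The sign must then be decided by the next-order term, where the residue $c$ enters through $\beta$ and $a$. Nonnegativity of all coefficients of $Q$ at degree $10$ ($r=4$) is the delicate part. If one certificate fails for some $c$, I would allow $W$, $A$, $B$ to depend on $c$, keeping the same template.
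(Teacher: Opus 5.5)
Your framework matches the paper's: the proof there is one application of Corollary~\ref{common:square-target-certificate} followed by a sign check in $q$. But the proposal stops exactly where the proof begins, and this is a genuine gap. The corollary holds for every admissible choice of data. The lemma is therefore precisely the claim that some admissible choice makes the scalar right-hand side negative, and you never exhibit one. ``Solve a small linear program'' does not show that a feasible point exists. Indeed, for $r=4$ and $2n\equiv0,4\pmod5$ the paper needs a different certificate with target terms (Lemma~\ref{small:exclusions}).

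What is missing is the explicit data and its verification:
$A_4=h_4-m_4$, $A_5=h_5$, $(\kappa_4,\alpha_4)=(192,-36)$, $(\kappa_5,\alpha_5)=(150,24)$, the coefficient tables for $W_4,W_5$, $b=0$ (no target term at all) and $a=q+1$. The verification consists of the monomial-nonnegative expansions of $Q_t$ modulo all-even monomials, and the evaluation $\Delta_t=-(2n)R_{t,c}(q)$ with $R_{t,c}(q_0+x)$ having positive coefficients. One certificate covers all three residues at $r=4$.

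Several of your heuristics would also steer the search wrong.

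First, the rule for $a$. The increment of $\tfrac12a(a+1)\sigma_r-a\Lam(A)$ from $a$ to $a+1$ is $(a+1)\sigma_r-\Lam(A)$, so the optimal integer is $\lfloor\Lam(A)/\sigma_r\rfloor$, not the nearest one. At $r=3$ with $A=h_4-m_4$ one has $\Lam(A)/\sigma_3=q+\tfrac32+\sigma_2/(2\sigma_3)-2n/\sigma_3$, slightly above $q+\tfrac32$. Your rule then picks $a=q+2$ and loses $\kappa(\tfrac12\sigma_3-\tfrac12\sigma_2+2n)$, which is of order $n^3$. The paper's entire margin is $-\Delta_4=2n(28q-51)=O(n^2)$.

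Second, the choice of $A$. Both of your candidates omit $m_{1^{r+1}}$. Then $\Lam(A)/\sigma_r$ stays bounded (it tends to $0$, respectively to $r$), so the optimal $a$ is independent of $q$ and the square term carries no residue information. It is the $m_{1^{r+1}}$ component that makes $Y_A$ track $F+1$ and gives $a=q+1$ its force.

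Third, the depth of cancellation. Nonnegativity of $[m_{1^{2r+2}}]Q$ forces $[m_{1^{r+2}}]W\ge\kappa(r+2)/(2(r+1))$; these are the entries $120$ and $90$ in the paper's tables. With $a=q+1$ and $b=0$, the order-$n^{r+2}$ part of the price vanishes exactly at that value, so the certificate must be tight there. For $r=3$ the price drops from terms of order $n^5$ to $-(2n)(28q-51)$, below even the order of $\sigma_3$. So the sign is not decided by a next-order term. It is decided by the lower coefficients of $W$ together with $\kappa,\alpha$, which guesses such as $h_{r+2}$ or $e_1h_{r+1}$ leave undetermined.
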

\begin{proof}
Write $2n=(r+1)q+c$ and put $t=r+1\in\{4,5\}$.
At radius three, $n$ odd and $n\ge5$ give $(t,c)=(4,2)$ and
$q=(n-1)/2\ge2$. At radius four, $n\ge13$ and $1\le c\le3$
give $5q=2n-c\ge23$, hence $q\ge5$.

For $t=4,5$ define
\[
 \begin{gathered}
 A_4=h_4-m_4=m_{31}+m_{22}+m_{211}+m_{1111},\qquad A_5=h_5,\\
 (\kappa_4,\alpha_4)=(192,-36),\qquad
 (\kappa_5,\alpha_5)=(150,24).
 \end{gathered}
\]
Both $A_t$ have zero-or-one monomial coefficients, and stable unit
deletion gives $DA_4=h_3$ and $DA_5=h_4$.

Choose degree-$(t+1)$ multipliers $W_t$ by the complete tables
\[
\begin{array}{c|rrrrrrr}
\lambda&5&41&32&311&221&2111&11111\\\hline
[m_\lambda]W_4&48&39&136&102&116&114&120
\end{array}
\]
\[
\begin{array}{c|rrrrrrrrrrr}
\lambda&6&51&42&411&33&321&3111&222&2211&21111&111111\\\hline
[m_\lambda]W_5&60&91&85&73&132&89&81&75&86&87&90
\end{array}
\]
and put
\[
 Q_t=W_th_{t-1}-\frac{\kappa_t}{2}A_t^2+\alpha_t\Pi_{2t,2}.
\]
All unlisted coefficients in the multiplier tables are zero.
For each $\lambda\vdash2t$, put $N_j=|\{i:\lambda_i\ge j\}|$.
Coordinatewise subdivision gives
\[
 [m_\lambda](m_\nu h_{2t-|\nu|})
 =\frac{\prod_{i=1}^{\ell(\nu)}(N_{\nu_i}-i+1)}
        {\prod_a m_a(\nu)!}.
\]
The product selects distinct coordinates for the descending parts of
$\nu$ and divides by permutations of equal parts. Together with
\eqref{small:coefficient-rule}, these formulas
give the complete expansions, modulo all-even profile monomials,
\begin{align*}
 Q_4\equiv{}&51m_{71}+131m_{53}+60m_{521}+9m_{51^3}
 +58m_{431}+23m_{421^2}\\
 &+100m_{3^22}+44m_{32^21}+16m_{321^3}+8m_{2^21^4},\\
 Q_5\equiv{}&3h_2m_{521}+p_3(13m_{43}+7m_{31^4})
       +m_{21}(4m_{61}+5m_{52}+2m_{421}+2m_{31^4})\\
 &+h_3(4m_{52}+11m_{51^2}+2m_{32^2})
       +m_{31}(25p_6+10m_{51}+13m_{42}+12m_{33})\\
 &+m_{41}(18m_{32}+11m_{221})
       +m_{32}(7m_{32}+24m_{311}+5h_5)+8m_{311}m_{221}.
\end{align*}
Both displayed right sides are monomial-nonnegative, so $Q_t$ satisfies the
nonnegativity hypothesis of Lemma~\ref{common:contact-functional}.

Apply Corollary~\ref{common:square-target-certificate} with
$(\kappa,b,a,B,\beta,W,\alpha)=(\kappa_t,0,q+1,0,0,W_t,\alpha_t)$.
Since $DA_t=h_r$, one has $\Lam(A_t)=\calL(A_t)+\sigma_r$, so
the scalar inequality simplifies to
\begin{align*}
 0\le{}&\Lam(W_t)-\kappa_t(q+1)\Lam(A_t)
      +\frac{\kappa_t(q+1)(q+2)}2\sigma_r
      +\frac{\alpha_t(2n)^2}2\\
 ={}&\Lam(W_t)-\kappa_t(q+1)\calL(A_t)
      +\frac{\kappa_tq(q+1)}2\sigma_r
      +\frac{\alpha_t(2n)^2}2=:\Delta_t.
\end{align*}

For the scalar evaluation, define $U_j(2n)=2^j\binom nj$ for $j\ge0$
and write $U_j=U_j(2n)$, so $j!U_j=\prod_{i=0}^{j-1}(2n-2i)$.
The orbit formula and unit deletion give
\[
 \calL(m_\nu)=\frac{\ell(\nu)!}{\prod_{a\ge1}m_a(\nu)!}U_{\ell(\nu)},
 \qquad
 \Lam(m_\nu)=\calL(m_\nu)+\calL(m_{\nu\setminus(1)}),
\]
where the second summand is zero if $\nu$ has no unit part.
Applying these formulas to the multiplier tables yields
\begin{align*}
 \Lam(W_4)&=87(2n)+670U_2+996U_3+576U_4+120U_5,\\
 \Lam(W_5)&=151(2n)+808U_2+1329U_3+1188U_4+525U_5+90U_6,\\
 \calL(A_4)&=3U_2+3U_3+U_4,\qquad
 \sigma_3=(2n)+2U_2+U_3,\\
 \calL(A_5)&=(2n)+4U_2+6U_3+4U_4+U_5,\qquad
 \sigma_4=(2n)+3U_2+3U_3+U_4.
\end{align*}
Expanding the displayed falling products in the formula for $\Delta_t$
gives
\begin{align*}
 \frac{\Delta_4}{(2n)}={}&(2n)^4-4(2n)^3+18(2n)^2-87(2n)+169
 +(16(2n)^2+32)q^2\\
 &+(-8(2n)^3+16(2n)^2-64(2n)+224)q,\\
 \frac{8\Delta_5}{2n}={}&(2n)^5-5(2n)^4+36(2n)^3-80(2n)^2+504(2n)-728\\
 &+(25(2n)^3+200(2n))q^2\\
 &+(-10(2n)^4+25(2n)^3-200(2n)^2+200(2n)-240)q.
\end{align*}
Substituting $(2n)=tq+c$ gives $\Delta_t=-(2n) R_{t,c}(q)$, where
\[
\begin{array}{c|c|l}
(t,c)&q_0&R_{t,c}(q)\\\hline
(4,2)&2&28q-51\\
(5,1)&5&50q^2-245q+34\\
(5,2)&5&(125q^3+50q^2-1040q-100)/4\\
(5,3)&5&(125q^3-175q^2-1365q-437)/4
\end{array}
\]
For $q=q_0+x$, the complete translated expansions are
\begin{align*}
 R_{4,2}(2+x)&=28x+5,\\
 4R_{5,1}(5+x)&=200x^2+1020x+236,\\
 4R_{5,2}(5+x)&=125x^3+1925x^2+8835x+11575,\\
 4R_{5,3}(5+x)&=125x^3+1700x^2+6260x+3988.
\end{align*}
Every coefficient in these four expansions is positive. Thus $q\ge q_0$
implies $x\ge0$, $-\Delta_t/(2n)>0$, and $\Delta_t<0$ since $(2n)>0$,
contradicting the scalar inequality above in every asserted class.
\end{proof}

\begin{lemma}\label{unified:r3-complete}
For every $n\ge3$, the Lee ball $B_n(3)$ does not tile $\mathbb Z^n$
by translations with an arbitrary centre set.
\end{lemma}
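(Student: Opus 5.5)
The argument splits on the residue of $2n$ modulo $r+1=4$. By Lemma~\ref{foundation:v5:small-dim} the dimensions $n=3,4,5$ are already excluded, so I assume $n\ge6$. By Lemma~\ref{unified:r3-odd} every odd $n\ge5$ is excluded. This leaves even $n\ge6$, for which $2n\equiv0\pmod4$. In the notation $2n=4q+c$ this is the class $c=0$, with $q=n/2\ge3$. The whole task is therefore to treat this single residue class, and I would run the same certificate machinery used for $c=2$.

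For $c=0$ I would apply Corollary~\ref{common:square-target-certificate}. At a boundary point the count satisfies $4F=2n-3+Z$ with $Z\ge0$, so $F\ge q$, and equality forces $Z=3$, which is the "tight" configuration.

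The natural choice is the square term with $A=h_4$ or $A=A_4=h_4-m_4$, centred at $a=q$ or $a=q-1$ rather than $q+1$, since the typical value of $Y_A$ drops by one when $c=0$. I would also consider adding a target term $bY_B(F-\beta)$ with $\beta=q$, $B=h_4$. The support condition $\beta\le\lceil(2n+1-\ell(\nu))/4\rceil$ holds for every $\nu\vdash4$: since $\ell(\nu)\le4$, we get $\lceil(4q+1-\ell)/4\rceil\ge q$.

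Next I would choose a degree-$5$ multiplier $W$ (seven coefficients on the partitions of $5$) together with $\kappa,b,\alpha$. The stable non-all-even coefficients of
\[
Q=Wh_3-\tfrac\kappa2A^2-bBh_4+\alpha\Pi_{8,2}
\]
must be nonnegative, and the scalar
\[
\Delta=\Lam(W)-\kappa a\Lam(A)+\tfrac12\kappa a(a+1)\sigma_3-bq\Lam(B)+\alpha(2n)^2/2
\]
must be negative for all $q\ge3$. Using the evaluation formulas $\calL(m_\nu)=\frac{\ell!}{\prod m_a!}U_\ell$ and $\sigma_3=2n+2U_2+U_3$, together with $2n=4q$, $\Delta$ becomes a polynomial in $q$. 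I would fit $W$ by a small linear program: require the coefficients of $Q$ modulo all-even profiles to be $\ge0$, and require the leading coefficients of $-\Delta/(2n)$ in $q$ to be positive. Then I would translate to $q=3+x$ and check that all coefficients are positive, exactly as in the table of $R_{t,c}$.

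The main obstacle is that $c=0$ is the "near-perfect" residue. There the first-moment identity gives $\M(Z)=3\sigma_3+2\sigma_2$, which is compatible with $Z=3$ almost everywhere. A plain square certificate may therefore leave $\Delta$ with leading term of order $(2n)^4$ but the wrong sign or zero. To break this degeneracy I would use the target term with $b>0$ and also exploit the doubled-packing facts, namely that all-even $\bar Z$ profiles vanish and at most one exterior residual is all-even. The latter forces one exterior vector of type $(2,2)$ or $(4)$ to be absent and creates a defect.

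If a single certificate still fails for small $q$ (say $q=3,4$, i.e.\ $n=6,8$), I would handle those dimensions separately. One option is the four-orbit duality of Lemma~\ref{foundation:lem:v5:four-orbit-duality} with adjusted weights, since Lemma~\ref{foundation:v5:triangular} only covers $r\ge4$. The other is a mod-$p$ star argument analogous to the radius-two proof with a coprime dilation $a$, where $\gcd(a,V_3)=1$.
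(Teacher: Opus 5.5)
Your reduction matches the paper's: Lemma~\ref{foundation:v5:small-dim} and Lemma~\ref{unified:r3-odd} leave exactly the even dimensions $n\ge6$, i.e.\ $2n=4q$ with $q\ge3$. The gap is that this residue class $c=0$ is the entire remaining content of the lemma, and you give no certificate for it. You exhibit no $W,\kappa,a,b,\alpha$, you concede that $\Delta$ may come out zero or with the wrong sign, and ``fit $W$ by a small linear program'' does not show that a feasible choice exists.

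There is concrete reason to expect your particular choices to fail. On the boundary $Y_{h_4}=F+I_\partial=q+1+(Z-3)/4$, so both $\binom{Y_{h_4}-a}{2}$ and $Y_{h_4}(F-q)$ are functions of the total defect $Z$ alone. Their means involve only $\Lam(h_4)$, $\sigma_3$ and $\mathfrak C(h_4^2)$. That is the information of the total-defect second moment, which in the paper only reaches the residues $1\le c\le r-1$. At $c=0$ the integrality bound $(Z-3)(Z-7)\ge0$ is saturated by $Z\in\{3,7\}$, which is compatible with $\M(Z)=3\sigma_3+2\sigma_2$. Passing to $A_4$ adds only the variable $Y_4$. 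What must be detected is how the tight boundary states split among the four types
$(Y_4,Y_{1111})=(1,q)$, $(Y_{31},Y_{211},Y_{1111})=(1,1,q-1)$, $(Y_{22},Y_{211},Y_{1111})=(1,1,q-1)$ and $(Y_{211},Y_{1111})=(3,q-2)$. Your certificates essentially do not see this split.

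The paper therefore does not use Corollary~\ref{common:square-target-certificate} for this class. It applies the bilinear identity~\eqref{common:profile-bilinear} directly to a quadratic form $P$ in the five completed fields $Y_4,Y_{31},Y_{22},Y_{211},Y_{1111}$, with coefficients of both signs (for instance $-13Y_4Y_{31}$ and $-18Y_{22}Y_{31}$). $P$ is not a square. Its pointwise nonnegativity is proved by a case analysis over $(Y_4,Y_{22})\in\{(0,0),(1,0),(0,1)\}$, using:
the completed support identity $Y_4+2Y_{31}+2Y_{22}+3Y_{211}+4Y_{1111}=4q+1$;
the doubled-packing constraints $Y_4,Y_{22}\in\{0,1\}$ and $Y_4+Y_{22}\le1$;
and $q\ge3$.
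The paper then takes $W=m_5+4m_{41}-7m_{32}+10m_{311}-4m_{221}+6m_{2111}$ and $Q=Wh_3-R-18\Pi_{8,2}$. The $U_4$, $U_3$ and $(2n)^2$ contributions all cancel, and the final scalar is $-8(2n)$. Even this tuned certificate has only a linear margin, so the existence of a certificate in your coarser family cannot be taken for granted.

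Your fallbacks do not close the gap either. Lemma~\ref{foundation:lem:four-orbit-ratios} is set up only for $r\ge4$ and would have to be redone at $r=3$. The radius-two star argument rests on $\mathsf S_1^2+\mathsf S_2=2\mathsf B_2+(2n)I$, an identity for $\mathsf B_2$ with no direct radius-three analogue. The paper's certificate is uniform in $q\ge3$, so no separate small-$q$ treatment is needed.
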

\begin{proof}
Lemma~\ref{foundation:v5:small-dim} excludes $n=3,4$, and
Lemma~\ref{unified:r3-odd} excludes odd $n\ge5$.
For the remaining even $n\ge6$, write $2n=4q$, $q\ge3$.
Abbreviate $Y_\nu=Y_{m_\nu}$, so the five completed fields are
\[
 Y_4=F_4,\quad Y_{31}=F_{31}+I_3,\quad Y_{22}=F_{22},\quad
 Y_{211}=F_{211}+I_{21},\quad Y_{1111}=F_{1111}+I_{111}.
\]
The signed-support identity, doubled packing and its parity reduction give,
on the boundary,
\begin{gather*}
 Y_4+2Y_{31}+2Y_{22}+3Y_{211}+4Y_{1111}=4q+1,\\
 Y_4,Y_{22}\in\{0,1\},\qquad Y_4+Y_{22}\le1,
 \qquad Y_4+Y_{211}\equiv1\pmod2.
\end{gather*}
Indeed the original signed-support equation is
\[
 I_3+2I_{21}+3I_{111}+F_4+2F_{31}+2F_{22}+3F_{211}+4F_{1111}=4q.
\]
Completion increases its left side by $I_\partial=1$, giving the first
identity. Reducing modulo two and adding $I_{21}$ gives
$Y_4+Y_{211}\equiv I_3+I_{21}+I_{111}=1\pmod2$.
All five completed fields vanish off the boundary.

Consider the single quadratic
\begin{align*}
 P={}&2Y_{31}^2+2Y_{211}^2+7Y_{31}Y_{211}
       +16Y_{31}Y_{1111}+8Y_{211}Y_{1111}\\
 &+3Y_{31}-2Y_{211}-24Y_{1111}-12I_\partial\\
 &+Y_4(12I_\partial-13Y_{31}-3Y_{211}+24Y_{1111})\\
 &+Y_{22}(22I_\partial-18Y_{31}-10Y_{211}+16Y_{1111}).
\end{align*}
We prove its nonnegativity from the three possible binary pairs
$(Y_4,Y_{22})$. On the boundary put
\[
 k=(Y_{211}+Y_4-1)/2,\qquad z=Y_{31}+k.
\]
These are nonnegative integers, $k\le z$, and the support identity is
$z+2k+2Y_{1111}=2q-1+Y_4-Y_{22}$.
If $(Y_4,Y_{22})=(1,0)$, then $q=z/2+k+Y_{1111}$ and direct substitution gives
\begin{align*}
 P/2&=z(z-5)+k(5z-2k)+8Y_{1111}z\\
 &\ge z(z-5)+3z(k+Y_{1111})
 =z(3q-5-z/2)\ge z(2q-5)\ge0,
\end{align*}
since $0\le k\le z\le2q$ and $q\ge3$.
If $(Y_4,Y_{22})=(0,0)$, the support identity makes $z$ odd and at least one;
substitution gives the exact nonnegative factorization
\[
 P/2=(z-1)(z+6+3k+8Y_{1111})+2k(z-k)\ge0.
\]
If $(Y_4,Y_{22})=(0,1)$, then $z$ is even and
\[
 P/2=z(z-4)+k(5z-2k-4)+8Y_{1111}z.
\]
At $z=0$ this is zero. At $z\ge4$ every term is nonnegative because
$5z-2k-4\ge3z-4>0$. The only remaining value is $z=2$, when
\[
 P/2=-4+2k(3-k)+16Y_{1111},\qquad 0\le k\le2.
\]
If $Y_{1111}\ge1$ this is at least $12$. If $Y_{1111}=0$, the support identity gives
$k=q-2\ge1$, and the expression is
$-2(k-1)(k-2)\ge0$. Thus $P\ge0$ on the boundary and is zero off it.

The quadratic part of $P$ corresponds to the homogeneous polynomial
\begin{align*}
 R={}&2m_{31}^2+2m_{211}^2+7m_{31}m_{211}
          +16m_{31}m_{1111}+8m_{211}m_{1111}\\
 &-13m_4m_{31}-3m_4m_{211}+24m_4m_{1111}
          -18m_{22}m_{31}-10m_{22}m_{211}+16m_{22}m_{1111}.
\end{align*}
The common bilinear identity~\eqref{common:profile-bilinear} gives
\[
 \M P=C+\mathfrak C(R),
 \qquad
 C=12\Lam(m_4)+5\Lam(m_{31})+22\Lam(m_{22})
          -24\Lam(m_{1111})-12\sigma_3.
\]
Here the only quadratic diagonals are $2\Lam(m_{31})$ and
$2\Lam(m_{211})$; the latter cancels the linear term $-2Y_{211}$.
Choose the single multiplier
\[
 W=m_5+4m_{41}-7m_{32}+10m_{311}-4m_{221}+6m_{2111},
 \qquad Q=Wh_3-R-18\Pi_{8,2}.
\]
Coordinatewise subdivision gives the full identity
\begin{align*}
 Q={}&4(m_{53}+m_{521}+m_{5111}+m_{431}+m_{3311})\\
 &+m_8-4m_{62}-10m_{44}-24m_{422}-48m_{2222}.
\end{align*}
The general coefficient rule~\eqref{small:coefficient-rule} gives this
identity term by term. The last five terms are all-even:
they have zero $\bar Z$ pairing and stable derivative zero.
All other coefficients are nonnegative, so $Q$ satisfies the stable
coefficient hypothesis of Lemma~\ref{common:contact-functional}.

For the scalar evaluation use $U_j=U_j(2n)$ as above.
The orbit formula and the preceding expression for $\sigma_3$ give
\begin{gather*}
 \Lam(m_4)=2n,\quad \Lam(m_{31})=2n+2U_2,\quad
 \Lam(m_{22})=U_2,\quad \Lam(m_{1111})=U_3+U_4,\\
 C=5(2n)+8U_2-36U_3-24U_4,\\
 \calL(W)=2n-6U_2+18U_3+24U_4,\qquad
 \calL(DW)=4(2n)+16U_2+18U_3,\\
 \Lam(W)=5(2n)+10U_2+36U_3+24U_4.
\end{gather*}
Since $P\ge0$ and $\mathfrak C(Q)\ge0$, the same functional gives
\begin{align*}
 0&\le\M P+\mathfrak C(Q)=C+\mathfrak C(R+Q)\\
  &=C+\mathfrak C(Wh_3-18\Pi_{8,2})
   =C+\Lam(W)-9(2n)^2\\
  &=10(2n)+18U_2-9(2n)^2=-8(2n)<0,
\end{align*}
where $2U_2=(2n)(2n-2)$. This contradiction excludes every even $n\ge6$.
\end{proof}

\begin{lemma}\label{small:exclusions}
For every $n\ge3$, the Lee ball $B_n(4)$ does not tile $\mathbb Z^n$
by translations with an arbitrary centre set.
\end{lemma}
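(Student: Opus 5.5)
Lemma~\ref{foundation:v5:triangular} with $r=4$ excludes $3\le n\le12$, since $r(r+1)/2+2=12$. So only $n\ge13$ remains. Write $2n=5q+c$ with $0\le c\le4$; then $5q\ge26-c$, so $q\ge5$. Lemma~\ref{unified:r4-chain} already excludes $c\in\{1,2,3\}$ in this range. It remains to treat the two residue classes $c=0$ and $c=4$, i.e.\ $2n\equiv0$ or $4\pmod5$.

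For these classes the general machinery is not yet available: Lemma~\ref{boundary:prop:all-nonexceptional-high} needs $r\ge5$, and Section~5 is where the two residue classes are treated for general $r$. So I would build a radius-four certificate modelled on the radius-three proof (Lemma~\ref{unified:r3-complete}). The completed fields are $Y_\nu=Y_{m_\nu}$ for $\nu\vdash5$:
\[
Y_5,\ Y_{41},\ Y_{32},\ Y_{311},\ Y_{221},\ Y_{2111},\ Y_{1^5}.
\]
On the boundary they satisfy the completed support identity
\[
\sum_\nu\ell(\nu)Y_\nu=2n+1=5q+c+1 .
\]
Doubled packing adds the constraints that at most one all-even exterior residual occurs (none of length five, so this is vacuous here) and that $Y_5\le1$. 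The parity reduction gives $Y_5+Y_{32}+Y_{2111}\equiv\ldots\pmod2$, obtained by reducing the signed-support equation modulo two as before.

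\begin{itemize}
\item For $c=4$: $2n+1=5(q+1)$, and the target-pair inequality \eqref{common:target-pair-mean} with $\beta=q+1$ is attractive. Profiles of length at most $5$ give $\lceil(2n+1-\ell)/5\rceil\ge q+1$.
\item For $c=0$: $2n+1\equiv1\pmod5$, so the square inequality \eqref{common:profile-square-mean} with $A=h_5$ and $a=q$ penalises $F$ near the value $q$.
\end{itemize}

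In either case I would apply Corollary~\ref{common:square-target-certificate} with unknowns: a multiplier $W$ of degree $6$, the weights $\kappa,b$, and $\alpha$ on $\Pi_{10,2}$. These are chosen so that every non-all-even coefficient of
\[
Q=Wh_4-\tfrac\kappa2A^2-bBh_5+\alpha\Pi_{10,2}
\]
is nonnegative. Then I would evaluate
\[
\Delta=\Lam(W)-\kappa a\Lam(A)+\tfrac12\kappa a(a+1)\sigma_4-b\beta\Lam(B)+\alpha(2n)^2/2
\]
in the $U_j(2n)$ basis, substitute $2n=5q+c$, and show that $-\Delta/(2n)$ is a polynomial in $x=q-5$ with positive coefficients, exactly as in the table for Lemma~\ref{unified:r4-chain}.

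If a single square/target certificate fails for one class (which I expect for $c=0$, the analogue of the even-$n$ radius-three case), I would instead construct a pointwise nonnegative quadratic $P$ in the $Y_\nu$ and $I_\partial$. To verify $P\ge0$, I would split into cases on the binary fields $Y_5$ (and $Y_{2111}$ parity), reparametrise by the support identity, and factor as in the radius-three argument. I would then compute $\M P=C+\mathfrak C(R)$ via \eqref{common:profile-bilinear} and choose $W$ with $Wh_4-R+\alpha\Pi_{10,2}$ monomial-nonnegative off all-even profiles. The scalar evaluation should then yield a negative multiple of $2n$.

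The main obstacle is finding the multiplier $W$, the weights, and (for $c=0$) the quadratic $P$. Their coefficients must make all roughly forty partitions of $10$ nonnegative while leaving a negative leading term in $q$. I would find them by solving the linear program over the coefficients of $W$ symbolically in $q$, starting from the radius-three template, with $m_{\lambda}$ for $\lambda\vdash6$.
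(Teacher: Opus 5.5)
Your reduction is correct and is the paper's first step: Lemma~\ref{foundation:v5:triangular} removes $3\le n\le 12$, and Lemma~\ref{unified:r4-chain} removes $2n\equiv1,2,3\pmod5$. Everything after that, however, is a search plan rather than a proof, and that is the genuine gap. For this lemma the whole content is the existence of explicit certificates. One needs a degree-six $W$, fields $A,B$, parameters $\kappa,b,a,\beta,\alpha$, an identity showing that $Q$ is monomial-nonnegative off all-even profiles, and a scalar evaluation showing $\Delta<0$ for every admissible $q$.

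The paper supplies all of these, and in both classes it uses the square and the target \emph{simultaneously}:
\begin{itemize}
\item for $2n=5q$: $A_0=m_{311}+m_{2111}+m_{1^5}$, $B_0=m_{1^5}$, and $(\kappa,b,a,\beta,\alpha)=(10,20,q,q,8)$;
\item for $2n=5q+4$: $A_1=m_{221}+m_{2111}+m_{1^5}$, $B_1=m_{2111}$, and $(50,25,q+1,q+1,197)$.
\end{itemize}
With the tabulated $W_0,W_1$ and explicit positivity identities, this gives, with $q=6+x$,
\[
-\Delta_0/(2n)=\tfrac{116}{3}+\tfrac{867}{2}x+\tfrac{875}{6}x^2+\tfrac{25}{2}x^3,
\qquad
-\Delta_1/(2n)=1+1160x+175x^2 .
\]
Your proposal gives no evidence that any certificate of your proposed shape exists.

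Several of your concrete choices also point the wrong way.
\begin{itemize}
\item For $c=0$ you square $A=h_5$ with $a=q$. But $Y_{h_5}=F+I_\partial$, and on the boundary $F\ge q$. Also $\M(I_\partial(F-q))=\sigma_5-q\sigma_4=\tfrac35\sigma_3$, which is tiny next to $\sigma_4$, so a distribution of $F$ on $\{q,q+1\}$ fits both first moments. This is exactly the residue-zero degeneracy that makes the coefficient $(r-c)(c-1)$ in Lemma~\ref{boundary:prop:total-defect-budget} equal to $-r$. Moreover, your square is strictly weaker than the $a=q+1$ square, since $\binom{F+1-q}2=\binom{F-q}2+(F-q)$. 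The paper avoids an $F$-square altogether: it squares only the sub-field $Y_{A_0}$ and adds the target $B_0$.
\item For $c=4$, your target claim fails at length five. Since $\lceil(5q+5-5)/5\rceil=q$, the hypothesis of Lemma~\ref{common:contact-functional} with $\beta=q+1$ holds only for profiles of length at most four, so $B$ must avoid $m_{1^5}$.
\item Because $2n$ is even and $c\in\{0,4\}$, $q$ is even, so $q\ge6$ rather than $q\ge5$. This matters: both of the paper's prices are negative at $q=6$ but positive at $q=5$. Demanding positivity in $x=q-5$ therefore asks for a possibly nonexistent certificate covering $2n=25,29$, values that do not occur.
\item Minor: the completed parity relation is $Y_5+Y_{311}+Y_{221}+Y_{1^5}\equiv1\pmod2$. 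Also, $Y_5\le1$ does not follow from doubled packing, because two type-$5$ residuals $5\mathbf a,5\mathbf b$ give centre difference $5(\mathbf b-\mathbf a)$, which is not of the form $2\mathbf v$.
\end{itemize}
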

\begin{proof}
Lemma~\ref{foundation:v5:triangular} excludes $3\le n\le12$,
and Lemma~\ref{unified:r4-chain} excludes
$2n\equiv1,2,3\pmod5$. Thus the remaining classes are
\[
 2n=5q,\quad q\ge6,\qquad\hbox{and}\qquad
 2n=5q+4,\quad q\ge6.
\]
Indeed, their least even values above $24$ are $30$ and $34$.
Set
\[
 A_0=m_{311}+m_{2111}+m_{11111},\qquad
 A_1=m_{221}+m_{2111}+m_{11111},\qquad
 B_0=m_{11111},\qquad B_1=m_{2111}.
\]
Apply the square-and-target criterion with $A=A_j$, $B=B_j$ and the
following parameters; both $A_j$ are zero-or-one profile sums:
\[
\begin{array}{c|c|c|c|c|c|c}
j&2n&\kappa&b&a&\beta&\alpha\\\hline
0&5q&10&20&q&q&8\\
1&5q+4&50&25&q+1&q+1&197
\end{array}
\]
The sole supported profile of $B_0$ has length five, and that of
$B_1$ has length four. The target conditions are therefore exactly
\[
 \left\lceil\frac{5q+1-5}{5}\right\rceil=q,\qquad
 \left\lceil\frac{5q+4+1-4}{5}\right\rceil=q+1.
\]
Thus every nonnegativity hypothesis for the local gap follows from
Corollary~\ref{common:square-target-certificate}.

Choose degree-six multipliers from the complete table
\[
\begin{array}{c|rrrrrrrrrrr}
\lambda&6&51&42&411&33&321&3111&222&2211&21111&111111\\\hline
{[m_\lambda]W_0}&0&0&0&3&3&-4&11&16&-2&13&30\\
{[m_\lambda]W_1}&33&1&-20&1&113&2&2&-75&52&49&30
\end{array}
\]
and put
\begin{align*}
 Q_0&=W_0h_4-5A_0^2-20B_0h_5+8\Pi_{10,2},\\
 Q_1&=W_1h_4-25A_1^2-25B_1h_5+197\Pi_{10,2}.
\end{align*}
The following full identities give nonnegativity without a contact-row table:
\begin{align*}
 Q_0={}&p_1\left(\frac32p_1p_8+p_4m_{41}+2p_5h_4
                  +\frac32m_{522}+\frac72m_{3222}\right)\\
 &+p_2\left(\frac52p_3m_{32}+9p_4e_4+3m_{41^4}\right)\\
 &+p_3\left(3p_3e_4+\frac72m_{43}+\frac52m_{2221}
                   +3m_{221^3}+5m_{21^5}\right)\\
 &+m_{21}m_{52}+e_3m_{43}+e_4m_{33}+5e_5m_{32}
                 +4m_{3331}+2m_{331^4}+m_{2221^4}\\
 &-\frac72m_{10}-9m_{82}-9m_{64}-\frac{17}2m_{622}
                  -3m_{442}+7m_{4222}+40m_{22222},\\[2mm]
 Q_1={}&\frac{53}2m_{541}+60m_{51^5}+17m_{4411}+141m_{433}
                   +96m_{3331}\\
 &+m_{331^4}+5m_{32221}+29m_{222211}
                   +\frac{217}2p_1m_{81}+43p_2m_{3311}\\
 &+h_2\left(\frac{209}2m_{62}+7m_{521}\right)
            +m_{421}(2p_3+5m_{21})+12p_3m_{221^3}\\
 &+25e_3m_{52}+\frac{39}2p_4m_{51}+87p_6m_{31}
            +\frac12m_{42}(171m_{31}+141m_{211})\\
 &+h_3(16p_7+31m_{43})\\
 &+17m_{10}-215m_{82}-\frac{119}2m_{64}-308m_{622}
              -133m_{442}-336m_{4222}-720m_{22222}.
\end{align*}
The coefficient rule~\eqref{small:coefficient-rule}, applied successively
to the displayed products, proves both identities including their complete
all-even remainders. The positive-product portions are
monomial-nonnegative, while the remainders have zero $\bar Z$ pairing
and stable derivative zero. Thus the coefficient hypotheses hold.
Corollary~\ref{common:square-target-certificate} with the two parameter
rows gives
\begin{align*}
 0\le\Delta_0:={}&\Lam(W_0)-10q\Lam(A_0)+5q(q+1)\sigma_4
                  -20q\Lam(B_0)+4(2n)^2,\\
 0\le\Delta_1:={}&\Lam(W_1)-50(q+1)\Lam(A_1)
                   +25(q+1)(q+2)\sigma_4\\
 &\hspace{14mm}-25(q+1)\Lam(B_1)+\frac{197}{2}(2n)^2.
\end{align*}

Use $U_j=U_j(2n)$ and the expression for $\sigma_4$ computed above.
The signed-orbit formula and the multiplier table give
\begin{align*}
 \Lam(W_0)&=U_2+28U_3+84U_4+95U_5+30U_6,\\
 \Lam(W_1)&=34(2n)+81U_2+102U_3+516U_4+275U_5+30U_6,\\
 \Lam(A_0)&=2U_2+6U_3+5U_4+U_5,\\
 \Lam(A_1)&=U_2+6U_3+5U_4+U_5,\\
 \Lam(B_0)&=U_4+U_5,\qquad
 \Lam(B_1)=3U_3+4U_4.
\end{align*}
Substitute these expressions into $\Delta_j$ and expand the displayed
falling products. Using $2n=5q$ in the first class and $2n=5q+4$ in
the second yields
\[
 \frac{\Delta_0}{2n}=-\frac{75q^3-475q^2+201q-74}{6},\qquad
 \frac{\Delta_1}{2n}=-175q^2+940q+659.
\]
Their signs hold uniformly, rather than by a finite check: with
$q=6+x$, $x\ge0$, the complete translated expansions are
\[
 -\frac{\Delta_0}{2n}
   =\frac{116}{3}+\frac{867}{2}x+\frac{875}{6}x^2+\frac{25}{2}x^3>0,
 \qquad
 -\frac{\Delta_1}{2n}=1+1160x+175x^2>0.
\]
Thus $\Delta_0,\Delta_1<0$, contradicting the respective nonnegative
sums and excluding both remaining classes.
\end{proof}

\subsection{Completion of the geometric reduction}\label{sec:parameter-reduction}
The preceding lemmas exclude radii two, three and four. For the
remaining radii, the geometric and remainder bounds give one reduction.
\begin{lemma}\label{foundation:thm:main-nonresidue}

If a tiling exists with $r\ge5$ and $n\ge3$, then
\[
 2n\ge(r+1)(r+2),\qquad 2n\equiv0,-1\pmod{r+1}.
\]
\end{lemma}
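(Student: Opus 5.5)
The plan is to assemble the earlier exclusions into a single case split. Fix $r\ge5$ and suppose a tiling of $\Z^n$ by $B_n(r)$ exists with $n\ge3$. Write $2n=(r+1)q+c$ with $0\le c\le r$.

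The first step is to get a lower bound on $n$ from the triangular bound. Lemma~\ref{foundation:v5:triangular} excludes $3\le n\le r(r+1)/2+2$, so
\[
2n\ge r(r+1)+6>r^2+r+4=H_r.
\]
This puts us in the range of Lemma~\ref{boundary:prop:all-nonexceptional-high}.

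The second step is the residue condition. Since $2n\ge H_r$, Lemma~\ref{boundary:prop:all-nonexceptional-high} rules out every residue $1\le c\le r-1$. Hence $c\in\{0,r\}$, which is exactly $2n\equiv0,-1\pmod{r+1}$.

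The third step upgrades the lower bound to $(r+1)(r+2)$. We know $2n\ge r^2+r+6$ and $2n\equiv0$ or $-1\pmod{r+1}$. The smallest multiples of $r+1$ near this bound are
\[
(r+1)r=r^2+r\quad\text{and}\quad(r+1)^2=r^2+2r+1.
\]
The value $r^2+r$ is already too small, so $2n\ge(r+1)^2-1$ in either residue class. Concretely:
\begin{itemize}
\item If $c=0$, then $2n\ge(r+1)^2$.
\item If $c=r$, then $2n\ge(r+1)^2-1$, since $r(r+1)-1$ is below the bound.
\end{itemize}
The endpoint values $2n=(r+1)^2$ and $2n=(r+1)^2-1$ are exactly the cases $\varepsilon=0,1$ of Lemma~\ref{foundation:thm:v5-diagonal-exclusions}. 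That lemma applies because $r\ge4$, so both endpoints are excluded. The next admissible values are then
\[
(r+1)(r+2)\quad\text{for }c=0,\qquad (r+1)(r+2)-1\quad\text{for }c=r.
\]

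The remaining difficulty is the class $c=r$, where this leaves the possibility $2n=(r+1)(r+2)-1$. That value is odd because $(r+1)(r+2)$ is even, so it cannot equal $2n$. The next value in this class is $(r+1)(r+3)-1\ge(r+1)(r+2)$. Hence $2n\ge(r+1)(r+2)$ in both classes.

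The point that needs care is the same parity check at the earlier stage. The value $(r+1)^2-1=r(r+2)$ is even only when $r$ is even, which is why the diagonal lemma covers it via $\varepsilon=1$. Likewise $(r+1)^2$ is even only when $r$ is odd. The bookkeeping of which endpoint is actually even is routine, so the main obstacle is just confirming that every admissible value strictly between $r^2+r+6$ and $(r+1)(r+2)$ is one of the two diagonal endpoints.
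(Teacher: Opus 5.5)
Your proposal is correct and follows the paper's argument. The triangular bound gives $2n\ge r(r+1)+6$, the remainder lemma restricts to $c\in\{0,r\}$, and the diagonal lemma together with the oddness of $(r+1)(r+2)-1$ removes every remaining candidate below $(r+1)(r+2)$; you merely organize this by residue class rather than listing the three candidates as the paper does, and your closing parity remark is harmless but unnecessary, since the diagonal lemma excludes both $(r+1)^2$ and $(r+1)^2-1$ outright.
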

\begin{proof}
Lemma~\ref{foundation:v5:triangular} gives
$n>r(r+1)/2+2$. Since $2n$ and $r(r+1)$ are even integers,
$2n\ge r(r+1)+6$. Lemma~\ref{boundary:prop:all-nonexceptional-high}
therefore excludes every remainder $1,\ldots,r-1$ modulo $r+1$.
Suppose now that $2n<(r+1)(r+2)$. The lower bound $2n\ge r(r+1)+6$ rules out all multiples of
$r+1$ at most $r(r+1)$ and all integers one below them. Thus any
remaining value in this interval must be one of
\[
 (r+1)^2-1,\qquad (r+1)^2,\qquad (r+1)(r+2)-1.
\]
The last number is odd. If $r$ is odd, only $(r+1)^2$ among the
first two is even; if $r$ is even, only $(r+1)^2-1$ is even.
Both possibilities contradict
Lemma~\ref{foundation:thm:v5-diagonal-exclusions}.
Thus $2n\ge(r+1)(r+2)$, as required.
\end{proof}

\section{Algebraic exclusion of the remaining residue classes}
\label{sec:analytic}
The geometric reduction leaves $r\ge5$,
$2n\ge(r+1)(r+2)$, and $2n\equiv0,-1\pmod{r+1}$.
We first turn the contact identities into one coefficient criterion.
The stable construction then treats the uniform ranges; exact finite
identities handle the five remaining rank--remainder pairs.

\subsection{A paired contact criterion}
The contact functional also gives the monomial-target criterion
used in the remaining arguments.
Let $V$ be homogeneous of degree $r+2$. For $T=m_\mu$,
$\mu\vdash r+1$, set
\[
 \delta=r+1-\ell(\mu),\qquad
 \beta=\left\lceil\frac{2n-r+\delta}{r+1}\right\rceil,
 \qquad Q=Vh_r-Th_{r+1}.
\]
If every stable non-all-even coefficient of $Q$ is nonnegative, then
\begin{equation}\label{unified:local-master-identity}
 \Lam(V)-\beta\Lam(T)\ge0.
\end{equation}
The sole supported profile of $T$ is $\mu$, and
$2n+1-\ell(\mu)=2n-r+\delta$. Lemma~\ref{common:contact-functional}
therefore gives
\[
 \begin{aligned}
 0&\le\M\bigl(Y_T(F-\beta)\bigr)+\mathfrak C(Q)\\
  &=\mathfrak C(Th_{r+1})-\beta\Lam(T)
       +\mathfrak C(Vh_r-Th_{r+1})
   =\Lam(V)-\beta\Lam(T).
 \end{aligned}
\]

\subsection{Stable coefficients and comparison tools}
For the stable construction, assume $r\ge8$ for the zero target and
$r\ge10$ for the minus target. The smaller radii are treated below.
For a list $\tau=(\tau_1\ge\cdots\ge\tau_k\ge2)$ set
$d_\tau=\sum_i(\tau_i-1)$ and $\operatorname{aut}(\tau)=\prod_{b\ge2}m_b(\tau)!$.
For a profile $\lambda$ of length $L$, let $N_b(\lambda)=|\{i:\lambda_i\ge b\}|$.
We abbreviate $N_b(\lambda)$ to $N_b$ when the profile is fixed. Define
\begin{equation*}
 \mathsf N_\tau(\lambda)=\prod_{i=1}^k(N_{\tau_i}-i+1),\qquad
 \widetilde m_{\ell,\tau}=\frac{\operatorname{aut}(\tau)}{(\ell)_k}m_{(\tau,1^{\ell-k})}.
\end{equation*}
The definition of $\widetilde m_{\ell,\tau}$ requires $k\le\ell$.
An impossible injection contributes zero; $\mathsf N_\varnothing=1$
and $\widetilde m_{\ell,\varnothing}=e_\ell$.
Thus $\mathsf N_\tau$ counts ordered injective choices of coordinates
satisfying the listed thresholds. If $k>L$, no such choice exists
and the corresponding monomial coefficient is zero.
For $k\le L$ and matching degrees,
\[
 [m_\lambda](\widetilde m_{\ell,\tau}h_{|\lambda|-\ell-d_\tau})
 =\frac{\operatorname{aut}(\tau)}{(\ell)_k}\frac{\mathsf N_\tau(\lambda)}{\operatorname{aut}(\tau)}
   \binom{L-k}{\ell-k}
 =\binom {L}{\ell}\frac{\mathsf N_\tau(\lambda)}{(L)_k}.
\]
Indeed, choose the nonunit coordinates, divide by the permutations of equal
markers, and then choose the remaining unit coordinates. The orbit formula gives
\[
 \calL(\widetilde m_{\ell,\tau})
 =\frac{\operatorname{aut}(\tau)}{(\ell)_k}
   \frac{2^\ell(n)_\ell}{\operatorname{aut}(\tau)(\ell-k)!}
 =U_\ell(2n).
\]
For $\ell>k$, deleting one unit part gives
$D\widetilde m_{\ell,\tau}
 =((\ell-1)_k/(\ell)_k)\widetilde m_{\ell-1,\tau}
 =((\ell-k)/\ell)\widetilde m_{\ell-1,\tau}$.
Applying $\calL$ yields
$\calL(D\widetilde m_{\ell,\tau})=(\ell-k)U_{\ell-1}(2n)/\ell$.
For $\ell=k$, there is no unit part and the derivative is zero.

Let $T_0=e_{r+1}$ and $T_1=m_{(2,1^{r-1})}$.
For $i\in\{0,1\}$, write a degree-$(r+2)$ multiplier as
$\sum_\tau v_{i,\tau}\widetilde m_{{(r+1)}+1-d_\tau,\tau}$.
At defect $d$, put $L=2{(r+1)}-d$ and evaluate on the profile consisting of $\tau$
and $L-k$ units. Its $k$ nonunit positions give
\[
 [m_{(\tau,1^{L-k})}](T_i h_{r+1})
 =\begin{cases}
 \displaystyle\binom L{r+1},&i=0,\\[2pt]
 \displaystyle k\binom{L-1}{r-1},&i=1
 \end{cases}
 =(k)_i\binom{L-i}{r+1-2i}.
\]
A fitting marker of defect $d$ must use all nonunit positions and match
each threshold exactly: an unused position or a strict inequality would
make its defect smaller than $d$. Thus the only same-defect marker that
fits is $\tau$ itself, and
$\mathsf N_\tau(\tau)=\operatorname{aut}(\tau)$. Solving for its coefficient gives
the triangular rule
\[
 v_{i,\tau}=\frac{(L)_k}{\operatorname{aut}(\tau)\binom L{{(r+1)}+1-d}}
 \left((k)_i\binom{L-i}{r+1-2i}-
 \sum_{d_\sigma<d}v_{i,\sigma}\binom {L}{{(r+1)}+1-d_\sigma}
       \frac{\mathsf N_\sigma(\tau)}{(L)_{\ell(\sigma)}}\right).
\]
All denominators here are positive for the listed defects. For example,
the first two zero-target entries are
\[
 v_{0,\varnothing}
 =\frac{\binom{2r+2}{r+1}}{\binom{2r+2}{r+2}}
 =\frac{r+2}{r+1},\qquad
 v_{0,2}=(2r+1)\left(1-\frac{r+2}{r+1}\frac r{r+2}\right)
 =\frac{2r+1}{r+1}.
\]
The remaining entries follow successively from the same rule, which
also verifies matching. The listed markers exhaust the partitions of
each defect after subtracting one from every nonunit part.

For the target $T_0$, define
\begin{equation*}
 W_4=\sum_{d_\tau\le4}v_{0,\tau} \widetilde m_{{(r+1)}+1-d_\tau,\tau},
\end{equation*}
with coefficients
\[
\begin{array}{c|c@{\qquad}c|c}
\tau&v_{0,\tau}&\tau&v_{0,\tau}\\\hline
\varnothing&(r+2)/{(r+1)}&2&(2{(r+1)}-1)/{(r+1)}\\
3&(2{(r+1)}-3)(r)/{(r+1)}^2&22&-(2{(r+1)}-3)(r)/{(r+1)}^2
\end{array}
\]
For the next two layers put $b_3=(r+1)^2$ and
$b_4=(r+1)^2(r-1)$. The table gives $b_dv_{0,\tau}$:
\[
\begin{gathered}
\begin{array}{c|c|l}
d&\tau&b_dv_{0,\tau}\\\hline
3&4&2{(r+1)}^2-11{(r+1)}+13\\
 &32&-2(4{(r+1)}^2-18{(r+1)}+19)\\
 &222&(2{(r+1)}-5)(3{(r+1)}-5)\\\hline
4&5&2{(r+1)}^3-23{(r+1)}^2+74{(r+1)}-71\\
 &42&-(r-4)(2{(r+1)}-5)(6{(r+1)}-11)\\
 &33&-(3{(r+1)}-8)(2{(r+1)}^2-13{(r+1)}+17)\\
 &322&38{(r+1)}^3-313{(r+1)}^2+812{(r+1)}-669\\
 &2222&-(2{(r+1)}-7)(11{(r+1)}^2-46{(r+1)}+47)
\end{array}
\end{gathered}
\]
Consequently $T_0h_{r+1}-W_4h_{r}$ vanishes on every profile of defect at most four.

For the target $T_1$, define
\begin{equation*}
 V_5=\sum_{d_\tau\le5}v_{1,\tau} \widetilde m_{{(r+1)}+1-d_\tau,\tau},
\end{equation*}
with $v_{1,\varnothing}=0$ and
$v_{1,2}={(r+1)}-1$, $v_{1,3}=(r)/{(r+1)}$, $v_{1,22}=(2{(r+1)}-3)(r)/{(r+1)}$, and
$v_{1,\tau}=\varphi_d A_{d,\tau}$ in the remaining layers, where
\[
 \varphi_3=\frac{r}{r+1},\qquad \varphi_4=\frac{r}{{(r+1)}(r-1)},\qquad
 \varphi_5=\frac{r}{{(r+1)}(r-1)(r-2)},
\]
\[
\begin{array}{c|c|l}
d&\tau&A_{d,\tau}\\\hline
3&4&1\\ &32&2(r-2)\\ &222&-(2{(r+1)}-5)\\\hline
4&5&{(r+1)}-3\\ &42&2{(r+1)}^2-16{(r+1)}+27\\ &33&-(3{(r+1)}-8)\\
 &322&-(8{(r+1)}^2-53{(r+1)}+81)\\ &2222&(2{(r+1)}-7)(3{(r+1)}-7)\\\hline
5&6&{(r+1)}^2-8{(r+1)}+13\\ &52&2{(r+1)}^3-30{(r+1)}^2+123{(r+1)}-147\\
 &43&-(10{(r+1)}^2-67{(r+1)}+101)\\ &422&-(12{(r+1)}^3-145{(r+1)}^2+525{(r+1)}-582)\\
 &332&-(6{(r+1)}^3-91{(r+1)}^2+381{(r+1)}-466)\\
 &3222&2(19{(r+1)}^3-208{(r+1)}^2+711{(r+1)}-762)\\
 &22222&-(2{(r+1)}-9)(11{(r+1)}^2-60{(r+1)}+79)
\end{array}
\]
Thus $T_1h_{r+1}-V_5h_{r}$ vanishes through defect five. In each layer
$\sum_\tau A_{d,\tau}=0$, while
\[
 \sum_\tau\ell(\tau)A_{d,\tau}=
 \begin{cases}-2(r-1),&d=3,\\
 2(r-1)(2{(r+1)}-5),&d=4,\\
 -4(r-1)(r-2)(2{(r+1)}-5),&d=5.
 \end{cases}
\]
These identities follow by adding the displayed polynomials; they retain the
derivative charge even though each layer has zero ordinary mean.

Put $\eta_1={(r+1)}(r-10)/18$ and
$\rho={(r+1)}(r-10)^2/100$. Write
\begin{align*}
 C_0&=\sum_\tau y_{0,\tau}\widetilde m_{{(r+1)}-4,\tau},\\
 C_1&=\sum_\tau y_{1,\tau}\widetilde m_{{(r+1)}-5,\tau},
\end{align*}
with nonzero coefficients
\[
\begin{array}{c|c@{\qquad}c|c}
\tau&y_{0,\tau}&\tau&y_{1,\tau}\\\hline
6&{(r+1)}^2/6&7&3{(r+1)}^2\\
422&66(r-6)/{(r+1)}&62&34(r-10)+17{(r+1)}/2+2\rho\\
332&10(r-6)&44&4+\rho\\
22222&120(r-5)/{(r+1)}&432&48+(r+1)(r-10)^2/12\\
&&4222&132(r-7)+\rho\\
&&333&64+\rho\\
&&3322&132(r-8)\\
 &&222222&180(r-5)\\
 &&522&2+\eta_1\\
 &&53&\eta_1
\end{array}
\]
All coefficients are nonnegative in their respective rank ranges. The added profiles have defects five and
six respectively, so they do not disturb the preceding exact matching.
Lemma~\ref{alg:uniform} will combine three ingredients: nonnegative
coefficients for profiles containing a large part, comparisons on the
remaining bounded alphabets, and strictly negative scalar contributions.
We first establish the tools shared by those comparisons.

Sampling the nonunit coordinates gives the same coefficient formula
for the bounded-alphabet and finite-rank arguments.
\begin{lemma}\label{alg:sample}
Let $\lambda\vdash M$ have $L$ parts, of which $k\ge h\ge1$ exceed one.
Choose a uniform $h$-subset of the nonunit positions and call its
sublist $X$. Write $\nu=(\alpha,2^b,1^u)$, where all parts of
$\alpha$ exceed two, $p=\ell(\alpha)\le h$, and $|\nu|\le M$.
For $p+b\le k$,
\begin{equation}
 [m_\lambda](m_\nu h_{M-|\nu|})
   =\mathbb E\left[
       \frac{(k)_{p+b}\mathsf N_\alpha(X)}
            {(h)_p\operatorname{aut}(\alpha)b!}
             \binom{L-p-b}{u}\right].
       \label{common:sample-coefficient}
\end{equation}
When $p+b>k$, both sides are zero; set the sampled expression
to zero before evaluating its binomial factor. With $E=M-L-k$,
\begin{equation}\label{common:sample-excess}
 \mathbb E\sum_{x\in X}(x-2)=\frac{hE}{k}.
\end{equation}
\end{lemma}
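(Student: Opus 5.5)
The plan is to reduce both sides of \eqref{common:sample-coefficient} to the same direct count of coordinate assignments, using the subdivision rule \eqref{small:coefficient-rule}.

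\textbf{Direct count.} Fix $\lambda$ with $L$ parts, $k$ of them nonunit. Since $h_{M-|\nu|}$ has every monomial coefficient equal to one, the rule \eqref{small:coefficient-rule} shows that $[m_\lambda](m_\nu h_{M-|\nu|})$ counts the distinct monomials of $m_\nu$ dominated coordinatewise by $\lambda$. Equivalently, it counts the injective placements of the parts of $\nu$ into the coordinates of $\lambda$ with each placed part at most the receiving coordinate, taken modulo permutations of equal parts. I would count these placements in three stages.
\begin{itemize}
\item The parts of $\alpha$ all exceed two, so they must go to nonunit coordinates. Listing them in descending order and choosing coordinates greedily gives $\mathsf N_\alpha(\lambda)/\operatorname{aut}(\alpha)$ placements. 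This is the same threshold-injection count used earlier for $\widetilde m_{\ell,\tau}$.
\item The $b$ parts equal to two need nonunit coordinates not already used. Every remaining nonunit coordinate is at least two, so there are $\binom{k-p}{b}$ choices.
\item The $u$ unit parts go anywhere among the $L-p-b$ remaining coordinates, giving $\binom{L-p-b}{u}$ choices.
\end{itemize}
The coefficient is therefore
\[
\frac{\mathsf N_\alpha(\lambda)}{\operatorname{aut}(\alpha)}\binom{k-p}{b}\binom{L-p-b}{u}.
\]
When $p+b>k$, the binomial $\binom{k-p}{b}$ vanishes, or $\mathsf N_\alpha(\lambda)=0$ if $p>k$, so the convention stated for that case is consistent.

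\textbf{Averaging identity.} The key step is to show
\[
\mathbb E\,\mathsf N_\alpha(X)=\frac{(h)_p}{(k)_p}\,\mathsf N_\alpha(\lambda).
\]
Here $\mathsf N_\alpha(\lambda)$ counts ordered injective $p$-tuples of nonunit positions satisfying the thresholds, since unit coordinates never meet a threshold $\ge 3$. Likewise, $\mathsf N_\alpha(X)$ counts those tuples lying inside the random $h$-subset. A fixed $p$-tuple of distinct positions lies in a uniform $h$-subset of the $k$ nonunit positions with probability $\binom{k-p}{h-p}/\binom kh=(h)_p/(k)_p$. Linearity of expectation then gives the identity, and it needs $p\le h$, which is assumed.

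Substituting, the right side of \eqref{common:sample-coefficient} becomes
\[
\frac{(k)_{p+b}}{(k)_p\,\operatorname{aut}(\alpha)\,b!}\,\mathsf N_\alpha(\lambda)\binom{L-p-b}{u}.
\]
Since $(k)_{p+b}/\big((k)_p\,b!\big)=\binom{k-p}{b}$, this matches the direct count.

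\textbf{Excess identity.} For \eqref{common:sample-excess}, each nonunit position lies in $X$ with probability $h/k$. Linearity gives $\mathbb E\sum_{x\in X}(x-2)=(h/k)\sum_{\lambda_i\ge2}(\lambda_i-2)$. Because the units contribute $L-k$ to the size, $\sum_{\lambda_i\ge2}\lambda_i=M-(L-k)$, so the sum equals $M-L+k-2k=M-L-k=E$.

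The main obstacle is the direct count: one must confirm that equal parts of $\alpha$ and of $2^b$ are divided out exactly once, and that the thresholds for $\alpha$ never let its parts land on unit coordinates. Once the count is written as (threshold injections)$\times$(twos)$\times$(units), the remaining steps are routine algebra.
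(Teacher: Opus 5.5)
Your proposal is correct and follows essentially the paper's argument. The inclusion probability $(h)_p/(k)_p$ of a fixed ordered $p$-tuple gives $\mathbb E\,\mathsf N_\alpha(X)=\frac{(h)_p}{(k)_p}\mathsf N_\alpha(\lambda)$; the twos then contribute $(k-p)_b/b!=\binom{k-p}{b}$, the units contribute $\binom{L-p-b}{u}$, and the excess identity comes from the inclusion probability $h/k$. The only cosmetic difference is that you choose the two-slots as an unordered set, whereas the paper counts them as ordered slots and divides by $b!$.
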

\begin{proof}
Each ordered admissible $p$-tuple is sampled with probability
\[
 \frac{\binom{k-p}{h-p}}{\binom kh}
 =\frac{h!(k-p)!}{(h-p)!k!}=\frac{(h)_p}{(k)_p}.
\]
After fixing the $\alpha$-slots, the ordered two-slots have
$(k-p)_b$ choices. Thus
\begin{equation}\label{common:sample-flag}
 \begin{aligned}
 \mathbb E\mathsf N_\alpha(X)
 &=\frac{(h)_p}{(k)_p}\mathsf N_\alpha(\lambda),\\
 \mathsf N_{(\alpha,2^b)}(\lambda)
 &=(k-p)_b\mathsf N_\alpha(\lambda)
 =(k)_{p+b}\mathbb E\frac{\mathsf N_\alpha(X)}{(h)_p}.
 \end{aligned}
\end{equation}
Divide by $\operatorname{aut}(\alpha)b!$ to forget the order of equal
nonunit slots, then choose the $u$ unit-slots from the $L-p-b$
remaining positions. Each placement leaves one monomial of
$h_{M-|\nu|}$, proving \eqref{common:sample-coefficient}.
For $p+b>k$ no such placement exists. Finally, each nonunit position
is sampled with probability $h/k$, so
\[
 \mathbb E\sum_{x\in X}(x-2)
 =\frac hk\sum_{\lambda_j\ge2}(\lambda_j-2)
 =\frac hk\{M-(L-k)-2k\}=\frac{hE}{k}.
\]
\end{proof}

The coefficient comparisons use the same two positive bases. We record
both coefficient transforms and the common envelope rule once.
\begin{lemma}\label{common:bernstein-envelope}
Let $P_\sigma=\sum_\alpha p_{\sigma\alpha}b_\alpha$ be a finite family,
where every $b_\alpha$ is nonnegative on the domain in question.
If $0\le\theta_\sigma\le1$, with $\theta_\sigma=1$ for $\sigma\in I$, then
\begin{equation}\label{common:positive-envelope}
 \sum_\sigma\theta_\sigma P_\sigma\ge
 \sum_\alpha\left(\sum_{\sigma\in I}p_{\sigma\alpha}
       +\sum_{\sigma\notin I}\min(0,p_{\sigma\alpha})\right)b_\alpha.
\end{equation}
Also $P_\sigma\ge\sum_\alpha(\min_\nu p_{\nu\alpha})b_\alpha$.
In the ordinary monomial basis we denote the latter polynomial by
$\operatorname{Env}_{\mathbf z}\{P_\nu\}$, taking missing coefficients as zero.

For $P(\mathbf x)=\sum_\beta p_\beta\mathbf x^\beta$ of degree at most $d$
and $\zeta\ge0$, define, for $|\alpha|=d$ and $0\le h\le d$,
\[
 \operatorname{Rad}_{d,\zeta}(P)_{h,\alpha}
 =\sum_{\substack{\beta\le\alpha\\h\le|\beta|}}
 p_\beta\binom{|\beta|}{h}\zeta^{|\beta|-h}
       \binom{d-|\beta|}{\alpha-\beta}.
\]
Here the last factor is multinomial and $0^0=1$. Then
\[
 P((\zeta+t)\mathbf u)
 =\sum_{h=0}^d\sum_{|\alpha|=d}
       \operatorname{Rad}_{d,\zeta}(P)_{h,\alpha}t^h\mathbf u^\alpha
 \quad(t,\mathbf u\ge0,\ \textstyle\sum_j u_j=1).
\]
For $P(\mathbf x,z)=\sum_{\alpha,a}p_{\alpha a}\mathbf x^\alpha z^a$
and $d\ge\deg_zP$, define
\[
 \operatorname{Ber}_{d,z}(P)_{\alpha,b}
 =\sum_{a=0}^b\frac{\binom ba}{\binom da}p_{\alpha a}
 \quad(0\le b\le d).
\]
Then
\[
 P(\mathbf x,z)=\sum_{\alpha,b}
 \operatorname{Ber}_{d,z}(P)_{\alpha,b}\mathbf x^\alpha
       \binom db z^b(1-z)^{d-b}.
\]
These are nonnegative bases on their displayed domains, so the envelope
rule applies. In particular, on $\mathbf x\ge0$, $0\le z\le1$,
\[
 P\ge\sum_\alpha\min_{0\le b\le d}
             \operatorname{Ber}_{d,z}(P)_{\alpha,b}\mathbf x^\alpha.
\]
For a finite family, the minimum may also range over its members.
\end{lemma}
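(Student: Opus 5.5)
The statement bundles three separate assertions: the envelope inequality, the radial expansion identity, and the Bernstein expansion identity. Each is a finite algebraic identity or a termwise inequality, and I will prove them in that order.

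\emph{Envelope rule.} Write $\sum_\sigma\theta_\sigma P_\sigma=\sum_\alpha\bigl(\sum_\sigma\theta_\sigma p_{\sigma\alpha}\bigr)b_\alpha$. Since every $b_\alpha\ge0$, it suffices to bound each inner coefficient from below. For $\sigma\in I$ we have $\theta_\sigma p_{\sigma\alpha}=p_{\sigma\alpha}$. For $\sigma\notin I$, the constraint $0\le\theta_\sigma\le1$ gives $\theta_\sigma p_{\sigma\alpha}\ge\min(0,p_{\sigma\alpha})$, since the product is minimized at an endpoint. Summing over $\sigma$ and pairing with $b_\alpha\ge0$ yields \eqref{common:positive-envelope}. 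The single-member bound is the same reasoning with one index: $p_{\sigma\alpha}\ge\min_\nu p_{\nu\alpha}$, multiplied by $b_\alpha\ge0$.

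\emph{Radial identity.} Substitute $\mathbf x=(\zeta+t)\mathbf u$ into a monomial $\mathbf x^\beta$. This gives $(\zeta+t)^{|\beta|}\mathbf u^\beta$. Expand $(\zeta+t)^{|\beta|}=\sum_h\binom{|\beta|}{h}\zeta^{|\beta|-h}t^h$; the convention $0^0=1$ is what makes the case $\zeta=0$ work. Then homogenize the $\mathbf u$ part to degree $d$ by multiplying by $(\sum_j u_j)^{d-|\beta|}=1$, and expand with the multinomial theorem as $\sum_{\gamma}\binom{d-|\beta|}{\gamma}\mathbf u^\gamma$. Set $\alpha=\beta+\gamma$. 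Collecting the coefficient of $t^h\mathbf u^\alpha$ over all $\beta\le\alpha$ with $|\beta|\ge h$ reproduces $\operatorname{Rad}_{d,\zeta}(P)_{h,\alpha}$. Nonnegativity of the basis $t^h\mathbf u^\alpha$ on $t\ge0$, $\mathbf u\ge0$ is immediate.

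\emph{Bernstein identity.} Work coefficientwise in $\mathbf x^\alpha$, so only the one-variable claim $z^a=\sum_{b\ge a}\frac{\binom ba}{\binom da}\binom db z^b(1-z)^{d-b}$ for $a\le d$ is needed. To prove it, write $z^a=z^a(z+(1-z))^{d-a}$ and expand binomially. The coefficient of $z^b(1-z)^{d-b}$ is $\binom{d-a}{b-a}$, and the identity follows from $\binom{d-a}{b-a}\binom da=\binom db\binom ba$. Summing $p_{\alpha a}$ against this expansion gives the displayed $\operatorname{Ber}$ expansion. The basis $\binom db z^b(1-z)^{d-b}$ is nonnegative on $0\le z\le1$, and $\mathbf x^\alpha\ge0$ for $\mathbf x\ge0$. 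The final minimum bound (including the version where the minimum also ranges over members of a finite family) is then the envelope rule with $b$ as index and $\sum_b\binom db z^b(1-z)^{d-b}=1$.

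None of the three steps is a real obstacle. The only care needed is index bookkeeping in the radial identity: terms with $|\beta|<h$ must be excluded, and $0^0=1$ must be used when $\zeta=0$.
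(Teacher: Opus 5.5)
Your proposal is correct and follows the paper's proof essentially step for step: the termwise bound $\theta p\ge\min(0,p)$ for the envelope rule, binomial expansion plus homogenization by $(\sum_j u_j)^{d-|\beta|}=1$ for the radial identity, and the expansion of $z^a(z+(1-z))^{d-a}$ via $\binom{d-a}{b-a}\binom da=\binom db\binom ba$ for the Bernstein identity (the paper runs this last computation in the reverse direction, cancelling factorials in the claimed sum to recover $z^a$). The final minimum bound then follows, as in the paper, from nonnegativity of the Bernstein weights together with $\sum_b\binom db z^b(1-z)^{d-b}=1$.
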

\begin{proof}
For $0\le\theta\le1$ and real $p$ we have
$\theta p\ge\min(0,p)$. Apply this coefficient by coefficient and
multiply by $b_\alpha\ge0$ to obtain \eqref{common:positive-envelope}.
The ordinary envelope follows from
$p_{\sigma\alpha}-\min_\nu p_{\nu\alpha}\ge0$.
For the radial formula, substitution and the binomial theorem give
\[
 \begin{aligned}
 P((\zeta+t)\mathbf u)
 &=\sum_\beta p_\beta\sum_{h=0}^{|\beta|}
       \binom{|\beta|}{h}\zeta^{|\beta|-h}t^h\mathbf u^\beta
         (\textstyle\sum_j u_j)^{d-|\beta|}\\
 &=\sum_\beta\sum_{h=0}^{|\beta|}\sum_{|\gamma|=d-|\beta|}
       p_\beta\binom{|\beta|}{h}\zeta^{|\beta|-h}
       \binom{d-|\beta|}{\gamma}t^h\mathbf u^{\beta+\gamma}.
 \end{aligned}
\]
Collecting $\alpha=\beta+\gamma$ gives the asserted coefficients.
For the Bernstein formula, factorial cancellation gives
\[
 \sum_{b=a}^d\frac{\binom ba}{\binom da}\binom db
       z^b(1-z)^{d-b}
 =z^a\sum_{b=a}^d\binom{d-a}{b-a}z^{b-a}(1-z)^{d-b}=z^a.
\]
Multiply by $p_{\alpha a}\mathbf x^\alpha$ and sum over $\alpha,a$.
The Bernstein weights are nonnegative and sum to
$(z+(1-z))^d=1$. Replacing every coefficient by its minimum therefore
gives the lower bound, separately for each family member and its degree.
\end{proof}

\subsection{A common radial comparison for both high-profile certificates}
\label{alg:high-profiles}

For the multipliers and corrections defined above, set
\[
 Q_0=(W_4+C_0)h_r-T_0h_{r+1},\qquad
 Q_1=(V_5+C_1)h_r-T_1h_{r+1}.
\]
\begin{lemma}\label{alg:high-zero}
For $i=0,1$ and every profile $\lambda\vdash2(r+1)$, one has
\begin{equation}\label{eq:unified-r1}
[m_\lambda]Q_i\ge0,
 \qquad r\ge8+2i,\quad\max\lambda\ge6+i.
\end{equation}
\end{lemma}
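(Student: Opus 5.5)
The coefficient $[m_\lambda]Q_i$ is evaluated directly from the explicit formulas of the preceding subsection, and then bounded below on profiles with a large part. For a profile $\lambda\vdash2(r+1)$ of length $L$ with $k$ nonunit parts, the rule
\[
 [m_\lambda](\widetilde m_{\ell,\tau}h_{|\lambda|-\ell-d_\tau})=\binom L\ell\frac{\mathsf N_\tau(\lambda)}{(L)_k}
\]
gives every term of $(W_4+C_0)h_r$ and $(V_5+C_1)h_r$. The target terms are $[m_\lambda](T_0h_{r+1})=\binom L{r+1}$ and $[m_\lambda](T_1h_{r+1})=k\binom{L-1}{r-1}$. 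Since $\max\lambda\ge6+i$, the defect $d=2(r+1)-L$ satisfies $d\ge5+i$. The exact matching through defect $4$ (resp.\ $5$) therefore gives no cancellation for free. Instead, the target term is small compared with the top-layer multiplier terms: at large defect $\binom L{r+1}$ is a small fraction of $\binom L{\ell}$ for $\ell=r+2-d_\tau$, because $L-(r+1)\le r+1-d$.

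I would organize the estimate by writing everything as a multiple of $\binom{L}{r+1}$ (resp.\ $k\binom{L-1}{r-1}$). Each ratio $\binom L{r+2-d_\tau}/\binom L{r+1}$ is a product of at most $d_\tau-1$ factors of the form $(r+1-j)/(L-r-1+j)$, and these factors are at least of order $r/(r+1-d)$. The key positive contributions come from the added corrections $C_i$. In particular, the marker $6$ (resp.\ $7$) has coefficient $(r+1)^2/6$ (resp.\ $3(r+1)^2$), and $\mathsf N_6(\lambda)\ge1$ (resp.\ $\mathsf N_7\ge1$) whenever $\max\lambda\ge6+i$. The triangular leading terms $v_{i,\tau}$ for the top marker ($\tau=5$ or $6$ type) also have positive leading coefficients $2(r+1)^3/b_4$ and $\varphi_5 A_{5,6}$. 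The plan is to show that these positive single-part markers dominate both the target and all negative multiplier entries (those for $22$, $32$, $42$, $33$, $2222$, and so on). Those negative entries are paired with $\mathsf N_\tau$ counts, which I bound using $\mathsf N_\tau(\lambda)\le (N_2)_k$ together with the constraint $\sum_j(\lambda_j-1)=d$.

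Concretely, I would split by the layer size. To bound the expectation over nonunit positions uniformly in $k$, I would apply Lemma~\ref{alg:sample} with $h$ equal to the number of nonunit slots of the largest marker, and use \eqref{common:sample-excess} to control the excess. I would then pass to the radial/Bernstein form of Lemma~\ref{common:bernstein-envelope}, in the variable $z=(r+1-d)/(r+1)$ or the normalized ratio. This reduces the claim to nonnegativity of finitely many polynomial coefficients in $r$ over $r\ge8+2i$, which I would verify by shifting $r=8+2i+x$ and checking signs, as was done in Lemmas~\ref{unified:r4-chain} and \ref{small:exclusions}.

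The main obstacle is the intermediate defect range $d\approx 6$–$10$ with many parts equal to $2$ or $3$. There the negative entries $v_{\tau}$ for $2^j$- and $32$-type markers carry large $\mathsf N_\tau$ counts, while the single large part contributes only $\mathsf N_6=1$. In this range I would rely on the $\rho$- and $\eta_1$-weighted corrections $C_1$ (markers $62$, $44$, $4222$, $522$, $53$). These are designed to cover exactly these mixed profiles, and I would check that their combined Bernstein minima stay nonnegative. If that fails for small $r$, I would fall back to listing the finitely many remaining profiles with $\max\lambda\ge6+i$ and bounded excess and checking them directly.
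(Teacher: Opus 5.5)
You correctly set up the coefficient rule for $\widetilde m_{\ell,\tau}h$ and the two target formulas, and you correctly observe $\delta\ge5+i$ and that the separated marker $6+i$ must pay for everything. But your proposal has a genuine gap: it never supplies a mechanism that certifies $[m_\lambda]Q_i\ge0$ uniformly in $r$, $L$ and the shape of $\lambda$, and the mechanism you do name fails in this lemma.

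\textbf{Why sampling fails here.} Lemma~\ref{alg:sample} gives the coefficient only as an expectation over samples of nonunit positions, so you would need a sampled kernel that is nonnegative for \emph{every} sample. When $k>h$, a sample contains the large coordinate only with probability $h/k$, and on samples that miss it the $(6+i)$-correction contributes nothing. Moreover, with the raw coefficients $p_{i,\tau}$ the sampled kernel has no reason to be pointwise nonnegative, since the cancellation of the low-defect layers is a property of the whole profile. In the paper, pointwise positivity of sampled kernels is reached only after the elimination \eqref{direct:elimination}. That elimination rests on the mass identity $(\delta-d_\alpha)\mathsf N_\alpha=\sum_\nu M^{(i)}_{\alpha\nu}\mathsf N_\nu$, which requires every part to be at most $5+i$, exactly the hypothesis that fails here. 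So the route you describe is the bounded-alphabet argument of Lemma~\ref{alg:direct-zero-bounded}, not a proof for high profiles.

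\textbf{The missing idea: anchoring.} Delete one coordinate $\ge6+i$, leaving $\lambda'$. Every non-separated marker has all parts at most $5+i$, so each flag splits exactly as $\mathsf N_\tau(\lambda)=\mathsf N_\tau(\lambda')+\sum_a\mathsf N_{\tau\setminus\tau_a}(\lambda')$. Regroup by the anchored flags $\sigma$ and normalize by $(k-1)_{\ell(\sigma)}$. Every normalized flag then lies in $[0,1]$, and the pure-two ones equal $1$. The envelope rule of Lemma~\ref{common:bernstein-envelope} with $I=\{2^j\}$ therefore does two things:
\begin{itemize}
\item it keeps the large pure-two cancellations exact;
\item it takes negative parts only after all positive and negative contributions to the same anchored flag have been combined.
\end{itemize}
The separated marker now contributes deterministically, since its anchored flag is $1+N_{6+i}(\lambda')\ge1$, giving the explicit positive polynomial $H_i$. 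Bounding each negative entry separately by $\mathsf N_\tau\le(k)_{\ell(\tau)}$, as you propose, gives up exactly this regrouping.

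\textbf{What must actually be verified.} It is not a finite list of sign checks of polynomials in $r$; it is a multivariate domination in explicit positive bases:
\begin{itemize}
\item in the radial chart ($k\ge5+i$): $\mathcal E_0\succeq-H_0/7$ and $H_1+\mathcal E_1\succeq0$;
\item in the Bernstein chart ($k\le4+i$): $\mathcal E_0\succeq-H_0/18$ and $\mathcal E_1\ge0$;
\item separately, the case $i=1$, $k=1$, where the weight $3(r+1)^2$ absorbs the target.
\end{itemize}

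\textbf{Two further problems.} The $\rho$- and $\eta_1$-parts of $C_1$ that you want to lean on vanish at $r=10$, the smallest admissible radius for $i=1$, and $C_0$ has no such terms at all. So they cannot carry the intermediate-defect range uniformly. Finally, checking finitely many profiles handles only finitely many values of $r$, so it cannot replace the missing uniform argument.
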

\begin{proof}
Write $L=\ell(\lambda)$, $\delta=2r+2-L$, $k=N_2(\lambda)$, and
$s=r-3-i$. Below $L=s$ all terms vanish. At $L=s$ the target and signed
multiplier product vanish and $C_i h_r$ is nonnegative. It therefore
suffices to work with $L\ge s$.

For a source marker write $d=d_\tau$ and $j=\ell(\tau)$.
The coefficients $v_{0,\tau},v_{1,\tau}$ are exactly the displayed
$W_4,V_5$ tables; omitted entries are zero.
Let $X_\tau$ be independent formal placeholders indexed by nonunit
markers, including the empty marker; evaluation means
$X_\tau=\mathsf N_\tau$. The positive normalizing factors and
normalized target flags are
\[
\begin{array}{c|l|l}
 i&\mathcal D_i&Z_i(X)\\\hline
0&6r(r+1)^2(r-1)(r-2)(L)_5
 &\displaystyle\frac{(L-r+3)_4}{(r+1)_4}X_\varnothing\\[2pt]
1&900(r-3)(r-2)(r-1)(r+1)(L)_6
 &\displaystyle\frac{(L-r+4)_4}{L(r-1)(r-2)(r-3)}X_2.
\end{array}
\]
These target entries follow from the exact coefficients
$\binom L{r+1}$ and $N_2\binom{L-1}{r-1}$ after division by
$\binom Ls$. Define the raw coefficients by the finite identity
\begin{equation}\label{eq:unified-r3}
\sum_\tau p_{i,\tau}X_\tau
 =\mathcal D_i\left\{
 \sum_{d_\tau\le4+i}v_{i,\tau}
 \frac{(L-s)_{5+i-d}}{(r+2-d)_{5+i-d}(L)_j}X_\tau
 -Z_i(X)
 +\sum_{\substack{y_{i,\tau}\ne0\\\tau\ne(6+i)}}
                 \frac{y_{i,\tau}}{(L)_j}X_\tau\right\}.
\end{equation}
Cancel and then set $L=2r+2-\delta$. All $p_{i,\tau}$ are polynomials.
For $i=0$, the denominators of the five signed layers are respectively
$r+1,r+1,(r+1)^2,(r+1)^2,(r+1)^2(r-1)$; after multiplication by the
falling-factorial denominators in \eqref{eq:unified-r3}, they divide
$r(r+1)^2(r-1)(r-2)$, using the displayed numerator factors $r+2$
in the empty layer and $r$ in the defect-two layer.
For $i=1$, the first two layers cancel their numerator factor $r$;
the remaining layer denominators are
$r+1,(r+1)(r-1),(r+1)(r-1)(r-2)$, with complementary falling factors
in $(r-3)(r-2)(r-1)(r+1)$.
The constants $6,900$ clear the correction denominators, and
$(L)_{5+i}$ clears every flag denominator. This accounts for all
cancellations in \eqref{eq:unified-r3} without coefficient fitting.

Choose one actual coordinate at least $6+i$ and delete it, leaving
$\lambda'$. Every marker in \eqref{eq:unified-r3} is satisfied at that coordinate.
Define the anchored coefficients by
\begin{equation}\label{eq:unified-r4}
\sum_\sigma b_{i,\sigma}X_\sigma
 =\sum_\tau p_{i,\tau}
   \left(X_\tau+\sum_{a=1}^{\ell(\tau)}X_{\tau\setminus\tau_a}\right).
\end{equation}
This is exact: the first term counts injections avoiding the deleted
coordinate, and the sum counts those using it in a specified slot.
Repeated equal slots are counted separately. Put $m=k-1$ and
$P_{i,\sigma}=b_{i,\sigma}(m)_{\ell(\sigma)}$.
For every surviving flag,
\begin{equation*}
0\le\frac{\mathsf N_\sigma(\lambda')}{(m)_{\ell(\sigma)}}\le1;
 \qquad \frac{\mathsf N_{2^j}(\lambda')}{(m)_j}=1.
\end{equation*}
The empty flag is included in $2^j$; flags longer than $m$ vanish.
Apply \eqref{common:positive-envelope} with
$\theta_\sigma=\mathsf N_\sigma(\lambda')/(m)_{\ell(\sigma)}$
and $I=\{2^j\}$. The resulting envelope $\mathcal E_i$ is a lower
bound for the anchored polynomial in either positive basis below.
The separated correction on $6+i$ contributes at least
\begin{equation}\label{eq:unified-r6}
H_0=r(r+1)^4(r-1)(r-2)(L-1)_4,
 \qquad
 H_1=2700(r+1)^3(r-3)(r-2)(r-1)(L-1)_5,
\end{equation}
since its anchored flag is $1+N_{6+i}(\lambda')\ge1$.
Combining this contribution with the surviving flags gives
\[
 \frac{\mathcal D_i}{\binom Ls}[m_\lambda]Q_i
 \ge H_i+\sum_\sigma\theta_\sigma P_{i,\sigma}
 \ge H_i+\mathcal E_i.
\]
Thus $H_i+\mathcal E_i\ge0$ suffices for \eqref{eq:unified-r1}.

Write the lower-defect bound as $\delta\ge k+e$; here $e=4+i$,
and the bounded-alphabet proof below will use the same chart with other $e$.
For $k\ge5+i$, the rank and support bounds place all required
parameters in the following domain:
\begin{equation}\label{eq:unified-r7}
\begin{gathered}
k=5+i+x,\quad\delta=5+i+e+x+y,\\
 r=e+x+y+z,\
x,y,z\ge0,
 x+y+z\ge\max(0,8+2i-e),\\
(x,y,z)=\bigl(\max(0,8+2i-e)+t\bigr)(u,v,w),\
t,u,v,w\ge0,\quad u+v+w=1.
\end{gathered}
\end{equation}
Indeed the inverse is $(x,y,z)=(k-5-i,\delta-k-e,r+5+i-\delta)$,
whose sum is $r-e\ge\max(0,8+2i-e)$.
Set $t=r-e-\max(0,8+2i-e)$ and divide $(x,y,z)$ by $r-e$;
when $r=e$, any simplex point works. Conversely the chart has
$r\ge8+2i$, $\delta-k-e=y\ge0$ and $r+5+i-\delta=z\ge0$.
It may include parameters that do not arise from a profile; proving
nonnegativity on this larger domain is sufficient.
For the present $e=4+i$, the radial shift is $4+i$ and
$L=1+x+y+2z$.
The substituted $P_{i,\sigma}$ has degree at most $11+2i$:
the correction weights have degree at most one or three, and their
cleared normalized products have degree at most ten; the signed terms
and target have no greater degree.
Lemma~\ref{common:bernstein-envelope} gives the exact radial coefficients
\begin{equation}\label{eq:unified-r8}
 q_{i,\sigma;h,A,B,C}
 =\operatorname{Rad}_{11+2i,\,4+i}(P_{i,\sigma})_{h,(A,B,C)},
 \quad 0\le h\le11+2i,\quad A+B+C=11+2i.
\end{equation}
The envelope coefficient is therefore exactly
\begin{equation}\label{eq:unified-r9}
e_{i;h,A,B,C}=\sum_{\sigma=2^j}q_{i,\sigma;h,A,B,C}
        +\sum_{\sigma\ne2^j}\min(0,q_{i,\sigma;h,A,B,C}).
\end{equation}
The paid polynomials have the positive radial factorizations
\begin{equation}\label{eq:unified-r10}
\begin{aligned}
H_0={}&(t+6)(t+7)(t+8)(t+9)^4(u+v+w)^7
       \prod_{a=1}^4\bigl((t+a)(u+v)+(2t+a+4)w\bigr),\\
 H_1={}&2700(t+7)(t+8)(t+9)(t+11)^3(u+v+w)^8
       \prod_{a=1}^5\bigl((t+a)(u+v)+(2t+a+5)w\bigr).
\end{aligned}
\end{equation}
For $i=0$, every coefficient $H_{0;h,A,B,C}$ in \eqref{eq:unified-r10} is positive
for $0\le h\le11$, $A+B+C=11$. The exact maximum of
$\max(0,-e_{0;h,A,B,C})/H_{0;h,A,B,C}$ over those triples is zero
for $h=0,1,2,3,4,11$. Its remaining values and attaining triples are
\[
\begin{array}{c|c|c}
 h&\text{maximum ratio}&(A,B,C)\\\hline
5&110846/6264375&(0,0,11)\\
6&5985189/143567846&(1,0,10)\\
7&24005115/330815261&(3,0,8)\\
8&1503592/14542665&(4,0,7)\\
9&82602/656075&(5,0,6)\\
10&13938/132293&(6,0,5).
\end{array}
\]
These are finite rational coefficient identities specified by
\eqref{eq:unified-r3}, \eqref{eq:unified-r4}, \eqref{eq:unified-r8}, \eqref{eq:unified-r9}, and the explicit product \eqref{eq:unified-r10}; every index
range is displayed. Each numerator multiplied by seven is smaller
than its denominator. Hence $\mathcal E_0\succeq-H_0/7$ coefficientwise.

For $i=1$, the complete negative part is even shorter:
\[
(\mathcal E_1)_-=-t^{11}\bigl(695468u^7w^6+122056u^6w^7\bigr).
\]
Every unlisted coefficient is nonnegative. For a formal series $F$,
$[z^j]F$ denotes the coefficient of $z^j$; brackets with several
variables denote the corresponding multivariate coefficient.
For each required coefficient of $H_1$, one positive term in the
binomial expansion already exceeds the corresponding loss:
\[
 \begin{aligned}
 [t^{11}u^7w^6]H_1
 &=2700[z^6](1+z)^8(1+2z)^5
 \ge2700\binom85\binom51\,2>695468,\\
 [t^{11}u^6w^7]H_1
 &=2700[z^7](1+z)^8(1+2z)^5
 \ge2700\binom86\binom51\,2>122056.
 \end{aligned}
\]
All other coefficients in \eqref{eq:unified-r10} are nonnegative. Thus $H_i+\mathcal E_i\ge0$
for both cases on the whole domain \eqref{eq:unified-r7}.

For a fixed $k$ and lower bound $\delta\ge k+e$, set
$r_0=\max(8+2i,k+e-5-i)$. The required parameters are covered by
\begin{equation}\label{eq:unified-r13}
 r=r_0+t,\quad\delta=k+e+(r_0+5+i-k-e+t)v,
 \qquad t\ge0,\quad0\le v\le1.
\end{equation}
Indeed $t=r-r_0\ge0$ and $A=r+5+i-k-e\ge0$;
for $A>0$ set $v=(\delta-k-e)/A\in[0,1]$, while for $A=0$
we have $\delta=k+e$ and any $v$ works. Conversely
$0\le\delta-k-e=Av\le A$ recovers both defect bounds.
For $m=0,\ldots,3+i$, take $k=m+1$, $e=4+i$; then
$r_0=8+2i$ and $r_0+5+i-k-e=8+2i-m$.
Discard flags longer than $m$ and substitute this chart in $P_{i,\sigma}$.
Their degree-eleven Bernstein coefficients are
\begin{equation}\label{eq:unified-r14}
 q_{i,\sigma;h,b}=\operatorname{Ber}_{11,v}(P_{i,\sigma})_{h,b},
 \qquad0\le b\le11,\quad0\le h\le11+2i.
\end{equation}
Define $e_{i;h,b}$ by the same pure-two/negative-part sum as \eqref{eq:unified-r9}, and
let $H_{i;h,b}$ be the coefficients of \eqref{eq:unified-r6} in the same basis.
They are nonnegative, because each $L$-dependent factor has positive
endpoint polynomials on \eqref{eq:unified-r13}.

For $i=0$, every $H_{0;h,b}$ is positive for $0\le h,b\le11$.
Over this rectangle the maximum negative-envelope ratio is
$\max_{h,b}\{\max(0,-e_{0;h,b})/H_{0;h,b}\}$. Its four values are:
\[
\begin{array}{c|c|c}
 m&\text{maximum ratio}&(h,b)\\\hline
0&15272/450985&(8,1)\\
1&5985971/116198226&(0,5)\\
2&162143/3792258&(0,3)\\
3&816/41539&(9,1).
\end{array}
\]
Each ratio is smaller than $1/18$. Equations \eqref{eq:unified-r3}, \eqref{eq:unified-r4}, \eqref{eq:unified-r14}, and
\eqref{eq:unified-r9}'s stated summation rule specify every coefficient in these finite
identities. Thus $\mathcal E_0\succeq-H_0/18$ in the Bernstein basis.
For $i=1$ and $m=1,2,3,4$, the complete negative-part identity is
\[
\min(0,e_{1;h,b})=0
 \quad(1\le m\le4,\ 0\le h\le13,\ 0\le b\le11).
\]
Thus $\mathcal E_1\ge0$ on those four intervals.

Only $i=1,k=1$ remains. Every surviving single-marker coefficient of
$V_5$ is positive for $r\ge10$. Moreover
\[
 \frac{[m_\lambda]T_1h_{r+1}}
      {[m_\lambda]\widetilde m_{r-4,7}h_r}
 =\frac{(L-r+4)_4}{(r-1)(r-2)(r-3)}\le r,
\]
since $0\le L-r+4\le r$ follows from support and $\delta\ge6$.
At these integers falling factorials are nonnegative and increasing.
The marker-seven weight $3(r+1)^2>r$ absorbs the target;
all other surviving terms are nonnegative.
All high profiles are now covered. Finally $\mathcal D_i/\binom Ls>0$,
so undoing the normalization proves \eqref{eq:unified-r1} for both certificates.
\end{proof}

\subsection{A common three-nonunit comparison for both bounded alphabets}
\begin{lemma}\label{alg:direct-zero-bounded}
Let $i\in\{0,1\}$, $r\ge8+2i$, and
$\lambda\vdash2(r+1)$ with largest part at most $5+i$. Then
$[m_\lambda]Q_i\ge0$.
\end{lemma}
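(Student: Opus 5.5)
We reduce $[m_\lambda]Q_i$ to a finite family of explicit polynomial inequalities in a few profile parameters, in the same way as the proof of Lemma~\ref{alg:high-zero}, but now without an anchor coordinate. Since every part of $\lambda$ is at most $5+i$, no marker of size $6+i$ or larger fits, so $C_i$ contributes only through its bounded markers ($422,332,22222$ for $i=0$; $62,44,432,\dots$ for $i=1$, with those containing a part $>5+i$ vanishing). Write $L=\ell(\lambda)$, $k=N_2(\lambda)$, and $\delta=2r+2-L$. By the triangular construction of $W_4$ and $V_5$, the coefficient vanishes identically when $\delta\le4+i$, since $T_ih_{r+1}-(W_4\text{ or }V_5)h_r$ vanishes through that defect and the corrections start at defect five or six. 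So we may assume $\delta\ge5+i$. The identity $\delta=\sum_{\lambda_j\ge2}(\lambda_j-1)$ together with $\lambda_j\le5+i$ gives $k\ge\delta/(4+i)$. Thus the only profiles in play have many nonunit parts compared with their excess $E=\delta-k=\sum(\lambda_j-2)$.

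The main device is Lemma~\ref{alg:sample} with $h=3$. Every marker $\tau$ occurring in $W_4,V_5,C_0,C_1$ has the form $(\alpha,2^b)$ with $\ell(\alpha)\le3$; the largest cases are $332$, $333$, $432$ and $3322$. So each coefficient $[m_\lambda](\widetilde m_{\ell,\tau}h_{\cdot})=\binom L\ell\mathsf N_\tau(\lambda)/(L)_{\ell(\tau)}$ can be written as an expectation over a uniformly sampled triple $X=(x_1,x_2,x_3)$ of nonunit parts, with $x_j\in\{2,\dots,5+i\}$. After dividing by the positive normaliser $\mathcal D_i/\binom Ls$ from \eqref{eq:unified-r3}, we get
\[
 \frac{\mathcal D_i}{\binom Ls}[m_\lambda]Q_i=\mathbb E\,\Psi_i(X;k,\delta,r),
\]
where $\Psi_i$ is polynomial in $(k,\delta,r)$ and depends on $X$ only through the bounded alphabet of threshold indicators $\mathbf1_{x_j\ge b}$. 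The target contribution $Z_i$ involves only $X_\varnothing$ or $X_2$, so it becomes a constant inside the expectation. The excess constraint \eqref{common:sample-excess}, $\mathbb E\sum(x_j-2)=3E/k$, is what ties the sample to $\delta$.

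Next we want a lower bound for $\Psi_i$ that is linear in the excess, of the form $\Psi_i(X)\ge\Psi^{(0)}_i+\Psi^{(1)}_i\sum_j(x_j-2)$. For each of the finitely many triples $X$ (at most $4^3$ or $5^3$ ordered ones, fewer up to symmetry), we would subtract the value at $X=(2,2,2)$ and divide by $\sum(x_j-2)$. Taking the minimum of these slopes over the family, via the envelope rule of Lemma~\ref{common:bernstein-envelope}, gives $\Psi_i^{(1)}$ as a polynomial in $(k,\delta,r)$. Taking expectations then gives
\[
 \mathbb E\Psi_i\ge\Psi_i^{(0)}+\frac{3(\delta-k)}{k}\Psi_i^{(1)}.
\]
Multiplying by $k$ leaves one polynomial inequality in $(k,\delta,r)$ on the domain $k\ge\lceil\delta/(4+i)\rceil$, $\delta\ge5+i$, $L=2r+2-\delta\ge k$, $r\ge8+2i$. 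We would parametrise this domain with the chart \eqref{eq:unified-r7}, taking $e=4+i$ (or an $e$ adapted to $k\ge\delta/(4+i)$), and read off nonnegativity from the radial and Bernstein coefficients, exactly as in Lemma~\ref{alg:high-zero}. The small cases $k\le2$ would be treated separately: there $h=3$ sampling is unavailable, so we would use $h=k$ directly, and the bounded largest part then forces $\delta\le2(4+i)$, leaving a short explicit list of profiles, each a one-variable polynomial in $r$.

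The main obstacle is the sign of the linearised slope. The negative signed layers of $W_4$ and $V_5$ (the $22$, $32$, $33$ and $2222$ entries) make $\Psi_i$ decrease in some directions of $X$, and the bounded corrections $y_{i,\tau}$ (for instance $66(r-6)/(r+1)$ on $422$ and $132(r-8)$ on $3322$) were evidently sized to cancel exactly these losses. If the crude minimum over triples turns out to be too lossy, our first fallback is to replace it by the convex combination forced by \eqref{common:sample-excess}, that is, to optimise the distribution of $X$ subject to the mean-excess constraint; this is a small linear program over triple types. We would then certify the resulting extremal vertices by explicit Bernstein coefficient tables in $(t,v)$.
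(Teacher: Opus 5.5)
There is a genuine gap at the central step. You sample the raw flag sum $\sum_\tau p_{i,\tau}(r,\delta)\mathsf N_\tau(\lambda)$ directly. That sum vanishes for $\delta\le4+i$ only through large cancellations between the target, the empty marker and the signed low-defect layers, and these cancellations hold only after summing over the whole profile. Once you pass to a sampled triple $X$, the only link you keep between $X$ and $\delta$ is the first moment $\mathbb E\sum_j(x_j-2)=3(\delta-k)/k$.

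Your key inequality $\Psi_i^{(0)}+\frac{3(\delta-k)}{k}\Psi_i^{(1)}\ge0$ is therefore neither derived nor supported by anything in the argument. The intercept $\Psi_i(222;k,\delta,r)$ is evaluated at $\delta>k$, where no all-two profile exists, so you have no sign information for it. The single slope $\Psi_i^{(1)}$ is fixed by the worst triple, so the bound is lossy whenever the actual sample sits elsewhere. For example, if every nonunit part equals $5+i$, the sample is deterministic. Yet your bound replaces $\Psi_i(X)$ by $\Psi_i(222)+3(3+i)\Psi_i^{(1)}$, which is strictly smaller unless the minimal slope is attained at exactly that triple. You name this as the main obstacle yourself, and neither the min-slope bound nor the LP fallback is shown to overcome it. That step is essentially the whole content of the lemma.

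The missing idea is \emph{defect elimination} before sampling. The paper uses the exact mass identity $(\delta-d_\alpha)\mathsf N_\alpha=\sum_\nu M^{(i)}_{\alpha\nu}\mathsf N_\nu$, obtained by summing threshold indicators coordinate by coordinate. With it, the paper successively removes every marker of defect $i,\dots,4+i$. The divisions by $\delta-d$ are exact polynomial divisions, because $Q_i$ matches all profiles through defect $4+i$. The result is a representation $\sum_\tau a_{i,\tau}(r,\delta)\mathsf N_\tau(\lambda)$ supported on markers with $5+i\le d_\tau\le8+2i$, still with at most three parts exceeding two.

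Only for this rewritten sum does the three-sample kernel $K_{i,X}$ become nonnegative for each triple separately. Positivity holds on the sample-dependent domain $\delta\ge k+e_X$, which every triple that can actually occur satisfies; this replaces your averaged constraint. The paper certifies it in two ranges:
\begin{itemize}
\item for $k\ge5+i$, by radial envelopes in the chart with $e=e_X$, not $e=4+i$;
\item for $3\le k\le4+i$, by Bernstein envelopes after dividing out the factor $L-4-i$ and making two domain replacements.
\end{itemize}
Only the cases $k\le2$ reduce, as you anticipated, to finitely many explicit residual polynomials in $r$.
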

\begin{proof}
Put $L=\ell(\lambda)$, $\delta=2r+2-L$, and $k=N_2(\lambda)$.
Below $L=r-3-i$ every term vanishes. Assume henceforth
$L\ge r-3-i\ge5+i$, so $\delta\le r+5+i$.
Use the raw coefficients $p_{i,\tau}$ and positive normalizers
$\mathcal D_i$ from the preceding high-profile lemma. Their flag
identity is obtained before choosing a high coordinate and is valid
on every profile. The separated marker $6+i$ vanishes here, giving
\begin{equation*}
 \frac{\mathcal D_i}{\binom L{r-3-i}}[m_\lambda]Q_i
       =\sum_\tau p_{i,\tau}(r,\delta)\mathsf N_\tau(\lambda).
\end{equation*}
The source tables give $p_{i,\tau}=0$ for $d_\tau<i$;
exact matching gives $[m_\lambda]Q_i=0$ whenever $\delta\le4+i$.
The following single induction includes both first elimination stages.

Use sorted marker lists in the alphabet $\{2,\ldots,5+i\}$ and define
\[
 M^{(i)}_{\alpha\nu}
 =\sum_{a=2}^{5+i}\mathbf1_{\{\operatorname{sort}(\alpha\cup\{a\})=\nu\}}
 +\sum_{j=1}^{\ell(\alpha)}\sum_{a=\alpha_j+1}^{5+i}
 \mathbf1_{\{\operatorname{sort}((\alpha\setminus(\alpha_j))\cup\{a\})=\nu\}}.
\]
Here $\alpha\setminus(\alpha_j)$ means deleting one occurrence
of the part $\alpha_j$. The sum over $j$ counts positions,
so repeated equal parts are counted separately.
For a fixed selection counted by $\mathsf N_\alpha$, an unselected
coordinate $x$ contributes $\sum_{a=2}^{5+i}\mathbf1_{x\ge a}$
to the defect; at selected slot $j$ the contribution is
$(\alpha_j-1)+\sum_{a=\alpha_j+1}^{5+i}\mathbf1_{x\ge a}$.
Summing over all selections therefore gives
\[
 (\delta-d_\alpha)\mathsf N_\alpha
 =\sum_\nu M^{(i)}_{\alpha\nu}\mathsf N_\nu.
\]
Set $c^{[i]}_{i,\tau}=p_{i,\tau}$, with absent entries zero. For $d=i,\ldots,4+i$ set
\begin{equation}\label{direct:elimination}
 q^{[d]}_{i,\alpha}=\frac{c^{[d]}_{i,\alpha}}{\delta-d}\quad(d_\alpha=d),
 \qquad
 c^{[d+1]}_{i,\nu}=\mathbf1_{\{d_\nu\ne d\}}c^{[d]}_{i,\nu}
       +\sum_{d_\alpha=d}q^{[d]}_{i,\alpha}M^{(i)}_{\alpha\nu}.
\end{equation}
For $i=0$ the first stage removes the empty flag; for $i=1$ that flag
is absent. Every quotient is polynomial. Indeed $Q_i$ matches all
profiles through defect $4+i$. After lower defects have been removed,
evaluate at the population consisting of $\alpha$ and enough units,
with $\delta=d_\alpha$. A higher-defect flag cannot fit. Another
same-defect flag cannot fit either: an injection with equal total
excess must use the same marker multiset. Consequently the remaining
coefficient of $\mathsf N_\alpha>0$ vanishes for every sufficiently
large integer $r$, hence identically on $\delta=d_\alpha$.
More explicitly, if $c^{[d]}_{i,\alpha}=\sum_b f_b(r)\delta^b$, then
\[
 q^{[d]}_{i,\alpha}
    =\sum_{b\ge1}f_b(r)\sum_{v=0}^{b-1}\delta^v d^{b-1-v}.
\]
This includes $d=0$ with $0^0=1$ and uses no division by a numerical
zero. Substituting the mass identity above proves equality of the
flag sums at every stage. Put
$a_{i,\tau}=c^{[5+i]}_{i,\tau}$; then
\begin{equation}\label{direct:eliminated}
 \frac{\mathcal D_i}{\binom L{r-3-i}}[m_\lambda]Q_i
       =\sum_\tau a_{i,\tau}(r,\delta)\mathsf N_\tau(\lambda).
\end{equation}
The two recurrences leave respectively twenty-six and fifty-three
nonzero markers. They satisfy
\[
 5+i\le d_\tau\le8+2i,\qquad 2\le\ell(\tau)\le5+i,
 \qquad|\{j:\tau_j>2\}|\le3.
\]
Indeed an eliminated marker has defect at most $4+i\le5$ and thus
at most two slots larger than two; a step appends at most one slot.
The surviving corrections have the same stated bounds.
The coefficient bounds needed below are
\begin{equation}\label{direct:degree}
 \begin{gathered}
 \deg a_{i,\tau}+\ell(\tau)\le11+2i,
 \qquad \deg_\delta a_{i,\tau}\le10+i-\ell(\tau),\\
 L-4-i\mid a_{i,\tau}\quad\hbox{when }\ell(\tau)\le4+i.
 \end{gathered}
\end{equation}
For the initial raw coefficient of length $j$, its flag denominator
leaves the factor $(L-j)_{5+i-j}$. Its signed $L$-degree is at most
$10+2i-d_\tau-j\le10+i-j$; the correction weights have degree at
most $1+2i$, and the cleared normalized products have weighted degree
ten. Division by $\delta-d$ lowers the degree bounds by one;
adjoining a slot raises the weighted bound by one, and replacement
does not increase length. Finally $L-4-i=2r-\delta-2-i$ and
$\delta-d$ are relatively prime in $\mathbb Q[r,\delta]$, so division
preserves this factor. Marker length never decreases. These observations
prove \eqref{direct:degree} inductively.

For $k\ge3$, choose a uniform three-position sample $X$ from the
nonunit coordinates. Write $\tau=(\alpha,2^b)$, where every entry of
$\alpha$ exceeds two, and put $p=\ell(\alpha)\le3$.
Apply \eqref{common:sample-flag} with $h=3$ and define
\begin{equation}\label{direct:sample-kernel}
 K_{i,X}(r,\delta,k)
 =6\sum_\tau a_{i,\tau}(r,\delta)(k)_{\ell(\tau)}
                  \frac{\mathsf N_\alpha(X)}{(3)_p},
 \qquad
 \mathbb E K_{i,X}=6\sum_\tau a_{i,\tau}\mathsf N_\tau(\lambda).
\end{equation}
For $X=(x_1,x_2,x_3)$ the flag is explicitly
$\mathsf N_\alpha(X)=\prod_{j=1}^p(|\{a:x_a\ge\alpha_j\}|-j+1)$.
It is enough to prove $K_{i,X}\ge0$ for every sample that can occur.

Let $X$ range over the twenty or thirty-five sorted triples from
$\{2,\ldots,5+i\}$, and put $e_X=\sum_{a\in X}(a-2)$.
Every occurrence satisfies $\delta\ge k+e_X$, since the other
nonunit coordinates each contribute at least one to the defect.
For $k\ge5+i$ use \eqref{eq} with $e=e_X$ and
write $c_{i,X}=\max(0,8+2i-e_X)$ for its radial shift.
Every kernel is divisible by $k(k-1)$; after division and substitution,
call the polynomial $P_{i,X}(x,y,z)$. Its degree is at most $9+2i$.
The common radial transform gives
\begin{equation}\label{direct:radial-coefficient}
 b_{i,X;h,A,B,C}
 =\operatorname{Rad}_{9+2i,\,c_{i,X}}(P_{i,X})_{h,(A,B,C)},
 \quad0\le h\le9+2i,\quad A+B+C=9+2i.
\end{equation}
Use $0^0=1$. The complete coefficient minima over all the displayed
samples and triples are
\begin{equation}\label{direct:radial-envelope}
 \min_{X,\,A+B+C=9+2i}b_{i,X;h,A,B,C}=[t^h]\mathcal B_i(t),
 \qquad0\le h\le9+2i,
\end{equation}
where the two positive polynomials are
\begin{align*}
 \mathcal B_0(t)
 &=216(t+8)\bigl\{15+(t+7)\bigl[955+(t+13)\\
 &\qquad\cdot\bigl(3+(t+1)(29+30(t+6))\bigr)\bigr]\bigr\},\\
 \mathcal B_1(t)
 &=129600\bigl\{40+(t+8)\bigl[145+(t+9)\bigl(167+(t+12)\\
 &\qquad\cdot\bigl(171+(t+4)(77+90(t+7))\bigr)\bigr)\bigr]\bigr\}.
\end{align*}
The minimum in every row is attained at $X=222$ and
$(A,B,C)=(0,0,9+2i)$. At that endpoint $k=\delta=5+i$, and only
the pure-two marker $2^{5+i}$ survives. Thus the witness polynomial
is directly $6(3+i)!\,a_{i,2^{5+i}}(8+2i+t,5+i)$, giving the
two displayed rows. Identity \eqref{direct:radial-envelope} is
obtained by the binomial theorem from the coefficient tables and
\eqref{direct:elimination}, \eqref{direct:sample-kernel},
\eqref{direct:radial-coefficient}; all its finite index ranges are
specified. In particular every coefficient is nonnegative, including
all zero rows $h\ge6$. Hence $K_{i,X}\ge0$ on the whole required domain.

Suppose $3\le k\le4+i$. Flags longer than $k$ vanish, and
\eqref{direct:degree} shows that
$J_{i,k,X}=K_{i,X}/(L-4-i)$ is polynomial of total degree at most
$8+2i$ and $\delta$-degree at most $7+i$. Its divisor is positive.
If $k+e_X\le4+i$, the kernel is identically zero: no surviving
marker can fit $X$ together with $k-3$ twos. This removes three
zero-target pairs $(k,X)$ and seven marked-target pairs.
For the remaining thirty-seven or ninety-eight pairs, use
\eqref{eq:unified-r13} with $e=e_X$; thus
$r_0=\max(8+2i,k+e_X-5-i)$.
Only the following two types of domain replacement are needed:
\[
 \begin{array}{c|c|c}
 (i,k,X)&r&\delta\\\hline
 (i,4+i,223),\ i=0,1&8+2i+t&5+i+(i+1)(i+3)v\\
 (1,3,225)&10+t&6.
 \end{array}
\]
For the first row the other $k-3=1+i$ coordinates contribute at most
$(1+i)(3+i)$ excess units; for the second row the sample is the whole
nonunit population. Thus these are genuine covers of the actual domains.
After substitution, $J_{i,k,X}$ has $t$-degree at most $8+2i$
and $v$-degree at most $7+i$. Its coefficients in the common
Bernstein basis are
\begin{equation}\label{direct:small-coefficient}
 B_{i,k,X;h,b}=\operatorname{Ber}_{7+i,v}(J_{i,k,X})_{h,b},
 \quad0\le h\le8+2i,\quad0\le b\le7+i.
\end{equation}
Thus $J_{i,k,X}=\sum_{h,b}B_{i,k,X;h,b}t^h\binom{7+i}b v^b(1-v)^{7+i-b}$.
The complete coefficient envelopes are
\begin{equation*}
 \min_{(k,X),\,0\le b\le7+i}B_{i,k,X;h,b}=[t^h]\mathcal C_i(t),
 \qquad0\le h\le8+2i,
\end{equation*}
where the minimum is over the nonzero families just defined, and
\begin{align*}
 \mathcal C_0(t)&=144(t+5)(t+8)
       \bigl\{16+(t+7)\bigl[464+t(88+133(t+6))\bigr]\bigr\},\\
 \mathcal C_1(t)&=450(11270400+24969600t+12634128t^2
                         +3441954t^3+643975t^4+92160t^5).
\end{align*}
These identities use exactly \eqref{direct:elimination},
\eqref{direct:sample-kernel}, \eqref{eq:unified-r13}, the two
domain replacements, and \eqref{direct:small-coefficient}.
For $i=0$ the minimum in every degree is attained at $(k,X,b)=(3,224,0)$.
For $i=1$, the positive minima are attained at $(5,223,8)$ for $h=0$,
at $(3,234,8)$ for $1\le h\le4$, and at $(5,223,0)$ for $h=5$;
the remaining minima are zero. Every Bernstein coefficient is therefore
nonnegative. Hence every required $J_{i,k,X}$ and $K_{i,X}$ is nonnegative.

It remains to consider $k\le2$.
If $k\le1$, or if the two nonunits $a\le b$ have $a+b-2\le4+i$,
exact matching applies. The remaining pairs are
\[
 \begin{array}{c|l}
 i& (a,b)\\\hline
 0&(2,5),(3,4),(3,5),(4,4),(4,5),(5,5)\\
 1&(2,6),(3,5),(3,6),(4,4),(4,5),(4,6),(5,5),(5,6),(6,6).
 \end{array}
\]
For each pair define the exact normalized residual polynomial
\[
 F_{i,a,b}(t)=\sum_\tau a_{i,\tau}(8+2i+t,a+b-2)
 \prod_{j=1}^{\ell(\tau)}
       \bigl(\mathbf1_{a\ge\tau_j}+\mathbf1_{b\ge\tau_j}-j+1\bigr).
\]
Flags longer than two give zero, and the degree is at most $9+2i$.
The complete coefficient envelopes over the listed pairs are
\begin{align*}
 \operatorname{Env}_t\{F_{0,a,b}\}
 ={}&12(5416320+29999104t+27613918t^2+12267147t^3\\
 &\hspace{2em}+3212395t^4+516982t^5+50436t^6+2744t^7+64t^8),\\
 \operatorname{Env}_t\{F_{1,a,b}\}
 ={}&F_{1,3,5}(t)\\
 ={}&200(t+6)(t+7)(2t+11)(2t+13)\\
 &\quad\cdot\bigl\{60+(t+10)\bigl[366+(t+8)\bigl(642+(t+9)\\
 &\qquad\cdot\bigl(12+(t+3)(225+(t+8)^2)\bigr)\bigr)\bigr]\bigr\}.
\end{align*}
For $i=0$ the minimum is attained by $(5,5)$ in degrees zero and one,
and by $(2,5)$ in all higher degrees; for $i=1$ it is always attained
by $(3,5)$. These finite identities include the zero top-degree
coefficients. All displayed coefficients and factors are positive.
Thus \eqref{direct:eliminated} is nonnegative for $k\le2$ as well.
For $k\ge3$ use \eqref{direct:sample-kernel}; in every case the factor
$\mathcal D_i/\binom L{r-3-i}$ is positive. This completes both proofs.
\end{proof}

\subsection{Scalar prices and the stable conclusion}
\label{subsec:stable-prices}
Both stable and finite prices are linear combinations of the moments $U_j$.
The following product comparison extends their endpoint estimates.
\begin{lemma}\label{v25:moment-comparison}
Let $m\in\mathbb Z_{\ge1}$, $s\in\{0,\ldots,m\}$, $2n\ge2n_0>2(m-1)$, and
$J(2n)=\sum_{j=0}^m a_jU_j(2n)$. If $a_j(j-s)\le0$ for every $j$, then
\[
 \frac{J(2n)}{U_s(2n)}\le\frac{J(2n_0)}{U_s(2n_0)}.
\]
In particular, $a_j\ge0$ for $j<m$ suffices when $s=m$.
\end{lemma}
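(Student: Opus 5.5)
We reduce the claim to the monotonicity of each ratio $U_j(2n)/U_s(2n)$ in $2n$. Write $N=2n$, so that $U_j(N)=\prod_{i=0}^{j-1}(N-2i)/j!$. This is positive for $N>2(j-1)$, and the hypothesis $2n_0>2(m-1)$ makes every $U_j$ with $j\le m$ positive on the whole range $N\ge 2n_0$. Then
\[
\frac{J(N)}{U_s(N)}=\sum_{j=0}^m a_j\,\frac{U_j(N)}{U_s(N)},
\]
and it suffices to show that each summand is nonincreasing in $N$ on $[2n_0,\infty)$. The argument uses only these product formulas.

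For $j>s$, the ratio is
\[
\frac{U_j(N)}{U_s(N)}=\frac{s!}{j!}\prod_{i=s}^{j-1}(N-2i).
\]
Each factor is positive for $N\ge2n_0$, since $2i\le2(m-1)<2n_0$. Hence the ratio is positive and nondecreasing. The hypothesis $a_j(j-s)\le0$ gives $a_j\le0$ here, so the summand $a_jU_j/U_s$ is nonincreasing.

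For $j<s$, the ratio is
\[
\frac{U_j(N)}{U_s(N)}=\frac{s!}{j!}\prod_{i=j}^{s-1}(N-2i)^{-1}.
\]
This is a product of positive decreasing functions of $N$, so it is nonincreasing. The hypothesis now gives $a_j\ge0$, so the summand is again nonincreasing. For $j=s$ the summand is the constant $a_s$.

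Summing the three cases, $J(N)/U_s(N)$ is nonincreasing on $N\ge2n_0$. Evaluating at $N=2n\ge 2n_0$ gives $J(2n)/U_s(2n)\le J(2n_0)/U_s(2n_0)$. The integrality of $2n$ plays no role, since the monotonicity holds for real $N$.

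For the special case $s=m$, the condition $a_j(j-m)\le0$ is vacuous at $j=m$. For $j<m$ it is exactly $a_j\ge0$, so that hypothesis suffices.

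The only point needing care is positivity of every factor $N-2i$ with $i\le m-1$ throughout the range. This is exactly what $2n_0>2(m-1)$ provides, so that no sign changes occur before the comparison point.
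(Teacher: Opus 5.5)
Your proposal is correct and follows the paper's proof: both cancel factorials to write $U_j(2n)/U_s(2n)$ as a product of the factors $2n-2i$ (inverted when $j<s$). Both use $2n_0>2(m-1)$ to make every factor positive and then compare termwise, using $a_j\ge0$ where the ratio decreases and $a_j\le0$ where it increases.
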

\begin{proof}
Every factor $2n-2a$ and $2n_0-2a$ below is positive. Cancelling
factorials in $U_j(2n)=2^j\binom nj$ gives
\[
 \frac{U_j(2n)}{U_s(2n)}=
 \begin{cases}
 \displaystyle\frac{s!}{j!\prod_{a=j}^{s-1}(2n-2a)},&j<s,\\[6pt]
 1,&j=s,\\[2pt]
 \displaystyle\frac{s!}{j!}\prod_{a=s}^{j-1}(2n-2a),&j>s.
 \end{cases}
\]
For $j<s$ the ratio decreases with $2n$, and $a_j\ge0$; for $j>s$
it increases with $2n$, and $a_j\le0$. Thus, in both cases,
\[
 a_j\frac{U_j(2n)}{U_s(2n)}
 \le a_j\frac{U_j(2n_0)}{U_s(2n_0)}.
\]
At $j=s$ both sides equal $a_s$. Summing over $0\le j\le m$
proves the assertion.
\end{proof}

We apply this comparison after discarding the single internal negative
term in each stable price. This loss still leaves both required strict bounds.
\begin{lemma}\label{alg:zero-price}
For ${(2n)}\ge {(r+1)}(r+2)$, the displayed multipliers and corrections satisfy
\begin{align*}
 J_0(2n)&:=\Lam(W_4+C_0)-\frac {2n}{r+1}\Lam(e_{r+1})<0
 &&({(r+1)}\ge9),\\
 J_1(2n)&:=\Lam(V_5+C_1)-\frac{{(2n)}+1}{r+1}\Lam(T_1)<0
 &&({(r+1)}\ge11).
\end{align*}
\end{lemma}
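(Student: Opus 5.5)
The approach is to express both prices as explicit finite linear combinations of the moments $U_j=U_j(2n)$ and then reduce to the single endpoint $2n_0=(r+1)(r+2)$ via Lemma~\ref{v25:moment-comparison}.

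\textbf{Reducing $J_0,J_1$ to $U_j$-combinations.} Every summand of $W_4$, $C_0$, $V_5$, $C_1$ is a multiple of some $\widetilde m_{\ell,\tau}$. The formulas
\[
\calL(\widetilde m_{\ell,\tau})=U_\ell,\qquad
\calL(D\widetilde m_{\ell,\tau})=\frac{\ell-k}{\ell}\,U_{\ell-1}\qquad(k=\ell(\tau))
\]
give $\Lam(\widetilde m_{\ell,\tau})=U_\ell+\frac{\ell-k}{\ell}U_{\ell-1}$. Since $e_{r+1}=\widetilde m_{r+1,\varnothing}$, we get $\Lam(e_{r+1})=U_{r+1}+U_r$. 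Since $T_1=m_{(2,1^{r-1})}=r\,\widetilde m_{r,(2)}$, we get $\Lam(T_1)=rU_r+(r-1)U_{r-1}$. The defects present are $0,\dots,6$, so only $U_{r+1},\dots,U_{r-6}$ occur, with coefficients rational in $r$. I will collect these coefficients layer by layer, using the identities $\sum_\tau A_{d,\tau}=0$ and the length sums $\sum_\tau\ell(\tau)A_{d,\tau}$ recorded above. These identities should make large cancellations transparent: within each layer only the derivative charge survives.

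\textbf{Treating the target term.} The factor $2n/(r+1)$, respectively $(2n+1)/(r+1)$, multiplies a $U$-combination, so it is not itself in the form required by Lemma~\ref{v25:moment-comparison}. I will rewrite it using $2n\,U_j=(j+1)U_{j+1}+2jU_j$, which follows from $U_j=2^j\binom nj$. Thus
\[
\frac{2n}{r+1}U_{r+1}=U_{r+2}+\frac{2(r+1)}{r+1}U_{r+1},
\]
and similarly for the other terms. Then $J_0$ and $J_1$ become finite sums $\sum_j a_jU_j$ with top index $m=r+2$, and $a_{r+2}=-1<0$ or $-r/(r+1)<0$, respectively.

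\textbf{Applying the moment comparison.} I choose the pivot $s=r+1$ and check the sign pattern $a_j(j-s)\le0$, that is, $a_j\ge0$ for $j<s$ and $a_j\le0$ for $j>s$. The paper's remark indicates there is a single internal negative coefficient among the lower indices; presumably this comes from the defect-two layer or from the negative $22$-type entries. I would discard this term by bounding it above by zero, after which the hypotheses of Lemma~\ref{v25:moment-comparison} hold. The hypothesis $2n_0>2(m-1)$ is clear since $(r+1)(r+2)>2r+2$. The lemma then reduces the claim to the sign of $J(2n_0)/U_{r+1}(2n_0)$ at $2n_0=(r+1)(r+2)$ (or the smallest admissible even value). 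Each ratio $U_j/U_{r+1}$ at this endpoint is an explicit rational function of $r$.

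\textbf{Main obstacle.} The endpoint expression must be shown negative for all $r+1\ge9$, respectively $r+1\ge11$. After multiplying by the positive common denominator, this becomes a polynomial inequality in $r$. I would handle it as the paper does elsewhere: substitute $r=8+x$ (respectively $r=10+x$), expand, and check that every coefficient has a fixed sign. The $\rho$, $\eta_1$ corrections in $C_0,C_1$ vanish at $r=10$ to high order, so they should not disturb the leading behaviour. I expect this final coefficient verification, with exactly the chosen constants $\rho$ and $\eta_1$, to be the delicate step. The margin is presumably thin, given the constraint that the sign requirement $r+1\ge11$ comes with it.
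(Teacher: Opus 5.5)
Your overall route is the paper's: expand both prices in the $U_j$, drop one internal negative coefficient, apply Lemma~\ref{v25:moment-comparison}, and check a polynomial sign at $2n_0=(r+1)(r+2)$. But the sign structure you assign to the expansion is wrong, and with your pivot the comparison step fails.

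\emph{The top coefficients vanish.} First a slip: $2nU_{r+1}=(r+2)U_{r+2}+2(r+1)U_{r+1}$, so $\frac{2n}{r+1}U_{r+1}=\frac{r+2}{r+1}U_{r+2}+2U_{r+1}$. More seriously, you never combine the target terms with the multiplier's own top terms.
\begin{itemize}
\item In $J_0$, $v_{0,\varnothing}\Lam(e_{r+2})=\frac{r+2}{r+1}(U_{r+2}+U_{r+1})$ cancels the $U_{r+2}$ term exactly. Also $v_{0,\varnothing}+v_{0,2}=3$ cancels the $3U_{r+1}$ produced by $\frac{2n}{r+1}(U_{r+1}+U_r)$. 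At $U_r$ the defect-two layer contributes nothing, since $v_{0,3}+v_{0,22}=0$. This leaves $\frac{r(2r+1)}{(r+1)^2}-\frac{2r}{r+1}=-\frac{r}{(r+1)^2}$.
\item In $J_1$ there is no $U_{r+2}$ at all, since $v_{1,\varnothing}=0$ and $\Lam(T_1)=rU_r+(r-1)U_{r-1}$. The $U_{r+1}$ and $U_r$ coefficients cancel against $v_{1,2},v_{1,3},v_{1,22}$. The first survivor is $-\frac{r}{r+1}U_{r-1}$.
\end{itemize}
This cancellation is exactly what the low-defect matching of $W_4$ and $V_5$ was built for. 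Your claimed $a_{r+2}<0$ is false in both cases.

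\emph{Consequence for the comparison.} With $s=r+1$, the hypothesis $a_j(j-s)\le0$ fails at $j=r$ (resp.\ $j=r-1$): the leading coefficient, which is negative, lies below the pivot, while $a_{r+1}=a_{r+2}=0$. Discarding it as well leaves an expression with only nonnegative coefficients, which can never be shown negative.

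\emph{The fix.} The pivot must be the top surviving index $s=r-i$. Then you discard only the genuinely internal negative term: $-\frac{2(2r-1)}{(r+1)^2}U_{r-2}$ in $J_0$, and $-\frac{2r(2r-3)}{(r+1)(r-2)}U_{r-3}$ in $J_1$. Before the case $s=m$ of Lemma~\ref{v25:moment-comparison} applies, you must verify that every other lower coefficient is nonnegative. That requires actually computing the correction sums from $C_0$ (on $U_{r-3},U_{r-4}$) and from $C_1$ (on $U_{r-4},U_{r-5}$).

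\emph{The endpoint.} The endpoint sign, which you leave as the ``main obstacle,'' is still the substance of the proof. Your heuristic about the corrections is also off. $C_0$ contains no $\rho$ or $\eta_1$ at all. In $C_1$, the entries carrying $(r-10)^2$ make $\sum_\tau y_{1,\tau}\sim\frac{2}{15}(r+1)^3$. After normalizing by $U_{r-1}(2n_0)$, this contributes an order-one positive amount against the leading $-\frac{r}{r+1}$, so the margin must be checked honestly. The paper does this with explicit numerators. For $J_0$ it uses $N_0$ via $K_0$, which is positive for $u\ge13$ and checked directly at $u=9,\dots,12$. For $J_1$ it uses $N_1$, positive in a falling-factorial basis at $r+1=11+x$.
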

\begin{proof}
The identities
\[
 \Lam(\widetilde m_{\ell,\tau})=U_\ell+
 \frac{\ell-\ell(\tau)}{\ell} U_{\ell-1},\qquad
 {(2n)}U_j=(j+1)U_{j+1}+2jU_j
\]
give the following correction sums:
\begin{align*}
 A_{r+1}&=\frac{2(2{(r+1)}-5)(2{(r+1)}-3)}{{(r+1)}^2(r-2)}+\sum_\tau y_{0,\tau}\\
 &=\frac{{(r+1)}^5+57{(r+1)}^4+516{(r+1)}^3-9132{(r+1)}^2+21084{(r+1)}+180}{6{(r+1)}^2(r-2)},\\
 B_{r+1}&=\sum_\tau\frac{{(r+1)}-4-\ell(\tau)}{r-3}y_{0,\tau}\\
 &=\frac{{(r+1)}^4+55{(r+1)}^3+276{(r+1)}^2-13404{(r+1)}+58284}{6{(r+1)}(r-3)},\\
 y_{\mathrm{tot}}&=\sum_\tau y_{1,\tau}\\
 &=\frac{120{(r+1)}^3+160{(r+1)}^2+451270{(r+1)}-3222000}{900},\\
 y_{\mathrm{der}}&=\sum_\tau({(r+1)}-5-\ell(\tau))y_{1,\tau}\\
 &=\frac{120{(r+1)}^4-782{(r+1)}^3+455044{(r+1)}^2-7515882{(r+1)}+30378600}{900}.
\end{align*}
With these coefficients, the signed tables and corrections give
\begin{align*}
 J_0(2n)={}&-\frac{r}{{(r+1)}^2}U_{r}
 +\frac{2{(r+1)}-3}{{(r+1)}^2}U_{r-1}
 -\frac{2(2{(r+1)}-3)}{{(r+1)}^2}U_{r-2}+A_{r+1}U_{r-3}+B_{r+1}U_{r-4},\\
 J_1(2n)={}&-\frac{r}{r+1}U_{r-1}
 +\frac{2(r)}{r+1} U_{r-2}
 -\frac{2(r)(2{(r+1)}-5)}{{(r+1)}(r-2)}U_{r-3}\\
 &+\left(y_{\mathrm{tot}}+\frac{4(r)(2{(r+1)}-5)}{{(r+1)}(r-3)}\right)U_{r-4}
 +\frac{y_{\mathrm{der}}}{r-4}U_{r-5}.
\end{align*}

All four correction coefficients are nonnegative, by their displayed
nonnegative inventories and admissible marker lengths.

Put $n_0=(r+1)(r+2)/2$ and remove the internal negative terms:
\[
 J_0^+=J_0+\frac{2(2r-1)}{(r+1)^2}U_{r-2},\qquad
 J_1^+=J_1+\frac{2r(2r-3)}{(r+1)(r-2)}U_{r-3}.
\]
Thus $J_i\le J_i^+$. Every coefficient of $J_i^+$ below its highest
moment $U_{r-i}$ is nonnegative, by the displayed inventories.
Lemma~\ref{v25:moment-comparison} gives the common reduction
\begin{equation}\label{v41:upper-price-reduction}
 \frac{J_i(2n)}{U_{r-i}(2n)}
 \le\frac{J_i^+(2n)}{U_{r-i}(2n)}
 \le\frac{J_i^+(2n_0)}{U_{r-i}(2n_0)}
 \qquad(i=0,1).
\end{equation}
The hypotheses hold because $n_0>r-i-1$ in both rank ranges.

For both endpoints set
\[
 \Pi_i=\prod_{a=0}^3\bigl((r+1)^2-(r+1)+4+2i+2a\bigr).
\]
At $2n_0=(r+1)(r+2)$ the moment-ratio formula in the preceding lemma becomes
\[
 \frac{U_{r-i-b}(2n_0)}{U_{r-i}(2n_0)}
 =\prod_{a=0}^{b-1}
   \frac{r-i-a}{(r+1)^2-(r+1)+4+2i+2a}
 \qquad(0\le b\le4).
\]
Define the numerator polynomials by
\begin{align*}
 K_0(u)={}&\frac{339}{5}+u\left[\frac45+(u-11)
       \left(\frac{12}{5}+(u-13)\left(\frac u6+\frac95\right)\right)\right],\\
 N_0(u)={}&30\left\{792+u\left[84+(u-2)
       \bigl(900+(u-3)(582+uK_0(u))\bigr)\right]\right\},\\
 N_1(11+x)={}&166600569600+19687148886x
       +3815989342(x)_2+3069689708(x)_3\\
 &+2024212535x^4+418520745x^5+43618912x^6
       +2462302x^7+70415x^8+765x^9.
\end{align*}

Substitute these four ratios and the displayed correction sums into
$J_i^+$ and multiply by the common denominators. Collection of powers gives
\begin{align*}
 \frac{J_0^+(2n_0)}{U_r(2n_0)}
 &=-\frac{rN_0(r+1)}{6(r+1)^2\Pi_0},\\
 \frac{J_1^+(2n_0)}{U_{r-1}(2n_0)}+\frac1{60}
 &=-\frac{N_1(r+1)}{900(r+1)\Pi_1}.
\end{align*}

For $u\ge13$, every term in the nested expression for $K_0(u)$
is nonnegative and its constant term is positive. The four remaining
values are
\[
 \bigl(K_0(9),K_0(10),K_0(11),K_0(12)\bigr)
 =\frac15(1347,779,383,303).
\]
Hence $K_0(u)>0$ and therefore $N_0(u)>0$ at every integer $u\ge9$.
For $N_1$, every falling factorial $(x)_a$ is nonnegative at integers
$x\ge0$, and the displayed constant term is positive. Thus
$N_1(r+1)>0$ for every integer $r+1\ge11$.
The denominators are positive. Applying \eqref{v41:upper-price-reduction}
now gives $J_0(2n)<0$ and
$J_1(2n)<-U_{r-1}(2n)/60<0$ throughout the claimed ranges.
\end{proof}

The coefficient comparisons and scalar estimates now combine through the paired contact criterion.
\begin{lemma}\label{alg:uniform}
For $2n\ge(r+1)(r+2)$, no tiling with an arbitrary centre set exists
in the zero residue class modulo $r+1$ when $r+1\ge9$, or in the
minus-one residue class when $r+1\ge11$ is odd.
\end{lemma}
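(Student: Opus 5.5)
We will apply the paired contact criterion \eqref{unified:local-master-identity} twice, once for each residue class. In the zero class we take $V=W_4+C_0$ and $T=T_0=e_{r+1}$; in the minus-one class we take $V=V_5+C_1$ and $T=T_1=m_{(2,1^{r-1})}$. The difference $Q=Vh_r-Th_{r+1}$ is then exactly the certificate $Q_i$ defined above.

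\emph{Step 1: nonnegativity of $Q_i$.} We must check that every stable non-all-even coefficient of $Q_i$ is nonnegative. We split the profiles $\lambda\vdash 2(r+1)$ by their largest part.
\begin{itemize}
\item If $\max\lambda\ge 6+i$, Lemma~\ref{alg:high-zero} gives $[m_\lambda]Q_i\ge0$. This needs $r\ge8+2i$, i.e.\ $r+1\ge9$ or $r+1\ge11$ respectively.
\item If $\max\lambda\le5+i$, Lemma~\ref{alg:direct-zero-bounded} gives the same conclusion.
\end{itemize}
Together these cover every profile, including the all-even ones. So the criterion applies and yields $\Lam(V)-\beta\Lam(T)\ge0$.

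\emph{Step 2: the value of $\beta$.} We have $\beta=\lceil(2n-r+\delta)/(r+1)\rceil$ with $\delta=r+1-\ell(\mu)$.
\begin{itemize}
\item For $T_0$: $\mu=(1^{r+1})$, so $\delta=0$. With $2n=(r+1)q$ we get $\beta=\lceil q-r/(r+1)\rceil=q=2n/(r+1)$.
\item For $T_1$: $\mu=(2,1^{r-1})$, so $\delta=1$. With $2n=(r+1)q-1$ we get $\beta=\lceil(2n+1-r)/(r+1)\rceil=q=(2n+1)/(r+1)$.
\end{itemize}
Hence the criterion reads $J_0(2n)\ge0$, respectively $J_1(2n)\ge0$, in the notation of Lemma~\ref{alg:zero-price}.

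\emph{Step 3: contradiction.} Since $2n\ge(r+1)(r+2)$, Lemma~\ref{alg:zero-price} gives $J_0(2n)<0$ when $r+1\ge9$ and $J_1(2n)<0$ when $r+1\ge11$, contradicting Step~2.

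The parity hypothesis ($r+1$ odd) in the minus-one class needs only a brief remark. When $r+1$ is even, $2n\equiv-1\pmod{r+1}$ is impossible because $2n$ is even, so nothing is lost by restricting to odd $r+1$.

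The only substantive obstacle is Step~1, which is entirely contained in the two coefficient lemmas. Beyond citing them, the remaining work is to check that $Q_i$ is literally the $Q$ of the criterion and that the criterion's target condition is satisfied by the sole supported profile of $T_i$. Both checks are immediate from the definitions.
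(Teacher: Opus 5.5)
Your proposal is correct and follows the paper's own proof essentially step for step. You obtain nonnegativity of $Q_i$ by splitting profiles at largest part $6+i$ between Lemma~\ref{alg:high-zero} and Lemma~\ref{alg:direct-zero-bounded}, compute $\beta=2n/(r+1)$ or $(2n+1)/(r+1)$ from the defects $0$ and $1$, and contradict the resulting inequality $J_i(2n)\ge0$ with Lemma~\ref{alg:zero-price}, exactly as the paper does.
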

\begin{proof}
For $i=0,1$, profiles with a part at least $6+i$ are covered by
Lemma~\ref{alg:high-zero}; every remaining
profile is covered directly by
Lemma~\ref{alg:direct-zero-bounded}.
Thus both stable residuals have nonnegative coefficients in their
stated rank ranges.

Apply \eqref{unified:local-master-identity} with $V=W_4+C_0$ or $V_5+C_1$,
whose stable residual is respectively $Q_0$ or $Q_1$.
For $T_0$ the defect is zero and
${(2n)}\equiv0\pmod {r+1}$ gives
$\lceil({(2n)}-{(r+1)}+1)/{(r+1)}\rceil={(2n)}/{(r+1)}$.
For $T_1$ the defect is one and ${(2n)}\equiv-1\pmod {r+1}$ gives
$\lceil({(2n)}-{(r+1)}+2)/{(r+1)}\rceil=({(2n)}+1)/{(r+1)}$.
The criterion therefore gives
$\Lam(V)-\beta\Lam(T_i)\ge0$, contradicting
the corresponding inequality in Lemma~\ref{alg:zero-price}.
\end{proof}

\subsection{Finite ranks and completion of the proof}\label{sec:finite}
The remaining pairs are $(r,\delta)=(5,0),(6,0),(7,0),(6,1),(8,1)$.
We construct the multipliers in one marker basis, prove their residuals
nonnegative on every non-all-even profile by the same two-coordinate
criterion, and bound all five prices by one moment comparison.

For a partition $\tau$ with all parts at least two, define
\begin{equation*}
 G_{r,\delta}=\frac1{D_{r,\delta}}
       \sum_{|\tau|\le r+2}b_{r,\delta,\tau}
             m_{(\tau,1^{r+2-|\tau|})}.
\end{equation*}
The complete integer coefficients are below. Every omitted marker and
every marker with $|\tau|>r+2$ has coefficient zero; hence no negative
unit multiplicity is used. Each multiplier has largest part at most six
and at most two parts greater than two.
\begin{longtable}{c|rrrrr}
$(r,\delta)$&$(5,0)$&$(6,0)$&$(7,0)$&$(6,1)$&$(8,1)$\\*
$D_{r,\delta}$&$4320$&$490$&$23040$&$35$&$315$\\*\hline
$\tau$&\multicolumn{5}{c}{$b_{r,\delta,\tau}$}\\*\hline
$\varnothing$&$5040$&$560$&$25920$&$0$&$0$\\
$2$&$1320$&$130$&$5400$&$30$&$280$\\
$3$&$1080$&$110$&$4680$&$5$&$35$\\
$4$&$570$&$68$&$3180$&$6$&$40$\\
$2^{2}$&$-540$&$-44$&$-1442$&$22$&$150$\\
$5$&$-230$&$3$&$876$&$6$&$40$\\
$3\,2$&$-1100$&$-89$&$-2617$&$12$&$80$\\
$6$&$115$&$0$&$0$&$3$&$30$\\
$4\,2$&$-875$&$-86$&$-4013$&$7$&$60$\\
$3^{2}$&$-454$&$-104$&$-6025$&$-13$&$-50$\\
$2^{3}$&$2730$&$144$&$2410$&$-27$&$-104$\\
$5\,2$&$6328$&$99$&$-1093$&$4$&$-4$\\
$4\,3$&$6688$&$0$&$0$&$0$&$-70$\\
$3\,2^{2}$&$11430$&$452$&$5054$&$-34$&$-168$\\
$6\,2$&$0$&$40$&$5741$&$44$&$0$\\
$5\,3$&$0$&$2665$&$16443$&$160$&$-123$\\
$4^{2}$&$0$&$520$&$4426$&$16$&$-140$\\
$4\,2^{2}$&$0$&$350$&$31340$&$-37$&$-254$\\
$3^{2}\,2$&$0$&$1612$&$57828$&$98$&$190$\\
$2^{4}$&$0$&$-1400$&$-6693$&$588$&$587$\\
$6\,3$&$0$&$0$&$394764$&$0$&$3425$\\
$5\,4$&$0$&$0$&$0$&$0$&$76$\\
$5\,2^{2}$&$0$&$0$&$0$&$0$&$230$\\
$4\,3\,2$&$0$&$0$&$0$&$0$&$784$\\
$3\,2^{3}$&$0$&$0$&$0$&$0$&$1760$\\
$6\,4$&$0$&$0$&$0$&$0$&$2018$\\
$6\,2^{2}$&$0$&$0$&$0$&$0$&$2298$\\
$5^{2}$&$0$&$0$&$0$&$0$&$1859$\\
$5\,3\,2$&$0$&$0$&$0$&$0$&$1639$\\
$4^{2}\,2$&$0$&$0$&$0$&$0$&$2032$\\
$4\,2^{3}$&$0$&$0$&$0$&$0$&$5440$\\
$3^{2}\,2^{2}$&$0$&$0$&$0$&$0$&$-2156$\\
$2^{5}$&$0$&$0$&$0$&$0$&$-8200$\\
\hline
\end{longtable}
\begin{lemma}\label{finite:pair-sampling}
Fix $r\ge2$, $\delta\in\{0,1\}$ and
$G=\sum_{\nu\vdash r+2}g_\nu m_\nu$, with at most two parts greater
than two in every occurring shape. Put $M=2r+2$.
For a profile $\lambda\vdash M$ let $L=\ell(\lambda)$,
$K=N_2(\lambda)\ge3$, and $E=M-L-K>0$.
Write $\nu=(\tau,2^j,1^u)$, with every entry of $\tau$ at least three,
$p=\ell(\tau)\le2$, and set
\begin{equation}\label{finite:pair-coefficient}
 \begin{aligned}
 P_\nu(a,b;L,K)
 &=\frac{(K)_{p+j}\mathsf N_\tau(a,b)}
      {(2)_p\operatorname{aut}(\tau)j!}\binom{L-p-j}{u},\\
 H_{a,b}
 &=\sum_\nu g_\nu P_\nu(a,b;L,K)
   -(1-\delta)\binom L{r+1}-\delta K\binom{L-1}{r-1}.
 \end{aligned}
\end{equation}
Set $P_\nu=0$ when $p+j>K$, before evaluating its binomial factor.
Let $\mathcal P_{L,K}$ contain the integer pairs
$a\ge b\ge2$, $a+b\le E+4$, deleting a pair only when
\[
 L=K,\qquad a,b\text{ are even},\qquad
 (K=3\ \text{or}\ a+b=E+4).
\]
If a real number $\pi_{L,K}$ satisfies
\begin{equation}\label{finite:pair-price-condition}
 H_{a,b}+\pi_{L,K}\left(a+b-4-\frac{2E}{K}\right)\ge0
 \qquad((a,b)\in\mathcal P_{L,K}),
\end{equation}
then $[m_\lambda](Gh_r-T_\delta h_{r+1})\ge0$ for every non-all-even
profile with these values of $L,K$.
\end{lemma}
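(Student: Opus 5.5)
The plan is to recognise the left-hand side of \eqref{finite:pair-price-condition} as a pointwise lower bound for a random variable whose mean is the coefficient in question. We sample two nonunit coordinates of $\lambda$ and average.

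First, fix a non-all-even profile $\lambda\vdash M$ with parameters $L,K\ge3,E$. Apply Lemma~\ref{alg:sample} with $h=2$ to each shape $\nu=(\tau,2^j,1^u)$ occurring in $G$. This is legitimate because every occurring shape has $p=\ell(\tau)\le2=h$. It gives
\[
 [m_\lambda](m_\nu h_{M-|\nu|})=\mathbb E\,P_\nu(X;L,K),
\]
where $X=(a,b)$ is a uniform $2$-subset of the nonunit coordinates, sorted so that $a\ge b$. The factor $(h)_p=(2)_p$, the flag $\mathsf N_\tau(a,b)$ and the convention $P_\nu=0$ for $p+j>K$ are exactly those of \eqref{finite:pair-coefficient}.

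Next, we compute the target coefficients. We have $[m_\lambda](T_\delta h_{r+1})$ equal to $\binom L{r+1}$ when $\delta=0$, and to $K\binom{L-1}{r-1}$ when $\delta=1$. These are computed exactly as in the triangular rule of the stable construction. They do not depend on $X$, so they equal their own expectation. By linearity,
\[
 [m_\lambda](Gh_r-T_\delta h_{r+1})=\mathbb E\,H_X .
\]
Identity \eqref{common:sample-excess} with $h=2$ gives $\mathbb E(a+b-4)=2E/K$. Hence the price term in \eqref{finite:pair-price-condition} has mean zero for any real $\pi_{L,K}$, and
\[
 \mathbb E H_X=\mathbb E\Bigl[H_X+\pi_{L,K}\bigl(a+b-4-\tfrac{2E}{K}\bigr)\Bigr].
\]
It therefore suffices to show that every pair $X$ occurring with positive probability lies in $\mathcal P_{L,K}$; then the hypothesis makes the integrand pointwise nonnegative.

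The main step, and the only delicate one, is justifying the deleted pairs. Every sampled pair has $a\ge b\ge2$. The nonunit parts have total excess $\sum(\lambda_j-2)=E$, and the other $K-2$ nonunits contribute nonnegative excess, so $a+b-4\le E$. Now suppose $L=K$, so there are no unit parts, and that $a,b$ are both even.
\begin{itemize}
\item If $a+b=E+4$, then all other parts equal $2$. The profile is then all-even, contrary to the assumption.
\item If $K=3$, note that $M=2r+2$ is even, so $\lambda$ has an even number of odd parts. Since $L=K$, every part is nonunit; a non-all-even profile with three parts therefore has exactly two odd parts. Consequently every sampled pair contains an odd entry, and no even--even pair can occur.
\end{itemize}
So deleted pairs never arise in a non-all-even profile, and every occurring pair lies in $\mathcal P_{L,K}$. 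Averaging the pointwise inequality then gives $[m_\lambda](Gh_r-T_\delta h_{r+1})\ge0$. The step I would check most carefully is the exact matching of normalizations in Lemma~\ref{alg:sample} for $h=2$: the factor $(K)_{p+j}/(2)_p$ and the handling of $\operatorname{aut}(\tau)$ when $p=2$ with equal parts.
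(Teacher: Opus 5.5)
Your proposal is correct and follows essentially the same route as the paper. You sample two nonunit positions and apply Lemma~\ref{alg:sample} with $h=2$, observe that the target coefficients are constant and that the price term has mean zero by \eqref{common:sample-excess}, and then show, via $a+b\le E+4$ and the parity/all-twos argument, that the deleted pairs cannot occur in a non-all-even profile.
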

\begin{proof}
Sample uniformly two of the $K$ nonunit positions, sorting their entries
as $(a,b)$. Equation~\eqref{common:sample-coefficient}, with $h=2$,
gives $\mathbb E P_\nu=[m_\lambda](m_\nu h_r)$, including zero
when $p+j>K$. The target placements number $\binom L{r+1}$ and
$K\binom{L-1}{r-1}$, respectively. Thus
\[
 \mathbb E H_{a,b}=[m_\lambda](Gh_r-T_\delta h_{r+1}),\qquad
 \mathbb E(a+b)=4+\frac{2E}{K},
\]
where the second equality is \eqref{common:sample-excess} with $h=2$.
The remaining $K-2$ nonunits have sum
$M-L+K-a-b\ge2(K-2)$, so $a+b\le E+4$.
A deleted pair would force an all-even profile: when $K=3$, its
one remaining entry is even because $M$ is even; when $a+b=E+4$,
all remaining entries equal two. In both cases $L=K$ leaves no units.
Thus every sampled pair belongs to $\mathcal P_{L,K}$. Taking expectations
in \eqref{finite:pair-price-condition} proves the claim.
\end{proof}

Put $M=2r+2$ and write $g_\nu=[m_\nu]G_{r,\delta}$, with absent
coefficients zero.
For integers $u\le x\le v$ and $0\le j\le v-u$,
\begin{equation}\label{finite:boundary-cardinal}
 \binom{x-u}{j}\binom{v-x}{v-u-j}
 =\mathbf1_{\{j=x-u\}}.
\end{equation}
Indeed, if $j>x-u$, the first factor is zero; if $j<x-u$, then
$v-u-j>v-x$ and the second factor is zero. At $j=x-u$ both
factors are one.
For each interval row below, the displayed vector $(a_0,\ldots,a_{v-u})$
and positive integer $s$ give the complete positive expansion
\[
 Q(x)=\frac1s\sum_{j=0}^{v-u}a_j
       \binom{x-u}{j}\binom{v-x}{v-u-j}\ge0.
\]
The entries are obtained from the displayed finite coefficient sums;
thus the tables include every coefficient of these expansions.

For $K=0$, the only profile is $1^M$, and its residual is
\[
 g_{1^{r+2}}\binom M{r+2}-(1-\delta)\binom M{r+1}
 =(1-\delta)\left\{\frac{r+2}{r+1}\binom M{r+2}
                         -\binom M{r+1}\right\}=0,
\]
because $\binom M{r+2}/\binom M{r+1}=(r+1)/(r+2)$.
For $K=1$, write $\lambda=(A,1^{M-A})$, $2\le A\le M-1$,
$L=M+1-A$, and $c_b=g_{(b,1^{r+2-b})}$, $c_0=g_{1^{r+2}}$.
The excluded endpoint $A=M$ is all-even. The exact residual is
\begin{equation}\label{finite:boundary-one}
 R(A)=c_0\binom L{r+2}
    +\sum_{b=2}^{\min(A,r+2)}c_b\binom{L-1}{r+2-b}
    -(1-\delta)\binom L{r+1}-\delta\binom{L-1}{r-1}.
\end{equation}
For $2\le A\le5+\delta$, no marker with two nonunit parts can fit.
Put $c_b^*=v_{\delta,(b)}/(r+3-b)$. The empty and one-marker rows give
\[
 c_0=v_{\delta,\varnothing},\qquad
 c_b=c_b^*+\varepsilon_b,\qquad
 \varepsilon_b=\frac{2}{945}\mathbf1_{\{(r,\delta,b)=(8,1,6)\}}\ge0
 \quad(2\le b\le5+\delta).
\]
Here $r+3-b\ge r-2-\delta>0$ and $r+2-b\ge0$, so every term
used is admissible, including the smallest listed radii. In this
one-marker restriction that rule reads exactly
\[
 c_A^*\binom{M-A}{r+2-A}
 =(1-\delta)\binom{M+1-A}{r+1}
   +\delta\binom{M-A}{r-1}
   -c_0\binom{M+1-A}{r+2}
   -\sum_{b=2}^{A-1}c_b^*\binom{M-A}{r+2-b}.
\]
Substitution in \eqref{finite:boundary-one} cancels the matched terms:
\[
 R(A)=\sum_{b=2}^A\varepsilon_b\binom{M-A}{r+2-b}\ge0
 \qquad(2\le A\le5+\delta).
\]
The remaining transition values are
\[
\begin{array}{c|c|r|l}
(r,\delta)&[u,v]&s&(sR(u),\ldots,sR(v))\\\hline
(5,0)&[6,11]&864&(1512,1275,488,45,0,23)\\
(6,0)&[6,8]&490&(2128,2345,1380)\\
(7,0)&[6,9]&960&(10005,13374,9675,4690)\\
(6,1)&[7,9]&35&(63,105,90)\\
(8,1)&[7,11]&63&(132,756,924,700,378)\\
\end{array}
\]
Outside the first row, the remaining range is $A\ge r+3+\delta$.
Then $L\le r-\delta$, both target binomials vanish, and every
occurring $c_b$ and $c_0$ is nonnegative. Thus every summand in
\eqref{finite:boundary-one} is nonnegative. The first row already
includes all $6\le A\le M-1$.

If $E=M-L-K=0$ and $K\ge2$, the profile is
$\lambda=(2^K,1^{M-2K})$. Its non-all-even range is $2\le K\le r$.
Choosing the $j$ two-slots and then the unit slots gives
\begin{equation*}
 B(K)=\sum_{j=0}^{\min(K,\lfloor(r+2)/2\rfloor)}
 g_{(2^j,1^{r+2-2j})}\binom Kj\binom{M-K-j}{r+2-2j}
 -(1-\delta)\binom{M-K}{r+1}
 -\delta K\binom{M-K-1}{r-1}.
\end{equation*}
Its complete positive expansions are
\[
\begin{array}{c|c|r|l}
(r,\delta)&[u,v]&s&(sB(u),\ldots,sB(v))\\\hline
(5,0)&[2,5]&36&(0,0,167,580)\\
(6,0)&[2,6]&245&(0,0,1148,2400,0)\\
(7,0)&[2,7]&23040&(93456,1308,1512,162425,262150,763)\\
(6,1)&[2,6]&7&(0,0,0,198,1044)\\
(8,1)&[2,8]&63&(0,0,3,3532,12252,18529,70)\\
\end{array}
\]

Use the formula in \eqref{finite:pair-coefficient} also at $K=2$,
writing $H_{a,b}(L)$ to display its $L$-dependence. If $E>0$ and
$\lambda=(a,b,1^{L-2})$ with $a\ge b\ge2$, $a+b=M+2-L$,
Equation~\eqref{common:sample-coefficient} with $h=k=2$ gives
\begin{equation}\label{finite:boundary-pair}
 [m_\lambda](Gh_r-T_\delta h_{r+1})=H_{a,b}(L).
\end{equation}
When $a\le5$, the possible pairs, excluding $(2,2)$ already covered
by $E=0$, are, in order,
\[
 (3,2),(3,3),(4,2),(4,3),(4,4),
 (5,2),(5,3),(5,4),(5,5).
\]
In this order, substitution of $L=M+2-a-b$ into
\eqref{finite:boundary-pair} gives the complete coefficient vectors
\[
\begin{array}{c|r|l}
(r,\delta)&s&\bigl(sH_{a,b}(M+2-a-b)\bigr)\\\hline
(5,0)&4320&(0,3876,0,7083,9720,343,4860,11292,19184)\\
(6,0)&245&(0,0,98,0,0,0,877,1055,2595)\\
(7,0)&23040&(228426,396,98406,2592,220,246,176,2393,71060)\\
(6,1)&35&(0,0,14,224,421,161,484,556,612)\\
(8,1)&315&(0,330,0,5610,7980,0,4065,5580,3719)\\
\end{array}
\]
Every entry is nonnegative. Since every marker part is at most six,
$H_{a,b}(L)=H_{6,\min(b,6)}(L)$ for $a\ge6$.
Set $x=L-2$, $h=\min(6,r-1)$, $e=\max(0,r-7)$ and
$c_*=g_{(h,3,1^e)}$. In $\nu=(\tau,1^u)$, $\tau$ lists all nonunit parts,
including twos. For $p=\ell(\tau)\in\{1,2\}$, changing the second coordinate from two to $b$ gives
\[
 \Delta_\nu=\frac{\binom{x+2-p}{u}}{\operatorname{aut}(\tau)}
                  \sum_{t\in\tau,\,t\ge3}\mathbf1_{b\ge t}\ge0.
\]
Indeed for $p=2$, $\tau=(s,t)$ with $s\ge t$, the ordered count is
$(1+\mathbf1_{b\ge s})\mathbf1_{b\ge t}=\mathbf1_{b\ge s}+\mathbf1_{b\ge t}$; subtract its value at $b=2$.
The $p=1$ case is the same single-indicator difference. Empty markers
and markers with $p>2$ have zero difference.
For every negative row, $u\ge e+2-p$. If $x<e$, then
$u>x+2-p$ and its binomial factor is zero. For $x\ge e$, the factorial identity
\[
 \frac{\binom{x+2-p}{u}}{\binom xe}
 =\frac{e!}{u!}\prod_{k=e}^{u+p-3}(x-k)
                    \prod_{k=1}^{2-p}(x+k)
\]
shows that the ratio increases wherever its numerator is nonzero; below that range the
numerator vanishes. If $b\ge t$, then $x=M-a-b\le X_t:=M-6-t$.
Consequently
\[
 \Delta_\nu\le\binom xe w_\nu,\qquad
 w_\nu=\frac1{\operatorname{aut}(\tau)}
        \sum_{t\in\tau,\,t\ge3}\frac{\binom{X_t+2-p}{u}}{\binom{X_t}{e}},
 \qquad N=\sum_{g_\nu<0}(-g_\nu)w_\nu.
\]
Here only rows with nonzero difference enter $N$; all their
$X_t\ge e$, so the displayed denominators are positive.
With the positive integer scales $s$ shown below, the full arithmetic,
in the descending order of the negative marker rows, is
\[
\begin{array}{c|r|l|r}
(r,\delta)&s&sN&sc_*\\\hline
(5,0)&4320&230+2\cdot875+3\cdot454+3\cdot1100=6642&6688\\
(6,0)&490&6\cdot86+10\cdot104+10\cdot89=2446&2665\\
(7,0)&23040&10\cdot1093+20\cdot4013+35\cdot6025+35\cdot2617=393660&394764\\
(6,1)&35&10\cdot13=130&160\\
(8,1)&945&7\cdot369+5\cdot12+7\cdot210+49\cdot70+14\cdot150=9643&10275
\end{array}
\]
Thus $c_*>N$. The positive $(h,3,1^e)$ contribution is
$c_*\binom xe(\mathbf1_{b\ge h}+\mathbf1_{b\ge 3})\ge c_*\binom xe$ when $b\ge3$.
All other positive contributions can be discarded, so the same bound proves
\[
 H_{6,b}(L)-H_{6,2}(L)=\sum_\nu g_\nu\Delta_\nu\ge(c_*-N)\binom xe\ge0\quad(3\le b\le6,\ 0\le x\le M-6-b).
\]
It remains only to check $H_{6,2}(L)$. For $a\ge6$, $b=2$ the range is
$2\le L\le M-6$. Its complete positive expansions, including $L=2$, are
\[
\begin{array}{c|c|r|l}
(r,\delta)&[u,v]&s&(sH_{6,2}(u),\ldots,sH_{6,2}(v))\\\hline
(5,0)&[2,6]&4320&(6443,5453,3703,2303,1763)\\
(6,0)&[2,8]&490&(40,139,155,70,0,75,159)\\
(7,0)&[2,10]&23040&(0,5741,10389,10807,5297,18,7844,30302,42030)\\
(6,1)&[2,8]&35&(44,51,74,137,227,289,281)\\
(8,1)&[2,12]&315&(0,0,0,26,234,1070,3025,5950,8456,8484,6690)\\
\end{array}
\]
Together with the small-alphabet table, this proves all $K=2$ boundary cases.

Assume $K\ge3$, $E=M-L-K>0$. For $3\le t\le s\le6$ put
\begin{equation}\label{finite:cross-coefficients}
 B_{s,t}=\sum_{\nu=(s,t,2^j,1^u)}
 g_\nu\frac{(K)_{2+j}}{2\operatorname{aut}(s,t)j!}
                  \binom{L-2-j}{u},\qquad
 I_x(s)=\mathbf1_{\{x\ge s\}}.
\end{equation}
As in \eqref{finite:pair-coefficient}, a summand is zero when $2+j>K$.
Write $B^+=\max(B,0)$ and $B^-=\min(B,0)$, and define
\begin{equation*}
 \psi_x=B_{3,3}+\sum_{s=4}^6 B_{s,3}I_x(s)
              +\sum_{4\le t\le s\le6}B_{s,t}^-I_x(s).
\end{equation*}
For $a,b\ge3$, the empty and one-high marker contributions cancel in
$C_{a,b}:=H_{a,b}-H_{a,2}-H_{b,2}+H_{2,2}$: their counts have
respective differences $1-1-1+1=0$ and
$(I_a(s)+I_b(s))-I_a(s)-I_b(s)=0$.
The target cancels by the same first identity. Two-high counts are
\[
 \mathsf N_{(s,t)}(a,b)=I_a(s)I_b(t)+I_b(s)I_a(t),
 \quad
 \mathsf N_{(3,3)}(a,b)=2,
 \quad
 \mathsf N_{(s,3)}(a,b)=I_a(s)+I_b(s).
\]
The coefficient formula therefore gives the exact positive decomposition
\begin{align*}
 C_{a,b}-\psi_a-\psi_b
 &=\sum_{4\le t\le s\le6}\Bigl[
 B_{s,t}^+\{I_a(s)I_b(t)+I_b(s)I_a(t)\}\notag\\
 &\hspace{25mm}
 +(-B_{s,t}^-)\{I_a(s)(1-I_b(t))+I_b(s)(1-I_a(t))\}\Bigr]
 \ge0.
\end{align*}
Indeed $B=B^++B^-$, and expanding the two braces recovers
$B\mathsf N_{(s,t)}-B^-(I_a(s)+I_b(s))$ term by term.
All factors in the displayed decomposition are nonnegative.

Define $d=d_{r,\delta,L,K}$ as follows.
Take $d=0$ for the three zero targets. For the minus targets take
$d=0$ except at the following entries; each vector is indexed by the
listed $(L,K)$ pairs in the same order:
\[
\begin{array}{c|c|l}
 r&\text{quantity}&\text{ordered entries}\\\hline
 6&(L,K)&(3,3),(4,3),(4,4),(5,3),(5,4),(6,3),(6,4),(7,4)\\
  &630d&(2240,2940,10584,3294,8640,2040,7128,6345)\\
 8&(L,K)&(4,4),(5,3),(5,4),(5,5),(6,3),(7,3),(8,3),(9,3)\\
  &630d&(44100,4200,14175,154288,3780,9872,9730,2340)
\end{array}
\]
Thus $d\ge0$ everywhere. Put
\[
 \pi=\frac{K(H_{2,2}-d)}{2E},\qquad
 R_{a,b}=H_{a,b}+\pi\left(a+b-4-\frac{2E}{K}\right).
\]
Then $R_{2,2}=H_{2,2}-(H_{2,2}-d)=d$. Define
\begin{equation}\label{finite:common-single}
 S_a=R_{a,2}+\psi_a-\frac d2,\qquad
 Z_a=\min(R_{a,2},S_a)
     =R_{a,2}+\min\left(\psi_a-\frac d2,0\right).
\end{equation}
The affine price terms cancel in the mixed difference, so
\[
 R_{a,b}-R_{a,2}-R_{b,2}+d=C_{a,b},\qquad
 R_{a,b}=S_a+S_b+(C_{a,b}-\psi_a-\psi_b)\ge S_a+S_b
 \quad(a,b\ge3).
\]
Thus $Z_a\ge0$ proves both $R_{a,2}\ge0$ and $S_a\ge0$.
Every permitted double-high pair satisfies $a,b\le E+1$, since
$a+b\le E+4$ and $a,b\ge3$. It consequently suffices to prove
\begin{equation}\label{finite:common-reduced-domain}
 Z_a\ge0\quad(3\le a\le E+1),\qquad
 R_{E+2,2}\ge0\quad(L>K).
\end{equation}
These also cover all pairs with one entry two, except $(2,2)$,
whose value is $d\ge0$. At the omitted boundary $L=K$, the pair
$(E+2,2)$ forces all other $K-2$ entries to equal two; moreover
$E+2=M-2K+2$ is even. The full profile is all-even and is excluded.

Every multiplier coefficient containing a six is nonnegative.
The increment when a first coordinate rises from five to six is
\[
 \Delta H:=H_{6,2}-H_{5,2}
 =\sum_{\nu=(6,2^j,1^u)}g_\nu
       \frac{(K)_{1+j}}{2j!}\binom{L-1-j}{u}\ge0.
\]
Terms with $1+j>K$ are zero. The same coefficient signs imply
$B_{6,t}\ge0$ for every $3\le t\le6$. Hence
\[
 \psi_6-\psi_5=B_{6,3}+\sum_{t=4}^6B_{6,t}^-=B_{6,3}\ge0,
\]
and \eqref{finite:common-single} yields
\[
 \begin{aligned}
 R_{6,2}-\pi&=R_{5,2}+\Delta H,\\
 S_6-\pi&=S_5+\Delta H+B_{6,3},\\
 Z_6-\pi&=\min(R_{5,2}+\Delta H,S_5+\Delta H+B_{6,3})
             \ge\min(R_{5,2},S_5)=Z_5.
 \end{aligned}
\]
Put $\Delta_Z=Z_6-Z_5-\pi\ge0$ and $A=E+1$.
For $a\ge6$, all marker thresholds have saturated, so
$H_{a,2}=H_{6,2}$ and $\psi_a=\psi_6$. Therefore
\[
 Z_a=Z_6+\pi(a-6)=Z_5+\pi(a-5)+\Delta_Z.
\]
For $6\le a\le A$, eliminating $\pi$ between this identity and its
value at $A$ gives
\[
 (A-5)Z_a=(A-a)Z_5+(a-5)Z_A+(A-a)\Delta_Z\ge0
 \quad\text{whenever }Z_5,Z_A\ge0.
\]
Since $A-5>0$, only $a=3,4,5$ and the endpoint $a=A\ge6$
are needed in the first inequality of \eqref{finite:common-reduced-domain}.

The one price formula above applies to every
population, with no separate pair verification at these entries.
In the tables below $a=A$ denotes $A=E+1$ and $a=\partial$ denotes
the ordinary boundary. Set
\[
\begin{array}{c|c|c|c}
 a&\mathcal F(E)&e&v\\\hline
 3,4,5&2E Z_a/K&a-1&M-2K\\
 A&2E Z_{E+1}/K&5&M-2K\\
 \partial&2E R_{E+2,2}/K&1&M-2K-1
\end{array}
 \qquad L=M-K-E.
\]
Since $L=M-K-E\ge K$, the possible excesses satisfy
$1\le E\le M-2K$; at the ordinary boundary $L>K$, the upper
bound is $M-2K-1$. The condition $a\le E+1$ gives $E\ge a-1$,
and the endpoint $a=E+1\ge6$ gives $E\ge5$. These are exactly
the intervals in the preceding table.
For each listed $K$, the positive scale $s$ and full integer vector
$\mathbf z=(z_0,\ldots,z_{v-e})$ give, at every integer $e\le E\le v$,
\begin{equation*}
 s\mathcal F(E)=\sum_{j=0}^{v-e}z_j
             \binom{E-e}{j}\binom{v-E}{v-e-j}\ge0.
\end{equation*}
This is \eqref{finite:boundary-cardinal}, with the explicitly listed
values $z_j=s\mathcal F(e+j)$. Substitution in
\eqref{finite:pair-coefficient}, \eqref{finite:cross-coefficients}
and \eqref{finite:common-single} gives the complete vectors below.
Every nonempty interval with $K\ge3$ occurs. Thus the tables and the
positive interpolation identity prove \eqref{finite:common-reduced-domain},
and Lemma~\ref{finite:pair-sampling} proves all five coefficient comparisons.

\begin{longtable}{ccr >{\raggedright\arraybackslash}p{109mm}}
$a$&$K$&$s$&$\mathbf z\quad((r,\delta)=(5,0))$\\*\cline{1-4}\noalign{\nobreak}
$3$&$3$&$360$&$(0,\allowbreak566,\allowbreak1836,\allowbreak3695,\allowbreak5488)$\\
$3$&$4$&$1440$&$(18364,\allowbreak29627,\allowbreak43144)$\\
$3$&$5$&$15$&$(544)$\\
$4$&$3$&$144$&$(323,\allowbreak648,\allowbreak1232,\allowbreak1936)$\\
$4$&$4$&$288$&$(6455,\allowbreak8472)$\\
$5$&$3$&$1080$&$(17096,\allowbreak24880,\allowbreak33159)$\\
$5$&$4$&$60$&$(2933)$\\
$A$&$3$&$4320$&$(103975,\allowbreak134016)$\\
$\partial$&$3$&$4320$&$(0,\allowbreak3120,\allowbreak42108,\allowbreak74064,\allowbreak106705)$\\
$\partial$&$4$&$1440$&$(11750,\allowbreak23900,\allowbreak56079)$\\
$\partial$&$5$&$2$&$(51)$\\
\hline
\multicolumn{4}{c}{$ (r,\delta)=(6,0)$}\\*\cline{1-4}\noalign{\nobreak}
$3$&$3$&$245$&$(0,\allowbreak930,\allowbreak1428,\allowbreak2035,\allowbreak2250,\allowbreak1582,\allowbreak0)$\\
$3$&$4$&$490$&$(8180,\allowbreak11008,\allowbreak12504,\allowbreak11401,\allowbreak6736)$\\
$3$&$5$&$245$&$(7748,\allowbreak8673,\allowbreak7356)$\\
$3$&$6$&$49$&$(1076)$\\
$4$&$3$&$245$&$(570,\allowbreak780,\allowbreak1330,\allowbreak2028,\allowbreak2205,\allowbreak1400)$\\
$4$&$4$&$490$&$(10244,\allowbreak11936,\allowbreak12297,\allowbreak10088)$\\
$4$&$5$&$245$&$(7677,\allowbreak6848)$\\
$5$&$3$&$490$&$(5864,\allowbreak7070,\allowbreak7836,\allowbreak7203,\allowbreak4384)$\\
$5$&$4$&$245$&$(8436,\allowbreak7994,\allowbreak6135)$\\
$5$&$5$&$5$&$(132)$\\
$A$&$3$&$490$&$(7770,\allowbreak8604,\allowbreak7763,\allowbreak5024)$\\
$A$&$4$&$245$&$(7882,\allowbreak5095)$\\
$\partial$&$3$&$490$&$(0,\allowbreak0,\allowbreak3849,\allowbreak6288,\allowbreak8070,\allowbreak8748,\allowbreak7763)$\\
$\partial$&$4$&$490$&$(4911,\allowbreak7980,\allowbreak15645,\allowbreak17416,\allowbreak14940)$\\
$\partial$&$5$&$245/2$&$(2685,\allowbreak3614,\allowbreak4269)$\\
$\partial$&$6$&$7$&$(108)$\\
\hline
\multicolumn{4}{c}{$ (r,\delta)=(7,0)$}\\*\cline{1-4}\noalign{\nobreak}
$3$&$3$&$11520$&$(294,\allowbreak23158,\allowbreak77,\allowbreak7627,\allowbreak29159,\allowbreak35953,\allowbreak20216,\allowbreak0,\allowbreak0)$\\
$3$&$4$&$23040$&$(136634,\allowbreak220058,\allowbreak282736,\allowbreak291814,\allowbreak222064,\allowbreak99441,\allowbreak0)$\\
$3$&$5$&$23040$&$(485455,\allowbreak561520,\allowbreak518721,\allowbreak359690,\allowbreak148479)$\\
$3$&$6$&$2304$&$(64377,\allowbreak59747,\allowbreak40155)$\\
$3$&$7$&$23040$&$(366413)$\\
$4$&$3$&$11520$&$(102911,\allowbreak123254,\allowbreak160679,\allowbreak224356,\allowbreak264994,\allowbreak238832,\allowbreak141030,\allowbreak0)$\\
$4$&$4$&$23040$&$(1151566,\allowbreak1483412,\allowbreak1675493,\allowbreak1640822,\allowbreak1324755,\allowbreak752160)$\\
$4$&$5$&$11520$&$(1321960,\allowbreak1471593,\allowbreak1416160,\allowbreak1137591)$\\
$4$&$6$&$4608$&$(736709,\allowbreak740288)$\\
$5$&$3$&$11520$&$(106839,\allowbreak151006,\allowbreak200853,\allowbreak235623,\allowbreak216104,\allowbreak131193,\allowbreak0)$\\
$5$&$4$&$7680$&$(475196,\allowbreak529024,\allowbreak515104,\allowbreak418445,\allowbreak241976)$\\
$5$&$5$&$23040$&$(2802451,\allowbreak2680230,\allowbreak2168277)$\\
$5$&$6$&$256$&$(38169)$\\
$A$&$3$&$11520$&$(318888,\allowbreak378397,\allowbreak399986,\allowbreak353888,\allowbreak234531,\allowbreak57410)$\\
$A$&$4$&$23040$&$(2027476,\allowbreak1951172,\allowbreak1596939,\allowbreak1001496)$\\
$A$&$5$&$23040$&$(3096680,\allowbreak2514699)$\\
$\partial$&$3$&$11520$&$(426,\allowbreak608,\allowbreak510,\allowbreak220344,\allowbreak314540,\allowbreak381054,\allowbreak401191,\allowbreak353888,\allowbreak234531)$\\
$\partial$&$4$&$23040$&$(36947,\allowbreak718636,\allowbreak1097952,\allowbreak1855216,\allowbreak2037305,\allowbreak1947426,\allowbreak1590246)$\\
$\partial$&$5$&$11520$&$(163377,\allowbreak1010435,\allowbreak1281870,\allowbreak1621658,\allowbreak1526925)$\\
$\partial$&$6$&$768$&$(17224,\allowbreak102291,\allowbreak114300)$\\
$\partial$&$7$&$11520$&$(147439)$\\
\hline
\multicolumn{4}{c}{$ (r,\delta)=(6,1)$}\\*\cline{1-4}\noalign{\nobreak}
$3$&$3$&$945$&$(729,\allowbreak2673,\allowbreak972,\allowbreak1365,\allowbreak6642,\allowbreak9744,\allowbreak14560)$\\
$3$&$4$&$140/3$&$(24,\allowbreak3,\allowbreak32,\allowbreak480,\allowbreak1536)$\\
$3$&$5$&$35/2$&$(454,\allowbreak465,\allowbreak615)$\\
$3$&$6$&$7$&$(1154)$\\
$4$&$3$&$945$&$(10611,\allowbreak11772,\allowbreak5460,\allowbreak2754,\allowbreak987,\allowbreak3208)$\\
$4$&$4$&$140$&$(387,\allowbreak256,\allowbreak600,\allowbreak2448)$\\
$4$&$5$&$35/3$&$(1011,\allowbreak1124)$\\
$5$&$3$&$945$&$(22464,\allowbreak13980,\allowbreak5184,\allowbreak735,\allowbreak1576)$\\
$5$&$4$&$35/2$&$(178,\allowbreak225,\allowbreak396)$\\
$5$&$5$&$7$&$(1266)$\\
$A$&$3$&$945/2$&$(11805,\allowbreak5508,\allowbreak2919,\allowbreak3896)$\\
$A$&$4$&$35/6$&$(200,\allowbreak273)$\\
$\partial$&$3$&$315$&$(1071,\allowbreak2484,\allowbreak5427,\allowbreak8928,\allowbreak5770,\allowbreak1782,\allowbreak476)$\\
$\partial$&$4$&$140/3$&$(48,\allowbreak312,\allowbreak219,\allowbreak1152,\allowbreak1600)$\\
$\partial$&$5$&$35/12$&$(80,\allowbreak227,\allowbreak465)$\\
$\partial$&$6$&$7$&$(1084)$\\
\hline
\multicolumn{4}{c}{$ (r,\delta)=(8,1)$}\\*\cline{1-4}\noalign{\nobreak}
$3$&$3$&$945/4$&$(360,\allowbreak7830,\allowbreak8106,\allowbreak3150,\allowbreak45,\allowbreak6545,\allowbreak8360,\allowbreak5211,\allowbreak5100,\allowbreak0,\allowbreak0)$\\
$3$&$4$&$2520$&$(76544,\allowbreak108920,\allowbreak117160,\allowbreak137360,\allowbreak175456,\allowbreak196400,\allowbreak95384,\allowbreak5787,\allowbreak5880)$\\
$3$&$5$&$1575$&$(276695,\allowbreak378725,\allowbreak465050,\allowbreak445650,\allowbreak322185,\allowbreak94635,\allowbreak192)$\\
$3$&$6$&$63$&$(31739,\allowbreak34911,\allowbreak31374,\allowbreak19942,\allowbreak27)$\\
$3$&$7$&$63$&$(40516,\allowbreak34814,\allowbreak15295)$\\
$3$&$8$&$9$&$(1060)$\\
$4$&$3$&$945$&$(3978,\allowbreak67392,\allowbreak56490,\allowbreak8460,\allowbreak8785,\allowbreak15232,\allowbreak810,\allowbreak21780,\allowbreak0,\allowbreak0)$\\
$4$&$4$&$1260$&$(68696,\allowbreak81608,\allowbreak85160,\allowbreak111256,\allowbreak159496,\allowbreak207896,\allowbreak302643,\allowbreak320500)$\\
$4$&$5$&$1575$&$(514090,\allowbreak657700,\allowbreak802225,\allowbreak933570,\allowbreak1014850,\allowbreak1124352)$\\
$4$&$6$&$63/2$&$(29292,\allowbreak34644,\allowbreak38617,\allowbreak40281)$\\
$4$&$7$&$63$&$(90463,\allowbreak101342)$\\
$5$&$3$&$945$&$(192,\allowbreak68940,\allowbreak49680,\allowbreak29575,\allowbreak26032,\allowbreak13986,\allowbreak29040,\allowbreak7590,\allowbreak0)$\\
$5$&$4$&$1260$&$(165436,\allowbreak192240,\allowbreak212784,\allowbreak245516,\allowbreak276228,\allowbreak323541,\allowbreak421600)$\\
$5$&$5$&$1575$&$(881150,\allowbreak1008400,\allowbreak1099395,\allowbreak1132585,\allowbreak1154944)$\\
$5$&$6$&$63$&$(77558,\allowbreak81501,\allowbreak80715)$\\
$5$&$7$&$7$&$(9949)$\\
$A$&$3$&$945$&$(135690,\allowbreak91044,\allowbreak46963,\allowbreak31504,\allowbreak51732,\allowbreak61080,\allowbreak83424,\allowbreak82728)$\\
$A$&$4$&$1260$&$(360000,\allowbreak412096,\allowbreak467288,\allowbreak511028,\allowbreak431055,\allowbreak168160)$\\
$A$&$5$&$1575$&$(1400025,\allowbreak1522185,\allowbreak1541440,\allowbreak784)$\\
$A$&$6$&$63$&$(99958,\allowbreak92313)$\\
$\partial$&$3$&$945$&$(0,\allowbreak180,\allowbreak270,\allowbreak73512,\allowbreak83190,\allowbreak72144,\allowbreak29323,\allowbreak16384,\allowbreak46062,\allowbreak54780,\allowbreak83424)$\\
$\partial$&$4$&$1260$&$(48,\allowbreak656,\allowbreak75828,\allowbreak272880,\allowbreak347880,\allowbreak416424,\allowbreak472668,\allowbreak513376,\allowbreak402705)$\\
$\partial$&$5$&$315$&$(33028,\allowbreak71430,\allowbreak141555,\allowbreak244632,\allowbreak284140,\allowbreak304002,\allowbreak303023)$\\
$\partial$&$6$&$63$&$(22325,\allowbreak46414,\allowbreak69780,\allowbreak95500,\allowbreak94985)$\\
$\partial$&$7$&$63$&$(37235,\allowbreak74540,\allowbreak87792)$\\
$\partial$&$8$&$9$&$(3580)$\\
\hline
\end{longtable}

The paired scalar prices are
\[
 J_{r,\delta}(2n)=\Lam(G_{r,\delta})
      -\frac{2n+\delta}{r+1}\Lam(T_\delta).
\]

For a multiplier shape $\nu$ of length $\ell$, with $u=m_1(\nu)$ and
$a_\nu=\prod_t m_t(\nu)!$, the orbit and unit-deletion rules give
\[
 \Lam(m_\nu)=\frac{\ell!}{a_\nu}U_\ell
    +\frac{u(\ell-1)!}{a_\nu}U_{\ell-1},\qquad
 (2n)U_j=(j+1)U_{j+1}+2jU_j.
\]
Write $\Lam(T_\delta)=\sum_jt_jU_j$; its nonzero coefficients are
\[
 (t_r,t_{r+1})=(1,1)\quad(\delta=0),\qquad
 (t_{r-1},t_r)=(r-1,r)\quad(\delta=1).
\]
Using $G_{r,\delta}=\sum_\nu g_\nu m_\nu$, we obtain
\[
 D_{r,\delta}J_{r,\delta}=\sum_j A_jU_j,
 \qquad
 \frac{A_j}{D_{r,\delta}}
 =\sum_{\ell(\nu)=j}\frac{j!g_\nu}{a_\nu}
   +\sum_{\ell(\nu)=j+1}\frac{m_1(\nu)j!g_\nu}{a_\nu}
   -\frac{j t_{j-1}+(2j+\delta)t_j}{r+1}.
\]
In the target term, $U_{j-1}$ contributes $j t_{j-1}U_j$ through
multiplication by $2n$, while $U_j$ contributes $(2j+\delta)t_jU_j$.
This accounts for the last numerator above. Substitution of the five
multiplier tables gives the following integer moment coefficients;
unlisted $A_j$ are zero:
\[
\begin{array}{c|r|rrrrrrr}
(r,\delta)&D_{r,\delta}&A_1&A_2&A_3&A_4&A_5&A_6&A_7\\\hline
(5,0)&4320&115&23598&24828&1080&-600&0&0\\
(6,0)&490&0&6128&7017&2160&110&-60&0\\
(7,0)&23040&0&838322&407328&1030053&-30315&-4780&-42\\
(6,1)&35&0&438&135&270&-30&0&0\\
(8,1)&315&0&12897&48066&33588&22545&575&-280
\end{array}
\]
In the table's row order, choose
\[
 (n_0)=(21,28,36,31,49),\qquad(s)=(4,5,4,4,6).
\]
At each endpoint, normalize by $U_s(2n_0)>0$ and use
\[
 \frac{U_{j+1}(2n_0)}{U_j(2n_0)}
 =\frac{2(n_0-j)}{j+1}.
\]
The products of these ratios avoid expanding the moments themselves.
For example, the $U_3$ and $U_5$ terms in the first row satisfy
\[
 \frac{24828}{4320}\frac{U_3(42)}{U_4(42)}
 =\frac{24828}{4320}\frac4{36}<\frac{16}{25},\qquad
 \frac{600}{4320}\frac{U_5(42)}{U_4(42)}
 =\frac{600}{4320}\frac{34}{5}=\frac{17}{18}.
\]
Bounding each positive summand upwards and each negative magnitude
downwards in the same way gives the following estimates; the terms
are listed in increasing moment order:
\begin{align*}
\frac{J_{5,0}(42)}{U_4(42)}
 &<\frac1{1000}+\frac1{20}+\frac{16}{25}+\frac14-\frac{17}{18}
   =-\frac{31}{9000},\\
\frac{J_{6,0}(56)}{U_5(56)}
 &<\frac1{100}+\frac3{25}+\frac12+\frac14-\frac9{10}
   =-\frac1{50},\\
\frac{J_{7,0}(72)}{U_4(72)}
 &<\frac1{10}+\frac{11}{10}+\frac{179}{4}
       -\frac{84}{5}-\frac{137}{5}-2=-\frac14,\\
\frac{J_{6,1}(62)}{U_4(62)}
 &<\frac1{20}+\frac3{10}+\frac{31}{4}-9=-\frac9{10},\\
\frac{J_{8,1}(98)}{U_6(98)}
 &<\frac1{1000}+\frac3{100}+\frac12+5+2-10
   =-\frac{2469}{1000}.
\end{align*}

In the same row order, the largest nonzero moment indices satisfy
\[
 (m)=(5,6,7,5,7),\qquad
 (n_0-m+1)=(17,23,30,27,43)>0.
\]
Thus $n_0>m-1$, and the table gives $A_j(j-s)\le0$.
Apply Lemma~\ref{v25:moment-comparison} to $D_{r,\delta}J_{r,\delta}$
and divide by $D_{r,\delta}>0$ to obtain
\[
 J_{r,\delta}(2n)
 \le\frac{U_s(2n)}{U_s(2n_0)}J_{r,\delta}(2n_0)<0
 \qquad(n\ge n_0).
\]

\begin{lemma}\label{finite:thm:finite}
There is no tiling satisfying $2n\ge(r+1)(r+2)$ and
$2n\equiv-\delta\pmod{r+1}$ for any of the five pairs
\[
 (r+1,\delta)\in\{(6,0),(7,0),(8,0),(7,1),(9,1)\}.
\]
\end{lemma}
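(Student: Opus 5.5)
The proof assembles the finite-rank ingredients just established through the paired contact criterion \eqref{unified:local-master-identity}. Suppose a tiling exists for one of the five pairs, and let $(r,\delta)$ be the corresponding radius--defect pair: $(5,0),(6,0),(7,0),(6,1),(8,1)$. Take the target $T=T_\delta$, so $T_0=e_{r+1}$ with $\mu=(1^{r+1})$ and defect $0$, and $T_1=m_{(2,1^{r-1})}$ with defect $1$. Take the multiplier $V=G_{r,\delta}$ from the table, which is homogeneous of degree $r+2$ because every listed marker satisfies $|\tau|\le r+2$. Then $Q=G_{r,\delta}h_r-T_\delta h_{r+1}$.

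The first step is the coefficient hypothesis: every stable non-all-even coefficient of $Q$ must be nonnegative. This is where all the work lies, and it has already been done. The cases $K=0,1$ and the cases $E=0$ and $K=2$ were checked directly by the positive interpolation expansions based on \eqref{finite:boundary-cardinal}. The cases $K\ge3$, $E>0$ reduce through Lemma~\ref{finite:pair-sampling} to the price condition \eqref{finite:pair-price-condition}. That condition was established with the price $\pi=K(H_{2,2}-d)/(2E)$, the mixed-difference decomposition into $S_a$ and $C_{a,b}$, the saturation argument reducing to $a=3,4,5,A$ together with the boundary, and the listed nonnegative vectors $\mathbf z$.

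I would also check that $G_{r,\delta}$ has at most two parts greater than two in each shape, so that the lemma applies. Stable profiles of length greater than $n$ are harmless, since $\calL$ discards them after differentiation. Profiles with $\ell(\lambda)=n+1$ still enter only through $D$, and the nonnegativity of $Q$ in the stable ring covers them.

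The second step evaluates $\beta$. Since $2n\equiv-\delta\pmod{r+1}$, we get
\[
\beta=\left\lceil\frac{2n-r+\delta}{r+1}\right\rceil=\frac{2n+\delta}{r+1}.
\]
The criterion then gives $J_{r,\delta}(2n)=\Lam(G_{r,\delta})-\beta\Lam(T_\delta)\ge0$.

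The third step obtains the contradiction from the scalar computation. I would verify that the hypothesis $2n\ge(r+1)(r+2)$ gives $n\ge n_0$ in each row. For the rows in order, $(r+1)(r+2)/2$ equals $21,28,36,28,45$ and $n_0=(21,28,36,31,49)$. The first three rows match exactly. For $(6,1)$, where $r+1=7$, the congruence $2n\equiv-1\pmod7$ with $2n\ge56$ forces $2n\ge62$, so $n\ge31$. For $(8,1)$, where $r+1=9$, $2n\equiv-1\pmod9$ with $2n\ge90$ forces $2n\ge98$, so $n\ge49$.

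With $n\ge n_0$ in hand, Lemma~\ref{v25:moment-comparison} applies, using the sign pattern $A_j(j-s)\le0$ read off the moment table. It gives $J_{r,\delta}(2n)\le \frac{U_s(2n)}{U_s(2n_0)}J_{r,\delta}(2n_0)<0$, which contradicts the criterion's $J_{r,\delta}(2n)\ge0$.

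The main obstacle is confirming that the endpoint $n_0$ is actually reached in the two minus-one rows. The congruence bump handles this as shown above. It also matters that $2n$ is even and that $r+1$ is odd there, consistent with the requirement in Lemma~\ref{alg:uniform}. With that settled, each case is a direct contradiction and the proof is complete.
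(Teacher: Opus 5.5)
Your proposal is correct and follows the paper's proof. The paper likewise combines the coefficient nonnegativity of $G_{r,\delta}h_r-T_\delta h_{r+1}$ with the paired contact criterion, using $\beta=(2n+\delta)/(r+1)$, to get $J_{r,\delta}(2n)\ge0$. It then contradicts this with the moment-comparison bound after checking $n\ge n_0=(21,28,36,31,49)$, including the same congruence bump to $2n\ge(r+1)(r+3)-1$ in the two minus-one rows.
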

\begin{proof}
For the zero cases the hypotheses imply
$n\ge(r+1)(r+2)/2=21,28,36$, respectively.
For the minus cases, the first even integer at least $(r+1)(r+2)$
which is congruent to $-1$ modulo the odd integer $r+1$ is
$(r+1)(r+3)-1$. Thus $n\ge31,49$, respectively.
All five scalar prices just proved are therefore strictly negative.
The coefficient inequalities proved above and
\eqref{unified:local-master-identity}, applied
with $V=G_{r,\delta}$ and $T=T_\delta$, instead give
$J_{r,\delta}(2n)\ge0$. This is a contradiction.
\end{proof}

We now combine the dimension reduction, the small-radius arguments,
and the two algebraic ranges.

\begin{proof}[Proof of Theorem~\ref{thm:main}]
Suppose a tiling exists for $n\ge3$ and $r\ge2$. For $r=2$,
Lemma~\ref{radius2:thm:radius-two} is a contradiction. For $r=3$,
Lemma~\ref{unified:r3-complete} is a contradiction. For $r=4$,
Lemma~\ref{small:exclusions} is a contradiction. Hence $r\ge5$, and
Lemma~\ref{foundation:thm:main-nonresidue} implies
$2n\ge {(r+1)}(r+2)$ and ${(2n)}\equiv0,-1\pmod {r+1}$.
In the minus-one class, $r+1\mid2n+1$, so $r+1$ is odd.

For ${(r+1)}=6,7,8$, and for the minus-one class at ${(r+1)}=9$, apply
Lemma~\ref{finite:thm:finite}. For the zero class at every
${(r+1)}\ge9$ and the physically possible minus-one class at every
${(r+1)}\ge11$, apply Lemma~\ref{alg:uniform}.
These ranges exhaust all
$r\ge2$. Every argument applies to the arbitrary centre set in
\eqref{eq:tiling}; no lattice or periodicity hypothesis is introduced.
\end{proof}

\section*{Acknowledgments}
The authors used ChatGPT to assist with exploring proof strategies,
checking intermediate calculations, and editing the exposition.
The authors
take full responsibility for the
content of this work.

\end{document}